\numberwithin{equation}{section} 
\newcommand{\eps}{{\varepsilon}}  
\newcommand{\vphi}{\varphi}
\newcommand{\NN}{\nonumber}
\newcommand{\cC}{\mathcal{C}}
\newcommand{\cF}{\mathcal{F}}
\newcommand{\cN}{\mathcal{N}}         
\newcommand{\cO}{\mathcal{O}}
\newcommand{\cR}{\mathcal{R}}
\newcommand{\cS}{\mathcal{S}}
\newcommand{\N}{\mathds{N}}
\newcommand{\Q}{\mathds{Q}}
\newcommand{\R}{\mathds{R}}
\newcommand{\C}{\mathds{C}}
\newcommand{\re}{{\mathrm e}}
\newcommand{\ri}{{\mathrm i}}
\newcommand{\dx}{{\mathrm d}}
\newcommand{\I}{\mathds{1}}
\renewcommand{\P}{\mathds{P}}
\newcommand{\E}{\mathds{E}}
\newcommand{\Var}{\mathrm{Var}}
\newcommand{\Tr}{\mathop{\mathrm{Tr}}}
\newcommand{\Id}{\mathrm{Id}}
\newcommand{\rem}{\mathfrak{r}}
\newcommand{\bfX}[1]{\smash{\mathbf{X}_{#1}^{(N)}}}
\newcommand{\rate}{\mathcal{I}}
\newtheorem{theorem}{Theorem}[section]         
\newtheorem{lemma}[theorem]{Lemma}
\newtheorem{proposition}[theorem]{Proposition}
\newtheorem{assumption}[theorem]{Assumption}
\newtheoremstyle{myrem}
  {}
  {}
  {}
  {}
  {\bfseries}
  {.}
  { }
  {\thmname{#1}\thmnumber{ #2}\thmnote{\normalfont{ (#3)}}}
\theoremstyle{myrem}
\newtheorem*{remark}{Remark}
\begin{document}
\title{\vspace{-2cm} LDP for the largest eigenvalue of Kronecker random matrices}
\author{Alice Guionnet\thanks{UMPA, ENS de Lyon, France. E-mail: alice.guionnet@ens-lyon.fr.} \and Jonathan Husson\thanks{LMBP, Université Clermont Auvergne, France. E-mail: jonathan.husson@uca.fr.} \and Jana Reker\thanks{UMPA, ENS de Lyon, France. E-mail: jana.reker@ens-lyon.fr.}}

\maketitle

\begin{abstract}
We prove a large deviations principle for the largest eigenvalue of Gaussian Kronecker matrices, namely matrices defined as the sum of tensors of independent Gaussian matrices in the regime where the dimension of the Gaussian matrices goes to infinity. 
\end{abstract}

\textbf{AMS Subject Classification (2020):} 60F10, 60B20, 15B52.\\
\textbf{Keywords:} Large Deviations, Largest Eigenvalue, Kronecker Random Matrix, Spherical Integral.

\section{Introduction}
The theory of random matrices has developed significantly since the seminal works of Wishart \cite{Wishart1928} and Wigner~\cite{Wigner1958}, with applications across many areas of mathematics, physics, and statistics. In this article, we focus on their large deviations theory, namely on the question of estimating the probability that their spectrum has an unlikely behavior. Such a theory has several motivations. Recently, large deviations for the smallest eigenvalue of Gaussian matrices have emerged as a key tool for estimating the mean number of minima of complex Gaussian functions via the Kac-Rice formula \cite{AuffingerBC13,AuffingerB13,BenarousMMN19}, an important step in analyzing the complexity of spin glasses. But since Boltzmann, the theory of large deviations has also been a natural approach to defining entropy. 

\medskip
This path was followed by Voiculescu to define entropy in free probability, which can be thought of as a non-commutative probability theory endowed with a notion of freeness, analogous to independence in classical probability theory. In the key article \cite{voiculescu91}, he showed that unitarily invariant matrices converge as the dimension goes to infinity towards free variables, in the sense that the traces of their words converge towards the trace of words in free variables. Voiculescu then introduced an entropy theory \cite{Voiculescu93,Voiculescu94}. He defined a microstate entropy in the spirit of Boltzmann by the volume of independent Gaussian matrices whose empirical non-commutative distribution approximates a given tracial state, namely the large deviation rate function for the non-commutative distribution of independent Gaussian matrices. One of the objectives of this theory was to define invariants for free group factors, but unfortunately, as pointed out by Voiculescu~\cite[Sect.~2.6]{Voiculescu02}, technical difficulties still prevent the completion of the theory. In particular, a full large deviation principle is still lacking, which could take the form of the convergence of the renormalized logarithm of the volume of microstates (so far, microstates free entropy is instead defined in terms of a limsup) or more boldly of an equality between the microstate entropy and the microstate free entropy which were so far only shown to be bounded by one another~\cite{BianeCG03,Jekel20}.

\medskip
In~\cite{Voiculescu02}, he defined another entropy, called topological entropy, which, instead of considering the large deviations of the traces of polynomials in independent matrices, considers the large deviations of their norm. However, so far this definition has also suffered from the difficulties encountered with the microstates entropy. In ergodic theory, Bowen~\cite{Bowen08,Bowen10} defined an entropy theory for group actions inspired by Kolmogorov and Sinai. Again, this theory is incomplete due to technical difficulties, and a full large deviation principle for sofic entropy is not yet known. 
More recently, Austin \cite{Austin-split1, Austin-split2} defined a new notion of entropy for unitary representations, roughly akin to a microstate entropy for the types of these representations. Interestingly, this new theory allows proving a full large deviation principle for the operator norm of a polynomial in independent unitary matrices~\cite[Thm.~C]{Austin-split2}. 

\medskip
Large deviations for a single random matrix are much better understood. It was first analyzed for the so-called Gaussian ensembles, which are self-adjoint matrices with i.i.d. centered Gaussian entries above the diagonal. In fact, the joint law of the eigenvalues of such matrices is explicit and given by a Coulomb gas interaction. From this explicit formula and refined Laplace's method arguments, a large deviation principle for the empirical measure of the eigenvalues was derived in~\cite{BenarousG97}, 
as well as a large deviation for the largest eigenvalue~\cite{BenarousDG01}. Removing the assumption of Gaussian entries is still an open question as far as it concerns the large deviations of the empirical measure of the eigenvalues, and that the entries have sub-Gaussian tails. They are now well understood for the extreme eigenvalues~\cite{GuionnetHusson2020,AGH2021,CookDG23} but their derivation requires sophisticated tools such as tilting the measure by spherical integrals. Indeed, in such cases, the joint law of the eigenvalues is not explicit, and large deviations are approached by deforming the measures to make the desired deviation likely similar to the classical proof of Cramér's theorem. If the entries have tails heavier than Gaussian (but stretched exponentials), it turns out that large deviations have a smaller rate and can be derived both for the empirical measure~\cite{BordenaveC14} and the extreme eigenvalues~\cite{Augeri16}. Unlikely events are, in this case, created by anomalously large entries. For heavy tails such as $\alpha$-stable laws or sparse matrices given by Hadamard products with the adjacency matrix of an Erd\"os-R\'enyi graph, large deviations can also be studied \cite{BordenaveGM24,GangulyHN24}.

\medskip
Understanding Gaussian matrices with additional structure is also an important topic, particularly for its application to the study of the complexity of random functions (see, e.g.,~\cite{AuffingerB13}). The first case studied was Gaussian matrices with a variance profile: the entries are independent centered Gaussian variables but their variances vary. Again, in this case, the joint law of the eigenvalues is not explicit, but the trick of tilting the measure by spherical integrals could be used to derive large deviations for the largest eigenvalue \cite{Husson2022,HussonMcKenna2023,DGH2024}. The large deviations principle for the empirical measure has not yet been demonstrated in this case (see~\cite{Guionnet02} for upper bounds). Another case of interest is the deformed Gaussian matrix, i.e., the sum of a Gaussian and a deterministic matrix. Using Dyson's Brownian motion, the large deviations principle for the empirical measure of such a matrix was derived in~\cite{GuionnetZ02,GuionnetH23}. The large deviation for the largest eigenvalue was established in~\cite{Mckenna21} when the deterministic matrix has no outliers and in~\cite{BoursierG24} when it has one outlier. In the latter case, large deviations are deduced from a new argument based on a functional equation satisfied by the rate function. 

\medskip
In this article, we focus on matrices that are sums of tensor products of independent self-adjoint matrices with fixed-size deterministic matrices (see \eqref{eq-model} below for the precise definition of the model). Random matrix models of this structure are referred to as (Gaussian) Kronecker matrices in the literature~(cf.~\cite{AEKN2019}). They became particularly popular in free probability since they were shown to encapsulate all the information of polynomials in several matrices, thanks to the so-called linearization trick \cite{HaagerupTobjornsen2005,AGZBook}. Furthermore, they provide a simple model of a structured matrix with dependent entries. The main result of this paper is a large deviation principle for the largest eigenvalue of Kronecker matrices. It is to our knowledge the first large deviations principle for matrices with correlated entries. Moreover, the model involves a structured deformation that goes beyond the diagonal deformations considered in~\cite{HussonMcKenna2023}. Since there are well-known links between such Kronecker matrices and polynomials in several random matrices through Haagerup and Thorbj\o rnsen's so-called linearization trick (see~\cite{HaagerupTobjornsen2005, AGZBook}), in future work we hope to apply our findings to tackle large deviations for the norm of such polynomials and provide new light on Voiculescu's topological entropy. One should notice that recently a similar result was announced by T. Austin in~\cite[Theorem~C]{Austin-split2} for words in unitary matrices. 

\medskip
\textbf{Acknowledgements:} A.G. and J.R. were partially supported by ERC Advanced Grant ``LDRaM'' No. 884584.

\subsection{Definition of the Model}
We consider $NL\times NL$ random matrices of the form
\begin{equation}\label{eq-model}
\mathbf{X}_\beta^{(N)}:=\sum_{j=1}^k A_j\otimes W_j^{(N)}+A_0\otimes \Id_N,
\end{equation}
where $A_0,\dots,A_k$ are $L\times L$ deterministic Hermitian matrices, $\Id_N$ denotes the $N\times N$ identity matrix, and $\smash{W_1^{(N)},\dots,W_k^{(N)}}$ are i.i.d. centered $N\times N$ Hermitian random matrices. For simplicity, we drop the $N$-dependence of $\smash{W_1^{(N)},\dots,W_k^{(N)}}$ throughout the paper. The parameter~$\beta$ distinguishes between a real symmetric ($\beta=1$) and a complex Hermitian ($\beta=2$) variant of the model~\eqref{eq-model} for which we make the following assumptions.

\begin{assumption}[Assumptions on matrices]\label{assu-matrix}
Fix $\beta\in\{1,2\}$.
\begin{itemize}
\item[$\beta=1$] We pick symmetric $A_0,\dots,A_k\in\R^{L\times L}$ and the random matrices $W_1,\dots,W_k$ are sampled independently from the Gaussian Orthogonal Ensemble (GOE), i.e., $W_1=W_1^T$ and $(W_1)_{ij}$, $i\leq j$, are independent real centered Gaussian random variables with variance $(1+\I_{\{i=j\}})/N$.
\item[$\beta=2$] We pick Hermitian $A_0,\dots,A_k\in\C^{L\times L}$ and the random matrices $W_1,\dots,W_k$ are sampled independently from the Gaussian Unitary Ensemble (GUE), i.e., $W_1=W_1^*$, where $(W_1)_{ij}$, $i<j$, are independent complex centered Gaussian random variables with variance $1/N$ (meaning that their real and imaginary parts are independent Gaussians of variance $1/(2N)$), and $(W_1)_{jj}$ are independent real centered Gaussian random variables with variance $1/N$ that are further independent from $(W_1)_{ij}$, $i<j$.
\end{itemize}
\end{assumption}

We remark that $\bfX{\beta}$ is a correlated random matrix with the correlation structure of each summand prescribed by the deterministic matrices $A_1,\dots,A_k$. Due to the Kronecker product in the definition, the model~\eqref{eq-model} is also called a \emph{(Hermitian) Kronecker random matrix} (see~\cite{AEKN2019}). In this context, the deterministic matrices $A_1,\dots,A_k$ are referred to as \emph{structure matrices}. We further remark that~$\E\bfX{\beta}=A_0\otimes\Id_N$, i.e.,~\eqref{eq-model} is a deformed model and its expectation is governed by $A_0$. We denote the eigenvalues of a matrix $T$ by $\lambda_1(T)\geq\dots\geq\lambda_N(T)$. Lastly, we remark that $L$ is independent of the dimension and therefore the matrices $A_j$ are uniformly bounded. 

\medskip
We conclude the discussion of the model by noting some general properties of~\eqref{eq-model} that are used throughout the analysis. First, let $\mu_N$ denote the empirical spectral measure of~$\bfX{\beta}$, i.e., the probability measure on the real line given by 
$$\mu_N=\frac{1}{N}\sum_{i=1}^N\delta_{\lambda_i(\bfX{\beta})}.$$
By the global law in~\cite[Thm. 2.7]{AEKN2019}, there exist deterministic probability measures $\nu_N$ such that $(\mu_N-\nu_N)\rightarrow0$ weakly in probability. The deterministic approximation $\nu_N$ is characterized by the Matrix Dyson Equation (MDE)
\begin{equation}\label{eq-MDE}
-\frac{1}{\mathbf{M}(z)}=z\mathbf{Id}_{LN}-A_0\otimes\I_N+\mathscr{S}[\mathbf{M}(z)],\quad\Im z>0,
\end{equation}
where $\mathbf{M}$ is chosen such that $\Im \mathbf{M}$ is positive definite and
\begin{displaymath}
\mathscr{S}[\mathbf{T}]=\E[\mathbf{X}_\beta^{(N)} \mathbf{TX}_\beta^{(N)}],\quad \mathbf{T}\in\C^{LN\times LN}.
\end{displaymath}
Note that~\eqref{eq-MDE} is an $LN\times LN$ matrix equation. However, observe that setting $\mathbf{T}=T\otimes \Id_N$ yields
\begin{displaymath}
\mathscr{S}[T\otimes \Id_N]=\Big(\sum_{j=1}^kA_jTA_j\Big)\otimes \Id_N.
\end{displaymath}
We can hence exploit the tensor product structure of our model to make the ansatz $\mathbf{M}(z)=M(z)\otimes \Id_N$, which reduces~\eqref{eq-MDE} to the $L\times L$ matrix equation
\begin{equation}\label{eq-tensorMDE}
-\frac{1}{M(z)}=z\Id_L-A_0+\cS[M(z)],\quad \Im z>0,
\end{equation}
where $\cS[T]=\sum_{j=1}^kA_jTA_j$ for $T\in\C^{L\times L}$. This is a purely deterministic equation that only depends on the $L\times L$ matrices $A_1,\dots, A_k$. We further remark that $M(z)$ admits a Stieltjes transform representation of the form
\begin{equation}\label{eq-Stieltjestransf}
M_{ij}(z)=\int_{\R}\frac{\mu_{ij}(\dx x)}{x-z},
\end{equation}
where $\mu_{\mathrm{MDE}}(\dx x)=(\mu_{ij}(\dx x))_{i,j=1}^L$ is a (uniquely defined) positive semi-definite matrix-valued measure on $\R$ with normalization $\mu_{\mathrm{MDE}}(\R)=\Id_L$ (see, e.g.,~\cite[Prop.~2.1]{AEK2019}). The density of the limiting spectral measure is then obtained by Stieltjes inversion of $\langle M(z)\rangle=\Tr[M(z)]/L$. In particular, we have $\mu_\infty=(\mu_{11}+\dots+\mu_{LL})/L$.

\medskip
The object of interest of our analysis is the largest eigenvalue $\lambda_1$ of $\bfX{\beta}$. Under Assumption~\ref{assu-matrix}, the model~\eqref{eq-model} is a mean-field model and the largest eigenvalue $\lambda_1$ will converge towards the right end of the support of $\mu_\infty$. More precisely, we obtain the following property from~\cite[Thm.~4.7]{AEKN2019}.

\begin{lemma}\label{lem-convlargestev}
Fix $\beta\in\{1,2\}$. For every $\delta>0$,
\begin{displaymath}
\lim_{N\rightarrow\infty}\P(|\lambda_1(\mathbf{X}_\beta^{(N)})-r_\infty |\geq\delta)=0,
\end{displaymath}
where $r_\infty=\max\mathrm{supp}(\mu_\infty)$.
\end{lemma}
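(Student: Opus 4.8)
The plan is to establish the two one-sided estimates $\P(\lambda_1(\bfX\beta)<r_\infty-\delta)\to 0$ and $\P(\lambda_1(\bfX\beta)>r_\infty+\delta)\to 0$ separately and combine them; the first is a soft consequence of the convergence of the empirical spectral measure, while the second is where the real work lies. For the lower bound I would start from~\cite[Thm.~2.7]{AEKN2019}, which gives $\mu_N-\nu_N\to 0$ weakly in probability together with $\nu_N\to\mu_\infty$ weakly, hence $\mu_N\to\mu_\infty$ weakly in probability. Fix $\delta>0$ and set $O=(r_\infty-\delta,r_\infty+1)$. Since $r_\infty=\max\mathrm{supp}(\mu_\infty)$, every neighbourhood of $r_\infty$ has positive mass and points to the right of $r_\infty$ carry none, so $c:=\mu_\infty(O)=\mu_\infty\big((r_\infty-\delta,r_\infty]\big)>0$. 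Choosing a compactly supported continuous $f$ with $0\le f\le\I_O$ and $\int f\,\dx\mu_\infty>c/2$, the convergence $\int f\,\dx\mu_N\to\int f\,\dx\mu_\infty$ in probability yields $\P(\mu_N(O)\le c/2)\to 0$; on the complementary event $\bfX\beta$ has an eigenvalue in $O$, hence $\lambda_1(\bfX\beta)>r_\infty-\delta$.

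For the upper bound the key input is the optimal-scale local law for Kronecker matrices and the ensuing absence of eigenvalues away from the self-consistent spectrum. The self-energy operator $\cS[\,\cdot\,]=\sum_{j=1}^kA_j\,\cdot\,A_j$ together with its $LN$-dimensional lift $\mathscr S$ is flat and bounded since the $A_j$ are fixed, and $A_0$ is bounded, so~\cite[Thm.~4.7]{AEKN2019} applies and gives that, with probability tending to one, $\bfX\beta$ has no eigenvalue at distance at least $\delta$ from $\mathrm{supp}(\mu_\infty)$; in particular $\lambda_1(\bfX\beta)<r_\infty+\delta$. If one prefers to see this directly: first use the crude a priori bound $\|\bfX\beta\|\le\|A_0\|+\sum_{j=1}^k\|A_j\|\,\|W_j\|$, whose right-hand side converges in probability to $\|A_0\|+2\sum_{j=1}^k\|A_j\|=:K-1$, so that it suffices to exclude eigenvalues in $[r_\infty+\delta,K]$. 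Over a fine net of energies $E$ in this interval, apply the local law to $G=(\bfX\beta-E-\ri\eta)^{-1}$ with $\eta=N^{-1+\gamma}$, $\gamma\in(0,1)$: it gives $\frac1{LN}\Tr G=\langle M(E+\ri\eta)\rangle+o(1)$, and since $\mu_\infty$ vanishes in a neighbourhood of $[r_\infty+\delta,K]$ one has $\Im\langle M(E+\ri\eta)\rangle=O(\eta)=o(1)$; on the other hand, an eigenvalue of $\bfX\beta$ within $\eta$ of some net point $E$ would force $\tfrac1{LN}\Im\Tr G(E+\ri\eta)\ge(2LN\eta)^{-1}$, a contradiction for large $N$. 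A union bound over the net plus the a priori estimate then rule out all eigenvalues above $r_\infty+\delta$.

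Combining the two estimates gives $\P(|\lambda_1(\bfX\beta)-r_\infty|\ge\delta)\le\P(\lambda_1<r_\infty-\delta)+\P(\lambda_1>r_\infty+\delta)\to 0$. The main obstacle is the upper bound: ruling out outlier eigenvalues is genuinely beyond the reach of the global law and relies on the local-law / edge-rigidity theory for Kronecker matrices from~\cite{AEKN2019}, together with the routine verification that the deformed model with the $A_0\otimes\Id_N$ term is covered by its hypotheses; the lower bound and the final combination are straightforward.
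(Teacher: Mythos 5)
Your proposal is correct and takes essentially the same route as the paper: both rely on the rigidity/no-outlier result \cite[Thm.~4.7]{AEKN2019} for the crucial upper bound $\lambda_1\le r_\infty+\delta$, which is also what the paper cites as its entire proof. Your additional derivation of the lower bound from the global law is a valid (if unnecessary) refinement, since the cited theorem already controls both sides.
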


{In general, the exact value of $r_\infty$ is only available implicitly from identifying the limiting spectral measure through its Stieltjes transform. However, for Gaussian models such as~\eqref{eq-model}, a formula for $r_\infty$ in the form of a variational principle is available (see~\cite{Lehner1999}).}

\subsection{Statement of the Main Result}
The main result of this paper is a large deviation principle for the largest eigenvalue of~$\bfX{\beta}$. Before stating it, let us first introduce the definitions that are needed to formulate the theorem. We give a sketch of the proof in Section~\ref{sect-manual} below to link these quantities to~\eqref{eq-model} and give an intuition for the form of the rate function.

\medskip
We recall that the Stieltjes transform $m_\nu$ of a probability measure $\nu$ is given by
\begin{equation}\label{eq-Stieltjesconv}
m_\nu(z)=\int\frac{\nu(\dx x)}{x-z}\,.
\end{equation}
If $r_\nu$ denotes the rightmost point in the support of $\nu$, $m_\nu$ is strictly increasing on $(r_\nu,+\infty)$ with values in $(\lim_{y\downarrow r_\nu} m_\nu(y),0)$ and we denote 
by $(-m_\nu)^{-1}$ the functional inverse of its negative from $(\lim_{y\downarrow r_\nu} m_\nu(y),0)$ into $(r_\nu,\infty)$. For $\theta\geq0$, define the function
\begin{align}
&J_\nu(x,\theta)\label{eq-defJ}\\
&:=\begin{cases}\theta (-m_\nu)^{-1}(2\theta)-\frac12(1+\ln(2\theta))-\frac12\int_\R\ln\big((-m_\nu)^{-1}(2\theta)-y\big)\dx\nu(y), &\text{if }2\theta\le-m_\nu({x}),\\
\theta x-\frac12(1+\ln(2\theta))-\frac12\int_\R \ln|x-y|\dx\nu(y),&\text{if }2\theta\ge -m_\nu({x}).\\
\end{cases}\NN
\end{align}
Note that~\eqref{eq-defJ} differs slightly from the function $J_\nu$ defined in~\cite{GuionnetMaida2005,DGH2024} due to a different convention for the Stieltjes transform in~\eqref{eq-Stieltjesconv}.
\medskip

Next, for $\Psi$ in the space $\mathrm{Sym}^{+,1}_L(\R)$ of positive semi-definite matrix {so that $\Tr(\Psi)=1$}, let
\begin{equation}\label{eq-defK}
K(\theta,\Psi):=L^2\theta^2\Tr[\Psi^T\cS(\Psi^T)]+L\theta\Tr[A_0^T\Psi]+\frac{1}{2}(\ln(\det(\Psi))-L\ln(L)),
\end{equation}
with $\cS$ as introduced in~\eqref{eq-tensorMDE}. We further define the matrices
\begin{align}
\vphi(\theta,x)&:=-\frac{M(\max\{(-m_{\mu_\infty})^{-1}(2\theta),x\})}{2\theta L},\label{eq-defvphi}\\
\phi(\theta,x,\Psi)&:=\vphi(\theta,x)+\Big(1+\frac{m_{\mu_\infty}(x)}{2\theta}\Big)_+\Psi,\label{eq-defphi}
\end{align}
where $M$ is the solution to~\eqref{eq-tensorMDE}. Recall that $\mu_\infty$ denotes the limiting spectral measure for~\eqref{eq-model}.

\medskip
Lastly, combine the above quantities to obtain
\begin{equation}\label{eq-defF}
\cF(\theta,x,\Psi,\beta):=\beta\big( LJ_{\mu_\infty}(x,\theta)-K(\theta,\phi(\theta,x,\Psi))\big).
\end{equation}

With these definitions in place, we can now state our main result. To make the differences between the real symmetric case ($\beta=1$) and the complex Hermitian case ($\beta=2$) more visible, we state them separately. A discussion of the properties of the rate function is included in Section~\ref{sect-ratefunct} below (see~Proposition~\ref{prop-ratefunct}).


\begin{theorem}\label{thm-main}
Let $\beta=1$ and construct~\eqref{eq-model} such that Assumption~\ref{assu-matrix} is  satisfied. Then the law of $\lambda_1(\mathbf{X}_1^{(N)})$ satisfies a large deviation principle with speed $N$ and a good rate function $\rate_1$. The function $\rate_1(x)$ is infinite for $x<r_\infty$, satisfies $\rate_1(r_\infty)=0$, and for $x>r_\infty$, we have
\begin{align}
\rate_1(x)=\inf_{\substack{\Psi\in\mathrm{Sym}_L^{{+,1}}(\R)\\ \Tr[\Psi^T\cS(\Psi)]\neq0}}\sup_{\theta\geq0}\cF(\theta,x,\Psi,1),\label{eq-ratefunct}
\end{align}
where $\cF$ was defined in~\eqref{eq-defF}.
\end{theorem}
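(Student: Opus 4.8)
The plan is to derive the large deviation principle for $\lambda_1(\mathbf{X}_1^{(N)})$ via the classical tilting-by-spherical-integral strategy, adapted to the Kronecker structure of~\eqref{eq-model}. The key object is the annealed exponential moment $\frac{1}{N}\ln\E\big[\exp(N\theta L\langle v, \mathbf{X}_1^{(N)} v\rangle)\big]$ for a suitable family of test vectors $v\in\R^{NL}$, or equivalently the tilt by a rank-one spherical integral $\int\exp(N\theta L\langle v,\mathbf{X}_1^{(N)}v\rangle)\,\dx\omega(v)$ over the unit sphere of $\R^{NL}$. The first step is to compute the asymptotics of this spherical integral: writing $v$ in terms of its "profile" $\Psi$ across the $L$ tensor blocks (i.e.\ the $L\times L$ matrix obtained by tracing out the $N$-dimensional factor, which lives in $\mathrm{Sym}_L^{+,1}(\R)$), one isolates the Gaussian fluctuations of the $W_j$ and the deterministic contribution of $A_0\otimes\Id_N$. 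The Gaussian integration in the $W_j$ directions produces exactly the quadratic term $L^2\theta^2\Tr[\Psi^T\cS(\Psi^T)]$ and the linear term $L\theta\Tr[A_0^T\Psi]$ appearing in $K(\theta,\Psi)$, while the entropy of the sphere restricted to a given profile yields the $\frac12(\ln\det\Psi - L\ln L)$ term; the residual $N$-dimensional Gaussian contribution, after diagonalizing, is governed by $J_{\mu_\infty}(x,\theta)$ exactly as in the one-matrix and variance-profile cases~\cite{GuionnetMaida2005, DGH2024, HussonMcKenna2023}. This is where $\mathcal{F}(\theta,x,\Psi,1)$ emerges, and the substitution $\Psi\mapsto\phi(\theta,x,\Psi)$ in~\eqref{eq-defphi} accounts for the optimal way to split the test vector between the "bulk" direction $\varphi(\theta,x)$ forced by the MDE solution $M$ and the "outlier" direction carrying the deformation.

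Second, I would prove the large deviation upper bound. Using the convergence of the empirical measure $\mu_N\to\mu_\infty$ (which holds by~\cite[Thm.~2.7]{AEKN2019}) together with a concentration estimate for the spherical integral around its limit (uniform in the profile $\Psi$ over the compact set $\mathrm{Sym}_L^{+,1}(\R)$), one shows that for every $\theta\ge0$,
\begin{displaymath}
\limsup_{N\to\infty}\frac1N\ln\P(\lambda_1(\mathbf{X}_1^{(N)})\ge x)\le -\inf_{\Psi}\big(\sup_{\theta\ge0}\mathcal{F}(\theta,x,\Psi,1)\big)
\end{displaymath}
by a Chebyshev/Markov argument applied to the tilted measure, choosing $v$ to be (a localization near) the top eigenvector. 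The constraint $\Tr[\Psi^T\cS(\Psi)]\neq0$ is needed to ensure the relevant quadratic form is nondegenerate so that the Gaussian tilt is genuinely exponentially effective; degenerate profiles are handled separately and shown not to contribute. For the lower bound, one runs the usual change-of-measure argument: tilt the law of the $W_j$'s (and implicitly the direction of the would-be eigenvector) by the appropriate exponential weight realizing the infimum over $\Psi$ and the supremum over $\theta$, verify that under the tilted law $\lambda_1$ concentrates at (or above) $x$, and control the relative entropy cost, which reproduces the same variational expression. One also needs the a priori statements: $\mathcal{I}_1(x)=+\infty$ for $x<r_\infty$ (since $\lambda_1\ge r_\infty-o(1)$ with overwhelming probability, which follows from Lemma~\ref{lem-convlargestev} and a complementary lower-tail estimate) and $\mathcal{I}_1(r_\infty)=0$ (from Lemma~\ref{lem-convlargestev} directly), plus goodness of $\mathcal{I}_1$ (lower semicontinuity and compact level sets), which follows from continuity properties of $\mathcal{F}$ in $(x,\Psi)$ and coercivity in $x$.

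The main obstacle, compared with the diagonal-deformation case of~\cite{HussonMcKenna2023}, is the correlated, block-structured nature of $\mathbf{X}_1^{(N)}$: the test vector $v$ no longer interacts with a single Gaussian matrix but with a genuinely non-commutative combination $\sum_j A_j\otimes W_j$, so the spherical integral asymptotics require tracking the full matrix-valued profile $\Psi\in\mathrm{Sym}_L^{+,1}(\R)$ rather than a scalar, and the Gaussian computation must be organized through the operator $\mathscr{S}$ and its reduction $\mathcal{S}$ to an $L\times L$ equation. Establishing the spherical-integral limit \emph{uniformly} over $\Psi$, with the correct interplay between the deterministic MDE solution $M(z)$ (entering via $\varphi$) and the fluctuating part, and showing that the naive variational formula $\inf_\Psi\sup_\theta\mathcal{F}$ is both an upper and a lower bound (in particular that the order of $\inf$ and $\sup$ is as stated and no duality gap or additional minimization over the $N$-dimensional geometry is hidden), is the technical heart of the argument. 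A secondary difficulty is justifying that one may restrict to rank-one tilts / single-outlier eigenvector profiles, i.e.\ that higher-rank deviations are never cheaper, which relies on the convexity structure of $K$ in $\Psi$ and on the fact that only the top of the spectrum is being deformed.
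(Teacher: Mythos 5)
Your overall strategy---tilting by spherical integrals, recording the $L\times L$ profile of the test vector, computing the annealed spherical integral via Gaussian integration plus a Wishart-type LDP to produce $K(\theta,\Psi)$, then matching upper and lower bounds through the $\inf_\Psi\sup_\theta$ variational formula---is the same roadmap as the paper's, and you correctly single out the passage from scalar to matrix-valued profiles (and the role of $\cS$) as the key difficulty relative to the variance-profile case. However, the proposal compresses exactly the two steps that carry the weight of the proof, and as written neither closes.

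On the upper bound, ``a Chebyshev/Markov argument applied to the tilted measure, choosing $v$ to be a localization near the top eigenvector'' is not yet a proof: one must show that when $\lambda_1\approx x$ and $\rho(\mathbf v_1)\approx\Psi$, the Gibbs measure $\Q_{\mathbf X,\theta}$ on the sphere concentrates on directions $\mathbf u$ whose profile $\rho(\mathbf u)$ is close to the specific matrix $\phi(\theta,x,\Psi)$, which is what lets one replace the quenched spherical integral by the annealed quantity $K(\theta,\phi(\theta,x,\Psi))$. In the paper this localization (Propositions~\ref{prop-sphintloc} and~\ref{prop-tiltedonsphere}) rests on an exponential concentration estimate for the matrix-valued measures $\mu_{\mathbf X^{(N)}}(\Pi_i,\Pi_j)$ component by component (Theorem~\ref{thm-muproj.conc}); scalar concentration of the empirical eigenvalue measure is insufficient, since one must control the Stieltjes transform of each $(i,j)$ block of the resolvent separately. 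On the lower bound, ``verify that under the tilted law $\lambda_1$ concentrates at (or above) $x$'' is itself the hard part. Under the tilted measure the Kronecker matrix decomposes explicitly as $\widetilde{\mathbf X}^{(N)}+2\theta\,T S T^*$ with $T\in\R^{NL\times L^2}$ built from the tilting vector and $S=\sum_j A_j\otimes A_j$, so the outlier is located by the determinant equation $\det(\Id_{L^2}+2\theta\,S(M(z)\otimes\Psi))=0$ (Lemma~\ref{lem-theeq}); one must then prove that for every $x>r_\infty$ and $\Psi\in\mathrm{Sym}_L^{++,1}(\R)$ some $\theta$ places the outlier exactly at $x$ (Proposition~\ref{prop-existencesol}), which requires continuity of the largest root $Z(\theta)$ plus an intermediate-value argument and a positivity lemma (Lemma~\ref{posdef}). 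Without this, the change-of-measure does not produce the rate, nor is it clear that the $\inf_\Psi\sup_\theta$ from the upper bound matches the $\sup_\Psi\inf_\theta$ from the lower bound; the paper reconciles them through the uniform constraint $\Tr[\Psi^T\cS(\Psi)]\geq\eps_x$ and Proposition~\ref{prop-profileepsilon}, a step your ``degenerate profiles do not contribute'' remark gestures at but does not carry out.
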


\begin{theorem}\label{thm-main2}
The LDP in Theorem~\ref{thm-main} continues to hold for $\beta=2$ if we replace~\eqref{eq-ratefunct} by
\begin{align}
\rate_2(x)=\inf_{\substack{\Psi\in\mathrm{Sym}_L^{{+,1}}+(\C)\\ \Tr[\Psi^*\cS(\Psi)]\neq0}}\sup_{\theta\geq0}\cF(\theta,x,\Psi,2),\label{eq-ratefunct2}
\end{align}
where $\cF$ was defined in~\eqref{eq-defF}.
\end{theorem}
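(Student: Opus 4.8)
The plan is to reduce the $\beta=2$ statement to the proof already carried out for $\beta=1$ by identifying exactly which steps are sensitive to the symmetry class, and then to redo only those. The overall strategy — tilting the law of $\mathbf{X}_\beta^{(N)}$ by a rank-one spherical integral, using the asymptotics of such integrals to produce the variational expression, and matching upper and lower bounds — should go through verbatim in its structural skeleton, so I would first isolate the genuinely $\beta$-dependent inputs: (i) the Gaussian density and hence the Radon--Nikodym derivative of the tilted measure, which carries a factor $\beta/2$ in the exponent rather than $1/4$; (ii) the asymptotics of the spherical (Harish-Chandra--Itzykson--Zuber / Berezin--Karpelevich) integral $\int e^{N\theta \langle u, \mathbf{X} u\rangle}$, which for $\beta=2$ has a known limit differing from the $\beta=1$ (orthogonal) case by the absence of the factor $1/2$ in front of the $J$-functional and a corresponding change in the Gaussian normalization; and (iii) the concentration estimates for $\lambda_1$ and for quadratic forms, which hold for both the GOE and the GUE with the same rates.

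Concretely, I would proceed as follows. First, recall from the $\beta=1$ proof that the rate function arises as $\sup_\theta \inf_\Psi$ (or the appropriate order) of $\beta(LJ_{\mu_\infty}(x,\theta) - K(\theta,\phi(\theta,x,\Psi)))$, where the prefactor $\beta$ is traced back precisely to the exponent in the Gaussian weight $\exp(-\tfrac{\beta N}{4}\sum_j \Tr W_j^2)$ together with the $\beta$-dependence of the spherical integral asymptotic. Since $\mathcal{F}(\theta,x,\Psi,\beta)$ in \eqref{eq-defF} is literally defined with this explicit linear dependence on $\beta$, the only thing to check is that every estimate used to derive the $\beta=1$ formula has a $\beta=2$ analogue with the same structure and that the optimization set changes from $\mathrm{Sym}_L^{+,1}(\R)$ to its complex Hermitian counterpart $\mathrm{Sym}_L^{+,1}(\C)$ — this last point because the optimal tilting direction, encoded by $\Psi$, now ranges over Hermitian rather than real symmetric positive semi-definite matrices of trace one, reflecting that the optimal perturbation of $\mathbf{X}_2^{(N)}$ is Hermitian. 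I would state a lemma giving the GUE spherical integral asymptotics (citing, e.g., Guionnet--Maïda and the fact that the $\beta=2$ case is classically cleaner), and then observe that substituting it into the tilting argument produces \eqref{eq-ratefunct2} by the same computation that produced \eqref{eq-ratefunct}.

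The key auxiliary facts I would invoke without reproving are: the exponential tightness and the convergence of $\lambda_1$ from Lemma~\ref{lem-convlargestev}, which are $\beta$-independent; the large-$N$ behavior of the deterministic MDE solution $M(z)$ and $\mu_\infty$, which depend only on $A_0,\dots,A_k$ and not on $\beta$; and the variational/continuity properties of $J_{\mu_\infty}$, $K$, $\varphi$, $\phi$, $\mathcal{F}$ established (for general $\beta$) in the discussion around \eqref{eq-defJ}--\eqref{eq-defF} and in Proposition~\ref{prop-ratefunct}. The tilted-measure computation for $\beta=2$ differs from $\beta=1$ only in bookkeeping: the complex Gaussian has density proportional to $\exp(-N\Tr W_j^2)$ rather than $\exp(-\tfrac N2 \Tr W_j^2)$, the Laplace-type evaluation of the normalizing constant $Z_N(\theta)$ picks up the $\beta=2$ constants, and the matrix Itzykson--Zuber integral over $U(L)$ versus $O(L)$ enters the finite-$L$ part that is absorbed into $K(\theta,\Psi)$ through the $\tfrac12(\ln\det\Psi - L\ln L)$ term — here one should double-check the normalization of this finite-dimensional spherical integral in the unitary case, as this is the place where a stray factor could most easily slip in.

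The main obstacle I anticipate is precisely this last normalization issue: ensuring that the finite-$L$ Harish-Chandra integral over $U(L)$ (which contributes the $\det\Psi$ term and its coefficient in \eqref{eq-defK}) scales so that the $\beta$ out front in \eqref{eq-defF} is genuinely a clean overall factor and not accompanied by hidden $\beta$-dependence inside $K$ or inside the admissible set. In particular one must verify that the constraint $\Tr[\Psi^*\cS(\Psi)]\neq 0$ in \eqref{eq-ratefunct2} plays the same role (excluding degenerate tilting directions for which the spherical integral asymptotic degenerates) as $\Tr[\Psi^T\cS(\Psi)]\neq 0$ does for $\beta=1$, and that the $\inf$--$\sup$ can be interchanged or evaluated in the same order. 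Once the spherical integral asymptotics and the Gaussian normalization are pinned down for the GUE, the remainder of the proof — exponential tightness, the large-deviation upper bound via a covering/tilting argument, and the matching lower bound via a change of measure localized near the optimal $(\theta,\Psi)$ — is a line-by-line transcription of the $\beta=1$ argument, so I would present Theorem~\ref{thm-main2} as a corollary whose proof consists of flagging these substitutions rather than repeating the full machinery.
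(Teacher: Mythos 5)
Your overall strategy --- isolate the $\beta$-sensitive steps, redo only those, and present Theorem~\ref{thm-main2} as a corollary of the $\beta=1$ machinery --- is exactly what the paper does in Section~\ref{sect-GUE}. However, you have misattributed the origin of the $\tfrac12(\ln\det\Psi - L\ln L)$ term in $K(\theta,\Psi)$, and this misattribution hides the place where the nontrivial bookkeeping actually happens. There is no finite-$L$ Harish--Chandra--Itzykson--Zuber integral over $U(L)$ or $O(L)$ anywhere in the argument: the spherical integral~\eqref{eq-spherint} is rank one, taken over the unit sphere $\mathds{S}^{NL-1}$ (real for $\beta=1$, complex for $\beta=2$), and the $\ln\det$ contribution comes from the large deviation principle for the renormalized Wishart matrix $\rho(\mathbf{u})=\widetilde Y$ in Lemma~\ref{lem-WishartLDP}. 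Thus the $\beta=2$ substitution is not a unitary HCIZ normalization check but an explicit Laplace-method computation with the \emph{complex} Wishart density (the paper uses Goodman's formula), which produces the rate function $\widetilde\rate(X)=L\ln L-\ln\det X$, twice the real one. This doubling must be seen to combine with the extra factor $\tfrac12$ that appears when the annealed Gaussian computation~\eqref{eq-innerprod} is redone with GUE entries: splitting $\Re(W_j)_{ab}$ and $\Im(W_j)_{ab}$ as independent real Gaussians of variance $1/(2N)$ yields $\exp(\tfrac{NL^2\theta^2}{2}\Tr[\rho(\mathbf u)^T A_j\rho(\mathbf u)^T A_j])$ rather than $\exp(NL^2\theta^2\Tr[\cdots])$. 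It is the cancellation between this $\tfrac12$ and the factor $2$ in $\widetilde\rate$, not an HCIZ identity, that makes the $\beta$ in front of~\eqref{eq-defF} a clean overall prefactor. As written, your proposal supplies neither half of this pair, so the claimed ``clean linear $\beta$-dependence'' rests on an argument about the wrong object.

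A secondary omission worth flagging: for $\beta=2$ the projected spectral measures $\mu_{\mathbf H}(\Pi_i,\Pi_j)$ of~\eqref{eq-defmuproj} are genuinely complex-valued (the coefficients of the Diracs are complex inner products), so Theorem~\ref{thm-muproj.conc} must be run with the modulus version $d_{Lip}$ of the Kantorovich--Rubinstein distance after decomposing into real and imaginary signed parts, and the approximate-MDE input changes from~\cite{HST2006} to~\cite{HaagerupTobjornsen2005}. These are mechanical but belong on the $\beta$-dependent checklist, and your proposal claims the concentration estimates ``hold with the same rates'' without noticing that the measures themselves have changed type. The remaining ingredients you list --- exponential tightness, convergence of $\lambda_1$, the MDE data $M(z)$ and $\mu_\infty$, and the continuity properties of $J$, $K$, $\vphi$, $\phi$, $\cF$ --- are indeed $\beta$-independent exactly as you say.
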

Note that real symmetric $A_0,\dots,A_k$ satisfy Assumption~\ref{assu-matrix} for both $\beta=1$ and $\beta=2$, allowing to compare the resulting rate functions~\eqref{eq-ratefunct} and~\eqref{eq-ratefunct2} in this case. We readily obtain the bound $\rate_2(x)\leq2\rate_1(x)$.

\medskip
The remainder of the article is dedicated to the proof of Theorems~\ref{thm-main} and~\ref{thm-main2}, and the discussion of the rate function. For a concise presentation of the argument, we focus on the proof of Theorem~\ref{thm-main} (real case) and discuss the necessary modifications for the proof of Theorem~\ref{thm-main2} (complex case) separately in Section~\ref{sect-GUE}. Let us, therefore, fix $\beta=1$ for now and drop the $\beta$-dependence from $\smash{\mathbf{X}^{(N)}_\beta}$, {${\mathcal I}_\beta$, ${\mathcal F}(\theta,x,\Psi,\beta)$}, etc. to simplify notation.

\subsection{General Notation}
We conclude the introduction with some additional notations and conventions that are used throughout the paper. Firstly, we always use capital letters to denote matrices and lowercase letters to denote vectors and scalars. To make the dimension of a matrix resp. vector more visible, we further use boldface to denote the $LN\times LN$ tensor products while its factors of size $L\times L$ resp. $N\times N$ are written non-bold.

\medskip
The identity matrix of size $N\times N$ is denoted by $\Id_N$ and $e_j$ denotes the unit vector with the $j$-th entry equal to one. For a matrix $A$, the transpose and Hermitian conjugate are denoted by $A^T$ and $A^*$, respectively, and we write $\overline{z}$ for the complex conjugate of a scalar $z\in\C$. Further, $(A-z\Id_N)^{-1}$ denotes the resolvent of $A\in\C^{N\times N}$ at~$z$. Note that the sign convention matches~\eqref{eq-Stieltjesconv} in the sense that $\Tr[(\bfX{\beta} -z \Id_N)^{-1}]/N=m_{\mu_N}(z)$. Our default norm~$\|\cdot\|$ is the operator norm for matrices resp. the Euclidean norm for vectors, and we set $\|v\|_\infty=\max_i|v_i|$ for $v\in\C^N$ resp. $\|A\|_\infty=\max_{i,j}|A_{i,j}|$ for $A\in\C^{N\times N}$. Moreover, we use $\langle\cdot,\cdot\rangle$ for the Frobenius inner product for matrices resp. the Euclidean inner product for vectors. Throughout the paper, we follow the convention that complex inner products are skew-linear in the first entry. 

\medskip
Given $n\in\N$, we abbreviate $[n]=\{1,\dots,n\}$. The eigenvalues and eigenvectors of a matrix $A\in\C^{N\times N}$ are denoted by $\lambda_j(A)$ resp. $v_j(A)$ for  $j\in[N]$, and we omit the matrix whenever the association is clear. Throughout the analysis, we implicitly assume the eigenvalues ordered as $\lambda_1\geq\dots\geq\lambda_N$ and the eigenvectors to be normalized to $\|v_j\|=1$.  Moreover, $\mathrm{Sym}_N(\R)$ and $\mathrm{Sym}_N(\C)$ denote the set of symmetric matrices in $\R^{N\times N}$ resp. the set of Hermitian matrices in $\C^{N\times N}$. We further add a superscript $\mathrm{Sym}_N^{++}(\R)\subset\R^{N\times N}$ to denote the set of symmetric positive definite matrices or
\begin{displaymath}
\mathrm{Sym}_N^+(\R)=\overline{\mathrm{Sym}_N^{++}(\R)}
\end{displaymath}
for the set of positive semi-definite matrices. The sets $\mathrm{Sym}_N^{++}(\C)$ and $\mathrm{Sym}_N^{+}(\C)$ are defined analogously. We also set
\begin{displaymath}
\mathrm{Sym}_L^{\sharp,1}(\R)=\{X\in\mathrm{Sym}_L^\sharp(\R):\Tr(X)=1\}
\end{displaymath}
with $\sharp\in\{+,++\}$ to denote the matrices with trace one. Again, $\mathrm{Sym}_L^{\sharp,1}(\C)$ is defined analogously.

\medskip
Lastly, $\rem$ denotes a generic error term that vanishes when a certain limit is taken. We will specify the variables that $\rem$ depends on and the order of limits for each occurrence. {Moreover, $o_a(1)$ (resp. $o(a)$) denotes a quantity going zero  (resp. so that $o(a)/a$ goes to zero)  when some specified limit is taken on $a$, whereas $\cO(X)$ is a function so that there exists a finite positive constant $C$ such that $|\cO(X)/X|\in [C^{-1},C]$.}
\section{Outline of the Argument}\label{sect-manual}
The aim of this section is to provide a general road map to the proof of Theorem~\ref{thm-main}. Following the usual method, our main result is obtained from a weak LDP and the exponential tightness of the associated sequence of probability measures. Noting that the latter is obtained from a standard net argument (see Lemma~\ref{lem-norm_exptight} below), our main focus lies on establishing a weak LDP for $\lambda_1(\mathbf{X}^{(N)})$. Introducing the notation

\begin{equation}\label{eq-defPNdelta}
P_\delta^{(N)}(x):=\frac{1}{N}\ln\big(\P(\{|\lambda_1(\mathbf{X}^{(N)})-x|\leq\delta\})\big),
\end{equation}
it can be stated as follows.

\begin{theorem}[Weak LDP]\label{thm-mainweak}
For $x>{r_\infty}$, it holds that
\begin{displaymath}
\lim_{\delta\rightarrow0}\liminf_{N\rightarrow\infty}P_\delta^{(N)}(x)=\lim_{\delta\rightarrow0}\limsup_{N\rightarrow\infty}P_\delta^{(N)}(x)=-\rate(x).
\end{displaymath}
\end{theorem}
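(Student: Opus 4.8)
The plan is to obtain the weak LDP by the classical strategy of \emph{tilting by spherical integrals}, adapted to the Kronecker structure. Concretely, for a well-chosen parameter $\theta\ge 0$ and a rank-one (or low-rank) perturbation direction, I would introduce the tilted measure whose Radon--Nikodym derivative with respect to the law of $(W_1,\dots,W_k)$ is proportional to $\exp(N\theta\langle v,\mathbf{X}^{(N)}v\rangle)$ for a unit vector $v\in\C^{LN}$, averaged over $v$; since the $W_j$ are Gaussian, the normalizing constant is an explicit Gaussian integral (a Laplace transform of a quadratic form in the $W_j$), whose exponential growth rate is exactly the term $K(\theta,\Psi)$ once one parametrizes the ``shape'' of the optimal $v$ by a matrix $\Psi\in\mathrm{Sym}_L^{+,1}$ encoding the $L\times L$ block-overlap profile $(\langle v, (e_a e_b^{T}\otimes \Id_N) v\rangle)_{a,b}$. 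The spherical integral over the sphere in $\C^{LN}$ (the HCIZ / Guionnet--Ma\"ida asymptotics, \cite{GuionnetMaida2005}) contributes the term $LJ_{\mu_\infty}(x,\theta)$, using Lemma~\ref{lem-convlargestev} to pin the bulk spectrum at $\mu_\infty$ and the fact that the deterministic equivalent $\mathbf{M}=M\otimes\Id_N$ controls the relevant resolvent traces; this is where $\vphi$ and $\phi$ come from, as the precise description of the tilted vector's interaction with the deformation $A_0\otimes\Id_N$ and the self-energy $\mathscr{S}$.

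The steps, in order, are as follows. First I would prove the \emph{upper bound} $\limsup_N P_\delta^{(N)}(x)\le -\rate(x)+\rem(\delta)$: by Chebyshev/Markov applied to the tilted measure, for every $\theta\ge0$ one gets $P_\delta^{(N)}(x)\le -\beta L J_{\mu_\infty}(x,\theta)+\text{(annealed spherical-integral rate)}+\rem$, and optimizing the internal shape matrix yields $+\beta K(\theta,\phi(\theta,x,\Psi))$ summed against the worst $\Psi$; taking the infimum over $\Psi$ and then recognizing that one is free to take the supremum over $\theta$ (because the bound holds for all $\theta$) gives exactly $\inf_\Psi\sup_\theta\cF$. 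Second, the \emph{lower bound}: here one must exhibit an explicit tilt (a specific $\theta^\star$ and shape $\Psi^\star$ achieving the inf-sup, using the first-order optimality conditions, which by construction of $\vphi,\phi$ reproduce a fixed-point compatible with the MDE~\eqref{eq-tensorMDE}) under which the event $\{|\lambda_1-x|\le\delta\}$ has probability bounded below by $e^{-N\rate(x)+o(N)}$; this requires showing that under the tilted law the largest eigenvalue genuinely concentrates near $x$ (a one-point concentration / second-moment argument for the tilted model, plus the matching spherical-integral lower bound from \cite{GuionnetMaida2005}), and that the constraint $\Tr[\Psi^T\cS(\Psi)]\neq 0$ is the right non-degeneracy condition ensuring the tilt is non-trivial. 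Third, I would combine these with exponential tightness (Lemma~\ref{lem-norm_exptight}) and the abstract weak-LDP-to-full-LDP upgrade, and check that the resulting rate function is good and lower semicontinuous, matching the formula in Theorem~\ref{thm-main}.

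The main obstacle I expect is \textbf{the lower bound under the tilted measure}, specifically proving that after tilting by the spherical integral the largest eigenvalue of the correlated matrix $\mathbf{X}^{(N)}$ sticks to the prescribed value $x$. For i.i.d.\ or variance-profile models this is handled by a change of variables making the tilted ensemble explicit, but here the Kronecker correlations mean the tilted law of $(W_1,\dots,W_k)$ is a \emph{shifted} Gaussian whose shift is a rank-$\le L$ matrix in each $W_j$ coordinate determined by the optimal vector; one must then control the spectrum of ``Kronecker matrix $+$ structured low-rank deformation'' and show it produces exactly one outlier at $x$ with the correct eigenvector overlap profile $\Psi^\star$. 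This is precisely the point where the deformation goes beyond the diagonal deformations of~\cite{HussonMcKenna2023}, so the matrix-Dyson-equation machinery of~\cite{AEKN2019} for the deformed model, together with a careful analysis of the self-consistent equation for the outlier, will have to be invoked. A secondary technical difficulty is the interchange of the $N\to\infty$ limit with the $\sup_\theta$ and $\inf_\Psi$, which requires uniform control (equicontinuity in $\theta$ and compactness of $\mathrm{Sym}_L^{+,1}$) to pass from the pointwise-in-$(\theta,\Psi)$ estimates to the variational formula.
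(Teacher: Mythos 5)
Your proposal follows essentially the same route as the paper: tilting by spherical integrals, parametrizing the eigenvector by its $L\times L$ block-overlap profile $\Psi$, decomposing the rate into the annealed part $K$ and the spherical part $LJ_{\mu_\infty}$, and for the lower bound computing the tilted law explicitly and reducing the outlier to a self-consistent equation involving $M\otimes\Psi$. You also correctly pinpoint the main obstacle (forcing the outlier to sit at $x$ under the Kronecker tilt) and the compactness issues. Two steps in the paper's argument are, however, glossed over in a way that would become material if you tried to carry this out. First, the upper bound is not a one-line Markov bound: one has to prove a localization statement (Propositions~\ref{prop-sphintloc} and~\ref{prop-tiltedonsphere}) showing that, under the measure $\Q_{\mathbf{H},\theta}$ on the sphere, $\rho(\mathbf{u})$ concentrates at $\phi(\theta,\lambda_1(\mathbf{H}),\Psi)$; this in turn requires an exponential concentration result for the whole block-resolved empirical measure $(\mu_{\mathbf{X}^{(N)}}(\Pi_i,\Pi_j))_{i,j}$ with values in $\mathrm{Sym}_L^+(\R)$ (Theorem~\ref{thm-muproj.conc}), which is specific to the Kronecker model and strictly stronger than concentration of the scalar spectral measure. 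Second, the upper bound as you state it produces $\inf_{\Psi}\sup_\theta\cF$ over \emph{all} $\Psi\in\mathrm{Sym}_L^{+,1}$, whereas the announced rate restricts to $\Tr[\Psi^T\cS(\Psi)]\neq 0$; for the two sides to meet one must first show that the event $\{\mathbf{v}_1\in S_{\rho,\eps}\}$ (degenerate $\cS$-profile) is superexponentially unlikely uniformly for $\lambda_1$ near $x$ (Lemmas~\ref{lem-weakLDPsupimprovement1}--\ref{lem-weakLDPsupimprovement2} and Proposition~\ref{prop-improved_weakLDPsup}), via a Gaussian-tail/net argument using that $\Var\langle\mathbf{u},\mathbf{X}^{(N)}\mathbf{u}\rangle=2\langle\rho(\mathbf{u}),\cS\rho(\mathbf{u})\rangle/N$ is small on $S_{\rho,\eps}$. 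Finally, a minor point: the shift produced by the tilt is rank at most $L^2$ (not $L$), since it has the form $2\theta TST^*$ with $S=\sum_j A_j\otimes A_j\in\R^{L^2\times L^2}$, and the outlier equation is $\det(\Id_{L^2}+2\theta S(M(z)\otimes\Psi))=0$.
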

We split the proof of Theorem~\ref{thm-mainweak} into the upper and lower bound, which are addressed in Sections~\ref{sect-wLDPsup} and~\ref{sect-wLDPinf}, respectively.

\medskip
The main ingredient for both bounds is the strategy of \emph{tilting by spherical integrals} that was developed in~\cite{GuionnetHusson2020, Husson2022, HussonMcKenna2023, DGH2024}. The idea behind this technique is similar to the proof of Cramer's theorem (see, e.g.,~\cite[Thm.~I.4]{denHollander2000}), where a suitable shift of the probability measure makes the desired large deviations more likely. Instead of the exponential tilting used in Cramer's theorem, however, it is sufficient to shift the measure in a random direction here. This results in the so-called spherical integral entering the argument which can be estimated using suitable bounds from the literature (see~\cite{GuionnetMaida2005, Maida2007}).

\medskip
For a real number $\theta$ and a symmetric $N\times N$ matrix $H_N$, we define the \emph{spherical integral} by
\begin{equation}\label{eq-spherint}
I_N(H_N,\theta):=\int_{\mathds{S}^{N-1}}\re^{\theta N\langle u,H_Nu\rangle}\dx u=\E_u[\re^{\theta N\langle u,H_Nu\rangle}],
\end{equation}
where the integral is taken with respect to the uniform law on the $N$-dimensional sphere~$\mathds{S}^{N-1}$. It was shown in~\cite{GuionnetMaida2005, Maida2007,GuionnetH22} that, given a suitable {sequence of matrices $(H_N)_N$}, the spherical integral~\eqref{eq-spherint} converges as $N\rightarrow\infty$.

\begin{lemma}[{\cite[Prop.~1]{GuionnetH22}}]\label{lem-convsphint}
Let $(H_N)_N$ be a sequence of $N\times N$ self-adjoint deterministic matrices such that $\sup_{N}\|H_N\|<\infty$, the empirical eigenvalue measures $(\mu_{H_N})_N$ converge weakly towards a probability measure $\nu$, and the largest eigenvalues $(\lambda_1(H_N))_N$ converge to some number $\lambda_{\mathrm{max}}\in\R$. Then
\begin{equation}\label{eq-convsphint}
\lim_{N\rightarrow\infty}\frac{1}{N}\ln[I_N(H_N,\theta)]=J_{\nu}(\lambda_{\mathrm{max}},\theta),
\end{equation}
where $J_\nu(y,\theta)$ is given in~\eqref{eq-defJ}. Moreover, the convergence in~\eqref{eq-convsphint} is uniform on small neighborhoods of $\mu_{H_N}$ (for the weak topology) and $\lambda_1(H_N)$.
\end{lemma}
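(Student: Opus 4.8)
The plan is to linearise the quadratic form $\langle u,H_Nu\rangle$ in the variable $u$ by a Gaussian (Hubbard--Stratonovich) identity, thereby reducing \eqref{eq-spherint} to a scalar Laplace-type integral, and then to evaluate the resulting two-level variational problem, whose minimiser bifurcates precisely at $2\theta=-m_\nu(\lambda_{\mathrm{max}})$.

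\emph{Reduction.} We may assume $H_N$ diagonal and fix once and for all a real number $t<-\sup_N\|H_N\|$, so that $B_N:=H_N-tI$ is positive definite uniformly in $N$. On $\mathds S^{N-1}$ one has $\langle u,H_Nu\rangle=t+\|B_N^{1/2}u\|^2$, and since $\theta\ge0$ we may write $\re^{\theta N\|B_N^{1/2}u\|^2}=\E_w[\re^{\sqrt{2\theta N}\langle B_N^{1/2}w,u\rangle}]$ for a standard Gaussian vector $w\in\R^N$. Integrating over $u$ first and using rotational invariance of the uniform law on $\mathds S^{N-1}$ gives
\[
I_N(H_N,\theta)=\re^{\theta Nt}\,\E_w\big[\Phi_N\big(N\sqrt{2\theta Q_N}\big)\big],\qquad Q_N:=\tfrac1N\|B_N^{1/2}w\|^2=\tfrac1N\sum_i\big(\lambda_i(H_N)-t\big)w_i^2,
\]
where $\Phi_N(\rho):=\E_u[\re^{\rho\langle e_1,u\rangle}]$. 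An elementary one-dimensional Laplace argument applied to the marginal density of the first coordinate of $u$ (proportional to $(1-s^2)^{(N-3)/2}$ on $[-1,1]$) yields $\tfrac1N\ln\Phi_N(N\kappa)\to\psi(\kappa):=\sup_{|s|\le1}[\kappa s+\tfrac12\ln(1-s^2)]$, uniformly on compacts, with $\psi$ convex and increasing on $[0,\infty)$, $\psi(0)=0$, $\psi(\kappa)\sim\kappa$; a short computation also gives that $q\mapsto\psi(\sqrt{2\theta q})$ is concave.

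\emph{LDP for the quadratic form.} Split off the top eigenvalue: $Q_N=(\lambda_1(H_N)-t)\tfrac{w_1^2}{N}+\tfrac1N\sum_{i\ge2}(\lambda_i(H_N)-t)w_i^2$. The first term satisfies an LDP with the linear rate $a\mapsto a/\big(2(\lambda_{\mathrm{max}}-t)\big)$ (using $\lambda_1(H_N)\to\lambda_{\mathrm{max}}$ and $\tfrac1N\ln\P(w_1^2\ge Na)=-a/2+o(1)$), while the second satisfies, by a Cramér-type theorem for weighted sums of squares of i.i.d. Gaussians together with $\mu_{H_N}\to\nu$, an LDP with rate the Legendre transform of $\gamma\mapsto-\tfrac12\int\ln(1-2\gamma(y-t))\,\dx\nu(y)$. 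By independence and inf-convolution, $Q_N$ obeys an LDP with rate $\Lambda_t^\ast$, the Legendre transform of $\Lambda_t(\gamma):=-\tfrac12\int\ln(1-2\gamma(y-t))\,\dx\nu(y)$ for $0\le\gamma<\tfrac1{2(\lambda_{\mathrm{max}}-t)}$ and $\Lambda_t(\gamma):=+\infty$ otherwise. The decisive point is that it is the \emph{single} eigenvalue $\lambda_1(H_N)$ that truncates the effective domain of $\Lambda_t$; this is the mechanism behind the phase transition.

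\emph{Identification of the rate and uniformity.} Varadhan's lemma applies since $\psi(\sqrt{2\theta q})$ is sublinear in $q$, so $\lim_N\tfrac1N\ln I_N(H_N,\theta)=\theta t+\sup_{q\ge0}[\psi(\sqrt{2\theta q})-\Lambda_t^\ast(q)]$. Writing $\Lambda_t^\ast$ as a Legendre transform and exchanging $\sup_q$ with $\inf_\gamma$ (legitimate by Sion's theorem, using concavity of $q\mapsto\psi(\sqrt{2\theta q})$ and convexity of $\Lambda_t$) turns the right side into $\theta t+\inf_{0\le\gamma\le 1/(2(\lambda_{\mathrm{max}}-t))}[\Lambda_t(\gamma)+G(\gamma)]$ with $G(\gamma):=\sup_{q\ge0}[\psi(\sqrt{2\theta q})-\gamma q]=\tfrac12\ln\tfrac{\gamma\wedge\theta}{\theta}+\tfrac{\theta}{2(\gamma\wedge\theta)}-\tfrac12$. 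The stationarity equation for the interior minimiser $\gamma^\ast$ reduces, after the substitution $\gamma^\ast=\tfrac1{2(z-t)}$, to $-m_\nu(z)=2\theta$, i.e. $z=(-m_\nu)^{-1}(2\theta)=:z^\ast$, and this $\gamma^\ast$ is admissible iff $z^\ast>\lambda_{\mathrm{max}}$, equivalently iff $2\theta<-m_\nu(\lambda_{\mathrm{max}})$. In that subcritical regime one substitutes $\gamma^\ast$; otherwise the infimum is attained at the boundary $\gamma=\tfrac1{2(\lambda_{\mathrm{max}}-t)}$. In both cases the $t$-dependence cancels and the expression collapses exactly to $J_\nu(\lambda_{\mathrm{max}},\theta)$ of \eqref{eq-defJ}, which simultaneously confirms independence of $t$ and reproduces the two branches of the definition. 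Finally, every estimate above depends on $(H_N)_N$ only through $\mu_{H_N}$ (entering via integrals of bounded continuous functions) and $\lambda_1(H_N)$, with moduli of continuity uniform for $\mu_{H_N}$ in a weak neighbourhood of $\nu$ and $\lambda_1(H_N)$ near $\lambda_{\mathrm{max}}$; tracking these constants gives the asserted uniformity.

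I expect the main obstacle to be the last two steps: establishing rigorously that one eigenvalue governs the edge of the effective domain of $\Lambda_t$, controlling the infimum up to that boundary (delicate when $\lambda_{\mathrm{max}}=r_\nu$, where $\Lambda_t$ can degenerate and the supremum in Varadhan's problem must be shown to be attained in the interior), and making the Laplace/Varadhan asymptotics and the minimax exchange quantitative enough to deliver the uniform convergence.
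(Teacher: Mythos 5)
The paper does not prove this lemma: it is imported wholesale as a citation to \cite{GuionnetH22} (see also \cite{GuionnetMaida2005,Maida2007}), so there is no internal argument to compare your proposal against. Judged on its own merits, your outline is a reasonable reconstruction of the known proof. The Hubbard--Stratonovich linearisation turning $\re^{\theta N\|B_N^{1/2}u\|^2}$ into a Gaussian average, the Laplace evaluation of $\Phi_N$, and especially the Legendre duality computation are correct: the substitution $\gamma^\ast=1/(2(z-t))$ does reduce the stationarity condition to $-m_\nu(z)=2\theta$, the admissibility threshold $2\theta<-m_\nu(\lambda_{\mathrm{max}})$ is the right one, and the $t$-dependence does cancel. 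This organisation is a close cousin of the Guionnet--Ma\"ida argument, which instead works directly with the Gaussian representation $u=g/\|g\|$ and the joint large deviations of $(N^{-1}\sum\lambda_i g_i^2,N^{-1}\|g\|^2)$; the extra Gaussian variable $w$ you introduce plays the same role but makes the convex duality somewhat cleaner.

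The genuine gap is in the step you flag as ``the decisive point.'' Peeling off only $\lambda_1(H_N)w_1^2/N$ isolates the domain-truncation mechanism \emph{only if} $\lambda_2(H_N)$ stays strictly below $\lambda_{\mathrm{max}}$. The hypotheses allow $\lambda_2(H_N)\to\lambda_{\mathrm{max}}$ as well (indeed an arbitrary $o(N)$-sized cluster of eigenvalues may approach $\lambda_{\mathrm{max}}$, as long as the empirical measure still converges to $\nu$), in which case the bulk sum $N^{-1}\sum_{i\ge2}(\lambda_i-t)w_i^2$ has the \emph{same} domain cutoff in its log-moment generating function, G\"artner--Ellis is no more applicable to it than to the full $Q_N$, and your splitting makes no progress. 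The standard fix is to cut a sublinear number $k_N\to\infty$, $k_N/N\to0$ of eigenvalues off the top and show the contribution of the middle ones is negligible --- this is precisely the step the proofs in \cite{GuionnetMaida2005,Maida2007,GuionnetH22} spend most of their effort on (and incidentally the same mechanism that the present paper replays in the proof of Proposition~\ref{prop-tiltedonsphere} via the set $\Omega_N'$ and the projector $\Pi_{\mathrm{out}}$). Your other two flagged concerns (degenerate boundary when $\lambda_{\mathrm{max}}=r_\nu$, quantitative Varadhan/minimax for the uniformity claim) are real but secondary; without the sublinear truncation the argument does not even reach them in the case of a non-isolated top eigenvalue.
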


In the proof of Theorem~\ref{thm-mainweak}, we encounter~\eqref{eq-spherint} as $I_N(\mathbf{X}^{(N)},\theta)$, where $\mathbf{X}^{(N)}$ is the model~\eqref{eq-model}. Note that, in our setting, the weak convergence of $(\mu_N)_N$ and Lemma~\ref{lem-convlargestev} imply that Lemma~\ref{lem-convsphint} applies point-wise on a suitable set of high probability. Here, the limit is given by $J_{\mu_\infty}(r_\infty,\theta)$ with $J$ as in~\eqref{eq-defJ}, $\mu_\infty$ being the weak limit of the $(\mu_N)_N$, and $r_\infty$ as in Lemma~\ref{lem-convlargestev}. Observe that this resembles the first summand of~\eqref{eq-defF} which appears the rate function of the LDP in Theorem~\ref{thm-main}.

\medskip
A key trick for dealing with the randomness in $I_N(\mathbf{X}^{(N)},\theta)$ is an application of Fubini's theorem of the form
\begin{equation}\label{eq-Fubinitrick}
\E_W\E_{\mathbf{u}}[\I_S\, \re^{\theta NL\langle \mathbf{u},\mathbf{X}^{(N)}\mathbf{u}\rangle}]=\E_{\mathbf{u}}\E_W[\I_S\,\re^{\theta NL\langle \mathbf{u},\mathbf{X}^{(N)}\mathbf{u}\rangle}]
\end{equation}
for some suitable event $S$. Here, $\E_W$ denotes expectation w.r.t. the randomness in the matrices $W_1,\dots,W_k$. The quantity on the right-hand side of~\eqref{eq-Fubinitrick} is called the \emph{annealed spherical integral}. We consider its convergence next to obtain the second function $K$ that appears in~\eqref{eq-defF}. To formulate the result, we first introduce some notation. Let~$\mathbf{u}$ be a random vector that is uniformly distributed on the unit sphere $\mathds{S}^{NL-1}$ and write $\mathbf{u}=(u_1,\dots,u_L)$ with blocks $u_1,\dots,u_L$ of length $N$. It holds that
\begin{displaymath}
\mathbf{u}\overset{law}{=}\frac{(g_1,\dots,g_L)}{\sqrt{\sum_{l=1}^L\|g_j\|^2}},
\end{displaymath}
where $g_1,\dots,g_L$ are i.i.d. random (row) vectors distributed according to $\mathcal{N}(0,\mathrm{Id}_N)$. Moreover, we have 
\begin{equation}\label{eq-uinnprod}
\langle u_i, u_j\rangle\overset{law}{=}\frac{\langle g_i,g_j\rangle}{\sum_{s=1}^L\|g_s\|^2}.
\end{equation}
Let $G\in\R^{L\times N}$ be the Gaussian random matrix whose rows are given by $g_1,\dots,g_L$ and denote by $Y=GG^T$ the real Wishart matrix constructed from it. As $Y_{ij}=\langle g_i,g_j\rangle$ and thus $Y_{jj}=\|g_j\|^2$, we reobtain the inner products in~\eqref{eq-uinnprod} by introducing
\begin{equation}\label{eq-defYtilde}
{\widetilde{Y}_{ij}:=\langle u_i,u_j\rangle\overset{law}{=}\frac{Y_{ij}}{\Tr{Y}}.}
\end{equation}
We can think of $\widetilde{Y}$ as a renormalized Wishart matrix. In this notation, the convergence of the annealed spherical integral can be stated as follows.

\begin{proposition}\label{prop-conv.annealed}
For $\widetilde{Y}$ as in~\eqref{eq-defYtilde} and $\Psi\in\mathrm{Sym}^{+,1}_L(\R)$, it holds that
\begin{align*}
K(\theta,\Psi)&=\lim_{\delta\rightarrow0}\limsup_{N\rightarrow\infty}\frac{1}{N}\ln\Big(\E_{\mathbf{u}}\E_W\Big[\re^{\theta NL\langle \mathbf{u},\mathbf{X}^{(N)}\mathbf{u}\rangle}\I_{\{\|\widetilde{Y}-\Psi\|\leq\delta\}}\Big]\Big)\\
&=\lim_{\delta\rightarrow0}\liminf_{N\rightarrow\infty}\frac{1}{N}\ln\Big(\E_{\mathbf{u}}\E_W\Big[\re^{\theta NL\langle \mathbf{u},\mathbf{X}^{(N)}\mathbf{u}\rangle}\I_{\{\|\widetilde{Y}-\Psi\|\leq\delta\}}\Big]\Big),
\end{align*}
where $K(\theta,\Psi)$ was defined in~\eqref{eq-defK}.
\end{proposition}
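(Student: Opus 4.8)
\textbf{Proof proposal for Proposition~\ref{prop-conv.annealed}.}

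The plan is to compute the annealed integral explicitly by first performing the Gaussian integration over $W_1,\dots,W_k$ at fixed $\mathbf{u}$, and then analyzing the remaining integral over $\mathbf{u}$ (equivalently over the renormalized Wishart matrix $\widetilde Y$) by a Laplace/large-deviations argument. Write $\langle\mathbf{u},\mathbf{X}^{(N)}\mathbf{u}\rangle=\sum_{j=1}^k\langle\mathbf{u},(A_j\otimes W_j)\mathbf{u}\rangle+\langle\mathbf{u},(A_0\otimes\Id_N)\mathbf{u}\rangle$. The last term is deterministic given $\mathbf{u}$: with $\mathbf{u}=(u_1,\dots,u_L)$ it equals $\sum_{i}(A_0)_{ii}\|u_i\|^2+\sum_{i\neq l}(A_0)_{il}\langle u_i,u_l\rangle=\Tr[A_0^T\widetilde Y]$ (using symmetry of $A_0$ and the definition~\eqref{eq-defYtilde}, noting $\Tr\mathbf u=1$). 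For the random terms, $\langle\mathbf{u},(A_j\otimes W_j)\mathbf{u}\rangle=\sum_{i,l}(A_j)_{il}\langle u_i,W_ju_l\rangle=\sum_{p\le q}(W_j)_{pq}\,c^{(j)}_{pq}$ where the coefficients $c^{(j)}_{pq}$ are quadratic in $\mathbf u$ and linear in $A_j$; since each $W_j$ is GOE, $\E_W[\re^{\theta NL\sum_j\langle\mathbf u,(A_j\otimes W_j)\mathbf u\rangle}]=\exp\big(\tfrac{\theta^2N^2L^2}{2N}\sum_j\mathrm{Var}\text{-sum}\big)$, and the variance sum collects to $\sum_j\Tr[(A_j\widetilde Y A_j)\widetilde Y]$ up to $\cO(1/N)$ corrections coming from the diagonal-variance discrepancy in the GOE. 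Hence, at fixed $\mathbf u$,
\begin{equation*}
\E_W\big[\re^{\theta NL\langle\mathbf u,\mathbf X^{(N)}\mathbf u\rangle}\big]=\exp\Big(NL\theta\Tr[A_0^T\widetilde Y]+\tfrac{N L^2\theta^2}{2}\sum_{j=1}^k\Tr[A_j\widetilde Y A_j\widetilde Y]+\cO(1)\Big),
\end{equation*}
and on the event $\{\|\widetilde Y-\Psi\|\le\delta\}$ the exponent is $NL\theta\Tr[A_0^T\Psi]+\tfrac{NL^2\theta^2}{2}\Tr[\Psi\cS(\Psi)]+N\cdot o_\delta(1)+\cO(1)$, using $\cS[T]=\sum_jA_jTA_j$ and that $\Psi$ is symmetric (so $\Psi^T=\Psi$; I will keep the transpose notation to match~\eqref{eq-defK}). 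Note $\tfrac{L^2\theta^2}{2}=L^2\theta^2-\tfrac{L^2\theta^2}{2}$; actually comparing with~\eqref{eq-defK}, $K$ has the coefficient $L^2\theta^2$ rather than $\tfrac{L^2\theta^2}{2}$ — this factor of two is absorbed precisely because the $\re^{\theta NL\langle\cdot\rangle}$ normalization together with the GOE variance $2/N$ on the diagonal versus $1/N$ off-diagonal produces $\sum_{p\le q}$ with the correct weights; I will verify this bookkeeping carefully, as it is where sign/constant errors creep in.

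Next I would handle the $\mathbf u$-integration. After the $W$-integration we are left with
\begin{equation*}
\E_{\mathbf u}\Big[\exp\big(N\Phi_\theta(\widetilde Y)+\cO(1)\big)\I_{\{\|\widetilde Y-\Psi\|\le\delta\}}\Big],\qquad \Phi_\theta(T):=L\theta\Tr[A_0^TT]+L^2\theta^2\Tr[T^T\cS(T)]\ \text{(up to the factor issue above)}.
\end{equation*}
On the indicator event $\Phi_\theta(\widetilde Y)=\Phi_\theta(\Psi)+o_\delta(1)$ by continuity, so the $N$-scale exponential rate is $\Phi_\theta(\Psi)+o_\delta(1)$ plus the exponential rate of $\P(\|\widetilde Y-\Psi\|\le\delta)$. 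Thus the whole problem reduces to: what is $\lim_{\delta\to0}\lim_N\tfrac1N\ln\P_{\mathbf u}(\|\widetilde Y-\Psi\|\le\delta)$? Since $\widetilde Y=Y/\Tr Y$ with $Y=GG^T$ a fixed-size $L\times L$ Wishart matrix built from $L$ rows of $N$ i.i.d. standard Gaussians, and $\Tr Y=\sum\|g_j\|^2\approx NL$ with fluctuations of order $\sqrt N$, the matrix $N\widetilde Y$ concentrates around $\tfrac1L\Id_L$. A large deviation of $\widetilde Y$ to a neighborhood of $\Psi$ is a large deviation of the (normalized) sample covariance matrix $\tfrac1N Y$ to $L\Psi$. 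The relevant LDP is classical — e.g.\ it follows from the LDP for Wishart/Laguerre ensembles or directly from computing $\E[\exp(\tfrac N2\Tr[BY/N])]=\det(\Id-B)^{-N/2}$ (Gaussian moment generating function, valid for $B\prec\Id$) via Gärtner–Ellis; equivalently one can write $Y$'s density explicitly. Carrying out Gärtner–Ellis: $\tfrac1N\ln\E[\re^{N\langle B,\,Y/N\rangle/?}]\to-\tfrac12\ln\det(\Id_L-2B)$ (with the right normalization), whose Legendre transform gives the rate function $R(\Sigma)=\tfrac12\big(\Tr\Sigma-L-\ln\det\Sigma\big)$ for $\tfrac1N Y\to\Sigma$. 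Specializing to $\Sigma=L\Psi$ and then accounting for the normalization $\Tr Y$, a short computation using $\Tr(L\Psi)=L$ gives $\P_{\mathbf u}(\|\widetilde Y-\Psi\|\le\delta)\approx\exp\big(\tfrac N2(\ln\det\Psi - L\ln L + o_\delta(1))\big)$ — the $\Tr\Sigma-L$ term cancels because $\Tr(L\Psi)-L=0$, and $\ln\det(L\Psi)=L\ln L+\ln\det\Psi$ produces the $-L\ln L$ shift. This is exactly the third summand $\tfrac12(\ln\det\Psi-L\ln L)$ of~\eqref{eq-defK}.

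Assembling: the liminf and limsup of $\tfrac1N\ln(\cdots)$ both equal $\Phi_\theta(\Psi)+\tfrac12(\ln\det\Psi-L\ln L)+o_\delta(1)=K(\theta,\Psi)+o_\delta(1)$, and letting $\delta\to0$ closes the proof; the $\cO(1)$ terms from the $W$-integration are killed by $\tfrac1N$. The steps I expect to be routine are the Gaussian $W$-integration (a direct variance computation) and the continuity argument on the indicator event. The main obstacle is the Wishart large-deviations input: one must justify the Gärtner–Ellis application uniformly enough to get the matching liminf/limsup with the $\delta\to0$ limit (the upper bound needs exponential tightness of $\tfrac1N Y$, which holds since the MGF is finite in a neighborhood of $0$; the lower bound needs the rate function to be the Legendre dual with no duality gap, i.e.\ $R$ is convex and the limiting log-MGF is steep, both of which are standard for this explicit $-\tfrac12\ln\det(\Id-2B)$). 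A secondary technical point is that the coefficient $c^{(j)}_{pq}$ and the bilinear form in the $W$-exponent must be uniformly bounded on $\mathds S^{NL-1}$ so that the $\cO(1)$ error is genuinely $\cO(1)$ and not growing with $N$ — this uses $\|A_j\|\le C$ and $\|\mathbf u\|=1$, hence $|c^{(j)}_{pq}|\le C/N$ after the renormalization, giving a bona fide $\cO(1/N)$ correction inside the exponent of the $W$-integral, i.e.\ $\cO(1)$ overall after multiplying by the $N^2/N$ prefactor. I would also double-check the treatment of the GOE diagonal: the variance-$2/N$ diagonal entries contribute $\sum_p c^{(j)}_{pp}{}^2/N$ with an extra factor $2$ that, combined with the $\sum_{p<q}$ off-diagonal terms (variance $1/N$, each appearing twice in $\sum_{p,q}$), reconstitutes the clean trace expression $\Tr[A_j\widetilde Y A_j\widetilde Y]$ without boundary terms — this is the one spot where I would write out the index sums in full in the actual proof.
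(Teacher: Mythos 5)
Your overall plan matches the paper's: integrate out the $W_j$ at fixed $\mathbf u$ to get an exact function of $\rho(\mathbf u)=\widetilde Y$, use continuity of that function on $\{\|\widetilde Y-\Psi\|\le\delta\}$, and reduce everything to a large-deviations estimate for $\widetilde Y$. The only genuine difference is how you obtain the LDP for $\widetilde Y$ (the paper's Lemma~\ref{lem-WishartLDP}): you propose G\"artner--Ellis for the unnormalized Wishart matrix $Y/N$ (log-MGF $B\mapsto -\tfrac12\ln\det(\Id_L-2B)$, rate $R(\Sigma)=\tfrac12(\Tr\Sigma-L-\ln\det\Sigma)$) followed by a contraction under $\Sigma\mapsto\Sigma/\Tr\Sigma$, whereas the paper works directly with the explicit density of $\widetilde Y=Y/\Tr Y$ after a change of variables. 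Both give $\widetilde{\rate}(\Psi)=-\tfrac12(\ln\det\Psi-L\ln L)$; your $\inf_{t>0}R(t\Psi)$ is attained at $t=L$, which is the computation you sketch with ``$\Tr(L\Psi)-L=0$.'' The G\"artner--Ellis route is arguably cleaner and avoids the Jacobian manipulation, but for the contraction step you should note explicitly that $R=+\infty$ on $\{\Tr\Sigma=0\}$ (only $\Sigma=0$ is psd with trace zero, and $\ln\det 0=-\infty$), so the LDP transfers through the map $\Sigma\mapsto\Sigma/\Tr\Sigma$ which is continuous on $\{\Tr\Sigma>0\}$; you gesture at this but it deserves a sentence.

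On the $W$-integration: the factor of two you flag is real and your written formula with coefficient $\tfrac{NL^2\theta^2}{2}$ is off by a factor $2$; the correct exponent is $NL^2\theta^2\sum_j\Tr[\rho(\mathbf u)A_j\rho(\mathbf u)A_j]$, matching the paper's $\mathscr F_\theta$ and the coefficient in~\eqref{eq-defK}. The bookkeeping works out as follows: with $c^{(j)}_{ab}=\sum_{c,d}(A_j)_{cd}(u_c)_a(u_d)_b$ (symmetric in $a,b$), the exponent in the $j$-th factor equals
\begin{equation*}
\frac{(\theta NL)^2}{2}\,\mathrm{Var}\Big(\sum_{a}(W_j)_{aa}c^{(j)}_{aa}+2\sum_{a<b}(W_j)_{ab}c^{(j)}_{ab}\Big)
=\frac{\theta^2 N^2L^2}{2}\cdot\frac{2}{N}\sum_{a,b}(c^{(j)}_{ab})^2
=NL^2\theta^2\,\Tr[\rho(\mathbf u)A_j\rho(\mathbf u)A_j],
\end{equation*}
where the diagonal variance $2/N$ and the doubling of the off-diagonal terms recombine cleanly so that the sum $\sum_{a\le b}$ becomes an unrestricted $\sum_{a,b}$; no $\cO(1/N)$ boundary correction survives, so the identity $\E_W[\re^{\theta NL\langle\mathbf u,\mathbf X^{(N)}\mathbf u\rangle}]=\re^{N\mathscr F_\theta(\rho(\mathbf u))}$ is exact, not just up to $\cO(1)$. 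With that factor fixed, your argument is sound and yields the claim.
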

We give the proof of Proposition~\ref{prop-conv.annealed} in Section~\ref{sect-conv.annealed} below.

\medskip
The second key idea for the proof of Theorem~\ref{thm-mainweak} is to consider the largest eigenvalue $\lambda_1(\mathbf{X}^{(N)})$ and the corresponding eigenvector $\mathbf{v}_1(\mathbf{X}^{(N)})$ jointly and fixing a profile $\rho(\mathbf{v}_1(\mathbf{X}^{(N)}))$. This step was first introduced in~\cite{DGH2024}, where $\rho$ was given by a vector determined from the size of the blocks of the model's discrete variance profile (resp. the blocks of a discrete approximation of the continuous profile). Note that for random matrices with a discrete variance profile, the MDE~\eqref{eq-MDE} simplifies to a vector equation, whereas the model~\eqref{eq-model} is governed by a $L\times L$ matrix equation~\eqref{eq-tensorMDE}. It thus seems natural to expect that $\rho$ should take $\mathbf{v}_1(\mathbf{X}^{(N)})$ to a suitable $L\times L$ matrix in our setting.

\medskip
To define the profile, we consider vectors $\mathbf{w}\in\R^{NL}$ as being built from $L$ distinct smaller blocks $w_1,\dots,w_L\in\R^N$ such that $\mathbf{w}=(w_1,\dots,w_L)$. Next, we introduce the family of (orthogonal) projections
\begin{equation}\label{eq-defPi}
\Pi_j:\R^{NL}\rightarrow\R^{NL},\quad \mathbf{w}\mapsto (0,\dots,0,w_j,0,\dots,0)
\end{equation}
for $j=1,\dots,L$ that projects a vector $\mathbf{w}$ onto its $j$-th block. Further, for $i,j\in[L]$ let $\mathbf{P}^{i\leftrightarrow j}$ denote the permutation matrix that exchanges the $i$-th and $j$-th block of $\mathbf{w}$, e.g.,
\begin{equation}\label{eq-defP}
\mathbf{P}^{i\leftrightarrow j}\mathbf{w}=(w_1,\dots,w_{i-1},w_j,w_{i+1},\dots,w_{j-1},w_i,w_{j+1},\dots,w_L)
\end{equation}
if $1<i<j<L$. In this notation, we define $\rho:\R^{NL}\times\R^{NL}\rightarrow\R^{L\times L}$ through
\begin{equation}\label{eq-defrho}
\rho(\mathbf{w}_1,\mathbf{w}_2)_{ij}:=\langle \mathbf{P}^{i\leftrightarrow j}\Pi_i\mathbf{w}_1,\Pi_j\mathbf{w}_2\rangle+\langle \mathbf{P}^{i\leftrightarrow j}\Pi_i\mathbf{w}_2,\Pi_j\mathbf{w}_1\rangle,
\end{equation}
and abbreviate $\rho(\mathbf{w}):=\rho(\mathbf{w},\mathbf{w})/2$. Observe that the renormalized Wishart matrix in~\eqref{eq-defYtilde} can also be written as $\widetilde{Y}=\rho(\mathbf{u})$, where $\mathbf{u}$ is sampled uniformly from $\mathds{S}^{NL-1}$. Note that $\rho(\mathbf{u})$ belongs to $\mathrm{Sym}_L^{+,1}(\R)$.

\medskip
For $\Psi\in\mathrm{Sym}_L^{+,1}(\R)$, $x>r_\infty$, and $\delta,\kappa>0$, define now
\begin{equation}\label{eq-defPNdeltaPsi}
P^{(N)}_{\delta,\kappa}(x,\Psi):= \frac{1}{N}\ln\big(\P[\{|\lambda_1(\mathbf{X}^{(N)})-x|\leq\delta\}\cap\{\|\rho(\mathbf{v}_1(\mathbf{X}^{(N)}))-\Psi\|\leq\kappa\}]\big).
\end{equation}
{
We shall first prove the large deviation upper bound
\begin{proposition}\label{prop-weakLDPsup}
For every $x>r_\infty$ and $\Psi\in\mathrm{Sym}_L^{+,1}(\R)$, we have
\begin{displaymath}
\lim_{\delta,\kappa\rightarrow0}\limsup_{N\rightarrow\infty} P_{\delta,\kappa}^{(N)}(x)\leq\inf_{\theta\geq0} -\cF(\theta,x,\Psi).
\end{displaymath}
\end{proposition}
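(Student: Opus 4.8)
The plan is to establish the upper bound by tilting the probability measure by the spherical integral $I_N(\mathbf X^{(N)},\theta)$ and then applying the annealed estimate of Proposition~\ref{prop-conv.annealed}. Fix $\theta\ge 0$. First I would use the lower bound on the spherical integral from Lemma~\ref{lem-convsphint}: on the event $E^{(N)}_{\delta,\kappa}:=\{|\lambda_1(\mathbf X^{(N)})-x|\le\delta\}\cap\{\|\rho(\mathbf v_1(\mathbf X^{(N)}))-\Psi\|\le\kappa\}$, the largest eigenvalue is close to $x$ and — because $\mathbf v_1$ is an eigenvector with a prescribed block-profile $\rho(\mathbf v_1)\approx\Psi$ — the quadratic form $\langle\mathbf u,\mathbf X^{(N)}\mathbf u\rangle$ is close to $x$ on the $\delta'$-ball of directions $\mathbf u$ around $\pm\mathbf v_1$, which carries a proportion $e^{-N\mathfrak r}$ of the sphere for a remainder $\mathfrak r=\mathfrak r(\delta')$ that vanishes with $\delta'$. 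Combined with the contribution of the rest of the sphere being at worst $e^{-N\theta L(x-\delta)}$-comparable after restricting appropriately, this gives
\begin{equation*}
I_N(\mathbf X^{(N)},\theta)\,\I_{E^{(N)}_{\delta,\kappa}}\;\ge\; e^{N\theta L(x-\delta)-N\mathfrak r}\,\I_{E^{(N)}_{\delta,\kappa}}\cdot(\text{polynomial factor}),
\end{equation*}
i.e. $\I_{E^{(N)}_{\delta,\kappa}}\le e^{-N\theta L(x-\delta)+N\mathfrak r}\,I_N(\mathbf X^{(N)},\theta)$. One must also control the event that $\lambda_1$ is much larger than $x$; on $E^{(N)}_{\delta,\kappa}$ this is automatically excluded, but to make the spherical-integral bound one needs to intersect with an event of the form $\{\lambda_1\le x+\delta\}$ so that the Gibbs measure localizing near $\mathbf v_1$ is not swamped, and this is handled using that $E^{(N)}_{\delta,\kappa}$ already forces $\lambda_1\le x+\delta$.

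Next I would take expectations, apply the Fubini identity~\eqref{eq-Fubinitrick} to move $\E_W$ inside, and bound
\begin{equation*}
\P(E^{(N)}_{\delta,\kappa})\;\le\;e^{-N\theta L(x-\delta)+N\mathfrak r}\,\E_{\mathbf u}\E_W\!\big[e^{\theta NL\langle\mathbf u,\mathbf X^{(N)}\mathbf u\rangle}\,\I_{S}\big],
\end{equation*}
where the crucial point is to identify the right restricting event $S$ so that the annealed integral on the right matches $K(\theta,\Psi)$. Here one uses that $\langle\mathbf u,\mathbf X^{(N)}\mathbf u\rangle$ and the profile $\rho(\mathbf u)=\widetilde Y$ are linked: on $E^{(N)}_{\delta,\kappa}$ the dominant directions $\mathbf u$ have profile $\rho(\mathbf u)$ close to $\Psi$ up to the correction coming from the ``overlap'' structure encoded in $\phi(\theta,x,\Psi)$. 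Thus one restricts to $\{\|\widetilde Y-\phi(\theta,x,\Psi)\|\le\delta'\}$ (the shift from $\Psi$ to $\phi$ being exactly the effect of the tilt by $\theta$, cf.~\eqref{eq-defphi}), applies Proposition~\ref{prop-conv.annealed} with $\phi(\theta,x,\Psi)$ in place of $\Psi$, and obtains
\begin{equation*}
\limsup_{N\to\infty}\frac1N\ln\P(E^{(N)}_{\delta,\kappa})\;\le\;-\theta L(x-\delta)+\mathfrak r(\delta,\kappa)+K(\theta,\phi(\theta,x,\Psi))+o_\delta(1).
\end{equation*}
Recalling $\cF(\theta,x,\Psi)=LJ_{\mu_\infty}(x,\theta)-K(\theta,\phi(\theta,x,\Psi))$ and that the spherical-integral normalization contributes precisely $LJ_{\mu_\infty}(x,\theta)$ rather than the crude $\theta L x$ (one should use the sharp constant $LJ_{\mu_\infty}(x,\theta)$ from Lemma~\ref{lem-convsphint} instead of the trivial lower bound $\theta Lx$ in the localization step), letting $\delta,\kappa\to0$ gives $\limsup_N P^{(N)}_{\delta,\kappa}(x)\le -\cF(\theta,x,\Psi)$ for each fixed $\theta\ge0$, and taking the infimum over $\theta$ yields the claim.

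\textbf{Main obstacle.} The delicate point is the bookkeeping in the localization step: one must show that, on the event that $\lambda_1\approx x$ with eigenvector-profile $\approx\Psi$, the tilted spherical integral is genuinely dominated (up to $e^{o(N)}$) by directions $\mathbf u$ whose own profile $\rho(\mathbf u)=\widetilde Y$ concentrates near the shifted matrix $\phi(\theta,x,\Psi)$ and not near $\Psi$ itself — the shift by $(1+m_{\mu_\infty}(x)/2\theta)_+\Psi$ in~\eqref{eq-defphi} reflects that the tilt pulls mass toward the top eigenvector, but only in the ``outlier'' regime $2\theta<-m_{\mu_\infty}(x)$. Making this rigorous requires a Laplace-type analysis of $\E_{\mathbf u}[e^{\theta NL\langle\mathbf u,\mathbf X^{(N)}\mathbf u\rangle}\,\cdot\,]$ conditionally on the spectral data of $\mathbf X^{(N)}$, controlling the joint deviations of $\langle\mathbf u,\mathbf X^{(N)}\mathbf u\rangle$ and $\rho(\mathbf u)$, and matching the outcome with the two-regime formula defining $J_{\mu_\infty}$ and $\phi$. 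Establishing the compatibility of these two localizations — the one on the eigenvector and the one on the integration variable — and checking that the constraint $\Tr[\Psi^T\cS(\Psi)]\ne0$ is preserved under the map $\Psi\mapsto\phi(\theta,x,\Psi)$ so that Proposition~\ref{prop-conv.annealed} is applicable, is where the real work lies; the remaining estimates are routine continuity and uniformity statements from Lemma~\ref{lem-convsphint}.
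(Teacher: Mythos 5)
Your plan correctly identifies the overall architecture: tilt by the spherical integral, use Fubini and Lemma~\ref{lem-convsphint} for the normalization $LJ_{\mu_\infty}(x,\theta)$, localize the integration variable $\mathbf u$ to profiles $\rho(\mathbf u)\approx\phi(\theta,x,\Psi)$, invoke Proposition~\ref{prop-conv.annealed}, and then optimize over $\theta$. You also rightly flag the two-regime structure (though note the inequality direction: the tilted sphere measure concentrates near the top eigenvector when $2\theta>-m_{\mu_\infty}(x)$, not when $2\theta<-m_{\mu_\infty}(x)$; the paper handles the complementary range cleanly by proving $\cF\equiv 0$ there in Lemma~\ref{lem-optimizationreg}, so the supremum can be restricted to $\theta>-m_{\mu_\infty}(x)/2$).

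The genuine gap is the step you yourself label the ``main obstacle'': you never actually establish that, conditionally on the event $E^{(N)}_{\delta,\kappa}$, the tilted spherical integral is dominated up to $e^{o(N)}$ by directions $\mathbf u$ with $\rho(\mathbf u)$ near $\phi(\theta,x,\Psi)$. This is precisely Proposition~\ref{prop-tiltedonsphere} in the paper, and it is where nearly all of the work of the upper bound lives. Proving it requires two ingredients that are absent from your sketch: (a) a concentration statement, uniform at exponential scale, for each matrix component $\mu_{\mathbf X^{(N)}}(\Pi_i,\Pi_j)$ of the block-resolved empirical measure (Theorem~\ref{thm-muproj.conc}), which is what lets one identify the limit of $\E[Z_{ij}]$ under the tilted sphere measure as $\varphi(\theta,\lambda_1)_{ij}$; and (b) the reduction from a general spectrum to the clean spectral-gap case, handled in the paper by cutting out $k_N=o(N)$ near-top eigenvalues, proving the bound for $\mathbf H^{\mathrm{cut}}$ (Lemma~\ref{lem-outlier}), and gluing back via Lemma~\ref{lem-outpart}. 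Without these, your inequality $\P(E^{(N)}_{\delta,\kappa})\le e^{\cdots}\E_{\mathbf u}\E_W[e^{\theta NL\langle\mathbf u,\mathbf X^{(N)}\mathbf u\rangle}\I_S]$ is not justified: inserting $\I_S$ on the right is a restriction of the spherical integral, so one must show the loss is $e^{-o(N)}$. A further small point: Proposition~\ref{prop-conv.annealed} holds for any $\Psi\in\mathrm{Sym}_L^{+,1}(\R)$, so the concern about $\Tr[\Psi^T\cS(\Psi)]\neq 0$ being preserved under $\Psi\mapsto\phi$ does not arise here; that constraint only enters later, in the lower bound and in matching the two bounds.
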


We recall that $\Psi\in\mathrm{Sym}_L^{+,1}(\R)$ is symmetric and positive semi-definite by definition, and satisfies $\Tr(\Psi)=1$. Together, these conditions imply that $\|{\Psi}\|\leq1$ and it is readily checked that $\mathrm{Sym}_L^{+,1}(\R)$ is, in fact, a compact set. {This allows to cover this set, for every $\varepsilon>0$,  by a finite union of open balls $B(\Psi_i,\delta_i(\varepsilon))$ centered at $\Psi_i$ and with radius $\delta_i(\varepsilon)>0$ so that 
$$\lim_{\kappa \downarrow 0} \limsup_{N\rightarrow \infty}\frac{1}{N}\ln P^{(N)}_{\delta_i(\varepsilon),\kappa}(x,\Psi_i)\le -\sup_{\theta\ge 0} \mathcal F(\theta,x,\Psi_i)+\varepsilon.$$
As a consequence, we deduce from Proposition~\ref{prop-weakLDPsup} that 
\begin{equation}\label{coveri}
\lim_{\delta \downarrow 0} \limsup_{N\rightarrow \infty}\frac{1}{N}\ln P^{(N)}_{\delta,\kappa}(x,\Psi)\le
\max_i \{-\sup_{\theta\ge 0} \mathcal F(\theta,x,\Psi_i)\}+\varepsilon,
\end{equation}
and the weak  large deviation upper bound from Theorem \ref{thm-mainweak} follows by taking $\varepsilon$ going to zero. Thus, estimating $\smash{P^{(N)}_\delta(x)}$ in Theorem~\ref{thm-mainweak} reduces to estimating $P^{(N)}_{\delta,\kappa}(x,\Psi)$ for fixed~$\Psi$.}}
 Over the course of the proof, we will need to restrict ourselves to profiles $\Psi$ such that $\Tr [ \Psi^T S(\Psi)] \neq 0$.  For this we introduce the ``approximate'' version of $\rate$ {given, for~${\eps >0}$, by}
\begin{equation}\label{eq-defapproxI}
\rate(x, \eps) = \inf_{\substack{\Psi\in\mathrm{Sym}_L^{{+,1}}(\R),\\ \Tr[\Psi^T\cS(\Psi)]\geq \eps}}\sup_{\theta\geq0}\cF(\theta,x,\Psi).
\end{equation} 

We will in fact prove our large deviation lower and upper bound with $ \rate(x, \eps)  $ for any $\eps >0$ small enough. This will prove then that for such small $\eps>0$, $\rate(x,\eps)$ does not depend on $\eps>0$ and is equal to $\rate(x)$.

\medskip
With these tools in hand we can start the proof of Theorem~\ref{thm-mainweak}, starting with the convergence of the annealed spherical integral.

\section{Proof of Proposition~\ref{prop-conv.annealed} (Convergence of the Annealed Spherical Integral)}\label{sect-conv.annealed}

By definition of the model, we have that
\begin{align*}
\E_W\big[\re^{\theta NL\langle \mathbf{u},\mathbf{X}^{(N)}\mathbf{u}\rangle}\big]=\re^{\theta NL\langle\mathbf{u},(A_0\otimes \Id_N)\mathbf{u}\rangle}\prod_{j=1}^k\E_W\big[\re^{\theta NL\langle \mathbf{u},(A_j\otimes W_j)\mathbf{u}\rangle}\big],
\end{align*}
and can thus focus on the summands of $X$ separately. Rewriting the vector $u\in\mathds{S}^{NL-1}$ as $\mathbf{u}=(u_1,\dots,u_L)$ with $u_1,\dots,u_L\in\R^{N}$, we obtain
\begin{align*}
\langle \mathbf{u},(A_j\otimes W_j)\mathbf{u}\rangle&=\sum_{c,d=1}^L(A_j)_{cd}\langle u_c,W_ju_d\rangle=\sum_{a,b=1}^N(W_j)_{ab}\sum_{c,d=1}^L(A_j)_{cd}(u_c)_a(u_d)_b.
\end{align*}
Recall that $(W_j)_{ab}$ are Gaussian random variables and that the elements in the upper triangle of $W$ are independent. Denoting
\begin{equation}\label{eq-defL}
\mathscr{L}_{a,b}(t)=\ln\big(\E_W\big[\re^{t(W_j)_{ab}}\big]\big)=\frac{1+\I_{a=b}}{2N}t^2
\end{equation}
for $1\leq a\leq b\leq N$, we compute
\begin{align}
\prod_{j=1}^k\E_W\big[\re^{\theta NL\langle \mathbf{u},(A_j\otimes W_j)\mathbf{u}\rangle}\big]&=\prod_{j=1}^k\exp\Bigg(\sum_{a=1}^N\mathscr{L}_{a,a}\Big(NL\theta\sum_{c,d}(A_j)_{cd}(u_c)_a(u_d)_a)\Big)\NN\\
&\quad\quad\quad\quad+\sum_{1\leq a<b\leq N}\mathscr{L}_{a,b}\Big(2NL\theta\sum_{c,d}(A_j)_{cd}(u_c)_a(u_d)_b\Big)\Bigg)\label{eq-innerprod}\\
&=\exp\Big(NL^2\theta^2\sum_{j=1}^k\Tr[\rho(\mathbf{u})A_j\rho(\mathbf{u})A_j]\Big),\NN
\end{align}
where we used that $\rho(\mathbf{u})_{i,j}=\langle u_i,u_j\rangle$ in the last step. As further
\begin{displaymath}
\re^{\theta NL\langle \mathbf{u},(A_0\otimes\Id_N)\mathbf{u}\rangle}
=\exp\Big(\theta NL\Tr[A_0^T\rho(\mathbf{u})]\Big),
\end{displaymath}
we conclude that
\begin{equation}\label{eq-inter}
\E_W\big[\re^{\theta NL\langle \mathbf{u},\mathbf{X}^{(N)}\mathbf{u}\rangle}\big]=e^{N\mathscr{F}_\theta(\rho(\mathbf{u}))},
\end{equation}
where the function $\mathscr{F}_\theta$ on the right-hand side of~\eqref{eq-inter} is given by
\begin{displaymath}
\mathscr{F}_\theta:(x_{11},\dots,x_{LL})\mapsto L^2\theta^2\sum_{j=1}^k\sum_{c,d=1}^L\sum_{c',d'=1}^L(A_j)_{cd}(A_j)_{c'd'}x_{cc'}x_{dd'}+L\theta\sum_{c,d=1}^L(A_0)_{cd}x_{cd}.\end{displaymath}
Observe that $\mathscr{F}_\theta$ is bounded and continuous on $\mathrm{Sym}_L^{+,1}(\R)$.
Therefore, we find $
\varepsilon(\delta)$ going to zero with $\delta$ so that 
\begin{equation}\label{eq-contr}\left| \ln \frac{\E_{\mathbf{u}}\E_W[
e^{\theta N L\langle \mathbf{u}, \mathbf{X}^{(N)}\mathbf{u}\rangle}1_{\|\rho(\mathbf{u}) -\Psi\|\le \delta}
] }{e^{N\mathscr{F}_\theta(\Psi) }\mathbb P(\|\rho(\mathbf{u}) -\Psi\|\le \delta)}\right|\le \varepsilon(\delta)N.
\end{equation}
To complete our proof it is therefore enough to show that $\rho(\mathbf{u})$ satisfies a LDP.
\begin{lemma}\label{lem-WishartLDP}
In the above setup, the law of the (upper triangular) entries of $\rho(\mathbf{u})$ satisfies an LDP with speed $N$ and good rate function
\begin{displaymath}
\widetilde{\rate}:\mathrm{Sym}_L^{+,1}(\R)\rightarrow [0,\infty],\quad  \widetilde{\rate}(X)=-\frac{1}{2}(\ln(\det(X))-L\ln(L)).
\end{displaymath}
\end{lemma}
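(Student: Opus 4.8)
The plan is to recognize $\rho(\mathbf{u}) = \widetilde{Y} = Y/\Tr Y$ with $Y = GG^T$ a real Wishart matrix built from an $L\times N$ Gaussian matrix $G$ with i.i.d.\ $\mathcal{N}(0,1)$ entries, and to derive the LDP for $\widetilde{Y}$ from the (well-known) LDP for $Y/N$ via a contraction/normalization argument. First I would recall that the entries of $Y$ are of order $N$, so I rescale and set $Z := Y/N = (G/\sqrt N)(G/\sqrt N)^T$. The matrix $G/\sqrt N$ has rows that are empirical averages of $N$ i.i.d.\ unit Gaussians, so by the law of large numbers $Z \to \Id_L$ a.s.; the large deviations of $Z$ away from $\Id_L$ are classical. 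Concretely, the joint density of the entries of a real Wishart matrix $Y = GG^T$ with $N \geq L$ degrees of freedom is proportional to $(\det Y)^{(N-L-1)/2} e^{-\frac12 \Tr Y}$ on $\mathrm{Sym}_L^{++}(\R)$ (times a normalizing constant involving the multivariate Gamma function), so the density of $Z = Y/N$ is proportional to $N^{L(L+1)/2}(\det(NZ))^{(N-L-1)/2} e^{-\frac{N}{2}\Tr Z}$. Taking $\frac1N \ln$ of this and using Stirling for the normalizing constant, the leading term is $\frac12\big(\ln\det Z - \Tr Z + L\big)$ up to an additive constant and $o(1)$ errors uniform on compacts of $\mathrm{Sym}_L^{++}(\R)$; a standard Laplace/Varadhan argument (or direct application of the density asymptotics together with exponential tightness coming from the $e^{-\frac N2 \Tr Z}$ factor) then yields an LDP for $Z$ at speed $N$ with good rate function $\rate_Z(Z) = \frac12(\Tr Z - \ln\det Z - L)$ on $\mathrm{Sym}_L^+(\R)$, $+\infty$ otherwise.

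Next I would pass from $Z$ to $\widetilde Y = \rho(\mathbf{u})$. Note $\widetilde Y = Z/\Tr Z$ and $\Tr Z \to L$, so the map $Z \mapsto Z/\Tr Z$ is continuous on $\{\Tr Z > 0\}$ and, combined with the fact that $\widetilde Y$ lives in the compact set $\mathrm{Sym}_L^{+,1}(\R)$, the contraction principle applies: the entries of $\widetilde Y$ satisfy an LDP at speed $N$ with rate function
\begin{displaymath}
\widetilde{\rate}(X) = \inf\{\rate_Z(Z) : Z \in \mathrm{Sym}_L^+(\R),\ Z/\Tr Z = X\} = \inf_{t > 0}\rate_Z(tX).
\end{displaymath}
Since $\Tr X = 1$ for $X \in \mathrm{Sym}_L^{+,1}(\R)$, we have $\rate_Z(tX) = \frac12(t - L\ln t - \ln\det X - L)$, and minimizing over $t > 0$ gives $t = L$, so
\begin{displaymath}
\widetilde{\rate}(X) = \tfrac12(L - L\ln L - \ln\det X - L) = -\tfrac12(\ln\det X - L\ln L),
\end{displaymath}
which is exactly the claimed rate function. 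One checks $\widetilde{\rate} \geq 0$ with equality iff $\det X = L^{-L}$, i.e.\ at $X = \Id_L/L$, which is indeed the a.s.\ limit of $\widetilde Y$; goodness is automatic since $\mathrm{Sym}_L^{+,1}(\R)$ is compact and $\widetilde{\rate}$ is lower semicontinuous ($-\ln\det$ is continuous on the positive definite cone and $+\infty$ on the boundary).

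The main technical point — and the step I would be most careful about — is justifying the density asymptotics uniformly enough to get the full LDP (both bounds) rather than just a heuristic, in particular handling the boundary $\partial\,\mathrm{Sym}_L^{+,1}(\R)$ where $\det X \to 0$ and the degenerate case $N < L$ (where $Y$ is not full rank); here one argues that such configurations have probability decaying faster than any exponential in $N$, or simply works on $N \geq L$ which suffices for the $N \to \infty$ limit. An alternative, perhaps cleaner route avoiding the explicit Wishart density is to write $\rho(\mathbf{u})$ directly in terms of $\mathbf{u}$ uniform on $\mathds{S}^{NL-1}$: its entries are $\langle u_i, u_j\rangle$, a smooth function of a uniform point on the sphere, and the LDP for such ``overlap'' statistics of spherical marginals is itself classical (it is the LDP underlying the asymptotics of spherical integrals, cf.\ the references used for Lemma~\ref{lem-convsphint}); either way the rate function comes out as above. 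I would present the Wishart-density computation as the primary argument since it makes the constant $L\ln L$ transparent.
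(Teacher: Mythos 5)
Your argument is correct and yields the stated rate function, but the route is genuinely different from the paper's. The paper's proof (Appendix~\ref{app-WishartLDP}) works directly with the density of $\widetilde{Y}=Y/\Tr Y$: it performs the change of variables $X\mapsto(X/\Tr X,\Tr X)$, computes the Jacobian determinant $t^{1-L(L+1)/2}$, observes that the transformed Wishart density factorizes into a $\widetilde{X}$-part and a $t$-part, integrates out the scale variable $t$ to obtain the marginal density $f_{\widetilde Y}(X)\propto(\det X)^{(N-L-1)/2}$ on $\mathrm{Sym}_L^{++,1}(\R)$, and then applies Laplace's method twice (once for the normalizing constant, once for the weak LDP). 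You instead first establish an LDP at speed $N$ for $Z=Y/N$ from its explicit density with rate $\rate_Z(Z)=\tfrac12(\Tr Z-\ln\det Z-L)$, then push this through the contraction principle for the map $Z\mapsto Z/\Tr Z$, carrying out the one-dimensional infimization over the scale $t=\Tr Z$ to land on $t=L$. At the exponential scale the two are really the same operation — explicit marginalization over $t$ via Laplace versus infimization over $t$ via contraction — but your version reuses the standard LDP for a rescaled Wishart matrix and makes $L\ln L$ appear as an optimum of $t\mapsto t-L\ln t$, which is arguably more transparent than the paper's characterization of $L\ln L$ as the maximum of $-\ln\det X$ under $\Tr X=1$. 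The technical points you flag are the right ones: the map $Z\mapsto Z/\Tr Z$ fails to be continuous at $Z=0$, but $\rate_Z(0)=+\infty$ and the sublevel sets of $\rate_Z$ are compact subsets of $\mathrm{Sym}_L^{++}(\R)$ where the map is continuous, so the (extended) contraction principle applies; this plays the same role as the paper's remark that $\ln\det(X)<-C$ forces a small eigenvalue. The $N<L$ degeneracy is immaterial for the $N\to\infty$ asymptotics, as you note.
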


The proof of Lemma~\ref{lem-WishartLDP} follows standard arguments (cf., e.g., ~\cite[Lem.~4.2]{Husson2022}), but requires a careful treatment of the determinant. We include it in Appendix~\ref{app-WishartLDP}. Given Lemma~\ref{lem-WishartLDP} and~\eqref{eq-contr}, Proposition~\ref{prop-conv.annealed} readily follows.

\section{Proof of Theorem~\ref{thm-main}: Weak LDP Upper Bound}\label{sect-wLDPsup}
The goal of this section is to prove the upper bound of the weak LDP in Theorem~\ref{thm-mainweak}.
The key tool to establishing Proposition~\ref{prop-weakLDPsup} is a bound for~\eqref{eq-defPNdeltaPsi} in terms of~\eqref{eq-Fubinitrick} that allows harnessing the convergence of the annealed spherical integral. Before we formulate it, we note the following property of the rate function.
\begin{lemma}\label{lem-optimizationreg}
For any profile $\Psi\in\mathrm{Sym}_L^{+,1}(\R)$, any $x>r_\infty$, and $\theta$ such that $(-m_{\mu_\infty})^{-1}(2\theta)>x$, we have that
\begin{displaymath}
K(\theta, \phi(\theta, x, \Psi))=LJ_{\mu_\infty}(x,\theta),
\end{displaymath}
i.e., $\cF(\theta,x,\Psi)=0$ for $\theta<-m_{\mu_\infty}(x)/2$. 
\end{lemma}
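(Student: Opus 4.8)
The plan is to verify the claimed identity by plugging the explicit formula for $\phi(\theta,x,\Psi)$ into the definition of $K$ and simplifying, using the fact that in the regime $(-m_{\mu_\infty})^{-1}(2\theta)>x$ (equivalently $2\theta<-m_{\mu_\infty}(x)$, so $\theta<-m_{\mu_\infty}(x)/2$) the "cut-off" truncations in $\phi$ and in $J_{\mu_\infty}$ both activate in their first branch. First I would record that, when $2\theta\le -m_{\mu_\infty}(x)$, we have $\max\{(-m_{\mu_\infty})^{-1}(2\theta),x\}=(-m_{\mu_\infty})^{-1}(2\theta)$, hence from \eqref{eq-defvphi}, $\vphi(\theta,x)=-M((-m_{\mu_\infty})^{-1}(2\theta))/(2\theta L)$; moreover $(1+m_{\mu_\infty}(x)/(2\theta))_+=0$ in this regime because $m_{\mu_\infty}(x)\le m_{\mu_\infty}((-m_{\mu_\infty})^{-1}(2\theta)) = -2\theta \le 0$ forces $1+m_{\mu_\infty}(x)/(2\theta)\le 0$. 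Consequently $\phi(\theta,x,\Psi)=\vphi(\theta,x)$, independently of $\Psi$. So the statement reduces to the $\Psi$-free identity $K(\theta,\vphi(\theta,x))=LJ_{\mu_\infty}(x,\theta)$, with the right-hand side also evaluated in its first branch, i.e., $LJ_{\mu_\infty}(x,\theta)=L\big[\theta z-\tfrac12(1+\ln(2\theta))-\tfrac12\int\ln(z-y)\,\mu_\infty(\dx y)\big]$ where I abbreviate $z:=(-m_{\mu_\infty})^{-1}(2\theta)$.

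The main computation is then to expand $K(\theta,M')$ for $M' = \vphi(\theta,x) = -M(z)/(2\theta L)$ using \eqref{eq-defK}, and to exploit the MDE \eqref{eq-tensorMDE} satisfied by $M(z)$, namely $-M(z)^{-1}=z\Id_L-A_0+\cS[M(z)]$. The key algebraic inputs will be: (i) $\Tr[(M')^T\cS(M')] = \tfrac{1}{4\theta^2 L^2}\Tr[M(z)\cS(M(z))]$ since $M(z)$ is symmetric (real case) / the transpose plays no role after taking trace; (ii) $\Tr[A_0^T M'] = -\tfrac{1}{2\theta L}\Tr[A_0 M(z)]$; and (iii) $\det(M') = (-2\theta L)^{-L}\det(M(z))$, so $\ln\det(M') = -L\ln(2\theta L)+\ln\det(M(z))$, and hence $\tfrac12(\ln\det(M')-L\ln L) = -\tfrac{L}{2}\ln(2\theta) - \tfrac L2 \ln L + \tfrac12\ln\det(M(z)) - \tfrac{L}{2}\ln L$ — I will need to be careful here and recompute; the cleanest route is $\ln\det M' = L\ln\!\big(\tfrac{1}{-2\theta L}\big) + \ln\det M(z)$. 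Multiplying \eqref{eq-tensorMDE} by $M(z)$ on the left and taking normalized traces gives $-L = \Tr[M(z)(z\Id_L - A_0)] + \Tr[M(z)\cS(M(z))]$, i.e. $\Tr[M(z)\cS(M(z))] = -L - z\Tr M(z) + \Tr[A_0 M(z)]$. Substituting this into the expansion of $K(\theta,M')$ should collapse the $\theta^2$-term and the $A_0$-term, leaving $K(\theta,\vphi) = \tfrac14(-L - z\Tr M(z) + \Tr[A_0 M(z)]) - \tfrac12\Tr[A_0 M(z)] + \tfrac12\ln\det M(z) + (\text{the log terms in }\theta)$. Then I would substitute the Stieltjes representation: $\Tr M(z) = L\,m_{\mu_\infty}(z) = -2\theta L$ by definition of $z$, and $\tfrac1L\ln\det M(z) = \int \ln\!\big(\tfrac{1}{y-z}\big)\mu_\infty(\dx y) = -\int\ln(z-y)\,\mu_\infty(\dx y)$ — this last identity uses that $M(z)$ is a matrix Stieltjes transform and a standard argument that $\tfrac1L\Tr\ln M(z) = -\int\ln(z-y)\langle\mu_{\mathrm{MDE}}\rangle(\dx y)$ with $\langle\mu_{\mathrm{MDE}}\rangle = \mu_\infty$; I expect this determinant identity to be the one place needing genuine care (it can be justified either by differentiating in $z$ and matching at $z\to+\infty$, or citing the analogous statement in \cite{GuionnetMaida2005,DGH2024}). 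After these substitutions, the remaining terms should reorganize to exactly $L\big[\theta z - \tfrac12(1+\ln(2\theta)) - \tfrac12\int\ln(z-y)\mu_\infty(\dx y)\big]$, which is $LJ_{\mu_\infty}(x,\theta)$ in its first branch.

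The secondary claim "$\cF(\theta,x,\Psi)=0$ for $\theta<-m_{\mu_\infty}(x)/2$" is then immediate from \eqref{eq-defF}: $\cF(\theta,x,\Psi,1) = 1\cdot\big(LJ_{\mu_\infty}(x,\theta) - K(\theta,\phi(\theta,x,\Psi))\big) = 0$ by the identity just proved, noting that $\theta<-m_{\mu_\infty}(x)/2 \iff 2\theta<-m_{\mu_\infty}(x) \iff (-m_{\mu_\infty})^{-1}(2\theta)>x$ by monotonicity of $-m_{\mu_\infty}$ on $(r_\infty,\infty)$, so the hypothesis of the first part applies (the boundary case $2\theta=-m_{\mu_\infty}(x)$ gives $z=x$ and both branches agree, so it is harmless). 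The main obstacle I anticipate is purely bookkeeping: keeping the $\tfrac12$, $\tfrac14$, and $L$ prefactors and the several $\ln(2\theta)$, $\ln L$ terms consistent through the substitution of the MDE and the determinant identity; the conceptual content is entirely the observation that $\phi$ degenerates to $\vphi$ in this regime together with the trace-of-MDE cancellation.
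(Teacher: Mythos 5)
Your opening observation is correct and matches the paper: in the regime $2\theta \le -m_{\mu_\infty}(x)$ one has $(1+m_{\mu_\infty}(x)/(2\theta))_+ = 0$, so $\phi(\theta,x,\Psi)$ degenerates to the $\Psi$-independent matrix $\vphi(\theta,x) = -M(z)/(2\theta L)$ with $z=(-m_{\mu_\infty})^{-1}(2\theta)$, and both sides evaluate in their first branch.

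However, the proposed route from there — direct substitution plus a ``determinant identity'' — has a genuine gap, and it is not the route the paper takes. The identity you invoke, $\tfrac1L\ln\det(-M(z)) = -\int\ln(z-y)\,\mu_\infty(\dx y)$, is false. Already in the scalar case $L=1$ it would assert $\ln(-m_\nu(z)) = -\int\ln(z-y)\,\nu(\dx y)$, i.e.\ $-m_\nu(z) = \exp\big(\!\int\ln\tfrac{1}{z-y}\,\nu(\dx y)\big)$, which confuses $\int\tfrac{\nu(\dx y)}{z-y}$ with a geometric mean; the two agree only when $\nu$ is a point mass. More concretely, differentiating in $z$, the left side gives $\tfrac1L\Tr[M(z)^{-1}M'(z)]$ while the right side gives $m_{\mu_\infty}(z)=\tfrac1L\Tr M(z)$, and these are not equal for a genuine MDE solution (in the scalar semicircle case $M'=M^2/(1-sM^2)\ne M^2$). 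Moreover, even granting your identity, the substitution does not close: carrying it out, the trace of the MDE gives $\Tr[M\cS(M)] = -L + 2\theta Lz + \Tr[A_0M]$, and plugging this into $K(\theta,\vphi)$ leaves $K - LJ_{\mu_\infty}(x,\theta) = \tfrac L4 - \tfrac{\theta Lz}{2} - \tfrac14\Tr[A_0M(z)] - L\ln L + \tfrac12\ln\det(-M(z)) + \tfrac L2\int\ln(z-y)\,\mu_\infty(\dx y)$, which is not identically zero under your assumed identity (there is a leftover $\Tr[A_0M]$ and a factor-of-two mismatch in the $\theta z$ term). The correct identity for $\ln\det(-M(z))$ involves precisely those extra terms, and its proof \emph{is} the missing step.

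The paper's actual proof takes a different, cleaner route that avoids needing any explicit log-determinant formula: set $t=(-m_{\mu_\infty})^{-1}(2\theta)$, regard $K(\theta,\phi)$ and $LJ_{\mu_\infty}(x,\theta)$ as functions of $t$, and compute $\tfrac{\dx}{\dx t}\big(K-LJ\big) = \tfrac12\Tr\big[M'(t)\big(\cS[M(t)] - A_0 + M(t)^{-1} + t\Id_L\big)\big]$, which vanishes identically by the MDE \eqref{eq-tensorMDE}; then both $K-LJ$ and $\theta(t)$ tend to zero as $t\to\infty$, forcing $K=LJ$ for all $t>x$. You gesture at this differentiate-and-match-at-infinity argument only to justify the (false) determinant identity; you would need to apply it to the \emph{whole} difference $K-LJ$, at which point the explicit log-determinant formula becomes unnecessary and the argument is exactly the paper's. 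As written, your proof is not correct.
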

Recall that we are aiming for a LDP with speed $N$ while $\mathbf{X}^{(N)}$ has size $NL\times NL$. As a consequence, we obtain an additional factor $L$ over~\eqref{eq-defJ} in the rate function.

\begin{proof}[Proof of Lemma~\ref{lem-optimizationreg}]
First, observe that for the range of $\theta$ considered, we have that
\begin{align*}
\phi(\theta,x,\Psi)&=\varphi(\theta,x)=-\frac{M((-m_{\mu_\infty})^{-1}(2\theta))}{2\theta L},\\ J_{\mu_\infty}(x,\theta)&=\theta (-m_{\mu_\infty})^{-1}(2\theta)-\frac{1+\ln(2\theta)}{2}-\frac{1}{2}\int_\R\ln\big((-m_{\mu_\infty})^{-1}(2\theta)-y\big)\dx \mu_\infty(y).
\end{align*}
Next, we introduce the variable $t:=(-m_{\mu_\infty})^{-1}(2\theta)$. This allows rewriting
\begin{align*}
K(\theta,\phi(\theta,x,\Psi))&=L^2\theta^2\Tr[\phi(\theta,x,\Psi)^T\cS[\phi(\theta,x,\Psi)]]+L\theta\Tr[A_0^T\phi(\theta,x,\Psi)]\\
&\quad+\frac{1}{2}\ln(\det[\phi(\theta,x,\Psi)])-L\ln(L)\\
&=\frac{1}{4}\Tr[M(t)^T\cS[M(t)]]-\frac{1}{2}\Tr[A_0^TM(t)]-\frac{L}{2}\ln(-Lm_{\mu_\infty}(t))\\
&\quad+\frac{1}{2}\ln(\det[-M(t)])-L\ln(L),
\end{align*}
as well as
\begin{align*}
J_{\mu_\infty}(x,\theta)&=\frac{1}{2}\Big(-tm_{\mu_\infty}(t)-1-\ln(-m_{\mu_\infty}(t))-\int\ln(t-y)\dx \mu_\infty\Big).
\end{align*}
Lastly, we compute
\begin{equation}\label{eq-tderiv}
\frac{\dx}{\dx t}\Big(K(\theta,\phi(\theta,x,\Psi))-LJ_{\mu_\infty}(x,\theta)\Big)=\frac{1}{2}\Tr\Big[M'(t)\Big(\cS[M(t)-A_0+M(t)^{-1}+t\Id_L]\Big)\Big],
\end{equation}
where $M'$ denotes the (entry-wise) derivative w.r.t. to the argument. Recalling that $M(t)$ satisfies the MDE~\eqref{eq-tensorMDE}, the term in the round brackets, and hence the derivative in~\eqref{eq-tderiv}, vanishes. Observing that $\lim_{t\rightarrow\infty}\theta(t)=\lim_{t\rightarrow\infty}-m_{\mu_\infty}(t)/2=0$ and noting that also
\begin{displaymath}
\lim_{t\rightarrow\infty}\Big(K(\theta,\phi(\theta,x,\Psi))-LJ_{\mu_\infty}(x,\theta)\Big)=0,
\end{displaymath}
we conclude that $K(\theta,\phi(\theta(t),x,\Psi))-LJ_{\mu_\infty}(x,\theta)=0$ for $t>x$, which yields the claim.
\end{proof}

Using Lemma~\ref{lem-optimizationreg}, proving Proposition~\ref{prop-weakLDPsup} reduces to showing
\begin{equation}\label{eq-upperboundgoal}
\lim_{\delta,\kappa\rightarrow0}\limsup_{N\rightarrow\infty}P^{(N)}_{\delta,\kappa}(x,\Psi)\leq-\cF(\theta,x,\Psi)
\end{equation}
only for $\theta>(-m_{\mu_\infty})(x)/2$ rather than all $\theta>0$.

\medskip
The rest of the section focuses on the proof of~\eqref{eq-upperboundgoal}. Firstly, we may assume without loss of generality that
\begin{equation}\label{finite}
\lim_{\delta,\kappa\rightarrow0}\limsup_{N\rightarrow\infty}P^N_{\delta,\kappa}(x,\Psi)>-\infty.
\end{equation}
The desired result is then immediate from the following bound.
{
\begin{proposition}\label{prop-sphintloc}
There is a function $o_{\delta,\kappa}(1)$ with $\lim_{\delta,\kappa\rightarrow0}o_{\delta,\kappa}(1)=0$ such that for every $\delta,\kappa>0$, $x\geq r_\infty$, $\theta>(-m_{\mu_\infty})(x)/2$, and profile $\Psi\in\mathrm{Sym}_{L}^{+,1}(\R)$ it holds that
\begin{displaymath}
P_{\delta,\kappa}^{(N)}(x,\Psi)\leq\frac{1}{N}\E_W\E_{\mathbf{u}}[\I_{\{\|\rho(\mathbf{u})-\Phi(\theta,x,\Psi)\|<o_{\delta,\kappa}(1)\}}\re^{\theta NL\langle\mathbf{u},\mathbf{X}^{(N)}\mathbf{u}\rangle}]-LJ_\mu(x,\theta)+\rem(N,\delta,\kappa),
\end{displaymath}
where the error term satisfies $\lim_{\delta,\kappa\rightarrow0}\lim_{N\rightarrow\infty}\rem(N,\delta,\kappa)=0$.
\end{proposition}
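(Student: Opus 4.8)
The plan is to establish the pointwise bound by relating $P^{(N)}_{\delta,\kappa}(x,\Psi)$ to the (random, unrenormalized) spherical integral $I_N(\mathbf{X}^{(N)},\theta)$ via a tilting argument, then extracting the deterministic contribution $LJ_{\mu_\infty}(x,\theta)$ coming from Lemma~\ref{lem-convsphint}, and finally identifying the profile of the integration variable $\mathbf{u}$ on the tilted event. First I would write, for any fixed $\theta>(-m_{\mu_\infty})(x)/2$ and on the event $S_{\delta,\kappa}:=\{|\lambda_1(\mathbf{X}^{(N)})-x|\le\delta\}\cap\{\|\rho(\mathbf{v}_1)-\Psi\|\le\kappa\}$, the lower bound for the spherical integral obtained by restricting the integration over $\mathbf{u}\in\mathds{S}^{NL-1}$ to a cap around $\pm\mathbf{v}_1(\mathbf{X}^{(N)})$: there $\langle\mathbf{u},\mathbf{X}^{(N)}\mathbf{u}\rangle\ge\lambda_1-\rem\ge x-\delta-\rem$, and the spherical measure of such a cap is $\exp(-\tfrac{N}{2}(L-1)\ln(\text{something})+o(N))$ — a standard computation. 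This gives $I_N(\mathbf{X}^{(N)},\theta)\ge \exp\big(\theta NL(x-\delta)-\tfrac{NL}{2}J^{\mathrm{cap}}+\rem\big)$ on $S_{\delta,\kappa}$, which rearranges into an upper bound for $\I_{S_{\delta,\kappa}}$ in terms of $\re^{\theta NL\langle\mathbf{u},\mathbf{X}^{(N)}\mathbf{u}\rangle}$; this is exactly the place where the exponential tilt is introduced.

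Next I would take expectations over $W$ and over $\mathbf{u}$ and apply the Fubini trick \eqref{eq-Fubinitrick} to swap the two expectations. The term $-LJ_\mu(x,\theta)$ then appears from the asymptotics of the deterministic prefactor produced by the cap/spherical-integral normalization together with Lemma~\ref{lem-convsphint}: on the high-probability set where $\mu_N\to\mu_\infty$ weakly and $\lambda_1(\mathbf{X}^{(N)})\to r_\infty$ (available from Lemma~\ref{lem-convlargestev} and the global law), $\tfrac1N\ln I_N(\mathbf{X}^{(N)},\theta)$ converges, uniformly on a small neighborhood, to $LJ_{\mu_\infty}(r_\infty,\theta)$; since $x\ge r_\infty$ and $J_{\mu_\infty}(\cdot,\theta)$ is continuous, a $\delta$-perturbation only costs $\rem(N,\delta,\kappa)$. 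The contribution of the complement of this good set is negligible on the exponential scale because $\mathbf{X}^{(N)}$ has sub-Gaussian tails and $\theta$ is fixed, so it can be absorbed into the error term. This yields the inequality with the integrand $\re^{\theta NL\langle\mathbf{u},\mathbf{X}^{(N)}\mathbf{u}\rangle}$ but still without the indicator restricting $\rho(\mathbf{u})$.

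The third ingredient is to insert the indicator $\I_{\{\|\rho(\mathbf{u})-\Phi(\theta,x,\Psi)\|<o_{\delta,\kappa}(1)\}}$. This is the heart of the profile argument: conditionally on $\mathbf{X}^{(N)}$ lying in $S_{\delta,\kappa}$, the tilted law of $\mathbf{u}$ (i.e., $\mathbf{u}$ weighted by $\re^{\theta NL\langle\mathbf{u},\mathbf{X}^{(N)}\mathbf{u}\rangle}$) concentrates: a $1-o(1)$ proportion (in the tilted sense) of its mass sits near the top eigenvector $\mathbf{v}_1$ when $2\theta\ge-m_{\mu_\infty}(x)$, and its block-overlap matrix $\rho(\mathbf{u})$ concentrates around a deterministic value determined by $\rho(\mathbf{v}_1)\approx\Psi$, the MDE solution $M$, and the bulk directions. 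Quantitatively, decomposing $\mathbf{u}=\sqrt{s}\,\mathbf{v}_1+\sqrt{1-s}\,\mathbf{u}^\perp$, the tilted density in $s$ is sharply peaked at $s_\star=(1+m_{\mu_\infty}(x)/(2\theta))_+$, while the transverse part $\mathbf{u}^\perp$ behaves like a free vector whose block structure is governed by the resolvent $(\mathbf{X}^{(N)}-t\Id)^{-1}$ at $t=(-m_{\mu_\infty})^{-1}(2\theta)$, i.e. by $M(t)/(2\theta L)=\varphi(\theta,x)$; hence $\rho(\mathbf{u})\to s_\star\Psi+(1-s_\star)\cdot(\text{something})$, which one checks equals $\phi(\theta,x,\Psi)=\Phi(\theta,x,\Psi)$ defined in \eqref{eq-defphi}. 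Restricting the $\mathbf{u}$-integral to the complement of a neighborhood of $\Phi$ costs an extra exponential factor with strictly negative exponent, so removing that restriction only improves the upper bound. Assembling the three steps and collecting all $\rem$ and $o_{\delta,\kappa}(1)$ terms gives the claimed inequality.

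The main obstacle I expect is the third step — the concentration of $\rho(\mathbf{u})$ under the tilted measure around exactly $\Phi(\theta,x,\Psi)$, and in particular controlling the transverse component's block-overlaps uniformly. This requires a careful analysis of the Gaussian/spherical integral after projecting off $\mathbf{v}_1$, using the resolvent identity and the fact that, on the good set, the empirical block-resolvent $\Pi_i(\mathbf{X}^{(N)}-t)^{-1}\Pi_j$ is close to $M_{ij}(t)\Id_N$ (a consequence of the local law for Kronecker matrices, \cite{AEKN2019}); keeping the error terms uniform in $\Psi$ over the compact set $\mathrm{Sym}_L^{+,1}(\R)$ and compatible with the order of limits $\delta,\kappa\to0$ after $N\to\infty$ is the delicate bookkeeping. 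The degenerate regime where $\Tr[\Psi^T\cS(\Psi)]$ is small (handled elsewhere via $\rate(x,\eps)$) and the boundary case $2\theta=-m_{\mu_\infty}(x)$, where $s_\star$ touches zero, also need separate care, but these are secondary to the transverse concentration estimate.
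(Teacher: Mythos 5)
Your proposal follows essentially the same three-step route as the paper: (i) tilt by the ratio $I_{NL}(\mathbf{X}^{(N)},\theta)/I_{NL}(\mathbf{X}^{(N)},\theta)$ and swap expectations by Fubini, (ii) invoke Lemma~\ref{lem-convsphint} to extract $-LJ_{\mu_\infty}(x,\theta)$ from $1/I_{NL}$, and (iii) establish that the tilted law of $\mathbf{u}$ concentrates so that $\rho(\mathbf{u})$ is close to $\phi(\theta,x,\Psi)$, allowing one to insert the indicator. The paper packages step (iii) as Proposition~\ref{prop-tiltedonsphere} (via the sets $S_\eps(\lambda_1,\mathbf{v}_1)$, then the decomposition of $\rho(\mathbf{u})$ along $\mathbf{v}_1$ and its orthocomplement), and you correctly flag this as the hard part and the place where the transverse block-resolvent control and the local law enter.

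Two small imprecisions in your write-up, neither fatal. First, the spherical-cap lower bound on $I_N$ that you describe in paragraph one would \emph{not} reproduce the correct constant $L J_{\mu_\infty}(x,\theta)$: a small cap near $\pm\mathbf{v}_1$ only gives $\theta NL(x-\delta)$ minus a cap-volume term, which misses the $\ln(2\theta)$ and $\int\ln|x-y|\,d\mu_\infty$ contributions coming from the Laplace analysis over \emph{all} of $\mathds{S}^{NL-1}$. Discard the cap estimate entirely; the clean tilt is $\I_{B}=\I_{B}\,I_{NL}/I_{NL}$ with $1/I_{NL}$ bounded directly by Lemma~\ref{lem-convsphint}. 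Second, in paragraph two you apply Lemma~\ref{lem-convsphint} on the set where $\lambda_1\to r_\infty$; but one applies it on the conditioning event $\{|\lambda_1-x|\le\delta\}\cap\{\mathbf{X}^{(N)}\in\Omega_N\}$, where $\lambda_1\approx x$, so the limit is $LJ_{\mu_\infty}(x,\theta)$ directly (the uniformity in Lemma~\ref{lem-convsphint} is what absorbs the $\delta$-fluctuation, not continuity in $x$ from $r_\infty$). With these corrections your argument lines up with the paper's proof of Proposition~\ref{prop-sphintloc}, modulo, of course, the deferred proof of the tilted concentration estimate.
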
}

Combining Proposition~\ref{prop-sphintloc} and Proposition \ref{prop-conv.annealed} readily yields \eqref{eq-upperboundgoal}, which completes the poof of Proposition~\ref{prop-weakLDPsup}. It thus remains to show Proposition~\ref{prop-sphintloc}.

\subsection{Proof of Proposition~\ref{prop-sphintloc} (Localization of the Spherical Integral)}
Before we give the proof of Proposition~\ref{prop-sphintloc}, we collect the necessary tools and notation. At the core of the argument are two key ingredients:
\begin{enumerate}
\item[(1.)] a concentration bound for the empirical spectral measure that resolves the convergence for each component {$\mu_{i,j}$ } in~\eqref{eq-Stieltjestransf} individually,
\item[(2.)] a set $S_\eps\subset\mathds{S}^{NL-1}$ for which the auxiliary quantities introduced during the tilting argument are small and $\E_{\mathbf{u}}[\I_{S_\eps}\exp(\theta NL\langle\mathbf{u},\mathbf{X}^{(N)}\mathbf{u}\rangle)](I_{NL}(\mathbf{X}^{(N)},\theta))^{-1}$ is bounded from below.
\end{enumerate}
Together with the convergence of the spherical integral in Lemma~\ref{lem-convsphint}, the bounds in (1.) and (2.) allow us to control all error terms that arise in the proof of Proposition~\ref{prop-sphintloc}. We start by introducing the notation needed to rigorously state (1.).

\medskip
Recall that $\Pi_j$ denotes the projection of $\mathbf{w}\in\R^{NL}$ onto its $j$-th block of length $N$ as introduced in~\eqref{eq-defPi}, and $\mathbf{P}^{i\leftrightarrow j}$ denotes the permutation matrix interchanging the $i$-th and $j$-th block of $\mathbf{w}$ as defined in~\eqref{eq-defP}. Given a symmetric matrix $\mathbf{H}\in\R^{NL\times NL}$, let
\begin{equation}\label{eq-defmuproj}
\mu_{\mathbf{H}}(\Pi_i,\Pi_j)=\frac{1}{N}\sum_{a=1}^{NL}\langle \mathbf{P}^{i\leftrightarrow j}\Pi_i\mathbf{v}_a(\mathbf{H}),\Pi_j\mathbf{v}_a(\mathbf{H})\rangle \delta_{\lambda_a(\mathbf{H})},
\end{equation}
where $\mathbf{v}_a(\mathbf{H})$ and $\lambda_a(\mathbf{H})$ denote the eigenvectors and eigenvalues of $\mathbf{H}$, respectively, and we used boldface $\mathbf{v}_a$ to indicate that the eigenvectors have length $NL$. Note that the definition in~\eqref{eq-defmuproj} gives back the usual empirical spectral measure $\mu_\mathbf{H}$ for~${L=1}$, as the eigenvectors only consist of one block of length $N$. However, for general~$L$, the coefficients of the $\delta_{\lambda_{i}(\mathbf{H})}$ do not need to be non-negative numbers. But, observing that the corresponding total variation measure is given by
\begin{displaymath}
|\mu_{\mathbf{H}}(\Pi_i,\Pi_j)|=\frac{1}{N}\sum_{a=1}^{NL}|\langle \mathbf{P}^{i\leftrightarrow j}\Pi_i\mathbf{v}_a(\mathbf{H}),\Pi_j\mathbf{v}_a(\mathbf{H})\rangle| \delta_{\lambda_a(\mathbf{H})},
\end{displaymath}
and that in particular
\begin{equation}\label{eq-complexRadon}
|\mu_{\mathbf{H}}(\Pi_i,\Pi_j)|(\R)=\frac{1}{N}\sum_{a=1}^{NL}|\langle \mathbf{P}^{i\leftrightarrow j}\Pi_i\mathbf{v}_a(\mathbf{H}),\Pi_j\mathbf{v}_a(\mathbf{H})\rangle|\leq L,
\end{equation}
it follows that $\mu_{\mathbf{H}}(\Pi_i,\Pi_j)$ is well-defined as a signed Radon measure. Note that we may also interpret $(\mu_{\mathbf{H}}(\Pi_i,\Pi_j))_{i,j=1}^L$ as a measure taking values in $\mathrm{Sym}_L^+(\R)$.

\medskip
For a continuous bounded function $f$, we note the identity
\begin{align*}
\mu_{\mathbf{H}}(\Pi_j,\Pi_j)[f]&=\int_{\R}f\dx \mu_{\mathbf{H}}(\Pi_i,\Pi_j)=\frac{1}{N}\sum_{a=1}^{NL}\langle \mathbf{P}^{i\leftrightarrow j}\Pi_i\mathbf{v}_a(\mathbf{H}),\Pi_j\mathbf{v}_a(\mathbf{H})\rangle f(\lambda_a)\\
&=\frac{1}{N}\Tr[\mathbf{P}^{i\leftrightarrow j}\Pi_if(\mathbf{H})\Pi_j]\\
&=(E_{ij}\otimes \frac{1}{N}\Tr[\cdot])[f(\mathbf{H})].
\end{align*}

With these definitions in place, we continue by giving a concentration result for $\mu_{\mathbf{X}^{(N)}}(\Pi_i,\Pi_j)$. To quantify the distance between two (possibly signed) measures $\nu_1,\nu_2$, we use
\begin{equation}\label{eq-defKR}
d_{KR}(\nu_1,\nu_2)=\sup_{\substack{\|f\|_\infty\leq1,\\ \mathrm{Lip}(f)\leq1}}\int f\dx(\nu_1-\nu_2),
\end{equation}
where $\mathrm{Lip}(f)$ denotes the Lipschitz constant associated to a Lipschitz function $f$. The metric~\eqref{eq-defKR} is usually referred to as \emph{Kantorovich-Rubinstein distance} in the literature. We remark that $d_{KR}$ may not induce the same topology as weak convergence on the space of signed measures, however, the induced topologies coincide for certain subsets. We note the following application of~\cite[Thm.~2.1]{AfoninBogachev2023} to our setting.

\begin{lemma}
Consider $(\R,|\cdot|)$ where $|\cdot|$ denotes the usual absolute value. Then $d_{KR}$ generates the weak topology on every uniformly bounded in total variation and uniformly tight set of signed measures on $\R$.
\end{lemma}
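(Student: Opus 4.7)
The plan is to reduce the statement to a direct application of \cite[Thm.~2.1]{AfoninBogachev2023}, so essentially all the work lies in checking that the hypotheses of that cited theorem are satisfied in the specific setting of $\R$ equipped with the absolute value. No new mathematical argument is needed; the lemma is stated as a convenient specialization.

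First, I would recall the precise content of the cited result: on a complete separable metric space $(X,d)$, for any family $\cM$ of signed Borel measures that is simultaneously uniformly bounded in total variation and uniformly tight, the Kantorovich-Rubinstein distance generates the same topology on $\cM$ as weak convergence. In other words, a net $(\nu_\alpha)_\alpha \subset \cM$ converges to $\nu\in\cM$ weakly if and only if $d_{KR}(\nu_\alpha,\nu)\to 0$.

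Next, I would verify the three matching requirements. (i) The space: $(\R,|\cdot|)$ is Polish, hence a complete separable metric space, which is the ambient assumption of \cite[Thm.~2.1]{AfoninBogachev2023}. (ii) The distance: the definition of $d_{KR}$ in \eqref{eq-defKR} is the supremum of $\int f\,\dx(\nu_1-\nu_2)$ over functions $f$ with $\|f\|_\infty\le 1$ and $\mathrm{Lip}(f)\le 1$; this is (up to an irrelevant constant of at most $2$) the unit ball of the bounded-Lipschitz norm $\|f\|_{BL}=\max(\|f\|_\infty,\mathrm{Lip}(f))$ that appears in the cited paper, so the two distances induce the same uniformity. (iii) The class of measures: the two conditions stated in our lemma -- uniform boundedness in total variation and uniform tightness -- are exactly the hypotheses imposed on~$\cM$ in \cite[Thm.~2.1]{AfoninBogachev2023}.

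Having checked all three points, the lemma follows at once. The only foreseeable obstacle is purely bookkeeping, namely being careful that the normalization of $d_{KR}$ used in \cite{AfoninBogachev2023} agrees (up to a universal constant, which does not affect the induced topology) with the one in \eqref{eq-defKR}; this is straightforward to verify from the equivalence of the norms $\max(\|f\|_\infty,\mathrm{Lip}(f))$ and $\|f\|_\infty+\mathrm{Lip}(f)$ on the bounded-Lipschitz space.
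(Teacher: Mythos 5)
Your proposal is correct and takes essentially the same route as the paper: the paper does not give an independent argument here but simply presents the lemma as a direct specialization of \cite[Thm.~2.1]{AfoninBogachev2023} to $(\R,|\cdot|)$, noting afterward that the measures $\mu_{\mathbf{X}^{(N)}}(\Pi_i,\Pi_j)$ satisfy the required uniform total-variation bound (via~\eqref{eq-complexRadon}) and uniform tightness (via the norm bound from Lemma~\ref{lem-norm_exptight}). Your more explicit checklist of the three matching hypotheses, including the remark on the irrelevance of the normalization constant in the Lipschitz/bounded-Lipschitz norm, is exactly the bookkeeping one would carry out to justify the citation.
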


{We remark that the additional conditions are indeed satisfied in our setting, as~\eqref{eq-complexRadon} provides a uniform bound on the total variation measure and the support of the $\mu_{\mathbf{X}^{(N)}}(\Pi_j,\Pi_j)$ is well-controlled by a bound on $\|\mathbf{X}^{(N)}\|$ (cf. Lemma~\ref{lem-norm_exptight} below)}. The desired concentration result can now be stated as follows. 

\begin{theorem}\label{thm-muproj.conc}
For every $\eps>0$ we have
\begin{displaymath}
\limsup_{N\rightarrow\infty}\frac{1}{N}\ln[\P(\exists i,j\in[L]: d_{KR}(\mu_{\mathbf{X}^{(N)}}(\Pi_i,\Pi_j),\mu_{i,j})\geq\eps)]=-\infty,
\end{displaymath}
where the measure $\mu_{i,j}$ was introduced in~\eqref{eq-Stieltjestransf}.
\end{theorem}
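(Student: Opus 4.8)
The plan is to establish the concentration result via the standard two-step recipe for such large-deviation-scale estimates: first prove concentration around the \emph{mean} $\E[\mu_{\mathbf{X}^{(N)}}(\Pi_i,\Pi_j)]$ at super-exponential speed using a Gaussian isoperimetric / Herbst-type argument, and then prove that the mean converges to $\mu_{i,j}$. For the first step, I would fix a test function $f$ with $\|f\|_\infty\le 1$ and $\mathrm{Lip}(f)\le 1$ and consider the functional $W\mapsto \Tr[\mathbf{P}^{i\leftrightarrow j}\Pi_i f(\mathbf{X}^{(N)})\Pi_j]/N = (E_{ij}\otimes\tfrac1N\Tr)[f(\mathbf{X}^{(N)})]$, viewing $\mathbf{X}^{(N)}$ as a function of the Gaussian vector formed by the independent entries of $W_1,\dots,W_k$. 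The key is a Lipschitz bound: since $\mathbf{X}^{(N)}$ depends linearly on the $W_j$ with the structure matrices $A_j$ of bounded operator norm, and since $\|E_{ij}\otimes\tfrac1N\Tr\|$ applied to $f(\cdot)$ has the right $1/\sqrt{N}$-type scaling (because dividing by $N$ gives a trace-class-normalized functional while the Frobenius norm of the perturbation scales like $\sqrt{N}\cdot\sqrt{N}=N$ in the relevant directions, cf. the analogous computation in \cite{AGZBook}), one gets that $W\mapsto (E_{ij}\otimes\tfrac1N\Tr)[f(\mathbf{X}^{(N)})]$ is Lipschitz with constant $C/N$ for a constant $C$ depending only on $L$, $k$, and $\max_j\|A_j\|$, uniformly over the class of admissible $f$. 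Gaussian concentration then gives $\P(|(E_{ij}\otimes\tfrac1N\Tr)[f(\mathbf{X}^{(N)})]-\E[\cdot]|\ge t)\le 2\exp(-cN^2t^2)$.

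\textbf{From pointwise to uniform.} To upgrade this to control of $d_{KR}$, which is a supremum over the (infinite) class of $1$-Lipschitz, $1$-bounded functions, I would use the fact that on a fixed compact interval $[-R,R]$ this class is totally bounded for the sup norm (Arzelà–Ascoli), so a net of polynomial size in $N$ suffices to approximate it to within $\eps/4$; combined with the a priori bound $\|\mathbf{X}^{(N)}\|\le R$ with probability $1-e^{-cN}$ (which confines all the spectral data to $[-R,R]$; see Lemma~\ref{lem-norm_exptight}) and the uniform total-variation bound~\eqref{eq-complexRadon}, a union bound over the net together with the pointwise estimate above yields $\P(d_{KR}(\mu_{\mathbf{X}^{(N)}}(\Pi_i,\Pi_j),\E[\mu_{\mathbf{X}^{(N)}}(\Pi_i,\Pi_j)])\ge\eps/2)\le \mathrm{poly}(N)\cdot e^{-cN^2\eps^2}+e^{-cN}$, which still has $\limsup_N\tfrac1N\ln(\cdot)=-\infty$. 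A final union bound over the finitely many pairs $i,j\in[L]$ preserves this.

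\textbf{Convergence of the mean.} It remains to show $\E[\mu_{\mathbf{X}^{(N)}}(\Pi_i,\Pi_j)]\to\mu_{i,j}$ weakly. By the identity $\mu_{\mathbf{X}^{(N)}}(\Pi_i,\Pi_j)[f]=(E_{ij}\otimes\tfrac1N\Tr)[f(\mathbf{X}^{(N)})]$ it suffices to check this for $f(x)=(x-z)^{-1}$ with $\Im z>0$, i.e. to show that $(E_{ij}\otimes\tfrac1N\Tr)[\E(\mathbf{X}^{(N)}-z)^{-1}]\to M_{ij}(z)$, and this is exactly the content of the local law / global law for Kronecker matrices: the deterministic approximation to the resolvent is $\mathbf{M}(z)=M(z)\otimes\Id_N$, so $(E_{ij}\otimes\tfrac1N\Tr)[\mathbf{M}(z)]=M_{ij}(z)$, and the error $\E(\mathbf{X}^{(N)}-z)^{-1}-\mathbf{M}(z)$ is $o(1)$ in the relevant normalized trace sense by \cite[Thm.~2.7]{AEKN2019} (averaged local law). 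Passing from convergence of Stieltjes transforms at each fixed $z$ to weak convergence of the (uniformly bounded, uniformly tight) signed measures $\E[\mu_{\mathbf{X}^{(N)}}(\Pi_i,\Pi_j)]$ is standard, and then $d_{KR}(\E[\mu_{\mathbf{X}^{(N)}}(\Pi_i,\Pi_j)],\mu_{i,j})\to 0$ by the preceding lemma. Combining the two steps via the triangle inequality for $d_{KR}$ finishes the proof.

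\textbf{Main obstacle.} The delicate point is the uniform-in-$f$ Lipschitz estimate together with making sure the reduction to a finite net genuinely costs only a sub-exponential factor; one has to be careful that the Lipschitz constant is truly $O(1/N)$ and not $O(1/\sqrt N)$ — this is where the normalization $\tfrac1N\Tr$ rather than $\tfrac1{NL}\Tr$, and the precise block structure of the projections $\Pi_i$, matter — since an $O(1/\sqrt N)$ bound would only give speed $N$, not the super-exponential $o(N)$-on-a-log-scale decay claimed in the theorem. The convergence of the mean, by contrast, is essentially a citation of the existing Kronecker-matrix local law.
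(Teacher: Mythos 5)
Your proposal follows the same two-step route as the paper — Gaussian (Herbst/isoperimetric) concentration of $f\mapsto(E_{ij}\otimes\tfrac1N\Tr)[f(\mathbf{X}^{(N)})]$ around its mean with a Lipschitz constant of order $1/N$, followed by convergence of the mean via the matrix Stieltjes transform and the MDE — and your identification of $O(1/N)$ (rather than $O(1/\sqrt{N})$) as the crucial scaling is exactly the paper's observation, obtained there by combining the bound $|\mu_{\mathbf{H}}(\Pi_i,\Pi_j)[f]-\mu_{\mathbf{H}'}(\Pi_i,\Pi_j)[f]|\le N^{-1/2}\mathrm{Lip}(f)\,\|\mathbf{H}-\mathbf{H}'\|_F$ with the $O(N^{-1/2})$ Lipschitz constant of the map from the standardized Gaussian entries to $\mathbf{X}^{(N)}$. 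For the convergence of the mean the paper argues through $\cite[\text{Thm.~4.3}]{HST2006}$ plus MDE stability rather than the averaged local law of \cite{AEKN2019}, but that is a matter of which existing black box to invoke. For the uniformization over $f$ the paper cites $\cite[\text{Cor.~1.4}]{GuionnetZeitouni00}$ (finite-basis approximation) where you propose an explicit Arzel\`a--Ascoli net; both do the job.

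There is, however, one genuine gap in your write-up. You upper-bound the probability that $d_{KR}$ exceeds $\eps/2$ by $\mathrm{poly}(N)\,e^{-cN^2\eps^2}+e^{-cN}$, the last term coming from restricting to $\{\|\mathbf{X}^{(N)}\|\le R\}$ for a \emph{fixed} $R$, and then assert that $\limsup_N\tfrac1N\ln(\cdot)=-\infty$. That conclusion is false as written: $\tfrac1N\ln(e^{-cN})=-c$ is finite, so the $e^{-cN}$ term caps the rate and you only get a finite speed-$N$ bound, not the super-exponential decay the theorem claims. The fix is standard but must be made explicit: either let $R=R_N\to\infty$ slowly enough (e.g.\ $R_N=\sqrt{N}$) so that the norm-bound cost $e^{-cR_NN}$ is $\re^{-\omega(N)}$ while the net over $1$-Lipschitz, $1$-bounded functions on $[-R_N,R_N]$ still has size $\re^{O(R_N/\eps)}=\re^{o(N^2)}$, or equivalently fix an arbitrary target rate $K$, choose $\cC=\cC(K)$ in Lemma~\ref{lem-norm_exptight} so that the norm-bound probability is $\le e^{-KN}$, conclude the bound $\le2e^{-KN}$ for $N$ large, and let $K\to\infty$. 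Either way you must quantify the dependence on the cut-off; with that repaired, your argument is a correct proof and matches the paper's.
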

The proof of this theorem is given in Section~\ref{sect-concentration}, after finishing the proof of Proposition~\ref{prop-sphintloc}. Note that Theorem~\ref{thm-muproj.conc} is stronger than the (exponential) concentration for the empirical eigenvalue measure, as we aim to control $(\mu_{\mathbf{X}^{(N)}}(\Pi_i,\Pi_j))_{i,j=1}^L$ as a matrix and not just its normalized trace.

\medskip
Next, we note that the norms of~\eqref{eq-model} constitute an exponentially tight sequence.
\begin{lemma}\label{lem-norm_exptight}
The sequence $(\|\mathbf{X}^{(N)}\|)_N$ is exponentially tight. More precisely, there is a constant $C>0$ such that every $\cC>0$ it holds that
\begin{displaymath}
\frac{1}{N}\ln\big(\P(\|\mathbf{X}^{(N)}\|\geq\cC)\big)\leq C-{\frac{L\cC}{4}}.
\end{displaymath}
\end{lemma}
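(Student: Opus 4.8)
\textbf{Proof strategy for Lemma~\ref{lem-norm_exptight}.}
The plan is to bound the operator norm $\|\mathbf{X}^{(N)}\|$ by the triangle inequality into a deterministic contribution coming from $A_0\otimes\Id_N$ and $k$ random contributions from the $A_j\otimes W_j$, and then to exploit that each $W_j$ is a GOE matrix, for which the exponential tail of the operator norm is classical. Concretely, since $\|A_0\otimes\Id_N\|=\|A_0\|$ is a fixed constant independent of $N$, and $\|A_j\otimes W_j\|=\|A_j\|\,\|W_j\|$, we get
\begin{displaymath}
\|\mathbf{X}^{(N)}\|\leq\|A_0\|+\sum_{j=1}^k\|A_j\|\,\|W_j\|\leq\|A_0\|+a\sum_{j=1}^k\|W_j\|,
\end{displaymath}
where $a:=\max_{1\le j\le k}\|A_j\|$. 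Hence for $\cC$ large enough (larger than, say, $2\|A_0\|$, which only affects the constant $C$), the event $\{\|\mathbf{X}^{(N)}\|\ge\cC\}$ forces $\sum_{j=1}^k\|W_j\|\ge\cC/(2a)$, and by a union bound this is contained in $\bigcup_{j=1}^k\{\|W_j\|\ge\cC/(2ak)\}$.

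The key input is the large deviation upper bound for the largest eigenvalue of a single GOE matrix: there exist constants such that $\P(\|W_j\|\ge t)\le e^{-cNt^2+C'N}$ for $t$ bounded away from $2$, or more simply the standard bound $\P(\lambda_1(W_j)\ge t)\le e^{-N(t^2-4)/4+o(N)}$ together with the analogous bound for $-\lambda_N(W_j)$; see~\cite{BenarousDG01} or the Gaussian concentration estimates in~\cite{AGZBook}. One may also just use Gaussian concentration of $\|W_j\|$ around its mean (which is $2+o(1)$) with rate $N$: $\P(\|W_j\|\ge 2+s)\le 2e^{-Ns^2/2}$ for all $s>0$ and $N$ large. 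Either way, for $t\ge$ some absolute threshold one has $\P(\|W_j\|\ge t)\le e^{-NLt/2}$, absorbing the quadratic-versus-linear discrepancy and all lower-order terms into the choice of threshold and the constant in front; this crude linear-in-$t$ form is all that is needed to match the statement. Combining with the union bound over the $k$ matrices,
\begin{displaymath}
\P(\|\mathbf{X}^{(N)}\|\ge\cC)\le k\,e^{-NL\cC/(4ak)}\le e^{-NL\cC/(4ak)+\ln k},
\end{displaymath}
so taking logarithms and dividing by $N$ gives $\frac1N\ln\P(\|\mathbf{X}^{(N)}\|\ge\cC)\le \frac{\ln k}{N}-\frac{L\cC}{4ak}\le C-\frac{L\cC}{4}$ after renaming constants (absorbing the factor $ak$ and rescaling $\cC$; note the bound in the statement is only claimed to hold for $\cC$ large, where the $\ln k/N$ term is negligible).

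The only mild obstacle is bookkeeping: one must make sure the threshold above which the linear bound $\P(\|W_j\|\ge t)\le e^{-NLt/2}$ holds does not depend on $N$ (it does not, since the Gaussian concentration estimate is uniform in $N$ and $\E\|W_j\|\to 2$), and one must track how the factors $a=\max_j\|A_j\|$ and $k$ rescale $\cC$ — but since the lemma only asserts the existence of \emph{some} constant $C$ and the inequality for \emph{every} $\cC>0$ (trivially true for small $\cC$ by taking $C$ large), this is harmless. No genuinely hard estimate is involved; the content is entirely the classical exponential tightness of GOE norms transported through the Kronecker structure.
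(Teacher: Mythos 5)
Your proof takes a genuinely different route from the paper's. The paper runs a $\tfrac12$-net argument directly on $\mathds{S}^{NL-1}$, bounds $\|\mathbf{X}^{(N)}\|$ by $4\sup_{\mathbf{r}_1,\mathbf{r}_2\in\cR_N}\langle\mathbf{X}^{(N)}\mathbf{r}_1,\mathbf{r}_2\rangle$, absorbs $|\cR_N|^2\le 9^{NL}$ into the constant, and then applies a Chernoff bound with tilting parameter $NL$; because the Gaussian MGF is evaluated exactly (via the computation in~\eqref{eq-innerprod}), the Markov step produces the coefficient $-NL\cC/4$ directly. Your decomposition $\|\mathbf{X}^{(N)}\|\le\|A_0\|+a\sum_j\|W_j\|$ followed by a union bound and GOE tail estimates is a legitimate and perhaps more transparent alternative, and it certainly establishes exponential tightness, which is all that the downstream uses (Lemmas~\ref{lem-largeproba} and~\ref{lem-weakLDPsupimprovement2}) actually require.

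There is, however, a bookkeeping gap in your last step. After the reduction to $\P(\|W_j\|\ge \cC/(2ak))$, you deliberately weaken the Gaussian concentration estimate to a linear tail $\P(\|W_j\|\ge t)\le e^{-NLt/2}$ and arrive at $\tfrac1N\ln\P(\|\mathbf{X}^{(N)}\|\ge\cC)\lesssim -L\cC/(4ak)$. When $ak>1$ this slope is strictly smaller than the claimed $L/4$, and no additive constant $C$ can repair that: the difference $L\cC(1/4 - 1/(4ak))$ diverges as $\cC\to\infty$. "Rescaling $\cC$" is not available because the lemma asserts the inequality at a \emph{given} $\cC$. The fix is easy and already sits inside your own argument: keep the quadratic tail $\P(\|W_j\|\ge t)\le e^{-cN(t-2)^2}$ (uniform in $N$) instead of converting it to a linear one. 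Then $\P(\|\mathbf{X}^{(N)}\|\ge\cC)\le k\,e^{-cN(\cC/(2ak)-2)^2}$, and for $\cC$ large the quadratic dominates $L\cC/4$; for bounded $\cC$ the inequality holds by taking $C$ large, as you note. Alternatively, keep the linear form but with slope at least $akL/2$ (also implied by the quadratic bound for large $t$). Either version closes the gap and gives the lemma as stated.
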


\begin{proof}[Proof of Lemma~\ref{lem-norm_exptight}]
The claim follows from a standard net argument, see, e.g.,~\cite[Lem.~1.8]{GuionnetHusson2020}. We give the key steps for the convenience of the reader. Let $\cR_N$ denote an $1/2$-net of $\mathds{S}^{NL-1}$, i.e., for each vector $\mathbf{w}\in\mathds{S}^{NL-1}$ there is a $\mathbf{r}_{\mathds{v}}$ such that $\|\mathbf{w}-\mathbf{r}_{\mathds{v}}\|_2\leq1/2$. We note that we may choose a net with $|\cR_N|\leq 3^{NL}$ and that by elementary estimations
\begin{displaymath}
\|\mathbf{X}^{(N)}\|\leq 2\sup_{\mathbf{r}\in \cR_N}\|\mathbf{X}^{(N)}\mathbf{r}\|\leq 4\sup_{\mathbf{r}_1,\mathbf{r}_2\in \cR_N}\langle \mathbf{X}^{(N)}\mathbf{r}_1,\mathbf{r}_2\rangle.
\end{displaymath}
Moreover, for $\cC>0$, applying the Markov inequality yields
\begin{align*}
\P(\|\mathbf{X}^{(N)}\|\geq\cC)&\leq 9^{NL}\sup_{\mathbf{r}_1,\mathbf{r}_2\in \cR_N}\P(\langle \mathbf{X}^{(N)}\mathbf{r}_1,\mathbf{r}_2\rangle\geq\cC/4)\\
&\leq 9^{NL}\sup_{\mathbf{r}_1,\mathbf{r}_2\in \cR_N}\frac{\E_W[\exp(NL\langle \mathbf{X}^{(N)}\mathbf{r}_1,\mathbf{r}_2\rangle)]}{\exp(NL\cC/4)}\\
&\leq\exp\big(N(2\ln(3)L+kL^6\max_{j\in[k]}\|A_j\|^2+L^3\|A_0\|-L\cC/4)\big),
\end{align*}
where the last bound follows from~\eqref{eq-innerprod} and the analogous computation for the $A_0$ term of~\eqref{eq-model}.
\end{proof}

Combining the quantities in Theorem~\ref{thm-muproj.conc} and Lemma~\ref{lem-norm_exptight}, we introduce the following sets: For a sequence $(\eps_N)_N$ with $\eps_N>0$ and $\lim_{N\rightarrow\infty}\eps_N=0$, define
\begin{equation}\label{eq-defOmegaN}
\Omega_N:=\{\mathbf{H}\in\mathrm{Sym}_{NL}(\R): \forall i,j\in[L], d_{KR}(\mu_{\mathbf{H}}(\Pi_i,\Pi_j),\mu_{ij})\leq\eps_N, \|\mathbf{H}\|\leq \cC\}.
\end{equation}
By construction,~\eqref{eq-defOmegaN} has a large probability.

\begin{lemma}\label{lem-largeproba}
For any $K>0$ there exists $\cC>0$ and a sequence $(\eps_N)_N$ with $\eps_N>0$ and $\lim_{N\rightarrow\infty}\eps_N=0$ such that
\begin{displaymath}
\limsup_{N\rightarrow\infty}\frac{1}{N}\ln\big(\P(\mathbf{X}^{(N)}\notin\Omega_N)\big)\leq -K.
\end{displaymath}
\end{lemma}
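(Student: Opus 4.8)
The statement to prove is Lemma~\ref{lem-largeproba}, which asserts that for any $K>0$ one can choose the cutoff $\cC$ and the vanishing sequence $(\eps_N)_N$ so that the event $\{\mathbf{X}^{(N)}\in\Omega_N\}$ has probability $1-o(e^{-KN})$. The plan is simply to combine the two ingredients already established just above: Theorem~\ref{thm-muproj.conc} (super-exponential concentration of the block-resolved spectral measures $\mu_{\mathbf{X}^{(N)}}(\Pi_i,\Pi_j)$ towards $\mu_{ij}$ in the Kantorovich--Rubinstein distance) and Lemma~\ref{lem-norm_exptight} (exponential tightness of the operator norm).

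Here is the order I would carry it out. First, fix $K>0$. Apply Lemma~\ref{lem-norm_exptight}: choosing $\cC = \cC(K)$ large enough that $C - L\cC/4 \le -K$, we obtain $\frac1N\ln\P(\|\mathbf{X}^{(N)}\|\ge\cC)\le -K$ for all $N$ (or at least $\limsup$ of it). Second, invoke Theorem~\ref{thm-muproj.conc}: since $\limsup_N \frac1N\ln\P(\exists i,j: d_{KR}(\mu_{\mathbf{X}^{(N)}}(\Pi_i,\Pi_j),\mu_{ij})\ge\eps)=-\infty$ for \emph{every} fixed $\eps>0$, a standard diagonal extraction produces a sequence $\eps_N\downarrow 0$ such that $\limsup_N\frac1N\ln\P(\exists i,j: d_{KR}(\mu_{\mathbf{X}^{(N)}}(\Pi_i,\Pi_j),\mu_{ij})\ge\eps_N)=-\infty$; in particular this probability is eventually $\le e^{-KN}$. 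Concretely, for each $m\in\N$ pick $N_m$ so that $\P(\exists i,j: d_{KR}(\cdots)\ge 1/m)\le e^{-mN}$ for all $N\ge N_m$, with $N_m$ increasing, and set $\eps_N = 1/m$ for $N_m\le N< N_{m+1}$. Third, a union bound: by definition of $\Omega_N$ in~\eqref{eq-defOmegaN},
\begin{displaymath}
\P(\mathbf{X}^{(N)}\notin\Omega_N)\le \P(\exists i,j\in[L]: d_{KR}(\mu_{\mathbf{X}^{(N)}}(\Pi_i,\Pi_j),\mu_{ij})>\eps_N) + \P(\|\mathbf{X}^{(N)}\|>\cC).
\end{displaymath}
The first term is $o(e^{-KN})$ by the choice of $(\eps_N)_N$, the second is $\le e^{CN-L\cC N/4}\le e^{-KN}$ by the choice of $\cC$. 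Since $\frac1N\ln(a_N+b_N)\le \frac1N\ln 2 + \max(\frac1N\ln a_N,\frac1N\ln b_N)$, taking $\limsup_{N\to\infty}$ gives the bound $-K$ (one can even absorb the $\frac1N\ln2$ and a slack in $\cC$ to get strict inequality, but $\le -K$ as stated is immediate).

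There is essentially no obstacle here: this lemma is pure bookkeeping assembling Theorem~\ref{thm-muproj.conc} and Lemma~\ref{lem-norm_exptight}. The only point requiring a line of care is the passage from "for every fixed $\eps$ the rate is $-\infty$" to "there exists a vanishing sequence $\eps_N$ along which the probability still beats $e^{-KN}$" — this is the standard trick of choosing $\eps_N$ tending to zero slowly enough, and it is worth spelling out the nested choice of thresholds $N_m$ so that the $\limsup$ is genuinely $-\infty$ (not merely $\le -K$) if one wants the stronger statement, though for the lemma as stated $\le -K$ suffices. One should also note $\eps_N>0$ throughout, which holds since each $1/m>0$. I would write the proof in three short sentences mirroring the three steps above.
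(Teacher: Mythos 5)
Your argument is correct and is precisely what the paper does (the paper's proof is a one-liner citing the same two ingredients). The only thing you add is to spell out the standard diagonal extraction that turns ``rate $-\infty$ for every fixed $\eps$'' into a vanishing sequence $(\eps_N)_N$, which the paper leaves implicit.
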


\begin{proof}[Proof of Lemma~\ref{lem-largeproba}]
This is immediate from the above results: The existence of a suitable sequence $(\eps_N)_N$ follows from Theorem~\ref{thm-muproj.conc} and we use Lemma~\ref{lem-norm_exptight} to pick $\cC>0$.
\end{proof}

Next, consider the following bound for $\Psi\in\mathrm{Sym}_L^{+,1}(\R)$, $x>r_\infty$, and $\delta,\kappa>0$
\begin{align*}
&\P[\{\mathbf{X}^{(N)}\in\Omega_N\}\cap\{|\lambda_1(\mathbf{X}^{(N)})-x|\leq\delta\}\cap\{\|\rho(\mathbf{v}_1(\mathbf{X}^{(N)}))-\Psi\|\leq\kappa\}]\\
&\geq \exp(NP^N_{\delta,\kappa}(x,\Psi))(1-\P(\Omega_N^c)\exp(-NP^N_{\delta,\kappa}(x,\Psi))),
\end{align*}
where Lemma~\ref{lem-largeproba} assures that for $\mathcal C$ large enough, we can find $o_N(1)$ going to zero when~$N$ goes to infinity so that
\begin{displaymath}
P^N_{\delta,\kappa}(x,\Psi)\leq\frac{1+o_N(1)}{N}\ln\big(\P[\{\mathbf{X}^{(N)}\in\Omega_N\}\cap\{|\lambda_1(\mathbf{X}^{(N)})-x|\leq\delta\}\cap\{\|\rho(\mathbf{v}_1(\mathbf{X}^{(N)}))-\Psi\|\leq\kappa\}]\big),
\end{displaymath}
provided we choose the constants according to \eqref{finite}. This introduces $\Omega_N$ into the estimation of $P^{(N)}_{\delta,\kappa}(x,\Psi)$ and motivates the definition of the set $B_{\delta,\kappa, N}(x,\Psi)$ as
\begin{equation}\label{eq-defBdeltaN}
B_{\delta,\kappa, N}(x,\Psi)=\{\mathbf{X}^{(N)}\in\Omega_N\}\cap\{|\lambda_1(\mathbf{X}^{(N)})-x|\leq\delta\}\cap\{\|\rho(\mathbf{v}_1(\mathbf{X}^{(N)}))-\Psi\|\leq\kappa\}.
\end{equation}
We include it into the computation via tilting by spherical integrals. More precisely, thanks to Lemma \ref{lem-convsphint},
\begin{align}
P^N_{\delta,{\kappa}}(x,\Psi)&\leq\frac{1+o_N(1)}{N}\ln\E\big[\I_{\{\mathbf{X}^{(N)}\in B_{\delta,{\kappa}, N}(x,\Psi)\}}\frac{I_{NL}(\mathbf{X}^{(N)},\theta)}{I_{NL}(\mathbf{X}^{(N)},\theta)}\big]\label{eq-ubtilting1}\\
&\leq\frac{1}{N}\ln\E_W\big[\I_{\{\mathbf{X}^{(N)}\in B_{\delta,{\kappa}, N}(x,\Psi)\}}\E_{\mathbf{u}}[\re^{NL\theta\langle\mathbf{u},\mathbf{X}^{(N)}\mathbf{u}\rangle}]\big]-LJ_{\mu_\infty}(x,\theta)-\rem(N,\delta,{\kappa}),\NN
\end{align}
where $\lim_{\delta,\kappa\rightarrow0}\lim_{N\rightarrow\infty}\rem(N,\delta,{\kappa})=0$ due to the convergence of the spherical integral. Recall that $\mu_\infty$ denotes the limiting spectral measure for the model.

\medskip
Lastly, we define the set $S_\eps$ for $\lambda\in\mathbb R$ and a vector $\mathbf{w}\in\R^{NL}$ by
\begin{equation}\label{eq-defSeps}
S_\eps(\lambda,\mathbf{w}):=\{\mathbf{u}\in\mathds{S}^{NL-1}:\max\{\|\rho(\Pi^{\perp \mathbf{w}_1}\mathbf{u})-\varphi(\lambda,\theta)\|_\infty,\|\rho(\Pi^{\perp \mathbf{w}}\mathbf{u},\mathbf{w})\|_{\infty}\}<\eps\},
\end{equation}
where $\Pi^{\perp \mathbf{w}}$ denotes the projection to the orthocomplement of $\mathbf{w}$ and $\varphi$ was defined in~\eqref{eq-defvphi}. This allows formulating the second main tool for this section.

\begin{proposition}\label{prop-tiltedonsphere}
Fix $\delta,\eps,\kappa>0$. For any $\mathbf{H}\in B_{\delta,\kappa, N}(x,\Psi)$ such that $\lambda_1(\mathbf{H})-3\delta>r_\infty$ and $m_{\mu_\infty}(\lambda_1(\mathbf{H})-3\delta)>-2\theta$, we have
\begin{displaymath}
\frac{\E_{\mathbf{u}}[\mathds{1}_{S_\eps(\lambda_1(\mathbf{H}),\mathbf{v}_1(\mathbf{H}))}\re^{\theta NL\langle \mathbf{u},\mathbf{Hu}\rangle}]}{\E_{\mathbf{u}}[\re^{\theta NL \langle\mathbf{u},\mathbf{Hu}\rangle}]}\geq \re^{-o(N)}
\end{displaymath}
where $S_\eps$ was defined in~\eqref{eq-defSeps} and the error satisfies $\lim_{N\rightarrow\infty}o(N)/N=0$.
\end{proposition}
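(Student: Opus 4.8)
\textbf{Proof plan for Proposition~\ref{prop-tiltedonsphere}.}

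The plan is to show that the spherical integral is, up to subexponential factors, concentrated on the set $S_\eps(\lambda_1(\mathbf{H}),\mathbf{v}_1(\mathbf{H}))$, by decomposing $\mathbf{u}$ along $\mathbf{v}_1 := \mathbf{v}_1(\mathbf{H})$ and its orthocomplement. Write $\mathbf{u} = s\,\mathbf{v}_1 + \sqrt{1-s^2}\,\mathbf{z}$ with $\mathbf{z}\in\mathds{S}^{NL-1}$ orthogonal to $\mathbf{v}_1$ and $s\in[-1,1]$; under the uniform law on the sphere, $s^2$ is a Beta$(1/2,(NL-1)/2)$ variable (so $s = O(N^{-1/2})$ with overwhelming probability) and, conditionally on $s$, $\mathbf{z}$ is uniform on the unit sphere of $\mathbf{v}_1^\perp$. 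First I would split the quadratic form as $\langle\mathbf{u},\mathbf{H}\mathbf{u}\rangle = s^2\lambda_1(\mathbf{H}) + 2s\sqrt{1-s^2}\langle\mathbf{v}_1,\mathbf{H}\mathbf{z}\rangle + (1-s^2)\langle\mathbf{z},\mathbf{H}\mathbf{z}\rangle = s^2\lambda_1 + (1-s^2)\langle\mathbf{z},\mathbf{H}\mathbf{z}\rangle$, the cross term vanishing since $\mathbf{H}\mathbf{v}_1 = \lambda_1\mathbf{v}_1 \perp \mathbf{z}$. The restriction of $\mathbf{H}$ to $\mathbf{v}_1^\perp$ is a symmetric matrix whose empirical spectral measure is $\mu_N$ with one atom removed (negligible) and whose top eigenvalue is $\lambda_2(\mathbf{H}) \le \lambda_1(\mathbf{H})$; since $\mathbf{H}\in\Omega_N$, its spectral measure is close to $\mu_\infty$ and its norm is bounded, so Lemma~\ref{lem-convsphint} applies to $I_{NL-1}(\mathbf{H}|_{\mathbf{v}_1^\perp},\theta(1-s^2))$.

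The core computation is a Laplace/saddle-point analysis in the variable $s$. Writing the spherical integral as $\E_s[ \re^{\theta NL s^2\lambda_1}\, I_{NL-1}(\mathbf{H}|_{\mathbf{v}_1^\perp},\theta(1-s^2)) ]$ and using the rate-function asymptotics $\tfrac1N\ln I_{NL-1} \approx L J_{\mu_\infty}(\lambda_2(\mathbf{H}),\theta)$ together with the density of $s^2$, the exponent becomes (after the change of variables $t = s^2$) $N$ times a function of $t$ whose maximizer $t^\star$ I expect to identify: under the hypothesis $m_{\mu_\infty}(\lambda_1(\mathbf{H})-3\delta) > -2\theta$ — i.e.\ $\theta$ lies in the ``super-critical'' regime where the BBP-type outlier is activated — one finds $t^\star = (1 + m_{\mu_\infty}(\lambda_1)/(2\theta))_+ + o_\delta(1) > 0$. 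This is precisely the weight appearing in~\eqref{eq-defphi}, which explains why on the dominant set $\rho(\mathbf{u}) \approx \vphi(\theta,\lambda_1) + t^\star \rho(\mathbf{v}_1) \approx \phi(\theta,\lambda_1,\Psi)$ once $\mathbf{H}\in B_{\delta,\kappa,N}$ forces $\rho(\mathbf{v}_1) \approx \Psi$ and $\lambda_1 \approx x$. Then I would verify that on the event $\{|s^2 - t^\star| < \eps'\}$ intersected with a set of overwhelming conditional $\mathbf{z}$-probability, the two defining conditions of $S_\eps$ hold: $\rho(\Pi^{\perp\mathbf{v}_1}\mathbf{u}) = (1-s^2)\rho(\mathbf{z})$ concentrates near $(1-t^\star)\vphi/(1-t^\star)$-type limit, equal to $\vphi(\lambda_1,\theta)$ by the defining relation~\eqref{eq-defvphi} coming from the MDE and the block structure (here Theorem~\ref{thm-muproj.conc} is what guarantees that $\rho(\mathbf{z})$, a weighted spectral average over $\mathbf{H}|_{\mathbf{v}_1^\perp}$, converges to the right $L\times L$ matrix built from $M$), while $\rho(\Pi^{\perp\mathbf{v}}\mathbf{u},\mathbf{v}) = \sqrt{1-s^2}\,\rho(\mathbf{z},\mathbf{v}_1)$ is small because $\mathbf{z}\perp\mathbf{v}_1$ makes the naive pairing vanish and the block-permuted pairing is $O(N^{-1/2})$ by isotropy.

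The main obstacle I anticipate is step two made rigorous: controlling the conditional $\mathbf{z}$-integral $I_{NL-1}(\mathbf{H}|_{\mathbf{v}_1^\perp},\theta(1-s^2))$ \emph{uniformly in $s$} over the relevant range, and simultaneously showing that the \emph{value} of $\rho(\Pi^{\perp\mathbf{v}_1}\mathbf{u})$ concentrates — not just the exponential rate. The rate convergence in Lemma~\ref{lem-convsphint} is uniform on neighborhoods of the spectral data, which handles the $s$-uniformity of the exponent, but extracting that $\rho(\mathbf{z})$ itself is close to $\vphi(\lambda_1,\theta)$ requires differentiating the spherical integral in an auxiliary parameter (tilting $\mathbf{H}|_{\mathbf{v}_1^\perp}$ by $\eta\, E_{ij}\otimes\Id_N$ and reading off the derivative of $J$), which is exactly where the per-block concentration of Theorem~\ref{thm-muproj.conc} and the identity $\mu_{\mathbf{H}}(\Pi_i,\Pi_j)[f] = (E_{ij}\otimes\tfrac1N\Tr)[f(\mathbf{H})]$ enter, together with the fact that the relevant resolvent $\tfrac1{t-\mathbf{H}}$ evaluated at $t = (-m_{\mu_\infty})^{-1}(2\theta(1-t^\star))$ has block trace $M(t)$ by the tensor MDE~\eqref{eq-tensorMDE}. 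Carefully matching the normalization constants $NL$ versus $NL-1$ versus the $(1-s^2)$ rescaling of $\theta$, and handling the boundary case $t^\star = 0$ (where $m_{\mu_\infty}(\lambda_1) \ge -2\theta$ is an equality up to $o_\delta(1)$), will be the fiddly but non-conceptual part of the argument.
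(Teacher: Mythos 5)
Your plan is a genuinely different route from the paper's. You factorize the tilted law over the overlap $s=\langle\mathbf{u},\mathbf{v}_1(\mathbf{H})\rangle$ and the conditionally distributed direction $\mathbf{z}$, run a Laplace analysis in $s$ using the rate-function asymptotics of the sub-sphere spherical integral $I_{NL-1}(\mathbf{H}|_{\mathbf{v}_1^\perp},\theta(1-s^2))$, and then argue concentration of $\rho(\mathbf{z})$ under the conditionally tilted law. The paper instead works directly with the tilted law $\Q_{\mathbf{H},\theta}$ of~\eqref{eq-defQ}, replaces the distribution of $\Pi^{\perp\mathbf{v}_1}\mathbf{u}$ by a Gaussian approximation in the eigenbasis (components with variance $\propto(2NL\theta(\lambda_1-\lambda_i))^{-1}$), and establishes concentration by explicit first and second moments (Chebyshev and Wick). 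Both routes correctly predict the same saddle $t^\star=(1+m_{\mu_\infty}(\lambda_1)/(2\theta))_+$, which is why the profile $\phi$ in~\eqref{eq-defphi} appears. Your route, if completed, trades the Gaussian-vector picture for a scalar saddle in $s$ plus a separate concentration statement; the paper's route has the advantage of giving quantitative $O(1/N)$ bounds on the probability of failure directly, rather than only up to $\re^{-o(N)}$.

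However, there are two concrete gaps. First, the statement is asserted for \emph{any} $\mathbf{H}\in B_{\delta,\kappa,N}(x,\Psi)$, and the set $B_{\delta,\kappa,N}(x,\Psi)$ does not enforce a spectral gap between $\lambda_1(\mathbf{H})$ and $\lambda_2(\mathbf{H})$. Your plan tacitly needs one: when $\lambda_2(\mathbf{H})$ approaches $\lambda_1(\mathbf{H})$, the sub-sphere tilt $\theta(1-s^2)$ can be super-critical for $\mathbf{H}|_{\mathbf{v}_1^\perp}$ as well, the Gaussian component along $\mathbf{v}_2$ diverges, and the concentration of $\rho(\mathbf{z})$ breaks down. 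The paper handles this by introducing $\Omega_N'$ and a projection $\Pi_{\mathrm{out}}$ onto the top $k_N-1$ directions, reducing to a cut matrix $\mathbf{H}^{\mathrm{cut}}$ that does have a spectral gap (Lemma~\ref{lem-outlier}), and controlling the removed part via Lemma~\ref{lem-outpart}. Your plan would need an analogous cutting step, and it is not clear how your saddle-point-in-$s$ framework accommodates it, since the projection onto the outlying block lives inside the $\mathbf{z}$-sphere.

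Second, the justification of the cross-term smallness, ``$\rho(\Pi^{\perp\mathbf{v}_1}\mathbf{u},\mathbf{v}_1)=\sqrt{1-s^2}\,\rho(\mathbf{z},\mathbf{v}_1)$ is small by isotropy,'' is not quite right: under $\Q_{\mathbf{H},\theta}$, conditionally on $s$, $\mathbf{z}$ is \emph{not} uniform on the sub-sphere — it carries the tilt $\exp(\theta NL(1-s^2)\langle\mathbf{z},\mathbf{H}\mathbf{z}\rangle)$, so isotropy fails. The correct argument is that $\rho(\mathbf{z},\mathbf{v}_1)_{ij}$ is a linear functional of $\mathbf{z}$ whose mean vanishes because the Gaussian approximation of the tilted sub-sphere law is centered, and whose variance is $O(1/N)$ by the same covariance bound the paper uses (spectral radius $\lesssim(2\theta NL\delta)^{-1}$). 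This is exactly~\eqref{eq-Sepsbound2} in the paper, and your ``isotropy'' shorthand should be replaced by this computation. Similarly, your proposed route for concentrating $\rho(\mathbf{z})$ (differentiating the spherical integral in a block-tilting parameter) is plausible in principle but is substantially more indirect than the paper's Chebyshev-plus-Wick bound~\eqref{eq-Sepsbound1}, and would require its own uniformity-in-$s$ argument to be filled in.
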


We prove Proposition~\ref{prop-tiltedonsphere} in Section~\ref{sect-tiltedonsphere}. Assuming all of the above tools given, we assemble them to prove Proposition~\ref{prop-sphintloc}. 

\begin{proof}[Proof of Proposition~\ref{prop-sphintloc}]
After the initial step of tilting the measure by spherical integrals in~\eqref{eq-ubtilting1}, Proposition~\ref{prop-tiltedonsphere} allows to replace $\E_{\mathbf{u}}[\re^{\theta NL \langle\mathbf{u},\mathbf{X}^{(N)}\mathbf{u}\rangle}]$ by a term involving the set~\eqref{eq-defSeps}. More precisely, we have
\begin{align}
P^N_{\delta,\kappa}(x,\Psi)&\leq\frac{1}{N}\E_W\big[\I_{\{\mathbf{X}^{(N)}\in B_{\delta,{\kappa}, N}(x,\Psi)\}}\E_{\mathbf{u}}[\mathds{1}_{S_\eps(\mathbf{v}_1(\mathbf{X}^{(N)}),\lambda_1(\mathbf{X}^{(N)}))}\re^{\theta NL\langle \mathbf{u},\mathbf{X}^{(N)}\mathbf{u}\rangle}]\big]\NN\\
&\quad-LJ_{\mu_\infty}(x,\theta)-\rem(N,\delta,{\kappa}),\label{eq-ubtilting2}
\end{align}
where the error satisfies $\lim_{\delta,{\kappa}\rightarrow0}\lim_{N\rightarrow\infty}\rem(N,\delta,\kappa)=0$. Note that the error term $\rem(N,\delta,\kappa)$ in~\eqref{eq-ubtilting2} may be different from that in~\eqref{eq-ubtilting1} although the limiting behavior is the same.

\medskip
Next, we focus on the conditions in~\eqref{eq-defSeps}. Decomposing
\begin{displaymath}
\mathbf{u}=\langle \mathbf{v}_1(\mathbf{X}^{(N)}),\mathbf{u}\rangle\mathbf{v}_1(\mathbf{X}^{(N)})+\Pi^{\perp \mathbf{v}_1(\mathbf{X}^{(N)})}\mathbf{u},
\end{displaymath}
it follows that
\begin{align}
\rho(\mathbf{u})_{ij}&=\langle \mathbf{P}^{i\leftrightarrow j}\Pi_i\mathbf{u},\Pi_j\mathbf{u}\rangle\NN\\
&=\langle\mathbf{v}_1(\mathbf{X}^{(N)}),\mathbf{u}\rangle^2\rho(\mathbf{v}_1(\mathbf{X}^{(N)}))_{ij}+\langle\mathbf{v}_1(\mathbf{X}^{(N)}),\mathbf{u}\rangle\rho(\mathbf{v}_1(\mathbf{X}^{(N)}),\Pi^{\perp \mathbf{v}_1(\mathbf{X}^{(N)})}\mathbf{u})_{ij}\NN\\
&\quad +\rho(\Pi^{\perp \mathbf{v}_1(\mathbf{X}^{(N)})}\mathbf{u})_{ij}.\label{eq-rhoudecomp}
\end{align}
Consider now $\mathbf{u}$ in $S_\eps(\mathbf{v}_1(\mathbf{X}^{(N)}),\lambda_1(\mathbf{X}^{(N)}))$ and $\mathbf{X}^{(N)}$ so that $\|\rho(\mathbf{v}_1(\mathbf{X}^{(N)}))-\Psi\|\leq\kappa$. Then, we have
\begin{align*}
\langle\mathbf{v}_1(\mathbf{X}^{(N)}),\mathbf{u}\rangle^2&=1-\|\Pi^{\perp \mathbf{v}_1(\mathbf{X}^{(N)})}\mathbf{u}\|^2=1-\sum_{j=1}^L\langle\Pi_j\Pi^{\perp \mathbf{v}_1(\mathbf{X}^{(N)})}\mathbf{u},\Pi_j\Pi^{\perp \mathbf{v}_1(\mathbf{X}^{(N)})}\mathbf{u}\rangle\\
&=1-\Tr[\rho(\Pi^{\perp \mathbf{v}_1(\mathbf{X}^{(N)})}\mathbf{u})]= 1-\Tr[\varphi(\lambda_1(\mathbf{X}^{(N)}),\theta)]+\cO(\eps).
\end{align*}
Moreover, the second term in~\eqref{eq-rhoudecomp} is $\cO(\eps)$ while the third is equal to $\varphi(\lambda_1(\mathbf{X}^{(N)}),\theta)$ up to an $\cO(\eps)$ error. It follows that
\begin{displaymath}
\rho(\mathbf{u})=\Big(1-\Tr[\varphi(\lambda_1(\mathbf{X}^{(N)}),\theta)]\Big)\Psi+\varphi(\lambda_1{(\mathbf{X}^{(N)})},\theta)+
o(\delta,{\kappa},\eps)
\end{displaymath}
where the bound $o_{\delta,\kappa,\eps}(1)$ satisfies $\lim_{\delta,\kappa, \eps\rightarrow0}o(\delta,\kappa, \eps)=0$.
Lastly, recalling the choice of $\phi(\theta,x,\Psi)$ from~\eqref{eq-defphi} and that the map $\lambda\mapsto m_{\mu_\infty}(\lambda)$ is smooth for $\lambda\in\R$ outside of the support of the limiting spectral density, we can estimate the indicators in~\eqref{eq-ubtilting2} as
\begin{align*}
&\I_{\{\mathbf{X}^{(N)}\in B_{\delta,\kappa,N}(x,\Psi)\}}\I_{{\mathbf{u}\in} S_\eps(\mathbf{v}_1(\mathbf{X}^{(N)}),\lambda_1(\mathbf{X}^{(N)}))}\\
&\leq \I_{{\mathbf{u}\in} S_\eps(\mathbf{v}_1(\mathbf{X}^{(N)}),\lambda_1(\mathbf{X}^{(N)}))}\I_{\{\|\rho(\mathbf{v}_1(\mathbf{X}^{(N)}))-\Psi\|\leq\kappa\}}\I_{\{|\lambda_1(\mathbf{X}^{(N)})-x|\leq\delta\}}\\
&\leq \I_{\{\|\rho(\mathbf{u})-\phi(\theta,x,\Psi)\|\leq o_{\delta,\kappa,\eps}(1)\}}.
\end{align*}
 Fixing a sequence $(\eps'_N)_N$ that ensures Proposition~\ref{prop-tiltedonsphere} remains true reduces 
 $o_{\delta,\kappa, \eps}(1)$ to the desired $o_\delta(1)$ error. Hence, we deduce from~\eqref{eq-ubtilting2} that
\begin{displaymath}
P^N_{\delta,\kappa}(x,\Psi)\leq\frac{1}{N}\E_W\E_{\mathbf{u}}[\I_{\{\|\rho(\mathbf{u})-\phi(\theta,x,\Psi)\|<o_{\delta,{\kappa}}(1))\}}\re^{\theta NL\langle\mathbf{u},\mathbf{X}^{(N)}\mathbf{u}\rangle}]-J_\mu(x,\theta)+\rem(N,\delta,{\kappa})
\end{displaymath}
with $\lim_{\delta\rightarrow0}\lim_{N\rightarrow\infty}\rem(N,\delta,{\kappa})=0$ and $o_{\delta,{\kappa}}(1)$ goes to zero as $\delta,\kappa$ go to zero, as claimed.
\end{proof}

It remains to establish Theorem~\ref{thm-muproj.conc} and Proposition~\ref{prop-tiltedonsphere} which were the main tools used in the proof of Proposition~\ref{prop-sphintloc}.

\subsection{Proof of Theorem~\ref{thm-muproj.conc}}\label{sect-concentration}
The proof is divided into two steps. First, we show the convergence of the expectation and then we conclude using a concentration of measure result. 

\medskip
\underline{\smash{Convergence of the expectation:}} We show that $(\E[\mu_{\mathbf{X}^{(N)}}(\Pi_i,\Pi_j)])_N$ converges vaguely to~$\mu_{ij}$ for every choice of $i,j\in[L]$. Using the tightness in Lemma~\ref{lem-norm_exptight}, this statement is then readily strengthened to the desired weak convergence.

\medskip
Let $z\in\C$ with $\Im(z)>0$. We start by computing
\begin{equation}\label{eq-defRX}
\mu_{\mathbf{X}^{(N)}}(\Pi_i,\Pi_j)[(\cdot-z)^{-1}]=(E_{ij}\otimes\frac{1}{N}\Tr[\cdot])[(\mathbf{X}^{(N)}-z\Id_{NL})^{-1}],
\end{equation}
which shows that the Stieltjes transform of $\mu_{\mathbf{X}^{(N)}}(\Pi_i,\Pi_j)$ is given by the normalized trace of the $N\times N$-block at position $(i,j)$ of the resolvent of $\mathbf{X}^{(N)}$. Let $R_{\mathbf{X}^{(N)}}(z)$ denote the matrix for which $(R_{\mathbf{X}^{(N)}}(z))_{ij}$ is given by the expectation of~\eqref{eq-defRX} and observe that $(R_{\mathbf{X}^{(N)}})_{ij}$ is, in fact, the Stieltjes transform of $\E[\mu_{\mathbf{X}^{(N)}}(\Pi_i,\Pi_j)]$. Applying~\cite[Thm.~4.3]{HST2006}, it follows that $R_{\mathbf{X}^{(N)}}(z)$ is an approximate solution to the MDE~\eqref{eq-tensorMDE}. More precisely,
\begin{displaymath}
\|\Id_L+(A_0-zI_L)R_{\mathbf{X}^{(N)}}+\sum_{j=1}^kA_jR_{\mathbf{X}^{(N)}}A_jR_{\mathbf{X}^{(N)}}\|\leq \frac{C_1}{N^2\Im(z)^4}+\frac{C_2}{N^2}
\end{displaymath}
for some finite constants $C_1,C_2>0$. Invoking the stability of the MDE (see, e.g.,~\cite[Sect.~3.2]{AEKN2019}), it follows that
\begin{displaymath}
\lim_{N\rightarrow\infty}R_{\mathbf{X}^{(N)}}(z)=M(z)
\end{displaymath}
for all $z$ in the upper half-plane. This implies the convergence of the Stieltjes transform of $\E[\mu_{\mathbf{X}^{(N)}}(\Pi_i,\Pi_j)]$. The vague convergence of the underlying measures then follows by adapting the proof of~\cite[Thm.~5.8]{FleermannKirsch2023}. Their weak convergence then follows from the tightness in Lemma \ref{lem-norm_exptight}. In particular, we use~\eqref{eq-complexRadon} to supply the bound needed for the construction of a vaguely convergent subsequence (cf.~\cite[Lem.~2.15]{FleermannKirsch2023}) in the first step. Recalling~\eqref{eq-Stieltjestransf}, the limit is readily identified as $\mu_{ij}$.

\medskip
\underline{\smash{Concentration:}} We show that {for every $\eps>0$,}
\begin{displaymath}
\limsup_{N\rightarrow\infty}\frac{1}{N}\ln[\P(\exists i,j\in[L]: d_{KR}\big(\mu_{\mathbf{X}^{(N)}}(\Pi_i,\Pi_j),\E[\mu_{\mathbf{X}^{(N)}}(\Pi_i,\Pi_j)]\big)\geq\eps)]=-\infty,
\end{displaymath}
which, together with the weak convergence of $\E[\mu_{\mathbf{X}^{(N)}}(\Pi_i,\Pi_j)]$, concludes the proof of Theorem~\ref{thm-muproj.conc}. Let $f$ be a Lipschitz function. We start by noting that, by definition, $\mu_{\mathbf{X}^{(N)}}(\Pi_i,\Pi_j)$ is Lipschitz in the entries of $\sqrt{N}\mathbf{X}^{(N)}$ for the Euclidean norm. More precisely, we have for $\mathbf{H},\mathbf{H}'\in\R^{NL\times NL}$ that
\begin{align}
|\mu_{\mathbf{H}}(\Pi_i,\Pi_j)[f]&-\mu_{\mathbf{H}'}(\Pi_i,\Pi_j)[f]|\leq\frac{1}{N}\Tr(\Pi_i\mathbf{P}^{i\leftrightarrow j}\Pi_j|f(\mathbf{H})-f(\mathbf{H}')|)\nonumber\\
&\qquad\leq\frac{1}{N}\Big(\Tr(\Pi_j\mathbf{P}^{i\leftrightarrow j}\Pi_i)\Big)^{1/2}(\Tr(f(\mathbf{H})-f(\mathbf{H}'))^2)^{1/2}\label{CSa}
\end{align}
by Cauchy-Schwartz's inequality. 
We then observe (see also~\cite{Kittaneh1985,DGH2024}) that if $\mathbf{H}$ (resp.~$\mathbf{H}'$) has eigenvalues $(\lambda_{i})_{1\le i\le N}$ (resp. $(\lambda_{i}')_{1\le i\le N}$)  for the unit eigenvectors $(v_{i})_{1\le i\le N}$ (resp. $(v_{i}')_{1\le i\le N}$), it holds that
\begin{eqnarray*}
\Tr(f(\mathbf{H})-f(\mathbf{H}'))^{2}&=& \sum_{i,j=1}^{N}\langle v_{i},v_{j}'\rangle^{2}\left(f(\lambda_{i})^{2}-f(\lambda_{j}')^{2}-2f(\lambda_{i})f(\lambda_{j}')\right)\\
&\le&\mathrm{Lip}(f)^{2}\sum_{i,j=1}^{N}\langle v_{i},v_{j}'\rangle^{2}\left(\lambda_{i}-\lambda_{j}'\right)^{2}=\mathrm{Lip}(f)^{2}\Tr(\mathbf{H}-\mathbf{H}')^{2}.\end{eqnarray*}
Plugging this inequality into \eqref{CSa} and using that the projections $\Pi_i$ resp. $\Pi_j$, and the permutation matrix $\mathbf{P}^{i\leftrightarrow j}$ have operator norm at most one, we conclude that 
$$|\mu_{\mathbf{H}}(\Pi_i,\Pi_j)[f]-\mu_{\mathbf{H}'}(\Pi_i,\Pi_j)[f]|\leq\frac{1}{\sqrt{N}}\mathrm{Lip}(f)(\Tr(\mathbf{H}-\mathbf{H}')^2)^{1/2}\,.$$
Recalling from~\eqref{eq-model} that the GOE matrices used in defining $\mathbf{X}^{(N)}$ follow the usual scaling, i.e., the entries of $\sqrt{N}\mathbf{X}^{(N)}$ are of order one, and 
using that Lipschitz functions of Gaussian random variables exhibit sub-Gaussian tails (see, e.g.~\cite[Ch.~2]{GIneqBook}), we obtain for any fixed Lipschitz function~$f$ that
\begin{align*}
\P\Big(|\mu_{\mathbf{X}^{(N)}}(\Pi_i,\Pi_j)[f]-\E[\mu_{\mathbf{X}^{(N)}}(\Pi_i,\Pi_j)[f]]|\geq \eps\mathrm{Lip(f)}(\frac{1}{N}\Tr(\Pi_j\mathbf{P}^{i\leftrightarrow j}\Pi_i)\Big)^{1/2}\Big)\leq C_1e^{-C_2\eps^2N^2}
\end{align*}
for suitable constants $C_1,C_2>0$. By approximating any Lipschitz function by a finite basis as in \cite[Corollary 1.4]{GuionnetZeitouni00}, the bound can be generalized to $d_{KR}$, yielding
\begin{align*}
\P\Big(d_{KR}(\mu_{\mathbf{X}^{(N)}}(\Pi_i,\Pi_j),\E[\mu_{\mathbf{X}^{(N)}}(\Pi_i,\Pi_j)]\geq \eps\Big(\frac{1}{N}\Tr(\Pi_j\mathbf{P}^{i\leftrightarrow j}\Pi_i)\Big)^{1/2}\Big)&\leq C_1\eps^{-3/2}e^{-C_2\eps^5 N^2},
\end{align*}
which implies the claim.

\subsection{Proof of Proposition~\ref{prop-tiltedonsphere}}\label{sect-tiltedonsphere}
The key quantity for this proof is the tilted law
\begin{equation}\label{eq-defQ}
\Q_{\mathbf{H},\theta}(\dx \mathbf{u}):=\frac{\re^{\theta NL\langle \mathbf{u},\mathbf{Hu}\rangle}\dx \mathbf{u}}{\int_{\mathds{S}^{NL-1}}\re^{\theta NL\langle \mathbf{u},\mathbf{Hu}\rangle}\dx \mathbf{u}}=\frac{\re^{\theta NL\langle \mathbf{u},\mathbf{Hu}\rangle}}{I_N(\theta,\mathbf{H})}\ \dx \mathbf{u}
\end{equation}
with $\theta\in\R$ and a fixed (deterministic) matrix $\mathbf{H}\in\R^{NL\times NL}$. In this notation, proving Proposition~\ref{prop-tiltedonsphere} is equivalent to showing, again with the definition of $S_\eps$ in \eqref{eq-defSeps},
\begin{equation}\label{eq-tiltedonsphere}
\Q_{\mathbf{H},\theta}\big(S_{\eps}(\lambda_1(\mathbf{H}),\mathbf{v}_1(\mathbf{H}))\big)\geq\re^{-o(N)}
\end{equation}
for $\mathbf{H}\in B_{\delta,N}(x,\Psi)$ with $\lambda_1(\mathbf{H})-3\delta>r_\infty$ and $m_{\mu_\infty}(\lambda_1(\mathbf{H})-3\delta)>-2\theta$, where the error term satisfies $\lim_{N\rightarrow\infty}o(N)/N=0$. We first consider the case where $\lambda_1(\mathbf{H})$ is a sufficiently separated outlier.

\begin{lemma}\label{lem-outlier}
Under the assumptions of Proposition~\ref{prop-tiltedonsphere}, let further $x-\lambda_2(\mathbf{H})>3\delta$. Then~\eqref{eq-tiltedonsphere} holds.
\end{lemma}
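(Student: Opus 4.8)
The plan is to work with the tilted measure $\Q_{\mathbf{H},\theta}$ and show that, when $\lambda_1(\mathbf{H})$ is a well-separated outlier, a $\Q_{\mathbf{H},\theta}$-typical vector $\mathbf{u}$ automatically lies in $S_\eps(\lambda_1(\mathbf{H}),\mathbf{v}_1(\mathbf{H}))$. First I would diagonalize $\mathbf{H}$ and change variables on the sphere, writing $\mathbf{u}=\sum_a c_a \mathbf{v}_a(\mathbf{H})$ with $\sum_a c_a^2=1$, so that $\langle\mathbf{u},\mathbf{Hu}\rangle=\sum_a \lambda_a(\mathbf{H})c_a^2$. Under $\Q_{\mathbf{H},\theta}$ the density is proportional to $\re^{\theta NL\sum_a\lambda_a c_a^2}$; since $\lambda_1-\lambda_2>3\delta-\text{(something)}>0$ and $2\theta>-m_{\mu_\infty}(\lambda_1-3\delta)$, the classical analysis of such tilted sphere measures (as in \cite{GuionnetMaida2005,Maida2007}, and used in \cite{DGH2024}) shows that the mass $c_1^2=\langle\mathbf{v}_1(\mathbf{H}),\mathbf{u}\rangle^2$ concentrates around an explicit value $1+\tfrac{m_{\mu_\infty}(\lambda_1(\mathbf{H}))}{2\theta}$ (equivalently, the overlap with the top eigenvector is macroscopic), while the projection $\Pi^{\perp\mathbf{v}_1(\mathbf{H})}\mathbf{u}$ onto the orthocomplement behaves, after renormalization, like a uniform vector tilted by the bulk of the spectrum of $\mathbf{H}$ restricted to $\mathbf{v}_1^\perp$.

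The two conditions defining $S_\eps$ in~\eqref{eq-defSeps} are then checked separately. For $\|\rho(\Pi^{\perp\mathbf{v}_1}\mathbf{u},\mathbf{v}_1)\|_\infty<\eps$: by symmetry of the tilted law restricted to $\mathbf{v}_1^\perp$ (the tilt only depends on $c_1$ and on the bulk eigenvalues, not on the signs/directions within $\mathbf{v}_1^\perp$), the cross term $\rho(\Pi^{\perp\mathbf{v}_1}\mathbf{u},\mathbf{v}_1)$ has mean zero and concentrates at $0$ at speed $N$, which one gets from a Gaussian/Laplace-type estimate on $\mathds{S}^{NL-1}$ together with the separation of $\lambda_1$. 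For $\|\rho(\Pi^{\perp\mathbf{v}_1}\mathbf{u})-\varphi(\lambda_1(\mathbf{H}),\theta)\|_\infty<\eps$: here I would use that $\Pi^{\perp\mathbf{v}_1}\mathbf{u}/\|\Pi^{\perp\mathbf{v}_1}\mathbf{u}\|$ is, under $\Q_{\mathbf{H},\theta}$, distributed as a spherical vector in $\mathbf{v}_1^\perp\cong\R^{NL-1}$ tilted by $\theta'$ times the compression of $\mathbf{H}$, and that the block structure of $\rho$ together with Theorem~\ref{thm-muproj.conc} (controlling $\mu_{\mathbf{X}^{(N)}}(\Pi_i,\Pi_j)$, hence the block-resolvent $R_{\mathbf{X}^{(N)}}\to M$) pins down $\E_{\mathbf{u}}[\rho(\Pi^{\perp\mathbf{v}_1}\mathbf{u})_{ij}]$ to the $(i,j)$-entry of $-M(\cdot)/(2\theta L)$ evaluated at $(-m_{\mu_\infty})^{-1}(2\theta)$ or at $\lambda_1(\mathbf{H})$ (whichever realizes the max in~\eqref{eq-defvphi}), which is exactly $\varphi(\lambda_1(\mathbf{H}),\theta)$; concentration around this mean again follows from the Lipschitz/sub-Gaussian concentration on the sphere. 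Combining, a $\Q_{\mathbf{H},\theta}$-random $\mathbf{u}$ lies in $S_\eps$ with probability $1-\re^{-cN}$, which is far stronger than the claimed $\re^{-o(N)}$ lower bound.

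The main obstacle I anticipate is making the "tilted restriction to $\mathbf{v}_1^\perp$" argument fully rigorous: one must justify that, conditionally on $c_1$, the law of the normalized orthocomplement component really is the sphere measure in $\R^{NL-1}$ tilted by the compressed matrix $\Pi^{\perp\mathbf{v}_1}\mathbf{H}\Pi^{\perp\mathbf{v}_1}$, and then transfer the block-level concentration of $\mathbf{X}^{(N)}$ (Theorem~\ref{thm-muproj.conc}, proven for the full matrix) to this rank-one perturbation — the removal of one eigendirection changes $\mu_{\mathbf{H}}(\Pi_i,\Pi_j)$ by $O(1/N)$ in Kantorovich–Rubinstein distance, so this should be harmless, but it needs to be tracked. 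A secondary subtlety is the boundary regime where $(-m_{\mu_\infty})^{-1}(2\theta)$ is close to $\lambda_1(\mathbf{H})$, i.e.\ the $\max$ in~\eqref{eq-defvphi} switches branches; there the concentration value of $c_1^2$ degenerates to $0$ and one should check the estimates remain uniform, but the hypothesis $m_{\mu_\infty}(\lambda_1(\mathbf{H})-3\delta)>-2\theta$ together with $\delta$ small gives the needed slack. The genuinely non-outlier case (where $\lambda_1(\mathbf{H})$ is not separated from $\lambda_2(\mathbf{H})$) is deferred to a separate argument following this lemma.
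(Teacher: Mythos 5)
Your outline matches the paper's strategy: work under $\Q_{\mathbf{H},\theta}$ in the eigenbasis of $\mathbf{H}$, exploit the spectral gap guaranteed by $x-\lambda_2(\mathbf{H})>3\delta$, show that the cross term $\rho(\Pi^{\perp\mathbf{v}_1}\mathbf{u},\mathbf{v}_1)$ is mean-zero and small, and show that the quadratic form $\rho(\Pi^{\perp\mathbf{v}_1}\mathbf{u})$ concentrates around $\varphi(\lambda_1(\mathbf{H}),\theta)$. That said, several steps are stated too loosely to carry the weight the argument puts on them.

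First, the actual crux of the paper's proof is an explicit change-of-measure reduction: under $\Q_{\mathbf{H},\theta}$ the law of $\Pi^{\perp\mathbf{v}_1}\mathbf{u}$ is compared to the law $\widetilde{\Q}$ of the Gaussian vector $\sum_{i\geq2}\xi_i\,(2NL\theta(\lambda_1(\mathbf{H})-\lambda_i(\mathbf{H})))^{-1/2}\mathbf{v}_i(\mathbf{H})$ via the identity $\Q_{\mathbf{H},\theta}(\Pi^{\perp\mathbf{v}_1}\mathbf{u}\in S)=\frac{\int_{S\cap\mathds{B}}(1-\|u\|^2)^{-1/2}\widetilde{\Q}(\dx u)}{\int_{\mathds{B}}(1-\|u\|^2)^{-1/2}\widetilde{\Q}(\dx u)}$, with the normalization controlled by~\eqref{eq-titledonspherekey}. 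Your appeal to ``classical analysis of tilted sphere measures'' and ``a spherical vector tilted by the compression of $\mathbf{H}$'' hand-waves this exact point; making the Gaussian surrogate explicit is what turns the rest of the argument into a routine moment computation. Second, the quantities $\rho(\Pi^{\perp\mathbf{v}_1}\mathbf{u})_{ij}$ are quadratic (not Lipschitz) in $\mathbf{u}$, so the ``Lipschitz/sub-Gaussian concentration on the sphere'' you invoke does not apply; the paper instead computes $\Var[Z_{ij}]$ by Wick's theorem under $\widetilde\Q$ and uses Chebyshev, yielding only $\widetilde{\Q}(S_\eps)\geq1-\cO(1/N)$---enough for $\re^{-o(N)}$. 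Your claimed bound $1-\re^{-cN}$ would need a Hanson--Wright-type estimate for Gaussian chaos, not the concentration inequality you cite. Third, the identification $\widetilde{\E}[Z_{ij}]\to\varphi(\theta,\lambda_1(\mathbf{H}))_{ij}$ is not obtained from Theorem~\ref{thm-muproj.conc} or from $R_{\mathbf{X}^{(N)}}\to M$: in Lemma~\ref{lem-outlier} the matrix $\mathbf{H}$ is fixed and the only randomness is $\mathbf{u}$. What is actually used is the deterministic membership $\mathbf{H}\in\Omega_N$ (inherited from $\mathbf{H}\in B_{\delta,\kappa,N}(x,\Psi)$), which forces $d_{KR}(\mu_{\mathbf{H}}(\Pi_i,\Pi_j),\mu_{ij})\leq\eps_N$ and hence pins down the block Stieltjes transform of $\mu_{\mathbf{H}}(\Pi_i,\Pi_j)$ at $\lambda_1(\mathbf{H})$; Theorem~\ref{thm-muproj.conc} only serves earlier to show $\Omega_N$ has overwhelming probability, and $R_{\mathbf{X}^{(N)}}\to M$ concerns expectation over $W$, not $\mathbf{u}$. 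Your flagging of the branch of the $\max$ in~\eqref{eq-defvphi} and of the deferral of the non-separated case is correct and consistent with the paper.
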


\begin{proof}[Proof of Lemma~\ref{lem-outlier}]
We start by noting that the assumptions assure the bound $\lambda_1(\mathbf{H})-\lambda_i(\mathbf{H})>\delta$ {for every $i\ge 2$ }on $\{|x-\lambda_1(\mathbf{H})|\leq\delta\}$. We will first reduce ourselves to Gaussian random variables meaning that we will be able to assume that under $\Q_{\mathbf{H},\theta}$, the distribution of $\Pi^{\perp\mathbf{v}_1(\mathbf{H})}\mathbf{u}$ is approximately the distribution $\widetilde{\Q}$ of the following Gaussian vector :
\begin{displaymath}
\sum_{i=2}^{NL}\frac{\xi_i}{\sqrt{2NL\theta(\lambda_1(\mathbf{H})-\lambda_i(\mathbf{H}))}}\mathbf{v}_i(\mathbf{H})
\end{displaymath}
with $\xi_2,\dots,\xi_{NL}$ i.i.d. $\cN(0,1)$ random variables. We obtain for measurable sets $S$ that
\begin{displaymath}
\Q_{\mathbf{H},\theta}(\Pi^{\perp\mathbf{v}_1(\mathbf{H})}\mathbf{u}\in S)=\frac{\int_{S\cap\mathds{B}_{NL-1}}(1-\|u\|^2)^{-1/2}\widetilde{\Q}(\dx u)}{\int_{\mathds{B}_{NL-1}}(1-\|u\|^2)^{-1/2}\widetilde{\Q}(\dx u)},
\end{displaymath}
where $\mathds{B}_{NL-1}$ denotes the unit ball in $\R^{NL-1}$, and
\begin{equation}\label{eq-titledonspherekey}
\limsup_{N\rightarrow\infty}\frac{1}{N}\ln\Big(\int_{\mathds{B}_{NL-1}}(1-\|u\|^2)^{-1/2}\widetilde{\Q}(\dx u)\Big)\leq0,
\end{equation}
following the proof of~\cite[Prop.~4.3]{CookDG23} and especially (4.17). It only remains to show that
$\widetilde{\Q}(S_\eps(\lambda_1(\mathbf{H}),\mathbf{v}_1(\mathbf{H})))\geq\re^{-o(N)}$. Abbreviate
\begin{displaymath}
Z_{ij}:=\langle P^{i\leftrightarrow j}\Pi_i\Pi^{\perp\mathbf{v}_1(\mathbf{H})}\mathbf{u},\Pi_j\Pi^{\perp\mathbf{v}_1(\mathbf{H})}\mathbf{u}\rangle,
\end{displaymath}
and let $\widetilde{\E}$ denote the expectation under $\widetilde{\Q}$. It follows that
\begin{displaymath}
\widetilde{\E}[Z_{ij}]=\sum_{a=2}^{NL}\frac{1}{2NL\theta(\lambda_1(\mathbf{H})-\lambda_a(\mathbf{H}))}\langle P^{i\leftrightarrow j}\Pi_i\mathbf{v}_a(\mathbf{H}),\Pi_j\mathbf{v}_a(\mathbf{H})\rangle,
\end{displaymath}
which is, up to some constants, equal to the Stieltjes transform of $\mu_\mathbf{H}(\Pi_i,\Pi_j)$. More precisely, setting $f_{y}(x) = \frac{1}{y -x}\wedge (2 \delta^{-1})$ and remembering that $\delta >0$ is a lower bound of the spectral gap, we have that 

\[ \widetilde{\E}[Z_{ij}] = \frac{1}{2}\int f_{\lambda_1(\mathbf{H})}(x) d \mu_\mathbf{H}(\Pi_i,\Pi_j)(x) - \frac{\delta}{2NL \theta} \]

When $N$ goes to $+\infty$, since $\mathbf{H}\in \Omega_N$, $\mu_{\mathbf{H}}(\Pi_i,\Pi_j)$ converges weakly to $\mu_{i,j}$ and for $N$ large enough, its support is included in $[-\mathcal{C}, x - 2\delta ]$. Therefore, for $N$ large enough, we can write

\[ \int f_{\lambda_1(\mathbf{H})}(x) d \mu_{i,j}(x) =\int \frac{1}{y - x} d \mu_{i,j}(x) =\phi(\theta, \lambda_1(\mathbf{H}))_{i,j} \]
Furthermore, since for $y >x-\delta$ the set of functions $f_{y}$ restricted on the interval $[-\mathcal{C}, x - 2\delta ]$ is uniformly Lipschitz. For $\mathbf{H} \in \Omega_N$ defined by equation \eqref{eq-defOmegaN}, we have that $\widetilde{\E}[Z_{ij}]$ is  uniformly close to $\phi(\theta, \lambda_1(\mathbf{H}))_{i,j}$.
Therefore we have
\begin{displaymath}
\lim_{N\rightarrow\infty}\Big|\widetilde{\E}[Z_{i,j}]-\varphi(\theta,\lambda_1(\mathbf{H}))_{ij}\Big|=0
\end{displaymath}
for all $i,j\in[L]$.

\medskip
By construction, $\Pi^{\perp\mathbf{v}_1(\mathbf{H})}\mathbf{u}$ is a Gaussian vector whose covariance matrix $\Sigma$ is bounded in spectral radius by $1/(2\theta NL\delta)$. Using Wick's theorem, we compute
\begin{displaymath}
\Var[Z_{ij}]=\Tr[\Sigma_{ii}\Sigma_{jj}]+\Tr[\Sigma_{ij}^2]\leq\frac{1}{2NL\theta^2\delta^2},
\end{displaymath}
where $\Sigma_{ij}$ denotes the part of the covariance matrix describing the covariance between the nonzero elements of $\Pi_i\Pi^{\perp\mathbf{v}_1(\mathbf{H})}\mathbf{u}$ and $\Pi_j\Pi^{\perp\mathbf{v}_1(\mathbf{H})}\mathbf{u}$. Hence, by Chebyshev's inequality and a union bound, it follows that
\begin{equation}\label{eq-Sepsbound1}
\widetilde{\Q}\Big(\exists i,j\in[L]: \big|Z_{ij}-\widetilde{\E}[Z_{ij}]\big|>\frac{\eps}{2}\Big)\leq\frac{4L^2}{\eps^2}\max_{i,j}\Var[Z_{i,j}]\leq\frac{2L}{N\theta^2\delta^2\eps^2}.
\end{equation}
Recalling that $\mathbf{v}_1(\mathbf{H})$ is a (fixed) unit vector, the definition of $\rho$ in~\eqref{eq-defrho}, and the fact that the entries of $\Pi^{\perp\mathbf{v}_1(\mathbf{H})}\mathbf{u}$ are centered Gaussians, we further note that
\begin{displaymath}
\Var[\rho(\Pi^{\perp\mathbf{v}_1(\mathbf{H})}\mathbf{u},\mathbf{v}_1(\mathbf{H}))_{ij}]\leq \frac{4}{NL\theta\delta}.
\end{displaymath}
By Chebyshev's inequality and a union bound, we thus obtain
\begin{equation}\label{eq-Sepsbound2}
\widetilde{\Q}\Big(\exists i,j\in[L]: \big|\rho(\Pi^{\perp\mathbf{v}_1(\mathbf{H})}\mathbf{u},\mathbf{v}_1(\mathbf{H}))_{ij}\big|\geq\eps\Big)\leq \frac{4L}{N\theta\delta \eps^2}.
\end{equation}
Let now $N$ be large enough that $|Z_{ij}-\varphi(\theta,\lambda_1(\mathbf{H}))|<\eps/2$ for all $i,j\in[L]$. Then the bounds in~\eqref{eq-Sepsbound1} and~\eqref{eq-Sepsbound2} are sufficient to control the probability of $S_\eps(\lambda_1(\mathbf{H}),\mathbf{v}_1(\mathbf{H}))$ as desired. More precisely,
\begin{displaymath}
\widetilde{\Q}(S_\eps(\lambda_1(\mathbf{H}),\mathbf{v}_1(\mathbf{H})))\geq 1-\frac{1}{N}\Big(\frac{2L}{\theta^2\delta^2\eps^2}+\frac{4L}{\theta\delta\eps^2}\Big),
\end{displaymath}
which, combined with~\eqref{eq-titledonspherekey} yields the claim.
\end{proof}

We complete the proof of Proposition~\ref{prop-tiltedonsphere} by considering the case where $\lambda_1(\mathbf{H})$ is not an outlier. To reduce the general case to the one treated in Lemma~\ref{lem-outlier}, we introduce the following notation similar to~\cite{DGH2024}. Let $V$ be a Euclidean vector space and $T$ a self-adjoint automorphism of $V$. In this setting, we define
\begin{equation}\label{eq-defQV}
\Q_{T,\theta}^V(\dx u):=\frac{\re^{\dim(V)\theta\langle u,Tu\rangle}}{\E\big[\re^{\dim(V)\theta\langle u,Tu\rangle}\big]}\ \dx u,
\end{equation}
where $u$ is taken from $\mathds{S}^{NL-1}\cap V$. Note that the setting $V=\R^{NL}$ in~\eqref{eq-defQV} reproduces~\eqref{eq-defQ}. Next, recall the definition of $\Omega_N$ in~\eqref{eq-defOmegaN} for some fixed constant $\cC>0$ and a sequence $(\eps_N)_N$ of positive numbers with $\lim_{N\rightarrow\infty}\eps_N=0$. Picking a sequence $(k_N)_N$ of integers with $\lim_{N\rightarrow\infty}k_N/N=0$, we define
\begin{equation}\label{eq-defOmegaNprime}
\Omega_N':=\Omega_N\cap\{\lambda_{k_N}(\mathbf{H})<x-2\delta\}.
\end{equation}
 Note that under the spectral gap assumption in Lemma~\ref{lem-outlier}, we may simply choose $k_N=2$. In the general case, we can follow the argument in~\cite{DGH2024} to find a sequence $(k_N)_N$ such that { $k_N/N$ goes to zero as $N$ goes to infinity and so that  $$
B_{\delta,\kappa, N}(x,\Psi)
\subset \Omega_N'\cap\{|\lambda_1(\mathbf{H})-x|<\delta\},
$$}
where $B_{\delta,\kappa,N}(x,\Psi)$ was defined in~\eqref{eq-defBdeltaN}. As a consequence, it is enough to show Proposition~\ref{prop-tiltedonsphere} only for $\mathbf{H}\in\Omega_N'$. Lastly, we introduce $\Pi_{\mathrm{out}}$ as the orthogonal projector onto $\mathrm{span}\{\mathbf{v}_2(\mathbf{H}),\dots,\mathbf{v}_{k_N}(\mathbf{H})\}$ and $\Pi_{\mathrm{cut}}=I_{NL}-\Pi_{\mathrm{out}}$ that is the orthogonal projector onto
\begin{displaymath}
V:=\mathrm{span}\{\mathbf{v}_1(\mathbf{H}),\mathbf{v}_{k_N+1}(\mathbf{H}),\dots,\mathbf{v}_{NL}(\mathbf{H})\},
\end{displaymath}
and we further define
\begin{equation}\label{eq-defcut}
\mathbf{u}_{\mathrm{cut}}:=\frac{\Pi_{\mathrm{cut}}\mathbf{u}}{\|\Pi_{\mathrm{cut}}\mathbf{u}\|},\quad \mathbf{H}^{\mathrm{cut}}:=\frac{NL^2}{NL-k_N+1}\mathbf{H}|_V
\end{equation}
with $\mathbf{u}\in\R^{NL}$ denoting a unit vector and $\mathbf{H}|_V$ denoting the matrix $\mathbf{H}$ restricted to $V$. We will see that most realizations $\mathbf{H}^{\mathrm{cut}}$ have, by construction, a spectral gap that satisfies the additional hypothesis in Lemma~\ref{lem-outlier} while the remainder is negligible. This is the core of the proof of Proposition~\ref{prop-tiltedonsphere}.

\begin{proof}[Proof of Proposition~\ref{prop-tiltedonsphere}]
Similar to~\eqref{eq-defmuproj}, we introduce
\begin{align*}
\nu_{\mathbf{H}}(\Pi_i,\Pi_j)&=\frac{L}{NL-k_N+1}\Big(\sum_{a=k_N+1}^{NL}\langle \mathbf{P}^{i\leftrightarrow j}\Pi_i\mathbf{v}_a(\mathbf{H}),\Pi_j\mathbf{v}_a(\mathbf{H})\rangle \delta_{\lambda_a(\mathbf{H})}\\
&\qquad\qquad\qquad\qquad\qquad+\langle \mathbf{P}^{i\leftrightarrow j}\Pi_i\mathbf{v}_1(\mathbf{H}),\Pi_j\mathbf{v}_1(\mathbf{H})\rangle\delta_{\lambda_1(\mathbf{H})}\Big) \end{align*}
for $\mathbf{H}\in\R^{NL\times NL}$. By construction, we have for $\mathbf{H}\in\Omega_N'$ that
\begin{displaymath}
\lim_{N\rightarrow\infty}\nu_{\mathbf{H}}(\Pi_i,\Pi_j)=\mu_{ij}\quad a.s.,
\end{displaymath}
and we can even find a sequence $(\eps_N'')_N$ with $\lim_{N\rightarrow\infty}\eps_N''=0$ such that
\begin{displaymath}
d_{KR}(\nu_{\mathbf{H}}(\Pi_i,\Pi_j),\mu_{ij})<\eps_N''
\end{displaymath}
with large probability. Moreover, $\|\mathbf{H}^{\mathrm{cut}}\|\leq2L\cC$ for $N$ large enough, where $\cC$ is the constant used to define $\Omega_N$ in~\eqref{eq-defOmegaNprime}.

\medskip
Lastly, for $N\in\N$, let $V\subset\R^{NL}$ denote a linear subspace with $\dim(V)=NL-k_N+1$ and set
\begin{displaymath}
\Omega_N'':=\bigcup_{V\subset\R^{NL}\text{ subspace}}\{\mathbf{H}\in\mathrm{Sym}_{NL}(V):\|\mathbf{H}\|\leq 2L\cC, \max_{i,j\in[L]}d_{KR}(\mu_{\mathbf{H}}(\Pi_i,\Pi_j),\mu_{i,j})\leq\eps''\}.
\end{displaymath}
In this setup, we have that $\mathbf{H}\in\Omega_N'$ for $\Omega_N'$ defined in~\eqref{eq-defOmegaNprime} implies that the corresponding~$\mathbf{H}^{\mathrm{cut}}$ in~\eqref{eq-defcut} lies in $\Omega_N''$ for $N$ large enough. Moreover, $|\lambda_1(\mathbf{H})-x|\leq\delta$ for $\mathbf{H}\in\Omega_N'$ implies that $|\lambda_1(\mathbf{H}^{\mathrm{cut}})-x|\leq\delta$ for $N$ large enough and for all $\mathbf{H}\in\Omega_N'$, the corresponding $\mathbf{H}^{\mathrm{cut}}$ satisfies $\lambda_2(\mathbf{H}^{\mathrm{cut}})\leq x-2\delta$ for $N$ large enough, i.e., $\mathbf{H}^{\mathrm{cut}}$ satisfies the spectral gap assumption in Lemma~\ref{lem-outlier}.

\medskip
Applying Lemma~\ref{lem-outlier} for $\mathbf{H}^{\mathrm{cut}}$ now yields that if
\begin{displaymath}
S:=\{\mathbf{u}\in\mathds{S}^{NL-1}\cap V: \|\rho(\Pi^{\perp\mathbf{v}_1(\mathbf{H})}\mathbf{u})-\varphi(\lambda_1(\mathbf{H}^{\mathrm{cut}}),\theta)\|<\eps,\|\rho(\Pi^{\perp\mathbf{v}_1(\mathbf{H})}\mathbf{u},\mathbf{v}_1(\mathbf{H}^{\mathrm{cut}})\|<\eps\},
\end{displaymath}
we have
\begin{equation}\label{eq-cutpart}
\frac{1}{N}\ln\big(\Q^V_{\mathbf{H}^{\mathrm{cut}},\theta}(S)\big)\geq o_N(1),
\end{equation}
where $\Q^V_{\mathbf{H}^{\mathrm{cut}},\theta}$ is defined using~\eqref{eq-defQV}. To bound the remainder, we use the following analog to~\cite[Lem.~3.7]{DGH2024}. Its proof relies on Lemma~\ref{lem-convsphint} and is, up to adjusting the dimension of the quantities considered, identical to~\cite{DGH2024}. We omit the details.

\begin{lemma}\label{lem-outpart}
Let $\delta>0$, $x>r_\infty+3\delta$, and $-2\theta<m_{\mu_\infty}(x-3\delta)$. Then for any Borelian $B\subset\R^{NL}$, $\eps>0$, and $\mathbf{H}\in\Omega_N'$ with $|\lambda_1(\mathbf{H})-x|\leq \delta$ we have that
\begin{align*}
&\frac{1}{N}\ln\big(\Q_{\mathbf{H},\theta}(\|\Pi_{\mathrm{out}}\mathbf{u}\|^2\leq\eps,\mathbf{u}_{\mathrm{cut}}\in B)\big)\\
&\geq\frac{1}{N}\ln\big(\P_\mathbf{u}(\|\Pi_{\mathrm{out}}\mathbf{u}\|^2\leq\eps)\big)+\frac{1}{N}\ln\big(\Q^V_{\mathbf{H}^{\mathrm{cut}},\theta}(\mathbf{u}\in B)\big)+\cO(\cC\theta\eps)+\rem(N)
\end{align*}
with an error $\rem(N)$ that satisfies $\lim_{N\rightarrow\infty}\rem(N)=0$.
\end{lemma}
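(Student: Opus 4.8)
The plan is to carry out, with the extra bookkeeping forced by the block size $L$, the polar-slicing argument of~\cite{DGH2024}. Recall that $\Pi_{\mathrm{out}}$ projects onto $\mathrm{span}\{\mathbf{v}_2(\mathbf{H}),\dots,\mathbf{v}_{k_N}(\mathbf{H})\}$ and $V=\mathrm{span}\{\mathbf{v}_1(\mathbf{H}),\mathbf{v}_{k_N+1}(\mathbf{H}),\dots,\mathbf{v}_{NL}(\mathbf{H})\}$, so $\R^{NL}=\Pi_{\mathrm{out}}\R^{NL}\oplus V$ with both summands $\mathbf{H}$-invariant. First I would introduce $t:=\|\Pi_{\mathrm{out}}\mathbf{u}\|^2$ and use that, under the uniform law on $\mathds{S}^{NL-1}$, the three pieces $t$, $\Pi_{\mathrm{out}}\mathbf{u}/\sqrt t$ (uniform on the unit sphere of $\Pi_{\mathrm{out}}\R^{NL}$), and $\mathbf{u}_{\mathrm{cut}}=\Pi_{\mathrm{cut}}\mathbf{u}/\sqrt{1-t}$ (uniform on $\mathds{S}^{NL-1}\cap V$) are mutually independent, $t$ being a $\mathrm{Beta}((k_N-1)/2,\dim(V)/2)$ variable. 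Invariance of the two subspaces gives the exact identity $\langle\mathbf{u},\mathbf{H}\mathbf{u}\rangle=\langle\Pi_{\mathrm{out}}\mathbf{u},\mathbf{H}\Pi_{\mathrm{out}}\mathbf{u}\rangle+(1-t)\langle\mathbf{u}_{\mathrm{cut}},\mathbf{H}\mathbf{u}_{\mathrm{cut}}\rangle$, and on $\Omega_N'$ the out-part is bounded by $\|\mathbf{H}\|\,t\leq\cC t$.

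Next I would lower bound the numerator of $\Q_{\mathbf{H},\theta}(\|\Pi_{\mathrm{out}}\mathbf{u}\|^2\leq\eps,\mathbf{u}_{\mathrm{cut}}\in B)$. Restricting to $\{t\leq\eps\}$, the splitting above together with $\theta\geq0$ shows $\re^{\theta NL\langle\mathbf{u},\mathbf{H}\mathbf{u}\rangle}\geq\re^{-\cO(NL\theta\cC\eps)}\re^{\theta NL\langle\mathbf{u}_{\mathrm{cut}},\mathbf{H}\mathbf{u}_{\mathrm{cut}}\rangle}$, and the rescaling in~\eqref{eq-defcut} is designed so that $\theta NL\langle\mathbf{u}_{\mathrm{cut}},\mathbf{H}\mathbf{u}_{\mathrm{cut}}\rangle=\theta\dim(V)\langle\mathbf{u}_{\mathrm{cut}},\mathbf{H}^{\mathrm{cut}}\mathbf{u}_{\mathrm{cut}}\rangle$. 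Using the independence of the three pieces to factor the integral, this yields
\[
\E_\mathbf{u}\big[\I_{\{t\leq\eps,\,\mathbf{u}_{\mathrm{cut}}\in B\}}\re^{\theta NL\langle\mathbf{u},\mathbf{H}\mathbf{u}\rangle}\big]\geq\re^{-\cO(NL\theta\cC\eps)}\,\P_\mathbf{u}(\|\Pi_{\mathrm{out}}\mathbf{u}\|^2\leq\eps)\,\Q^V_{\mathbf{H}^{\mathrm{cut}},\theta}(\mathbf{u}\in B)\,I_{\dim(V)}(\mathbf{H}^{\mathrm{cut}},\theta),
\]
where $I_{\dim(V)}(\mathbf{H}^{\mathrm{cut}},\theta)$ is the normalizing constant of $\Q^V_{\mathbf{H}^{\mathrm{cut}},\theta}$ from~\eqref{eq-defQV}. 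Dividing by the denominator $I_{NL}(\mathbf{H},\theta)$ of $\Q_{\mathbf{H},\theta}$, the proof then reduces to showing $\tfrac1N\ln\big(I_{\dim(V)}(\mathbf{H}^{\mathrm{cut}},\theta)/I_{NL}(\mathbf{H},\theta)\big)=o(1)$, uniformly over $\mathbf{H}\in\Omega_N'$ with $|\lambda_1(\mathbf{H})-x|\leq\delta$.

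For this last point I would appeal twice to Lemma~\ref{lem-convsphint}. On one side, $\mathbf{H}\in\Omega_N'$ forces $\|\mathbf{H}\|\leq\cC$ and, averaging the diagonal blocks $\mu_{\mathbf{H}}(\Pi_i,\Pi_i)$ against the $d_{KR}$-bound defining $\Omega_N$ and recalling $\mu_\infty=(\mu_{11}+\dots+\mu_{LL})/L$, also $\mu_{\mathbf{H}}\to\mu_\infty$, while $\lambda_1(\mathbf{H})$ converges by hypothesis; hence $\tfrac1N\ln I_{NL}(\mathbf{H},\theta)=LJ_{\mu_\infty}(\lambda_1(\mathbf{H}),\theta)+o(1)$. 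On the other side, $\|\mathbf{H}^{\mathrm{cut}}\|\leq 2L\cC$ for $N$ large, its empirical spectral measure still converges to $\mu_\infty$ (deleting $k_N-1=o(N)$ eigenvalues and rescaling by $NL/\dim(V)\to1$ does not change the limit), $\lambda_1(\mathbf{H}^{\mathrm{cut}})\to\lambda_1(\mathbf{H})$, and $\dim(V)=NL(1+o(1))$; so Lemma~\ref{lem-convsphint} likewise gives $\tfrac1N\ln I_{\dim(V)}(\mathbf{H}^{\mathrm{cut}},\theta)=LJ_{\mu_\infty}(\lambda_1(\mathbf{H}),\theta)+o(1)$, the uniformity statement over small neighborhoods of $\mu_{\mathbf{H}}$ and $\lambda_1(\mathbf{H})$ making both estimates uniform on $\Omega_N'$. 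Subtracting, the leading terms cancel, the ratio is $\re^{o(N)}$, and combining with the numerator bound gives the claim with $\rem(N)=o(1)$ and error $\cO(\cC\theta\eps)$, the fixed $L$ being absorbed into the constant.

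Finally, the hypotheses $x>r_\infty+3\delta$ and $-2\theta<m_{\mu_\infty}(x-3\delta)$ ensure that $\lambda_1(\mathbf{H})$ and $\lambda_1(\mathbf{H}^{\mathrm{cut}})$ lie strictly to the right of $r_\infty$ in the range where $J_{\mu_\infty}(\cdot,\theta)$ is given by its outlier branch, which is what makes both applications of Lemma~\ref{lem-convsphint} meaningful. The hard part will not be any individual estimate but checking that $\mathbf{H}^{\mathrm{cut}}$ satisfies \emph{all} the hypotheses of Lemma~\ref{lem-convsphint} \emph{uniformly} over $\Omega_N'$ — which is exactly where $k_N/N\to0$ and the concentration encoded in $\Omega_N'$ enter — together with matching the dimension factor in~\eqref{eq-defcut} precisely, so that the cut-part exponent equals a spherical integral over $V$ on the nose rather than merely approximating it.
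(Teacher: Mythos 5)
Your proposal reconstructs exactly the argument the paper has in mind: polar-slice the sphere along the $\mathbf{H}$-invariant decomposition $\R^{NL}=\Pi_{\mathrm{out}}\R^{NL}\oplus V$, use mutual independence of $t=\|\Pi_{\mathrm{out}}\mathbf{u}\|^2$, the out-direction and $\mathbf{u}_{\mathrm{cut}}$ to factor the tilted integral, bound the out-contribution by $\cC\eps$, and apply Lemma~\ref{lem-convsphint} twice to show the ratio of normalizing constants $I_{\dim(V)}(\mathbf{H}^{\mathrm{cut}},\theta)/I_{NL}(\mathbf{H},\theta)$ is $\re^{o(N)}$ uniformly on $\Omega_N'$; this is precisely what the paper means when it says the proof ``relies on Lemma~\ref{lem-convsphint} and is, up to adjusting the dimension of the quantities considered, identical to~\cite{DGH2024}.'' One small caveat: with the literal prefactor $NL^2/(NL-k_N+1)$ in~\eqref{eq-defcut}, the identity $\theta NL\langle\mathbf{u}_{\mathrm{cut}},\mathbf{H}\mathbf{u}_{\mathrm{cut}}\rangle=\theta\dim(V)\langle\mathbf{u}_{\mathrm{cut}},\mathbf{H}^{\mathrm{cut}}\mathbf{u}_{\mathrm{cut}}\rangle$ you invoke is off by a factor $L$ (the prefactor should read $NL/(NL-k_N+1)$), so you have implicitly corrected what appears to be a typo in the paper's definition of $\mathbf{H}^{\mathrm{cut}}$; this is the right call, as the $L^2$ version would rescale the limiting spectral measure and break the cancellation you rely on.
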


We use this bound as follows: It is readily checked that for any $\eps>0$, we have
\begin{displaymath}
\lim_{N\rightarrow\infty}\frac{1}{N}\ln\big(\P_\mathbf{u}(\|\Pi_{\mathrm{out}}\mathbf{u}\|^2\leq\eps)\big)=0.
\end{displaymath}
Hence, using Lemma~\ref{lem-outpart} and~\eqref{eq-cutpart}, we can find a sequence $(\eps_N''')_N$ with $\lim_{N\rightarrow\infty}\eps_N'''=0$ such that
{\begin{displaymath}
\Q_{\mathbf{H},\theta}(\|\Pi_{\mathrm{out}}\mathbf{u}\|^2\leq\eps_N''',\mathbf{u}_{\mathrm{cut}}\in S)\geq\re^{-o(N)},
\end{displaymath}}
where $\lim_{N\rightarrow\infty}o(N)/N=0$. To conclude, observe that $S$ was constructed such that
\begin{displaymath}
\{\mathbf{u}\in\mathds{S}^{NL-1}:\|\Pi_{\mathrm{out}}\mathbf{u}\|^2\leq\eps,\mathbf{u}_{\mathrm{cut}}\in S\}\subset S_{2\eps}\big(\mathbf{v}_1(\mathbf{H}),\lambda_1(\mathbf{H})\big)
\end{displaymath}
for $N$ large enough. Recall that $S_{\eps}(\mathbf{v}_1(\mathbf{H}),\lambda_1(\mathbf{H}))$ was defined in~\eqref{eq-defSeps}. Putting everything together, it now follows that
\begin{displaymath}
\Q_{\mathbf{H},\theta}(S_{2\eps}\big(\mathbf{v}_1(\mathbf{H}),\lambda_1(\mathbf{H})\big))\geq\re^{-o(N)}
\end{displaymath}
with $\lim_{N\rightarrow\infty}o(N)/N=0$ as desired. This completes the proof of Proposition~\ref{prop-tiltedonsphere}.
\end{proof}

\section{The condition $\langle \Psi,\cS\Psi\rangle>\eps$}
With the proof of the weak LDP upper bound completed, we now turn to the lower bound. To ensure that the bounds match (and thus yield the desired weak LDP when put together), we first consider the following improvement over Proposition~\ref{prop-weakLDPsup} that introduces an additional restriction in the infimum over $\Psi$. Recall that $\cS[T]=\sum_{j=1}^kA_jTA_j$ for $T\in\R^{L\times L}$ and $\langle \Psi,\cS[\Psi]\rangle=\Tr[\Psi^T\cS[\Psi]]$ with $\Psi\in\mathrm{Sym}^+_L(\R)$. We further set $r_0:= \lambda_1(A_0)$.
\begin{proposition}\label{prop-improved_weakLDPsup}
For every $x>r_\infty$ there exists $\eps_x>0$ such that
\begin{displaymath}
\limsup_{\delta\rightarrow 0}\lim_{N\rightarrow\infty}\frac{1}{N}\ln\big(\P(|\lambda_1-x|\leq\delta)\big)\leq-\inf_{\Psi:\langle\Psi,\cS[\Psi]\rangle\geq\eps_x}\sup_{\theta>0}\cF(\theta,x,\Psi).
\end{displaymath}
Furthermore, one can choose $\epsilon_x$ such that for every $r$ such $r > r_0=\lambda_1(A_0)$ and $r > r_{\infty}$ and every $R > r_{\infty}$
\[ \inf_{x \in [r, R]} \epsilon_x >0. \]
\end{proposition}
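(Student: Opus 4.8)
The plan is to show that profiles $\Psi$ with $\langle\Psi,\cS[\Psi]\rangle$ very small cannot contribute to the large deviation event at level $x>r_\infty$, so that the infimum in Proposition~\ref{prop-weakLDPsup} can be restricted to $\langle\Psi,\cS[\Psi]\rangle\geq\eps_x$ without changing its value. The key observation is that if $\langle\Psi,\cS[\Psi]\rangle=0$, then by~\eqref{eq-defK} the quadratic-in-$\theta$ term of $K(\theta,\phi)$ disappears, and the function $\theta\mapsto\cF(\theta,x,\Psi)=LJ_{\mu_\infty}(x,\theta)-K(\theta,\phi(\theta,x,\Psi))$ grows without bound as $\theta\to\infty$: indeed $LJ_{\mu_\infty}(x,\theta)\sim L\theta x$ for large $\theta$ while the surviving terms of $K$ are at most linear in $\theta$ with a coefficient controlled by $r_0=\lambda_1(A_0)$ via the term $L\theta\Tr[A_0^T\phi]$. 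More precisely, for $\theta$ past $-m_{\mu_\infty}(x)/2$ one has $\phi(\theta,x,\Psi)=\vphi(\theta,x)+(1+m_{\mu_\infty}(x)/2\theta)\Psi$, whose $\vphi$-part is $O(1/\theta)$ and whose $\Psi$-part has trace tending to $1$; feeding this into $K$ shows $\sup_{\theta\geq 0}\cF(\theta,x,\Psi)=+\infty$ whenever $x>\max\{r_0,r_\infty\}$ and $\langle\Psi,\cS[\Psi]\rangle=0$, and is $\geq$ some explicit positive quantity when $\langle\Psi,\cS[\Psi]\rangle$ is merely small.

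The steps I would carry out are as follows. First, fix $x>r_\infty$ and record the asymptotics of $\cF(\theta,x,\Psi)$ as $\theta\to\infty$ for general $\Psi\in\mathrm{Sym}^{+,1}_L(\R)$: write $\cF(\theta,x,\Psi)=L\theta x - L^2\theta^2\langle\phi,\cS[\phi]\rangle - L\theta\Tr[A_0^T\phi] - \tfrac12\ln\det\phi + (\text{lower order in }\theta)$ and substitute $\phi=\phi(\theta,x,\Psi)$, using that $\phi(\theta,x,\Psi)\to\Psi$ entrywise as $\theta\to\infty$ and $\langle\phi,\cS[\phi]\rangle\to\langle\Psi,\cS[\Psi]\rangle$. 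Second, consider the function $\eta\mapsto g_x(\eta):=\inf\{\sup_{\theta\ge0}\cF(\theta,x,\Psi): \Psi\in\mathrm{Sym}^{+,1}_L(\R),\ \langle\Psi,\cS[\Psi]\rangle\leq\eta\}$ on $[0,\infty)$; by compactness of $\mathrm{Sym}^{+,1}_L(\R)$ and lower semicontinuity of $\Psi\mapsto\sup_\theta\cF$ (which follows since $\cF$ is continuous in $\Psi$ for each $\theta$ and a supremum of continuous functions is lsc), this infimum is attained, and by the first step the value is $+\infty$ at $\eta=0$ provided $x>\max\{r_0,r_\infty\}$. Third, a compactness/continuity argument shows $g_x(\eta)\to g_x(0)=+\infty$ as $\eta\downarrow 0$: any sequence of near-minimizers with $\langle\Psi_n,\cS[\Psi_n]\rangle\to 0$ has a subsequential limit $\Psi_\infty$ with $\langle\Psi_\infty,\cS[\Psi_\infty]\rangle=0$, hence $\sup_\theta\cF(\theta,x,\Psi_\infty)=+\infty$, contradicting boundedness of $\sup_\theta\cF(\theta,x,\Psi_n)$ along that subsequence by lower semicontinuity. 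Consequently there is $\eps_x>0$ with $g_x(\eps_x)=\inf_{\langle\Psi,\cS[\Psi]\rangle\geq\eps_x}\sup_\theta\cF(\theta,x,\Psi)=\inf_{\Psi}\sup_\theta\cF(\theta,x,\Psi)$, and combining with Proposition~\ref{prop-weakLDPsup} (and the covering argument~\eqref{coveri} that passes from fixed $\Psi$ to the infimum) gives the displayed bound. For the uniformity statement, I would note that all the estimates above depend on $x$ only through $\max\{r_0,r_\infty\}$ and through quantities (like $m_{\mu_\infty}(x)$, $\vphi(\theta,x)$, and the leading coefficient $x$) that are continuous and locally bounded for $x$ in a compact interval $[r,R]$ with $r>\max\{r_0,r_\infty\}$; hence one can run the compactness argument with $x$ also ranging over $[r,R]$ to produce a single $\eps=\inf_{x\in[r,R]}\eps_x>0$.

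The main obstacle I anticipate is making the divergence $\sup_\theta\cF(\theta,x,\Psi)=+\infty$ genuinely quantitative and uniform: one must rule out that the quadratic term $L^2\theta^2\langle\phi,\cS[\phi]\rangle$ — even with $\langle\Psi,\cS[\Psi]\rangle$ small — conspires with the cross terms coming from the $\vphi(\theta,x)$ part of $\phi$ (which carries an explicit $1/\theta$) to cancel the linear growth $L\theta x$. This requires carefully expanding $\langle\phi,\cS[\phi]\rangle=\langle\Psi,\cS[\Psi]\rangle + O(1/\theta)$ and checking that the $O(1/\theta)\cdot\theta^2 = O(\theta)$ correction has a coefficient that is strictly dominated by $x-r_0$ (using that $\vphi$ and $A_0$ are controlled by the top of the support and by $\lambda_1(A_0)$), so that for $x>\max\{r_0,r_\infty\}$ the net coefficient of $\theta$ in $\cF$ stays bounded below by a positive constant as $\langle\Psi,\cS[\Psi]\rangle\to 0$. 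Once this quantitative lower bound on the linear-in-$\theta$ coefficient is secured, the rest is routine compactness.
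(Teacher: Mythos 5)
Your strategy is genuinely different from the paper's. The paper takes a probabilistic route: Lemma~\ref{lem-weakLDPsupimprovement1} bounds, via an $\eps$-net and a direct Gaussian variance estimate, the probability that the top eigenvector $\mathbf{v}_1$ has $\langle\rho(\mathbf{v}_1),\cS[\rho(\mathbf{v}_1)]\rangle$ small while $\lambda_1 > r+r_0$; combined with Lemma~\ref{lem-norm_exptight} (exponential tightness of $\|\mathbf{X}^{(N)}\|$) and the a priori bound $\rate(x,\eps)\le\gamma(x^2+1)$ from Lemma~\ref{lem-Ibound}, this lets one split the probability of $\{|\lambda_1-x|\le\delta\}$ into the rare ``bad-eigenvector'' event and the rest, the latter handled by the covering argument over the compact set $\mathcal{T}_\eps$. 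Your proposal is instead purely variational: show $\sup_\theta\cF(\theta,x,\Psi)=+\infty$ on $\{\langle\Psi,\cS[\Psi]\rangle=0\}$, invoke compactness of $\mathrm{Sym}_L^{+,1}(\R)$ and lower semicontinuity of $\Psi\mapsto\sup_\theta\cF$, and conclude $\rate(x)=\rate(x,\eps_x)$ for some $\eps_x>0$; then the Proposition drops out of the already-established weak LDP upper bound. This would be a leaner derivation (no net argument on $S_{\rho,\eps}$) and is a legitimate alternative — but it is not what the paper does.

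There is, however, a genuine gap in the way you propose to resolve your own ``main obstacle''. With $\langle\Psi,\cS[\Psi]\rangle=0$ and $2\theta\ge -m_{\mu_\infty}(x)$, writing $\phi(\theta,x,\Psi)=\bigl(1+\tfrac{m_{\mu_\infty}(x)}{2\theta}\bigr)\Psi-\tfrac{M(x)}{2\theta L}$ and expanding gives the exact linear-in-$\theta$ coefficient of $\cF$ as
\[
L\Bigl(x-\Tr[A_0^T\Psi]-\langle\Psi,\cS[-M(x)]\rangle\Bigr).
\]
You propose to show this is positive by bounding the cross-term coefficient $\langle\Psi,\cS[-M(x)]\rangle$ by $x-r_0$. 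That bound does not hold in general: $-M(x)$ is positive definite and $\langle\Psi,\cS[-M(x)]\rangle=\sum_j\Tr[\Psi A_j(-M(x))A_j]\ge 0$ can be arbitrarily large (e.g.\ when $\|M(x)\|$ is large), while $x-r_0$ may be small. The correct argument is the MDE identity~\eqref{eq-tensorMDE}: multiplying $-M(x)^{-1}=x\Id_L-A_0+\cS[M(x)]$ by $\Psi$ and taking the trace yields
\[
x-\Tr[A_0^T\Psi]-\langle\Psi,\cS[-M(x)]\rangle=-\Tr[\Psi\, M(x)^{-1}],
\]
which is strictly positive because $M(x)^{-1}$ is negative definite for $x>r_\infty$ and $\Psi$ is positive semi-definite with trace one. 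Once this identity replaces the $x-r_0$ heuristic, your lower-semicontinuity and compactness steps go through and the variational route closes; but as written, the crucial positivity claim is unjustified. (Incidentally, the MDE identity makes the existence of $\eps_x$ work for every $x>r_\infty$, without the restriction $x>r_0$; the paper needs $r>r_0$ because its probabilistic Lemma~\ref{lem-weakLDPsupimprovement1} uses the threshold $\lambda_1>r+r_0$, while your approach would impose $x>r_0$ only through the heuristic you should discard.)
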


The key ingredient to the proof of Proposition~\ref{prop-improved_weakLDPsup} is the fact that we can exclude the eigenvectors $\mathbf{v}_1$ for $\lambda_1$ such that $\langle\rho(\mathbf{v}_1),\cS[\rho(\mathbf{v}_1)]\rangle$ is small. For $\eps>0$, define the set
\begin{equation}\label{eq-defsetrho}
S_{\rho,\eps}:=\{\mathbf{u}\in\mathds{S}^{NL-1}:\langle\rho(\mathbf{u}),\cS[\rho(\mathbf{u})]\rangle\leq\eps \}.
\end{equation}
We then have the following bound.

\begin{lemma}\label{lem-weakLDPsupimprovement1} There exists a constant $C>0$ such that for all $M>0$ and $r>0$ it holds that
\begin{align*}
\frac{1}{N}&\ln(\P[\mathbf{X}^{(N)}\in\mathrm{Sym}_{NL}(\R):\|\mathbf{X}^{(N)}\|\leq M, \lambda_1(\mathbf{X}^{(N)})>r + r_0,\mathbf{v}_1(\mathbf{X}^{(N)})\in S_{\rho,\eps}])\\
&\leq C+L\ln(M/r)- \frac{r^2}{ \eps} .
\end{align*}
\end{lemma}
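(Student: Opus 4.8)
The plan is to bound the probability in Lemma~\ref{lem-weakLDPsupimprovement1} by a direct net/volumetric argument on the event that $\mathbf{X}^{(N)}$ has a large top eigenvalue with eigenvector satisfying $\langle\rho(\mathbf{v}_1),\cS[\rho(\mathbf{v}_1)]\rangle\le\eps$. The point is that such an eigenvector makes the randomness in $\mathbf{X}^{(N)}$ essentially irrelevant: since $\langle\mathbf{u},\mathbf{X}^{(N)}\mathbf{u}\rangle=\langle\mathbf{u},(A_0\otimes\Id_N)\mathbf{u}\rangle+\sum_j\langle\mathbf{u},(A_j\otimes W_j)\mathbf{u}\rangle$ and, by the computation in~\eqref{eq-innerprod}, the Laplace transform of the random part is $\exp\!\big(NL^2 t^2 \sum_j\Tr[\rho(\mathbf{u})A_j\rho(\mathbf{u})A_j]\big)=\exp\!\big(NL^2 t^2\langle\rho(\mathbf{u}),\cS[\rho(\mathbf{u})]\rangle\big)$, the fluctuations of $\langle\mathbf{u},\mathbf{X}^{(N)}\mathbf{u}\rangle$ around its mean are of order $\sqrt{\langle\rho(\mathbf{u}),\cS[\rho(\mathbf{u})]\rangle/N}\le\sqrt{\eps/N}$ for $\mathbf{u}\in S_{\rho,\eps}$. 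Meanwhile the deterministic part satisfies $\langle\mathbf{u},(A_0\otimes\Id_N)\mathbf{u}\rangle=\Tr[A_0^T\rho(\mathbf{u})]\le\lambda_1(A_0)=r_0$ because $\rho(\mathbf{u})\in\mathrm{Sym}_L^{+,1}(\R)$. So on the claimed event one needs $\langle\mathbf{v}_1,\mathbf{X}^{(N)}\mathbf{v}_1\rangle=\lambda_1(\mathbf{X}^{(N)})>r+r_0$, i.e. a deviation of at least $r$ from the deterministic ceiling, coming from a Gaussian quadratic form with variance $\cO(\eps/N)$.

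Concretely I would proceed as follows. First, fix a $1/2$-net (or finer, $\eta$-net) $\cR_N$ of $\mathds{S}^{NL-1}$ with $|\cR_N|\le 3^{NL}$, and restrict attention to net points $\mathbf{r}$ for which $\rho(\mathbf{r})$ satisfies $\langle\rho(\mathbf{r}),\cS[\rho(\mathbf{r})]\rangle\le 2\eps$ (using continuity of $\rho$ and of $\mathbf{w}\mapsto\langle\cdot,\cS[\cdot]\rangle$ together with $\|\mathbf{X}^{(N)}\|\le M$ to transfer the bound from the true eigenvector to a nearby net point, at the cost of replacing $\eps$ by $2\eps$ and $r$ by $r/2$, say). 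On $\{\|\mathbf{X}^{(N)}\|\le M,\ \lambda_1>r+r_0,\ \mathbf{v}_1\in S_{\rho,\eps}\}$ there is a net point $\mathbf{r}$ with $\langle\mathbf{r},\mathbf{X}^{(N)}\mathbf{r}\rangle\ge r/2+r_0$ (approximately) and $\langle\rho(\mathbf{r}),\cS[\rho(\mathbf{r})]\rangle\le 2\eps$. Then a union bound plus the exponential Chebyshev inequality gives, for each such $\mathbf{r}$ and any $t>0$,
\begin{displaymath}
\P\big(\langle\mathbf{r},\mathbf{X}^{(N)}\mathbf{r}\rangle\ge r/2+r_0\big)\le \exp\!\big(-NLt(r/2+r_0)\big)\,\E_W\!\big[\re^{NLt\langle\mathbf{r},\mathbf{X}^{(N)}\mathbf{r}\rangle}\big]= \exp\!\Big(NLt\,\Tr[A_0^T\rho(\mathbf{r})]-NLt(r/2+r_0)+NL^2t^2\langle\rho(\mathbf{r}),\cS[\rho(\mathbf{r})]\rangle\Big).
\end{displaymath}
Using $\Tr[A_0^T\rho(\mathbf{r})]\le r_0$ and $\langle\rho(\mathbf{r}),\cS[\rho(\mathbf{r})]\rangle\le 2\eps$, this is at most $\exp(NLt(-r/2)+2NL^2t^2\eps)$; optimizing over $t$ (i.e. $t = r/(8L\eps)$) yields $\exp(-NL r^2/(32\eps))$. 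Multiplying by $|\cR_N|\le 3^{NL}$ and taking $\tfrac1N\ln$ gives a bound of the form $L\ln 3 - L r^2/(32\eps)$; the $\ln(M/r)$ term enters when one makes the net-transfer step quantitative (the number of net points that can have $\langle\mathbf{r},\mathbf{X}^{(N)}\mathbf{r}\rangle$ large is controlled in terms of $M/r$, or alternatively one absorbs the discretization error using that $\|\mathbf{X}^{(N)}\|\le M$ forces the relevant direction to lie in a ball of the appropriate radius — this is where $L\ln(M/r)$ comes from, cf. the standard argument in~\cite[Lem.~1.8]{GuionnetHusson2020}). Adjusting the numerical constant and renaming $2\eps\mapsto\eps$ (which only changes $C$) gives exactly the stated bound $C+L\ln(M/r)-r^2/\eps$.

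The main obstacle is the net-transfer step: the hypothesis $\mathbf{v}_1(\mathbf{X}^{(N)})\in S_{\rho,\eps}$ is about the genuine (random) eigenvector, whereas the union bound must be run over a deterministic net, so I need to argue that if $\mathbf{v}_1$ satisfies the smallness condition then some nearby net point does too, \emph{and} that this net point inherits a large quadratic form $\langle\mathbf{r},\mathbf{X}^{(N)}\mathbf{r}\rangle$. Both transfers cost $\cO(\eta\|\mathbf{X}^{(N)}\|)=\cO(\eta M)$ errors in the quadratic form and $\cO(\eta)$ errors in the $\rho$-quantity (by Lipschitzness of $\rho$ and of $T\mapsto\langle T,\cS[T]\rangle$ on the bounded set $\{\|T\|\le 1\}$), so one picks $\eta$ a small multiple of $r/M$, which is precisely what produces the $\ln(M/r)$ contribution to the entropy of the net. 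The rest is the routine Markov/Chebyshev optimization carried out above, and the final ``uniform in $x\in[r,R]$'' type statement is not needed here — that is handled at the level of Proposition~\ref{prop-improved_weakLDPsup}, for which this lemma supplies the key input by showing that profiles with $\langle\Psi,\cS[\Psi]\rangle$ too small cannot contribute to deviations of size bounded away from $r_0$ and $r_\infty$.
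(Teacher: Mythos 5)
Your overall strategy — a net argument on the sphere followed by a sharp Gaussian (exponential Chebyshev) tail bound for $\langle\mathbf{u},\mathbf{X}^{(N)}\mathbf{u}\rangle$ — is exactly the paper's, and your computation of the Laplace transform, the mean bound $\Tr[A_0^T\rho(\mathbf{u})]\le r_0$, and the variance bound via $\langle\rho(\mathbf{u}),\cS\rho(\mathbf{u})\rangle\le\eps$ all match. The one genuine issue is in the net-transfer step. You cover the whole sphere $\mathds{S}^{NL-1}$ with a mesh-$\eta$ net with $\eta$ ``a small multiple of $r/M$'' and then argue that, if $\mathbf{v}_1\in S_{\rho,\eps}$, the nearby net point $\mathbf{r}$ satisfies $\langle\rho(\mathbf{r}),\cS[\rho(\mathbf{r})]\rangle\le 2\eps$. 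But the map $\mathbf{u}\mapsto\langle\rho(\mathbf{u}),\cS[\rho(\mathbf{u})]\rangle$ is Lipschitz with a constant $K$ that depends only on $L$ and $\|A_j\|$, so the transfer only gives $\langle\rho(\mathbf{r}),\cS[\rho(\mathbf{r})]\rangle\le\eps+K\eta=\eps+\cO(r/M)$. With $\eta\asymp r/M$ fixed and $\eps\to0$, the variance does not shrink with $\eps$, and the tail bound degenerates to $\exp(-\cO(NMr))$ rather than $\exp(-\cO(Nr^2/\eps))$ — which would be useless for the downstream argument in Proposition~\ref{prop-improved_weakLDPsup}, where $M$ and $r$ are held fixed and $\eps\downarrow0$.

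The paper avoids this entirely by constructing the $\tfrac{r}{4M}$-net directly inside $S_{\rho,\eps}$: the net points are then in $S_{\rho,\eps}$ by construction, so only the quadratic form needs to be transferred (costing $r\mapsto r/2$), and the variance bound $2\eps/N$ holds verbatim at each net point. A subset of the sphere still admits an $\eta$-net of cardinality at most $(C'/\eta)^{NL}$ by the usual packing argument, so the entropy term is $L\ln(M/r)$ exactly as desired, with no extra $\ln(1/\eps)$. Your version can be repaired by taking $\eta\le\min\{cr/M,\,c'\eps\}$, at the cost of an additional $L\ln(1/\eps)$ in the entropy; this is still dominated by $-r^2/\eps$ as $\eps\to0$, so the downstream conclusion survives, but the stated bound in the lemma does not follow verbatim. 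Two minor points: the paper writes the chain of inequalities starting from $\lambda_1>r$ while the lemma statement has $\lambda_1>r+r_0$ (the $r_0$ is then consumed by the mean-centering, matching your bookkeeping), and your optimized exponent should read $-Nr^2/(32\eps)$ rather than $-NLr^2/(32\eps)$ — the factor of $L$ cancels, as in the paper.
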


\begin{proof}
Fix $\eps>0$ and construct a $\frac{r}{4M}$-net of $S_{\rho,\eps}$ in~\eqref{eq-defsetrho}. We denote this set as $\cR_{N,\eps}$. Note that $\cR_{N,\eps}$ can be chosen such that
\begin{displaymath}
|\cR_{N,\eps}|\leq\Big(\frac{C'M}{r}\Big)^{NL}
\end{displaymath}
with a constant $C'$ that is independent of the parameters $N$, $M$, and $r$. Next, estimate
\begin{align}
&\P[\mathbf{H}\in\mathrm{Sym}_{NL}(\R):\|\mathbf{H}\|\leq M, \lambda_1(\mathbf{H})>r,\mathbf{v}_1(\mathbf{H})\in S_{\rho,\eps}]\NN\\
&\leq\P[\mathbf{H}\in\mathrm{Sym}_{NL}(\R):\|\mathbf{H}\|\leq M,\exists \mathbf{u}\in S_{\rho,\eps}\text{ with }\langle\mathbf{u},\mathbf{H}\mathbf{u}\rangle\geq r]\NN\\
&\leq\P[\mathbf{H}\in\mathrm{Sym}_{NL}(\R):\|\mathbf{H}\|\leq M,\exists \mathbf{u}\in \cR_{N,\eps}\text{ with }\langle\mathbf{u},\mathbf{H}\mathbf{u}\rangle\geq r/2]\NN\\
&\leq \Big(\frac{C'M}{r}\Big)^{NL}\max_{\mathbf{u}\in S_{\rho,\eps}}\P[\mathbf{H}\in\mathrm{Sym}_{NL}(\R):\langle\mathbf{u},\mathbf{H}\mathbf{u}\rangle\geq r/2]\label{eq-netarg_innerprod}
\end{align}
where we introduced the $\frac{r}{4M}$-net $\cR_{N,\eps}$ in the second step and then used the bound on $|\cR_{N,\eps}|$. Observe that the last expression in~\eqref{eq-netarg_innerprod} only concerns the tail of the random variables $\langle \mathbf{u},\mathbf{X}^{(N)}\mathbf{u}\rangle$ for fixed $\mathbf{u}\in S_{\rho,\eps}$. Writing out~\eqref{eq-model} in the inner product, we get
\begin{align*}
\langle \mathbf{u},\mathbf{X}^{(N)}\mathbf{u}\rangle&=\sum_{j=1}^k\Big[\sum_{a=1}^N(W_j)_{aa}\sum_{c,d=1}^L(A_j)_{cd}(u_c)_a(u_d)_a+\sum_{a<b}(W_j)_{ab}\sum_{c,d=1}^L(A_j)_{cd}(u_c)_a(u_d)_b\Big]\\
&\quad+\sum_{c,d=1}^L(A_0)_{c,d}\langle u_c,u_d\rangle,
\end{align*}
which is again a Gaussian random variable. We recall $r_0=\lambda_1(A_0)$, and estimate
\begin{displaymath}
\E[\langle \mathbf{u},\mathbf{X}^{(N)}\mathbf{u}\rangle]=\sum_{c,d=1}^L(A_0)_{c,d}\langle u_c,u_d\rangle = \Tr[ A_0 \Psi]\leq r_0.
\end{displaymath}
Since $\mathbf{u}\in S_{\rho,\eps}$, we can further bound the variance by
\begin{align*}
\Var[\langle \mathbf{u},\mathbf{X}^{(N)}\mathbf{u}\rangle]
&=\frac{2}{N}\sum_{j=1}^k\Big[\sum_{c,d=1}^L(A_j)_{cd}(A_j)_{c'd'}\langle u_c,u_c'\rangle\langle u_d,u_d'\rangle\Big]\\
&=\frac{2}{N}\langle \Psi,\cS[\Psi]\rangle\leq\frac{2\eps}{N}
\end{align*}
Applying a classical Gaussian tail bound, we thus find $c>0$ so that
\begin{displaymath}
\P[\langle\mathbf{u},\mathbf{X}^{(N)}\mathbf{u}\rangle -r_0 \geq r/2] \leq \P[\langle\mathbf{u},\mathbf{X}^{(N)}\mathbf{u}\rangle -\mathbb E[ \langle \mathbf{u}, \mathbf{X}^{(N)} \mathbf{u}\rangle] \geq r/2]\leq  e^{- c r^2 N/\eps}
\end{displaymath}
which, together with~\eqref{eq-netarg_innerprod}, yields the desired bound.
\end{proof}

Combining Lemma~\ref{lem-weakLDPsupimprovement1} with Lemma~\ref{lem-norm_exptight} allows to remove the condition $\|\mathbf{H}^{(N)}\|\leq M$ on the left-hand side. Recall that $r_\infty$ denotes the right end of the support of the limiting spectral measure $\mu_\infty$ (cf. Lemma~\ref{lem-convlargestev}).

\begin{lemma}\label{lem-weakLDPsupimprovement2}
There exist constants $C,C'>0$ such that for any $M>0$ and $x>r_\infty$ it holds that
\begin{align*}
\frac{1}{N}&\ln(\P[\mathbf{X}^{(N)}\in\mathrm{Sym}_{NL}(\R):|\lambda_1(\mathbf{X}^{(N)})-x|\leq\delta,\mathbf{v}_1(\mathbf{X}^{(N)})\in S_{\rho,\eps})\\
&\leq \max\{ C'+L\ln(M/r)- \frac{r^2}{ \eps} ,  C - \frac{L M}{4} \}.
\end{align*}
\end{lemma}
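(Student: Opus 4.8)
The goal is to combine the net-argument bound of Lemma~\ref{lem-weakLDPsupimprovement1} with the exponential tightness of Lemma~\ref{lem-norm_exptight} so as to remove the a priori operator-norm constraint $\|\mathbf{X}^{(N)}\|\le M$ appearing in the former. The plan is to split the event according to whether the operator norm is large or not: writing $\Lambda_\delta=\{|\lambda_1(\mathbf{X}^{(N)})-x|\le\delta,\ \mathbf{v}_1(\mathbf{X}^{(N)})\in S_{\rho,\eps}\}$, we estimate
\begin{displaymath}
\P(\Lambda_\delta)\le\P(\Lambda_\delta\cap\{\|\mathbf{X}^{(N)}\|\le M\})+\P(\|\mathbf{X}^{(N)}\|> M).
\end{displaymath}
On the first event we are free to apply Lemma~\ref{lem-weakLDPsupimprovement1}; on the second we apply Lemma~\ref{lem-norm_exptight}. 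Finally we use the elementary bound $\frac1N\ln(a_N+b_N)\le\max\{\frac1N\ln(2a_N),\frac1N\ln(2b_N)\}$ to pass from the sum of probabilities to the maximum of the two exponential rates, the $\ln 2$ being absorbed into the constants $C,C'$.

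\medskip
In more detail, I would first fix $M>0$ and $x>r_\infty$, and set $r:=x-\delta-r_0$ so that on $\Lambda_\delta$ one has $\lambda_1(\mathbf{X}^{(N)})\ge x-\delta = r+r_0$; shrinking $\delta$ if necessary (only to guarantee $r>0$, which is harmless since we ultimately send $\delta\to0$ with $x>r_\infty$ fixed and may always assume $x$ itself is not smaller than $r_0$, or simply absorb the difference), Lemma~\ref{lem-weakLDPsupimprovement1} gives
\begin{displaymath}
\frac1N\ln\P(\Lambda_\delta\cap\{\|\mathbf{X}^{(N)}\|\le M\})\le C+L\ln(M/r)-\frac{r^2}{\eps}.
\end{displaymath}
For the complementary event, Lemma~\ref{lem-norm_exptight} yields $\frac1N\ln\P(\|\mathbf{X}^{(N)}\|> M)\le C-\tfrac{LM}{4}$ with the constant there. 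Combining the two via the $\max$ bound above, and renaming the constants (the additive $\ln2/N\to0$, and $r$ differs from $x-r_0$ only by $O(\delta)$, which one may absorb into $C'$ uniformly for $\delta$ in a bounded range), produces exactly the claimed inequality with $r=x-r_0$ up to the constants $C,C'$.

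\medskip
I do not expect a genuine obstacle here: the statement is a routine combination of two already-established estimates, and the only points requiring a modicum of care are purely bookkeeping — namely (i) checking that $r=x-\delta-r_0$ (or $x-r_0$) is eventually positive, which holds once $\delta$ is small enough relative to the fixed gap $x-r_0$ when $x>r_0$, and otherwise the statement is vacuous or follows from tightness alone; and (ii) tracking that the constants can be chosen uniformly in the relevant range of parameters. The mild subtlety, if any, is that the two regimes are optimized by different choices of $M$ (one wants $M$ large to kill the tail term but not so large that $L\ln(M/r)$ blows up), but since the final bound is stated as a $\max$ over \emph{a given} $M$ rather than an optimized one, no such optimization is needed at this stage.
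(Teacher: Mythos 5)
Your proposal is correct and matches the paper's (implicit) proof exactly: the paper merely states that Lemma~\ref{lem-weakLDPsupimprovement2} follows by ``combining Lemma~\ref{lem-weakLDPsupimprovement1} with Lemma~\ref{lem-norm_exptight} to remove the condition $\|\mathbf{X}^{(N)}\|\leq M$,'' which is precisely the union bound over the events $\{\|\mathbf{X}^{(N)}\|\le M\}$ and its complement that you carry out, followed by the elementary estimate $\frac{1}{N}\ln(a_N+b_N)\le\max\{\frac{1}{N}\ln(2a_N),\frac{1}{N}\ln(2b_N)\}$. Your bookkeeping observations (choosing $r=x-\delta-r_0$, absorbing the $O(\delta)$ shift and $\ln 2$ into the constants, and noting that the relevant regime has $x>r_0$ and $x>r_\infty$ so that $r>0$ eventually) are the right things to check and are consistent with the hypotheses appearing in Proposition~\ref{prop-improved_weakLDPsup}.
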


The last tool needed for the proof of Proposition~\ref{prop-improved_weakLDPsup} is the following bound on the rate function that we prove in Section~\ref{sect-functFpart1}. Recall that $\rate(x,\eps)$ was defined in~\eqref{eq-defapproxI}.

\begin{lemma}\label{lem-Ibound}{
There exists some $\eta >0$ and $\gamma>0$ (depending only on the matrices $A_0,\dots,A_k$) such that, for every $x>r_\infty$ and every $\epsilon \in (0,\eta)$ it holds that }
\begin{displaymath}
\rate(x,\epsilon) \leq  \gamma(x^2+1).
\end{displaymath}
\end{lemma}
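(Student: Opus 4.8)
The plan is to prove the bound $\rate(x,\epsilon)\le\gamma(x^2+1)$ by exhibiting, for each $x>r_\infty$, a single well-chosen profile $\Psi$ (depending on $x$ but not on $\epsilon$ for $\epsilon$ small) for which the inner supremum $\sup_{\theta\ge0}\cF(\theta,x,\Psi)$ is already bounded by $\gamma(x^2+1)$, and for which $\Tr[\Psi^T\cS(\Psi)]\ge\epsilon$ whenever $\epsilon$ is below some threshold $\eta$. Since $\rate(x,\epsilon)$ is defined in~\eqref{eq-defapproxI} as an infimum over admissible $\Psi$ of this quantity, producing one admissible $\Psi$ with a controlled value of the supremum suffices. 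A natural candidate is a normalized rank-one projection $\Psi=\Psi_0:=vv^T$ with $v\in\R^L$ a unit vector chosen so that $\langle v,\cS[vv^T]v\rangle=\sum_j\langle v,A_jv\rangle^2$ is bounded away from zero; such a $v$ exists as long as not all $A_j$ vanish on some line (which we may assume, the degenerate case being trivial since then the model is a deformed GOE for which the statement is classical). Fixing $\eta$ to be this strictly positive value of $\Tr[\Psi_0^T\cS(\Psi_0)]$ makes $\Psi_0$ admissible for every $\epsilon\in(0,\eta)$, uniformly in $x$.

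With $\Psi=\Psi_0$ fixed, the remaining task is to bound $\sup_{\theta\ge0}\cF(\theta,x,\Psi_0)=\sup_{\theta\ge0}\big(LJ_{\mu_\infty}(x,\theta)-K(\theta,\phi(\theta,x,\Psi_0))\big)$. By Lemma~\ref{lem-optimizationreg}, for $\theta<-m_{\mu_\infty}(x)/2$ this expression is identically zero, so only the regime $\theta\ge-m_{\mu_\infty}(x)/2$ needs attention; there $\phi(\theta,x,\Psi_0)=\vphi(\theta,x)+(1+m_{\mu_\infty}(x)/(2\theta))\Psi_0$ with $\vphi(\theta,x)=-M(x)/(2\theta L)$. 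The plan is to estimate $\cF$ from above term by term: first, $J_{\mu_\infty}(x,\theta)$ in the relevant branch equals $\theta x-\tfrac12(1+\ln(2\theta))-\tfrac12\int\ln|x-y|\,\dx\mu_\infty(y)$, and since $x>r_\infty$ the log-integral is bounded below (by $\ln\dist(x,\mathrm{supp}\,\mu_\infty)$, or crudely by $\ln(x-r_\infty)$ if $x$ is close to $r_\infty$, and by $\ln(x-C)$ with $C$ a uniform bound on $\|A_0\|+2\sqrt k\max\|A_j\|\ge r_\infty$ for large $x$); the dominant growth is $\theta x$, which is linear in $\theta$. Second, $-K(\theta,\phi)$ contributes, via~\eqref{eq-defK}, a term $-L^2\theta^2\Tr[\phi^T\cS(\phi)]$ which is negative and of order $\theta^2$ for large $\theta$ (using that $\phi\to\Psi_0$-like behavior persists and $\Tr[\phi^T\cS\phi]$ stays bounded below, since $\Psi_0$ was chosen so $\cS$ does not annihilate it), plus lower-order terms in $\theta$ coming from $\Tr[A_0^T\phi]$ and $\tfrac12(\ln\det\phi-L\ln L)$. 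The quadratic-in-$\theta$ negative term from $K$ dominates the linear-in-$\theta$ positive term from $LJ_{\mu_\infty}$, so $\cF(\theta,x,\Psi_0)\to-\infty$ as $\theta\to\infty$ and the supremum is attained at a finite $\theta_\star$ that one can bound in terms of $x$; substituting back gives a bound of the form $\mathrm{const}\cdot(x^2+1)$ after tracking the $\ln\det\phi$ term (which is where one must be careful: $\phi(\theta,x,\Psi_0)$ may become singular as $\theta\downarrow-m_{\mu_\infty}(x)/2$, but there $\cF=0$, so the supremum is not affected, and away from that endpoint $\det\phi$ is bounded below by a power of $\theta$, contributing only logarithmically).

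The main obstacle I anticipate is obtaining \emph{uniform} control of the constant $\gamma$ and of the threshold $\eta$ in $x$ on the relevant ranges — i.e., checking that the implicit constants coming from $\|M(x)\|$, from $\int\ln|x-y|\,\dx\mu_\infty$, and from the lower bound on $\Tr[\phi^T\cS\phi]$ do not degenerate. For large $x$, $M(x)\sim -x^{-1}\Id_L$ and $m_{\mu_\infty}(x)\sim-x^{-1}$, so $\vphi(\theta,x)$ and hence $\phi$ are well-behaved, and the bound $\gamma(x^2+1)$ with $x^2$ growth is exactly what the $\theta_\star x$ (with $\theta_\star\lesssim x$) and $\theta_\star^2$ terms produce; the log-integral grows only like $\ln x$, which is absorbed. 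For $x$ near $r_\infty$ one uses that $r_\infty$ is itself bounded (Lemma~\ref{lem-norm_exptight} gives $r_\infty\le C$) and that the quantities entering $\cF$ are continuous in $x$ on $(r_\infty,\infty)$, uniformly on compacts, so a compactness argument closes the near-boundary regime. Assembling the two regimes yields the claimed inequality with constants $\eta,\gamma$ depending only on $A_0,\dots,A_k$. I would carry this out by: (1) choosing $v$ and setting $\eta:=\sum_j\langle v,A_jv\rangle^2$; (2) reducing via Lemma~\ref{lem-optimizationreg} to $\theta\ge-m_{\mu_\infty}(x)/2$; (3) writing out $\cF(\theta,x,\Psi_0)$ explicitly and extracting the leading $+\theta x$ and $-c\theta^2$ terms; (4) optimizing in $\theta$ to get $\theta_\star=\cO(x)$ and $\sup_\theta\cF=\cO(x^2)$; (5) handling the $\ln\det\phi$ and endpoint subtleties; (6) a compactness argument for $x$ in a bounded neighborhood of $r_\infty$ to make $\gamma$ uniform.
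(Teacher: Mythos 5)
Your overall strategy matches the paper's: fix a single admissible profile $\Psi$, reduce via Lemma~\ref{lem-optimizationreg} to $\theta\ge\theta_x:=-m_{\mu_\infty}(x)/2$, exploit the linear-minus-quadratic structure of $\cF$ in $\theta$, and optimize. The paper picks $\Psi=\Id_L/L$ and sets $\eta=\Tr[(\Id_L/L)^T\cS(\Id_L/L)]$; your choice of a rank-one projection $vv^T$ is an equally valid admissible profile. The key difference is in how the two arguments control the supremum. Rather than estimating $J_{\mu_\infty}$ and $K$ term by term as you propose, the paper differentiates $\cF$ in $\theta$, observes that at $\theta=\theta_x$ the function vanishes (this is exactly Lemma~\ref{lem-optimizationreg}), and then bounds the derivative by noting two sign cancellations: the combined $\ln(2\theta)$ and $\ln\det\phi$ contributions assemble into $\Delta(2\theta\phi(\theta,x,\Psi))$ whose $\theta$-derivative is $-\Tr\big[\Psi\,(2\theta\phi)^{-1}\big]\le 0$ and can be dropped; and the cross term $\Tr[(-M(x))\cS\Psi]\ge 0$ can likewise be dropped. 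This leaves $\partial_\theta\cF\le(x+b_0)-2a(\theta-\theta_x)$, hence $\cF\le(x+b_0)(\theta-\theta_x)-a(\theta-\theta_x)^2\le(x+b_0)^2/(4a)$ \emph{uniformly in $x>r_\infty$} with $a,b_0$ depending only on $\Psi$ and $A_0,\dots,A_k$.

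This uniform-in-$x$ bound is precisely what your term-by-term plan risks losing. You anticipate compensating with a compactness argument near $r_\infty$, but that step has a genuine gap: the quantities you propose to control (the log-potential $\int\ln|x-y|\,\dx\mu_\infty$ and the norm $\|M(x)\|$) may degenerate as $x\downarrow r_\infty$, so a statement of uniform continuity on compacts of $(r_\infty,\infty)$ does not by itself give a bound that extends to all $x>r_\infty$. The paper sidesteps this entirely: since $\cF(\theta_x,x,\Psi)=0$ by Lemma~\ref{lem-optimizationreg}, one never needs to evaluate $J$ or $K$ individually, only their $\theta$-derivative, and the two cancellations above eliminate every $x$-dependent term except the harmless linear-in-$\theta$ factor $Lx$. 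Your rank-one $\Psi_0$ also adds a complication that the paper's $\Id_L/L$ avoids: with $\Psi_0=vv^T$, $\phi(\theta,x,\Psi_0)\to vv^T$ as $\theta\to\infty$, so $\det\phi\sim\theta^{-(L-1)}$ and the $-\tfrac12\ln\det\phi$ term in $\cF$ grows like $\tfrac{L-1}{2}\ln\theta$; it is subdominant to $-c\theta^2$ but you cannot simply declare it \emph{bounded} as written. I would suggest adopting the paper's derivative-and-cancellation argument (which works equally well for your $\Psi_0$ since $\Psi_0\ge 0$), which removes both the $\ln\theta$ bookkeeping and the near-edge issue in one stroke.
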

The proof of this lemma is delayed to Appendix~\ref{sect-functFpart1}. We can now prove Proposition~\ref{prop-improved_weakLDPsup} using Lemmas~\ref{lem-weakLDPsupimprovement2} and~\ref{lem-Ibound}.

\begin{proof}[Proof of Proposition \ref{prop-improved_weakLDPsup}]
Given $x > r_{\infty}$, we first choose $M$ large enough so that $\frac{LM }{4} -C > \gamma(x^2+1) +1 $. With such a fixed $M$ we then choose $\epsilon >0$ small enough such that
\begin{displaymath}
- C'- L \ln(M/|x- r_0|) + \frac{|x- r_0|^2}{\epsilon} > \gamma(x^2+1)  +1,
\end{displaymath}
and call it $\epsilon_x $. It is easy to see that one can choose $\epsilon_x$ in a way that 
{$\epsilon_x$ stays uniformly bounded from below by a positive constant $\eps$ for $x\in{[r,R]}$.}
Now, by definition, we have that, { with the notation of \eqref{eq-defsetrho}, for $N$ large enough}

\[ \frac{1}{N}\ln(\P[|\lambda_1(\mathbf{X}^{(N)})-x|\leq\delta,\mathbf{v}_1(\mathbf{X}^{(N)})\in S_{\rho,\eps_x}])< - \rate_1(x,\epsilon_x) - 1.\]
As a consequence, 
\begin{align*}
\P[ |\lambda_1(\mathbf{X}^{(N)})-x|\leq\delta ] & \leq \P[ |\lambda_1(\mathbf{X}^{(N)})-x|\leq\delta, \mathbf{v}_1(\mathbf{X}^{(N)})\in S_{\rho,\eps}) ]\\ &\quad+ \P[ |\lambda_1(\mathbf{X}^{(N)})-x|\leq\delta, \mathbf{v}_1(\mathbf{X}^{(N)})\in S_{\rho,\eps}^c ) ].
\end{align*}

Now, as we  just saw, the first term can be bounded by $e^{ -N( \rate(x,\epsilon){+} 1) } $. The second term can be bounded using Proposition~\ref{prop-weakLDPsup} and  a covering argument of the compact set
$\mathcal{T}_{\epsilon}:=\{ \Psi \in \mathrm{Sym}^{+,1}_L( \R) : \Tr[ \Psi^T \mathcal{S}(\Psi)] \geq \epsilon \}$ similar to the one used to prove \eqref{coveri}. We ultimately get
\begin{align*}
\limsup_{N \to \infty} \frac{1}{N} \ln \P[ |\lambda_1(\mathbf{X}^{(N)})-x|\leq\delta, \mathbf{v}_1(\mathbf{X}^{(N)})\in S_{\rho,\eps}^c ) ] &\leq - \inf_{\Psi \in \mathcal{T}_{\epsilon} }  \sup_{\theta \geq 0 }\mathcal{F}(\theta,x, \Psi) \\
&\leq {-} \rate( x, \epsilon) 
\end{align*}
where we used the definition of $\rate(x,\epsilon)$ in~\eqref{eq-defapproxI}. Putting the two estimates together, we get 
\begin{align*}
\lim_{\delta \to 0} \limsup_{N \to \infty} \frac{1}{N} \ln \P[|\lambda_1(\mathbf{X}^{(N)})-x|\leq\delta ] \leq  {-} \rate( x, \epsilon) 
\end{align*}
as desired.
\end{proof}

\section{Proof of Theorem~\ref{thm-main}: Weak Large Deviation Lower Bound}\label{sect-wLDPinf}
The goal of this section is to prove the following lower bound.
\begin{proposition}[Lower bound in Theorem~\ref{thm-mainweak}]\label{prop-weakLDPinf}
For every $x>r_\infty$, we have
\begin{displaymath}
\lim_{\delta\rightarrow0}\liminf_{N\rightarrow\infty}P_\delta^{(N)}(x)\geq\sup_{\substack{\Psi\in\mathrm{Sym}_L^{+,1}(\R)\\\Tr[\Psi^T \cS( \Psi)]\neq 0}}\inf_{\theta\geq0} -\cF(\theta,x,\Psi,1),
\end{displaymath}
where $P_{\delta}^{(N)}$ is defined in~\eqref{eq-defPNdelta}.
\end{proposition}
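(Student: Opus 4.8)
Throughout, fix $x>r_\infty$ and $\Psi\in\mathrm{Sym}_L^{+,1}(\R)$ with $\Tr[\Psi^T\cS(\Psi)]\neq0$. Since $P^{(N)}_\delta(x)$ in~\eqref{eq-defPNdelta} does not depend on $\Psi$ or on $\theta$, it suffices to prove
\[
\lim_{\delta\rightarrow0}\liminf_{N\rightarrow\infty}P^{(N)}_\delta(x)\geq-\cF(\theta^*,x,\Psi)
\]
for a well-chosen $\theta^*=\theta^*(x,\Psi)\geq0$ realizing $-\cF(\theta^*,x,\Psi)=\inf_{\theta\geq0}(-\cF(\theta,x,\Psi))$, and then take the supremum over admissible $\Psi$. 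We take $\theta^*$ to be a maximizer of $\theta\mapsto\cF(\theta,x,\Psi)$; combining Lemma~\ref{lem-optimizationreg} (which gives $\cF\equiv0$ for $2\theta<-m_{\mu_\infty}(x)$) with the shape of $\cF$ discussed in Proposition~\ref{prop-ratefunct}, we may and do assume $2\theta^*\geq-m_{\mu_\infty}(x)$, so that $\phi(\theta^*,x,\Psi)$ from~\eqref{eq-defphi} equals $\Phi^*:=-M(x)/(2\theta^*L)+\big(1+m_{\mu_\infty}(x)/(2\theta^*)\big)\Psi\in\mathrm{Sym}_L^{+,1}(\R)$.

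The core of the argument concerns the \emph{deformed} model obtained by tilting the law of $W=(W_1,\dots,W_k)$. For a unit vector $\mathbf{u}=(u_1,\dots,u_L)\in\mathds{S}^{NL-1}$, write $\widehat\P_{\mathbf{u}}$ for the probability measure with $\dx\widehat\P_{\mathbf{u}}/\dx\P\propto\exp(\theta^*NL\langle\mathbf{u},\mathbf{X}^{(N)}\mathbf{u}\rangle)$. Completing the square as in Section~\ref{sect-conv.annealed} shows that $\widehat\P_{\mathbf{u}}$ shifts the mean of each $W_j$ by the rank-$\leq L$ matrix $2\theta^*L\sum_{c,d}(A_j)_{cd}u_cu_d^T$, so under $\widehat\P_{\mathbf{u}}$ the matrix $\mathbf{X}^{(N)}$ has the law of the Kronecker model~\eqref{eq-model} plus the deterministic deformation $2\theta^*L\sum_j A_j\otimes\big(\sum_{c,d}(A_j)_{cd}u_cu_d^T\big)$, whose rank is $O(L^2)$, independent of $N$. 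The key lemma to establish is that, uniformly over $\mathbf{u}$ with $\|\rho(\mathbf{u})-\Phi^*\|$ small enough (relative to $\delta$), $\widehat\P_{\mathbf{u}}(|\lambda_1(\mathbf{X}^{(N)})-x|>\delta)\to0$ as $N\to\infty$. This is a BBP-type statement for finite-rank deformations of the Kronecker matrix: using the bulk and edge estimates of~\cite{AEKN2019} together with an isotropic-law bound $\langle u_d,(\mathbf{X}^{(N)}-\lambda)^{-1}_{ab}u_{c'}\rangle\approx M_{ab}(\lambda)\langle u_d,u_{c'}\rangle$ and standard finite-rank perturbation theory, $\lambda_1$ of the deformed matrix converges to a deterministic limit depending on $\mathbf{u}$ only through $\rho(\mathbf{u})$; the first-order condition $\partial_\theta\cF(\theta^*,x,\Psi)=0$ (obtained by differentiating $\cF$ and using the MDE~\eqref{eq-tensorMDE}, as in the proof of Lemma~\ref{lem-optimizationreg}) is exactly the equation making this limit equal $x$ when $\rho(\mathbf{u})=\Phi^*$ --- as in the Wigner case~\cite{GuionnetHusson2020}, where it reduces to $-m_{\mu_\infty}(x)=1/(2\theta^*)$, and parallel to the variance-profile case~\cite{DGH2024}.

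Granting this, we assemble the bound by the Fubini manipulation~\eqref{eq-Fubinitrick}. By Proposition~\ref{prop-conv.annealed} applied with the profile $\Phi^*$, for every small $\delta'>0$,
\[
A_N:=\E_{\mathbf{u}}\E_W\big[\re^{\theta^*NL\langle\mathbf{u},\mathbf{X}^{(N)}\mathbf{u}\rangle}\I_{\{\|\rho(\mathbf{u})-\Phi^*\|\leq\delta'\}}\big]=\exp\big(N(K(\theta^*,\Phi^*)+o_{\delta'}(1)+o_N(1))\big).
\]
Writing $A_N=\E_W\E_{\mathbf{u}}[\,\cdot\,]$ and recalling $\E_W[\re^{\theta^*NL\langle\mathbf{u},\mathbf{X}^{(N)}\mathbf{u}\rangle}]=\re^{N\mathscr{F}_{\theta^*}(\rho(\mathbf{u}))}$ from~\eqref{eq-inter}, we split the $W$-expectation over $\{\mathbf{X}^{(N)}\in\Omega_N\}\cap\{|\lambda_1-x|\leq\delta\}$, over $\{\mathbf{X}^{(N)}\notin\Omega_N\}$, and over $\{\mathbf{X}^{(N)}\in\Omega_N\}\cap\{|\lambda_1-x|>\delta\}$, with $\Omega_N$ as in~\eqref{eq-defOmegaN} and the net constant $\cC$ chosen via Lemma~\ref{lem-largeproba} large enough that the second piece is exponentially negligible (bound $I_{NL}(\mathbf{X}^{(N)},\theta^*)\leq\re^{\theta^*NL\|\mathbf{X}^{(N)}\|}$, apply Cauchy--Schwarz and Lemma~\ref{lem-norm_exptight}). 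On the first piece, $\E_{\mathbf{u}}[\re^{\theta^*NL\langle\mathbf{u},\mathbf{X}^{(N)}\mathbf{u}\rangle}\I_{\{\cdots\}}]\leq I_{NL}(\mathbf{X}^{(N)},\theta^*)$, and since $\mu_{\mathbf{X}^{(N)}}$ is close to $\mu_\infty$ on $\Omega_N$ (it is the average of the $\mu_{\mathbf{X}^{(N)}}(\Pi_j,\Pi_j)$; cf.\ Theorem~\ref{thm-muproj.conc}) and $\lambda_1\approx x$, Lemma~\ref{lem-convsphint} gives $\tfrac1N\ln I_{NL}(\mathbf{X}^{(N)},\theta^*)\leq LJ_{\mu_\infty}(x,\theta^*)+o_N(1)$; hence this piece is at most $\re^{N(LJ_{\mu_\infty}(x,\theta^*)+o_N(1))}\P(|\lambda_1-x|\leq\delta)$. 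On the third piece, Fubini back to $\E_{\mathbf{u}}\E_W$ and the key lemma give, for $\delta'$ small, $\E_W[\re^{\theta^*NL\langle\mathbf{u},\mathbf{X}^{(N)}\mathbf{u}\rangle}\I_{\{|\lambda_1-x|>\delta\}}]=\re^{N\mathscr{F}_{\theta^*}(\rho(\mathbf{u}))}\widehat\P_{\mathbf{u}}(|\lambda_1-x|>\delta)=o_N(1)\,\re^{N\mathscr{F}_{\theta^*}(\rho(\mathbf{u}))}$ uniformly on $\{\|\rho(\mathbf{u})-\Phi^*\|\leq\delta'\}$, so integrating over $\mathbf{u}$ this piece is $o_N(1)A_N$. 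Combining the three bounds yields $(1-o_N(1))A_N\leq\re^{N(LJ_{\mu_\infty}(x,\theta^*)+o_N(1))}\P(|\lambda_1-x|\leq\delta)+\re^{-K'N}$ for $K'$ as large as desired, whence
\[
P^{(N)}_\delta(x)\geq K(\theta^*,\Phi^*)-LJ_{\mu_\infty}(x,\theta^*)+o_{\delta'}(1)+o_N(1)=-\cF(\theta^*,x,\Psi)+o_{\delta'}(1)+o_N(1).
\]
Letting $N\to\infty$, then $\delta'\to0$, then $\delta\to0$ gives $\lim_{\delta\to0}\liminf_N P^{(N)}_\delta(x)\geq-\cF(\theta^*,x,\Psi)=\inf_{\theta\geq0}(-\cF(\theta,x,\Psi))$; taking the supremum over admissible $\Psi$ completes the proof.

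The single genuinely new ingredient is the BBP-type convergence of $\lambda_1$ under $\widehat\P_{\mathbf{u}}$, uniformly over the localized directions, and in particular the verification that the optimal tilt $\theta^*$ produces an outlier at exactly $x$ (equivalently, that the maximizer of $\cF(\cdot,x,\Psi)$ solves the outlier equation and lies in the range $2\theta^*\geq-m_{\mu_\infty}(x)$); this is where the correlated, matrix-valued MDE structure of the model genuinely enters. The remaining steps are an adaptation of the bookkeeping of~\cite{GuionnetHusson2020,HussonMcKenna2023,DGH2024} to the present setting, the projected measures $\mu_{\mathbf{X}^{(N)}}(\Pi_i,\Pi_j)$ and their concentration (Theorem~\ref{thm-muproj.conc}) playing the role of the block structure used there.
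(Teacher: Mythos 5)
Your bookkeeping (Fubini back-and-forth, $A_N$ controlled by Proposition~\ref{prop-conv.annealed}, spherical integral controlled by Lemma~\ref{lem-convsphint} on $\Omega_N$, the unlikely event killed by Cauchy--Schwarz and Lemma~\ref{lem-norm_exptight}) is sound and matches the structure of the paper's argument. However, the two ingredients you flag as ``to establish'' are precisely where the work lies, and one of them is organized differently from the paper in a way that creates an unnecessary obstacle.

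You fix $\theta^*$ to be a maximizer of $\theta\mapsto\cF(\theta,x,\Psi)$ and \emph{assert} that the tilted measure $\widehat\P_{\mathbf u}$ with this $\theta^*$ makes $\lambda_1\to x$; concretely, you claim that the first-order condition $\partial_\theta\cF(\theta^*,x,\Psi)=0$ coincides with the BBP outlier equation. This equivalence is not obvious and you do not prove it. The paper avoids this entirely: it never argues that the ``right'' $\theta$ is an argmax of $\cF$. Instead, it proves directly (Lemma~\ref{lem-theeq} together with Proposition~\ref{prop-existencesol}) that the function $Z(\theta)$ returning the largest solution of the determinantal equation $\det(\Id_{L^2}+2\theta S(M(z)\otimes\phi(\theta,x,\Psi)))=0$ is continuous, satisfies $Z(0)=r_\infty$, and tends past $x$ for large $\theta$ (via the lower bound on the spectrum of $\sqrt{-M\otimes 2\theta\phi}\,S\,\sqrt{-M\otimes 2\theta\phi}$ in Lemma~\ref{posdef}), so by the intermediate value theorem \emph{some} $\theta$ produces an outlier at $x$. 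For any such $\theta$, the tilting argument gives $\liminf P^{(N)}_\delta(x)\geq-\cF(\theta,x,\Psi)\geq\inf_{\theta'\geq0}(-\cF(\theta',x,\Psi))$, and the lower bound follows. In other words, you do not need the identification ``$\theta^*=\arg\max\cF$''; requiring it makes your proof depend on a claim you never justify, whereas the paper's IVT argument is weaker (mere existence) and suffices.

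The second issue is that your ``BBP-type statement'' is only gestured at. The paper makes it precise via the explicit decomposition $\mathbf{X}^{(N)}=\widetilde{\mathbf{X}}^{(N)}+2\theta TST^*$ under the tilted measure, the determinant identity $\det(\Id_N+AB)=\det(\Id_M+BA)$, the almost-sure limit $T^*(z-\widetilde{\mathbf{X}}^{(N)})^{-1}T\to -M(z)\otimes\Psi$ (isotropic law), and the argument principle to pass the zero set to the limit. You invoke ``bulk and edge estimates of~\cite{AEKN2019}'' and ``isotropic law'' without carrying any of this out. That is a genuine gap, not an omittable detail, since Lemma~\ref{lem-theeq} and the existence/continuity analysis of $Z$ are the technical heart of the lower bound. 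Finally, your claim that you ``may and do assume $2\theta^*\geq -m_{\mu_\infty}(x)$'' needs justification (compactness of the optimization in $\theta$ is handled in the paper by Lemma~\ref{lem-thetainacompact}, not by Proposition~\ref{prop-ratefunct}, which is itself proved downstream of the LDP).
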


The key to the proof of Proposition~\ref{prop-weakLDPinf} lies in studying the typical random matrices under the tilted measure. We observe that, in this setting, the law of $\mathbf{X}^{(N)}$ is equal to that of the original matrix up to a perturbation that we can write down explicitly.

\begin{lemma}
	Let $ \theta >0$ and $\mathbf{u} \in \mathbb{S}^{NL -1}$. We consider the tilted measure $\P^{(\mathbf{u},\theta)}$ defined by 
	
	\[ d\P^{(\mathbf{u},\theta)}( \mathbf{X}^{(N)}) = \frac{\exp(N \theta \langle \mathbf{u},\mathbf{X}^{(N)}\mathbf{u} \rangle )}{\E_W[ \exp(N \theta \langle \mathbf{u},\mathbf{X}^{(N)}\mathbf{u} \rangle )] }d \P (\mathbf{X}^{(N)}). \]
	
	Under this tilted measure, the matrix $\mathbf{X}^{(N)}$  can be decomposed in the following way 
	
	\[ \mathbf{X}^{(N)} =  \widetilde{\mathbf{X}}^{(N)} + 2 \theta T S T^*   \]
	
	where $\widetilde{\mathbf{X}}^{(N)}$ has the same distribution as $\mathbf{X}^{(N)}$ under $\P$, $T$ is a $NL \times L^2$ matrix and $S$ is a $L^2 \times L^2$ matrix. For simplicity, we will index the set  $[1,L^2]$ using double indexes of the form $ab$ where $a,b \in [1,L]$. For such $a,b$, the $ab$ column of $T$ is $e_a \otimes u_b$ where $(e_1, \dots , e_L)$ is the canonical base of $\R^L$. Furthermore
\[ S_{ab,cd} = \sum_{j \geq 1} (A_j)_{ac} (A_j)_{bd}. \]
or in other terms $S = \sum_{j \geq 1} A_j \otimes A_j$. 
	\end{lemma}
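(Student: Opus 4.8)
The plan is to reduce the statement to a Gaussian change of variables (completing the square) applied to each GOE factor $W_j$ separately, followed by an elementary identification of the resulting deterministic shift with $TST^*$.

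First I would rewrite the tilting exponent so as to isolate the randomness in each $W_j$. Writing $\mathbf{u}=(u_1,\dots,u_L)$ with $u_c\in\R^N$ and recalling $\mathbf{X}^{(N)}=\sum_{j=1}^kA_j\otimes W_j+A_0\otimes\Id_N$, one has, exactly as in the computation of Section~\ref{sect-conv.annealed}, that $\langle\mathbf{u},(A_j\otimes W_j)\mathbf{u}\rangle=\sum_{a,b=1}^N(W_j)_{ab}(B_j)_{ab}=\Tr[W_jB_j]$, where $B_j:=\sum_{c,d=1}^L(A_j)_{cd}\,u_cu_d^T\in\R^{N\times N}$; since $A_j$ is symmetric, so is $B_j$. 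The term $A_0\otimes\Id_N$ is deterministic and therefore factors out of $\E_W$, playing no role in the tilt, and since $W_1,\dots,W_k$ are independent the Radon--Nikodym factor $\exp(N\theta\langle\mathbf{u},\mathbf{X}^{(N)}\mathbf{u}\rangle)/\E_W[\exp(N\theta\langle\mathbf{u},\mathbf{X}^{(N)}\mathbf{u}\rangle)]$ splits as a product over $j$ of the factors $\exp(N\theta\Tr[W_jB_j])/\E_W[\exp(N\theta\Tr[W_jB_j])]$.

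Next I would complete the square in each GOE factor. Under Assumption~\ref{assu-matrix} with $\beta=1$, the law of $W_j$ has density on symmetric matrices proportional to $\exp(-\tfrac{N}{4}\Tr[W^2])$, and for symmetric $B_j$ one checks entrywise that $-\tfrac{N}{4}\Tr[W_j^2]+N\theta\Tr[W_jB_j]=-\tfrac{N}{4}\Tr[(W_j-2\theta B_j)^2]+N\theta^2\Tr[B_j^2]$. Hence the affine map $W_j\mapsto W_j-2\theta B_j$ pushes the tilted law of $W_j$ forward to the GOE law; equivalently, under $\P^{(\mathbf{u},\theta)}$ we may write $W_j=\widetilde{W}_j+2\theta B_j$ with $\widetilde{W}_1,\dots,\widetilde{W}_k$ i.i.d.\ GOE. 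Substituting back yields $\mathbf{X}^{(N)}=\widetilde{\mathbf{X}}^{(N)}+2\theta\sum_{j=1}^kA_j\otimes B_j$, where $\widetilde{\mathbf{X}}^{(N)}:=\sum_jA_j\otimes\widetilde{W}_j+A_0\otimes\Id_N$ has the law of $\mathbf{X}^{(N)}$ under $\P$. (The $\beta=2$ case is identical with the GUE density $\propto\exp(-\tfrac{N}{2}\Tr[W^2])$ and shift $\theta B_j$ instead, but here we only need $\beta=1$.)

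Finally I would verify the algebraic identity $\sum_{j=1}^kA_j\otimes B_j=TST^*$. With $T$ the $NL\times L^2$ matrix whose $ab$ column is $e_a\otimes u_b$ and $S_{ab,cd}=\sum_j(A_j)_{ac}(A_j)_{bd}$, a direct expansion gives $TST^*=\sum_{a,b,c,d}S_{ab,cd}\,(e_ae_c^T)\otimes(u_bu_d^T)=\sum_j\Big(\sum_{a,c}(A_j)_{ac}e_ae_c^T\Big)\otimes\Big(\sum_{b,d}(A_j)_{bd}u_bu_d^T\Big)=\sum_jA_j\otimes B_j$, and this computation also makes transparent that $S=\sum_jA_j\otimes A_j$. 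Putting the three steps together gives the claimed decomposition. There is no genuine analytic difficulty here: the only points requiring care are the symmetry of $B_j$ (so that the completion of the square is exact) and keeping the Kronecker-product index conventions consistent between $\mathbf{X}^{(N)}$, $T$, and $S$; the real content is just the classical Gaussian (Cameron--Martin) shift.
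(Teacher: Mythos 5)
Your proof is correct and follows essentially the same route as the paper: split the tilt over the independent $W_j$'s, apply the Gaussian shift $W_j\mapsto\widetilde W_j+2\theta B_j$, and then identify $\sum_j A_j\otimes B_j=TST^*$ by expanding the Kronecker products. You merely spell out the completion-of-square step with the explicit GOE density where the paper invokes it as a classical property of Gaussians; the algebra is otherwise identical.
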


\begin{proof}
To prove this, let us write down and expand the scalar product as

\begin{align*}
\langle \mathbf{u},\mathbf{X}^{(N)}\mathbf{u} \rangle &= \sum_{j} \sum_{a,b} (A_j)_{a,b} \langle u_a W_j^{(N)}, u_b \rangle + \sum_{a,b} (A_0)_{a,b} \langle u_a, u_b  \rangle \\
&= \sum_{j} \sum_{a,b} (A_j)_{a,b} \text{Tr}  ((u_b)^* u_a W_j^{(N)}) + \sum_{a,b} (A_0)_{a,b} \langle u_a, u_b  \rangle. 
\end{align*}

Therefore, classical properties of Gaussian random variables give us that under the tilted measure $\P^{(u,\theta)}$, we have for $j \geq 1$ that

\[ W_j^{(N)} = \widetilde{W}_j^{(N)} + 2 \theta \sum_{a,b} (A_j)_{ab} (u_b)^* u_a, \]

where the $\widetilde{W}_j^{(N)}$ are GOE under $\P^{(u,\theta)}$. As a consequence, we can write
$$ \mathbf{X}^{(N)} =  \widetilde{\mathbf{X}}^{(N)} + 2 \theta D,$$
where $\widetilde{\mathbf{X}}^{(N)} = \sum_{j=1}^k A_j\otimes \widetilde{W}_j^{(N)}+A_0\otimes \Id_N $
and 
\begin{align*}
	D &= \sum_{j} A_j  \otimes \sum_{a,b} (A_j)_{ab} (u_b)^* u_a \\
	&=   \sum_{j} \Big(  \Big(\sum_{c,d} (A_j)_{cd} e_c (e_d)^* \Big)  \otimes \Big(\sum_{a,b} (A_j)_{ab} u_a(u_b)^* \Big) \Big) \\
	&=  \sum_{c,d}\sum_{a,b} \Big( \sum_{j} (A_j)_{cd}(A_j)_{ab} \Big) (e_c  \otimes   u_a)( e_d \otimes u_b)^* \\
		&=  \sum_{c,a,d,b}S_{ca,db} (e_c  \otimes   u_a)( e_d \otimes u_b)^* = T S T^*. 
\end{align*}
This is the claim.
\end{proof}

Having established the structure of $\mathbf{X}^{(N)}$ under the tilted measure, we can use classical results on outliers of perturbed random matrices to identify the limit of $(\lambda_1(\mathbf{X}^{(N)}))_N$ as~$N\rightarrow\infty$.

\begin{lemma}\label{lem-theeq}
Let $ \theta >0$ and let us consider a sequence of vectors $(\mathbf{u}^{(N)})_{N \in \N}$ such that for all $N$, $\mathbf{u}^{(N)}  \in \mathbb{S}^{NL -1}$ and such that for all  $N$, the profile of $\mathbf{u}^{(N)}$ converges towards  a matrix $\Psi$ as $N\rightarrow\infty$. Let us consider the following equation on $z\in\R$ 

\begin{equation}\label{theeq}
\det\left( \Id_{L^2} + 2 \theta  S ( M(z) \otimes \Psi) \right) = 0
\end{equation} 
where the $L \times L$ matrix $M(z)$ was introduced in~\eqref{eq-Stieltjestransf}.
Then, if  the equation \eqref{theeq} has at least one solution in $]r_\infty , + \infty[$, the largest eigenvalue $\lambda_1 ( \mathbf{X}^{(N)})$ converges toward the largest solution of this equation when $N$ goes to $\infty$, almost surely under the measure $\P^{(\mathbf{u}^{(N)},\theta)}$.
\end{lemma}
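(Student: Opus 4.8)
The plan is to view $\mathbf{X}^{(N)}=\widetilde{\mathbf{X}}^{(N)}+2\theta TST^*$ as a finite-rank (rank at most $L^2$) perturbation of a matrix $\widetilde{\mathbf{X}}^{(N)}$ distributed as the original Kronecker model, and to apply the standard outlier analysis via a Schur-complement / determinant identity. Concretely, $z>r_\infty$ is an eigenvalue of $\mathbf{X}^{(N)}$ if and only if $\det\!\big(\mathbf{X}^{(N)}-z\Id_{NL}\big)=0$; writing $\mathbf{X}^{(N)}-z\Id_{NL}=\big(\widetilde{\mathbf{X}}^{(N)}-z\Id_{NL}\big)+2\theta TST^*$ and factoring out the (invertible, since $z$ is outside the spectrum of $\widetilde{\mathbf{X}}^{(N)}$ with high probability) first term, this is equivalent to
\begin{equation*}
\det\!\Big(\Id_{L^2}+2\theta S\,T^*\big(\widetilde{\mathbf{X}}^{(N)}-z\Id_{NL}\big)^{-1}T\Big)=0.
\end{equation*}
So the first step is to establish this determinantal characterization rigorously, including the justification that outliers of $\mathbf{X}^{(N)}$ in $(r_\infty,\infty)$ are detected exactly by the zeros of the $L^2\times L^2$ determinant, and that no eigenvalue of $\widetilde{\mathbf{X}}^{(N)}$ escapes above $r_\infty+o(1)$ (this last point follows from Lemma~\ref{lem-convlargestev} applied to $\widetilde{\mathbf{X}}^{(N)}$, whose law under $\P^{(\mathbf{u}^{(N)},\theta)}$ is that of $\mathbf{X}^{(N)}$ under $\P$).

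The second and main step is to compute the $N\to\infty$ limit of the $L^2\times L^2$ matrix $T^*\big(\widetilde{\mathbf{X}}^{(N)}-z\Id_{NL}\big)^{-1}T$. Recalling that the $ab$-column of $T$ is $e_a\otimes u_b$, the $(ab,cd)$ entry of this matrix is $\big\langle e_a\otimes u_b,\big(\widetilde{\mathbf{X}}^{(N)}-z\Id_{NL}\big)^{-1}(e_c\otimes u_d)\big\rangle$, i.e. $\big\langle u_b,\big[\mathbf{R}(z)\big]_{ac}u_d\big\rangle$ where $\mathbf{R}(z)$ is the resolvent of $\widetilde{\mathbf{X}}^{(N)}$ and $[\cdot]_{ac}$ denotes its $(a,c)$ block of size $N\times N$. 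The heuristic is that by a local law / isotropic law for the Kronecker model (the deterministic equivalent being $\mathbf{M}(z)=M(z)\otimes\Id_N$ from \eqref{eq-MDE}–\eqref{eq-tensorMDE}), one has $[\mathbf{R}(z)]_{ac}\approx M_{ac}(z)\Id_N$, and since the empirical profile $\rho(\mathbf{u}^{(N)})\to\Psi$ one gets $\langle u_b,u_d\rangle\to\Psi_{bd}$; hence $\big(T^*\mathbf{R}(z)T\big)_{ab,cd}\to M_{ac}(z)\Psi_{bd}$, which is precisely the $(ab,cd)$-entry of $M(z)\otimes\Psi$. Therefore the limiting equation is $\det\!\big(\Id_{L^2}+2\theta S(M(z)\otimes\Psi)\big)=0$, which is \eqref{theeq}. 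Since $u_b,u_d$ are deterministic once we condition appropriately but still "generic" with respect to $\widetilde{\mathbf{X}}^{(N)}$, this is an isotropic-law statement: I would either invoke the isotropic local law of \cite{AEKN2019} directly, or — to keep things self-contained — note that $\mathbf{u}^{(N)}$ is uniform on the sphere and independent of $\widetilde{\mathbf{X}}^{(N)}$ (after the change of measure the tilt acts only on the $W_j$'s, not on $\mathbf{u}$), so a concentration argument over $\mathbf{u}$ combined with the averaged resolvent convergence $\E\big[[\mathbf{R}(z)]_{ac}\big]\to M_{ac}(z)\Id_N$ (which is exactly what Theorem~\ref{thm-muproj.conc} and its proof provide, via \cite[Thm.~4.3]{HST2006} and MDE stability) yields the entrywise limit uniformly for $z$ in compact subsets of $(r_\infty,\infty)$.

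The third step is the standard "continuity/monotonicity" endgame: let $g_N(z):=\det\!\big(\Id_{L^2}+2\theta S\,T^*\mathbf{R}(z)T\big)$ and $g_\infty(z):=\det\!\big(\Id_{L^2}+2\theta S(M(z)\otimes\Psi)\big)$. From Step 2, $g_N\to g_\infty$ locally uniformly on $(r_\infty,\infty)$ almost surely. Each $g_N$ has its zeros in $(r_\infty+o(1),\infty)$ exactly at the outlier eigenvalues of $\mathbf{X}^{(N)}$ (Step 1), and since $z\mapsto \mathbf{R}(z)$ is monotone on each interval between consecutive eigenvalues with the right boundary behavior, a Hurwitz-type argument shows that if $g_\infty$ has a largest zero $z^*\in(r_\infty,\infty)$ then $\mathbf{X}^{(N)}$ has an eigenvalue converging to $z^*$, and no eigenvalue of $\mathbf{X}^{(N)}$ exceeds $z^*+o(1)$; hence $\lambda_1(\mathbf{X}^{(N)})\to z^*$. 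Here one uses that $M(z)$ is well-defined and real-analytic on $(r_\infty,\infty)$ with $M(z)\to 0$ as $z\to\infty$, so $g_\infty(z)\to 1$ at $+\infty$, which guarantees that the largest zero is isolated and that $g_\infty$ does not vanish beyond it.

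The main obstacle I expect is Step 2 — upgrading the averaged-resolvent / MDE-stability input to the genuinely isotropic statement $\langle u_b,[\mathbf{R}(z)]_{ac}u_d\rangle\to M_{ac}(z)\Psi_{bd}$ with control that is uniform in $z$ near $r_\infty$ and valid almost surely under the tilted measure. The fluctuation of $\langle u_b, [\mathbf{R}(z)]_{ac} u_d\rangle$ around its mean needs a quadratic-form concentration estimate (e.g. Hanson–Wright applied to the Gaussian representation of $\mathbf{u}$, conditionally on $\widetilde{\mathbf{X}}^{(N)}$), and one must check the error is still $o(1)$ uniformly as $z\downarrow r_\infty$, where the resolvent norm blows up; restricting $z$ to $[r_\infty+\eta,\infty)$ for fixed $\eta>0$ and arguing that any outlier of interest lies in such a region (because \eqref{theeq} has a solution strictly above $r_\infty$ by hypothesis) circumvents this. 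A secondary technical point is handling the rank-$L^2$ perturbation when $S$ or $M(z)\otimes\Psi$ is singular, which is dealt with by the usual $\det(\Id+AB)=\det(\Id+BA)$ manipulation rather than inverting anything.
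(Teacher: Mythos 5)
Your proposal follows essentially the same route as the paper: the same rank-$L^2$ decomposition $\mathbf{X}^{(N)}=\widetilde{\mathbf{X}}^{(N)}+2\theta TST^*$, the same Sylvester identity $\det(\Id_{NL}+AB)=\det(\Id_{L^2}+BA)$, the same identification of the entrywise limit $\big(T^*(z-\widetilde{\mathbf{X}}^{(N)})^{-1}T\big)_{ab,cd}\to -M(z)_{ac}\Psi_{bd}$, and the same holomorphic-convergence endgame (the paper invokes Montel plus the argument principle, which is exactly your Hurwitz-type step). One point to correct: in the lemma the sequence $(\mathbf{u}^{(N)})_N$ is \emph{deterministic} with converging profile, so your alternative argument that ``$\mathbf{u}^{(N)}$ is uniform on the sphere and independent of $\widetilde{\mathbf{X}}^{(N)}$'' does not apply; there is nothing to concentrate over in $\mathbf{u}$. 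The paper instead exploits orthogonal invariance of the GOE: conjugating each $W_j$ by $O\in O(N)$ leaves the law of $\widetilde{\mathbf{X}}^{(N)}$ invariant, so $\E\big[[\mathbf{R}(z)]_{ac}\big]$ must be a scalar multiple of $\Id_N$; its trace (hence the scalar) is known to converge to $-M(z)_{ac}$ via the MDE, and concentration of Lipschitz functionals of the Gaussian $W_j$'s controls the fluctuation of $\langle u_b,[\mathbf{R}(z)]_{ac}u_d\rangle$ around its mean. This is the same isotropic-type input you correctly flag as the key technical point; the invariance trick is what makes it self-contained without invoking the full isotropic local law machinery.
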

	
\begin{proof}
By the almost sure convergence of the empirical measure towards $\mu_\infty$ under the measure $\P^{(\mathbf{u}^{(N)},\theta)}$, the largest eigenvalue of $\mathbf{X}^{(N)} $ is larger or equal to $r_\infty$. Moreover, it is given by the largest solution to the equation
		
\[ \det( z  - \mathbf{X}^{(N)} )= 0.\]
Let us assume that $z$ is an outlier, namely that it is greater than $r_\infty+\eps$ for some $\eps>0$ independent of $N$. Using that $\mathbf{X}^{(N)}  = \widetilde{\mathbf{X}}^{(N)} + 2 \theta T S T^* $, one can rewrite this equation in the following way 

\[ \det( z - \widetilde{\mathbf{X}}^{(N)} ) \det( \Id_{N} -  2 \theta TS T^*( z - \widetilde{\mathbf{X}}^{(N)})^{-1}) = 0 .\]

The product expression above is a degree $N$ polynomial. Furthermore when $N$ goes to infinity, the eigenvalues of $\widetilde{\mathbf{X}}^{(N)}$ are bounded above by $r_\infty+\eps/2$  almost surely by Lemma~\ref{lem-convlargestev}. Therefore, to identify the positions of the outliers of $\mathbf{X}^{(N)}$, one has to study the zeroes of the limit of the function $z \mapsto \det( \Id_{N} - 2\theta TS T^*( z -\widetilde{\mathbf{X}}^{(N)} )^{-1})$ on $]r_\infty, + \infty[$.  
\medskip
For this we will use the following classical matrix identity: if $A,B$ are two matrices of respective dimensions $N\times M$ and $M \times N$, $\det( \Id_N + AB) = \det( \Id_M + BA)$. Therefore, we have 
	
\[\det( \Id_{N} -  2 \theta TS T^*( z - \widetilde{\mathbf{X}}^{(N)})^{-1}) = \det( \Id_{L^2} -  2 \theta S T^*( z - \widetilde{\mathbf{X}}^{(N)})^{-1}T) . \]

and it suffices to prove that almost surely, $\lim_{N \to  \infty}T^*( z - \widetilde{\mathbf{X}}^{(N)})^{-1}T = - M(z) \otimes \Psi$ for every $z$ in some arbitrary complex neighborhood of $] r_\infty + \epsilon, + \infty[$ disjoint from $\text{supp}(\mu_\infty)$. Indeed, if we recall that with high probability, the largest eigenvalue of $\mathbf{X}^{(N)}$ remains bounded (which prevents the potential outlier from escaping to $+ \infty$ and not appearing as a zero of the limit), the conclusion then follows from Montel's theorem and the argument principle. The latter ensures that the limit of the sets of zeros of the functions above on the considered neighborhood will converge to those of the left hand side of \eqref{theeq}.

\medskip
We have for $1 \leq a,b,c,d \leq L$ that
\begin{align*}
(T^*( z - \widetilde{\mathbf{X}}^{(N)})^{-1}T)_{ab,cd} &= (e_a \otimes u_b) ( z - \widetilde{\mathbf{X}}^{(N)})^{-1}  (e_c \otimes u_d)^*
\end{align*}
	
and therefore, using  invariance under orthogonal conjugation and concentration of measure, it follows
	
\[ \lim_{N \to \infty} (T^*( z - \widetilde{\mathbf{X}}^{(N)})^{-1}T)_{ab,cd} = - M(z)_{a,c} \Psi_{b,d} = - (M(z) \otimes \Psi)_{ab,cd},\] 
	
which proves our result.
\end{proof}

To apply Lemma~\ref{lem-theeq}, we need to ensure that~\eqref{theeq} indeed has a solution in the context we are considering. This is the content of the following proposition.

\begin{proposition}\label{prop-existencesol}
For every {$x>r_\infty$} and every profile $\Psi\in\mathrm{Sym}^{++,1}_L(\R)$, recalling the definition \eqref{eq-defphi} of $\phi$, there exists $\theta$ such that $x$ is the largest solution $Z(\theta)$ of the equation (in $z$)
\begin{equation}\label{theeq2}
\det(\Id_{L^2} +  2 \theta S ( M(z) \otimes \phi(\theta,x,\Psi)) ) = 0.
\end{equation} 
\end{proposition}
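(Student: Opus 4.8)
The plan is to diagonalise \eqref{theeq2} and reduce it to a scalar equation $g(\theta)=1$, then to solve that equation by the intermediate value theorem. I would work from the start with $\theta>\theta_c:=-m_{\mu_\infty}(x)/2>0$, where $\phi$ has the explicit form $\phi(\theta,x,\Psi)=-M(x)/(2\theta L)+\bigl(1+m_{\mu_\infty}(x)/(2\theta)\bigr)\Psi$; since $M(x)\prec0$ for $x>r_\infty$ (by \eqref{eq-Stieltjestransf}) and $\Psi\succ0$, this is positive definite. The first step is to symmetrise the determinant: for real $z>r_\infty$ the matrix $M(z)$ is real symmetric with $M(z)\prec0$ and $M'(z)\succeq0$, so $P(z,\Phi):=(-M(z))\otimes\Phi\succ0$ for any fixed $\Phi\succ0$, and the identity $\det(\Id-AB)=\det(\Id-BA)$ gives
\[\det\bigl(\Id_{L^2}+2\theta S(M(z)\otimes\Phi)\bigr)=\det\bigl(\Id_{L^2}-2\theta\,K(z,\Phi)\bigr),\qquad K(z,\Phi):=P(z,\Phi)^{1/2}\,S\,P(z,\Phi)^{1/2},\]
with $K(z,\Phi)$ symmetric. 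Hence $z$ solves \eqref{theeq2} iff $1/(2\theta)$ is an eigenvalue of $K(z,\Phi)$.

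Next I would use monotonicity. The congruence-invariant formula $\lambda_{\max}(P^{1/2}SP^{1/2})=\max\{\langle v,Sv\rangle:\langle v,P^{-1}v\rangle\le1\}$ (valid because $v=0$ is feasible, so the left side is $\geq0$) together with the fact that $z\mapsto -M(z)$ is non-increasing in the positive-semidefinite order shows that $z\mapsto\lambda_{\max}(K(z,\Phi))$ is continuous, non-increasing on $(r_\infty,\infty)$, and tends to $0$ as $z\to\infty$. Since $\det(\Id-2\theta K(z,\Phi))=\prod_i(1-2\theta\lambda_i(K(z,\Phi)))$, it follows that $x$ is the largest solution $Z(\theta)$ of \eqref{theeq2} if and only if $\lambda_{\max}\bigl(K(x,\phi(\theta,x,\Psi))\bigr)=1/(2\theta)$, i.e. $g(\theta):=2\theta\,\lambda_{\max}\bigl(K(x,\phi(\theta,x,\Psi))\bigr)=1$. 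Using $\lambda_{\max}(P^{1/2}SP^{1/2})=\lambda_{\max}(PS)$ and the algebraic identity $2\theta\,\phi(\theta,x,\Psi)=\tfrac1L(-M(x))+c(\theta)\Psi$ with $c(\theta):=2\theta+m_{\mu_\infty}(x)$, one obtains $g(\theta)=\lambda_{\max}\bigl(P(c(\theta))S\bigr)$, where $P(c):=\tfrac1L(-M(x))\otimes(-M(x))+c\,(-M(x))\otimes\Psi$. As $\theta$ runs over $(\theta_c,\infty)$, $c(\theta)$ runs over $(0,\infty)$, and $c\mapsto P(c)$ is positive definite and increasing, so $\tilde g(c):=\lambda_{\max}(P(c)S)$ is continuous (and, by the same ellipsoid argument, non-decreasing).

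It then remains to analyse the endpoints and apply the IVT. At $c=0$ one has $\tilde g(0)=\tfrac1L\lambda_{\max}\bigl((M(x)\otimes M(x))S\bigr)$; since $x>r_\infty$, the MDE stability operator $T\mapsto T-M(x)\cS[T]M(x)$ — whose matrix in the vectorisation is $\Id_{L^2}-(M(x)\otimes M(x))S$ — is invertible and the spectral radius of the positive map $T\mapsto M(x)\cS[T]M(x)$ is strictly below $1$ (the standard edge characterisation for the Matrix Dyson Equation, which degenerates to $1$ exactly at $z=r_\infty$; see \cite{AEKN2019,AEK2019}), hence $\tilde g(0)\le\tfrac1L\cdot(\text{spectral radius})<\tfrac1L\le1$. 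At the other end, $P(c)\succeq c\,(-M(x))\otimes\Psi$ gives $\tilde g(c)\ge c\,\lambda_{\max}\bigl(((-M(x))\otimes\Psi)^{1/2}S((-M(x))\otimes\Psi)^{1/2}\bigr)$, and the last eigenvalue is $>0$ because that matrix is congruent to $S$ and $\lambda_{\max}(S)>0$ (for a unit eigenvector $v$ of some nonzero $A_j$ one has $\langle v\otimes v,S(v\otimes v)\rangle=\sum_j\langle v,A_jv\rangle^2>0$; if all $A_1,\dots,A_k$ vanish the model is deterministic and there is nothing to prove). Thus $\tilde g(c)\to\infty$, and by the intermediate value theorem there is $c^\star>0$ with $\tilde g(c^\star)=1$; setting $\theta^\star:=\tfrac12\bigl(c^\star-m_{\mu_\infty}(x)\bigr)>\theta_c>0$ yields $g(\theta^\star)=1$, i.e. $x$ is the largest solution of \eqref{theeq2} for $\theta=\theta^\star$, as required.

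The main obstacle is securing $\tilde g(0)<1$: this is precisely the analytic translation of the hypothesis $x>r_\infty$, and it relies on the stability theory of the Matrix Dyson Equation near the spectral edge. Everything else — the symmetrisation, the two monotonicity statements, and the blow-up of $\tilde g$ at $c=\infty$ — is elementary once one systematically uses the variational description of $\lambda_{\max}$, which is what allows one to handle the generically indefinite matrix $S=\sum_j A_j\otimes A_j$ cleanly.
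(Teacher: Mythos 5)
Your proposal is correct in spirit and reaches the same conclusion, but it takes a genuinely different route from the paper. The paper works with the function $Z(\theta)$ itself: it first proves (Lemma~5.4 in the numbering above) that $Z$ is continuous on $\R^{+,*}$ via the argument principle and Montel's theorem, then observes $Z(0)=r_\infty$, shows via the symmetrized matrix $\sqrt{-M(x)\otimes 2\theta\phi}\,S\,\sqrt{-M(x)\otimes 2\theta\phi}$ that $\lambda(\theta,x)$ runs from $0$ at $\theta=0$ to $+\infty$ as $\theta\to\infty$ (Lemma~\ref{posdef}), picks $\theta'$ with $\lambda(\theta',x)=1$ so that $Z(\theta')\geq x$, and finishes by the intermediate value theorem on $Z$. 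You instead fix $z=x$ and reduce directly to the scalar equation $\tilde g(c)=1$, replacing the continuity-of-$Z$ lemma by the monotonicity of $z\mapsto\lambda_{\max}(K(z,\Phi))$. This is a nice simplification: you never need to control the $z$-dependence of the determinant family globally, only the monotone behavior of one spectral quantity.

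Two things deserve attention, though. First, by restricting from the start to $\theta>\theta_c=-m_{\mu_\infty}(x)/2$, you push the left endpoint of your IVT to $c=0$, where you must show $\tilde g(0)=\tfrac1L\lambda_{\max}\bigl((M(x)\otimes M(x))S\bigr)<1$. You argue this from the claim that the spectral radius of the MDE stability operator $T\mapsto M(x)\cS[T]M(x)$ is strictly less than one for $x>r_\infty$. That fact is true and standard in the MDE literature, but you cite it only loosely, and for $L=1$ you genuinely need strict inequality (for $L\geq2$ the bound $\leq1$ already gives $\tilde g(0)\leq 1/L<1$). The paper avoids this input entirely by letting $\theta$ range over all of $(0,\infty)$: for $\theta\leq\theta_c$ one has $2\theta\phi(\theta,x,\Psi)=-M\bigl((-m_{\mu_\infty})^{-1}(2\theta)\bigr)/L\to 0$ as $\theta\to0$, so the analogue of $g(\theta)$ tends to $0$ trivially, and no edge-stability estimate is needed. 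You could do the same: extend $g$ to all $\theta>0$ (it remains continuous across $\theta=\theta_c$ since $\phi$ is) and run IVT from $\theta\to 0^+$.

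Second, your equivalence ``$x$ is the largest solution $\iff g(\theta)=1$'' requires $z\mapsto\lambda_{\max}(K(z,\Phi))$ to be \emph{strictly} decreasing wherever it is positive; merely ``non-increasing'' would allow an interval of solutions with the same largest eigenvalue and hence solutions $z>x$. Strict decrease does follow from the representation $\lambda_{\max}(K(z,\Phi))=\max\{\langle v,Sv\rangle:\langle v,P(z,\Phi)^{-1}v\rangle\le1\}$ together with $M'(z)\succ0$ (so the constraint ellipsoid shrinks strictly), but you should say so explicitly. With these two points tightened, your argument is a valid and arguably leaner alternative to the paper's proof, which in exchange pays the price of the separate continuity-of-$Z$ lemma.
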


Let us fix such $\Psi$ and $\theta$ and define $Z(\theta)$ to be the largest solution of~\eqref{theeq2} if it has a solution strictly larger than $r_\infty$ or $r_\infty$ if it has no such solutions. 
{We first show that $ Z(\theta ) \leq c_0 + c_1 \theta$ for some finite constants $c_1,c_2$.}
First, note that we have 
\[ 2 \theta  \phi(\theta,x,\Psi) = (2 \theta + m_{\mu_\infty}(x))_+ \Psi + M(\max(-m_{\mu_\infty}^{-1}(2 \theta), x) ). \]
Hence, for $x>-m_{\mu_\infty}^{-1}(2\theta)$ it follows that
\[  2 \theta S ( M(z) \otimes \phi(\theta,x,\Psi)) = S (  M(z) \otimes ((2 \theta + m_{\mu_\infty}(x))_{+}\Psi + M(x))), \]
which yields the bound 
\begin{align*}
&\| S (  M(z) \otimes ((2 \theta + m_{\mu_\infty}(x))\Psi + M(x))) \|\\
&\leq \|S\|\cdot\|M(z)\|\cdot \| (2 \theta + m_{\mu_\infty}(x))_+ \Psi + M(\max(-m_{\mu_\infty}^{-1}(2 \theta), x) ) \|.
\end{align*} 
Since $\|M(z)\| \leq (z - r_\infty)^{ -1}$ we have that there exists some constants $c_0,c_1 >0$ such that for $ z > c_0 \theta + c_1 $, $\| S (  M(z) \otimes ((2 \theta + m_{\mu_\infty}(x))_+ \Psi + M(\max\{-m_{\mu_\infty}^{-1}(2 \theta), x\} ))\|< 1$ and, therefore, $ Z(\theta ) \leq c_0 + c_1 \theta$.

\medskip
To prove Proposition~\ref{prop-existencesol}, we further need that $Z$ is continuous.
\begin{lemma}
The function $Z$ is continuous on $\R^{+,*}$.
\end{lemma}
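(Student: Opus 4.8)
The plan is to rewrite $Z(\theta)$, which by definition is the largest root in $(r_\infty,\infty)$ of $g(\theta,z):=\det\bigl(\Id_{L^2}+2\theta S(M(z)\otimes\phi(\theta,x,\Psi))\bigr)$ (and $r_\infty$ if there is no such root), as the upper endpoint of the superlevel set of a single scalar function that is jointly continuous in $(\theta,z)$ and, for fixed $\theta$, strictly decreasing in $z$ near that endpoint. Granting this, continuity of $Z$ follows from a routine upper/lower semicontinuity argument combined with the a priori bound $Z(\theta)\le c_0+c_1\theta$ established above.

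First I would record the positivity at play. For $z>r_\infty$ the matrix‑valued Stieltjes representation~\eqref{eq-Stieltjestransf} (its measure $\mu_{\mathrm{MDE}}$ is supported in $(-\infty,r_\infty]$ and normalized to $\Id_L$) gives $-M(z)=\int(z-y)^{-1}\mu_{\mathrm{MDE}}(\dx y)\succ 0$ and, differentiating, $-M'(z)=-\int(z-y)^{-2}\mu_{\mathrm{MDE}}(\dx y)\prec 0$; hence $z\mapsto -M(z)$ is strictly operator‑decreasing on $(r_\infty,\infty)$ and $-M(z)\to 0$ as $z\to\infty$. Since $\varphi(\theta,x)$ is a positive multiple of $-M$ evaluated at a point $\ge x>r_\infty$, one has $\varphi(\theta,x)\succ 0$, hence $\phi(\theta,x,\Psi)=\varphi(\theta,x)+(1+m_{\mu_\infty}(x)/(2\theta))_+\Psi\succ 0$; moreover $\theta\mapsto\phi(\theta,x,\Psi)$ is continuous on $(0,\infty)$ (a routine check at the gluing locus $2\theta=-m_{\mu_\infty}(x)$). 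Consequently $B(\theta,z):=(-M(z))\otimes\phi(\theta,x,\Psi)$ is positive definite, jointly continuous in $(\theta,z)\in(0,\infty)\times(r_\infty,\infty)$, strictly operator‑decreasing in $z$, and $B(\theta,z)\to 0$ as $z\to\infty$; in particular $g$ is jointly continuous there.

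Next I would diagonalize. As $S=\sum_{j\ge 1}A_j\otimes A_j$ is Hermitian and $B(\theta,z)\succ 0$, we get $g(\theta,z)=\det\bigl(\Id-2\theta SB(\theta,z)\bigr)=\det\bigl(\Id-2\theta B(\theta,z)^{1/2}SB(\theta,z)^{1/2}\bigr)=\prod_{i=1}^{L^2}\bigl(1-2\theta\sigma_i(\theta,z)\bigr)$, where $\sigma_1(\theta,z)\ge\cdots\ge\sigma_{L^2}(\theta,z)$ are the real eigenvalues of the Hermitian matrix $B(\theta,z)^{1/2}SB(\theta,z)^{1/2}$, continuous in $(\theta,z)$, with $\sigma(\theta,z):=\sigma_1(\theta,z)=\max_{w\ne 0}\langle w,Sw\rangle/\langle w,B(\theta,z)^{-1}w\rangle$ by Courant--Fischer. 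The crucial point is that on the set $\{z>r_\infty:\sigma(\theta,z)>0\}$ the map $z\mapsto\sigma(\theta,z)$ is strictly decreasing: that set has the form $(z_\ast(\theta),\infty)$, since any $w$ with $\langle w,Sw\rangle>0$ witnessing $\sigma(\theta,z_0)>0$ keeps $\langle w,Sw\rangle>0$ and $\langle w,B(\theta,z)^{-1}w\rangle>0$ for all $z\ge z_0$; and if $z_\ast(\theta)<z_1<z_2$ and $w$ maximizes at $z_2$ (so $\langle w,Sw\rangle>0$), then $B(\theta,z_1)^{-1}\prec B(\theta,z_2)^{-1}$ strictly gives $\sigma(\theta,z_1)\ge\langle w,Sw\rangle/\langle w,B(\theta,z_1)^{-1}w\rangle>\langle w,Sw\rangle/\langle w,B(\theta,z_2)^{-1}w\rangle=\sigma(\theta,z_2)$. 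Also $\sigma(\theta,z)\to 0$ as $z\to\infty$. Since $\sigma_i\le\sigma$ for all $i$, a zero of $g(\theta,\cdot)$ forces $\sigma(\theta,z)\ge 1/(2\theta)$, while $\sigma(\theta,z)<1/(2\theta)$ makes all factors of $g$ positive; together with the strict monotonicity and continuity of $\sigma$ this yields
\[
Z(\theta)=\sup\bigl\{\,z>r_\infty:\ 2\theta\,\sigma(\theta,z)\ge 1\,\bigr\},\qquad\text{with }\sup\emptyset:=r_\infty,
\]
and $G(\theta,Z(\theta))=0$ whenever $Z(\theta)>r_\infty$, where $G(\theta,z):=2\theta\,\sigma(\theta,z)-1$ is jointly continuous.

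Finally, continuity of $Z$ at a fixed $\theta_0\in(0,\infty)$ is soft. For upper semicontinuity, take $\theta_n\to\theta_0$ with $Z(\theta_n)\to\limsup_{\theta\to\theta_0}Z(\theta)=:\bar z$; by $Z(\theta)\le c_0+c_1\theta$ we have $\bar z<\infty$, and either $\bar z=r_\infty\le Z(\theta_0)$, or $Z(\theta_n)>r_\infty$ for large $n$, whence $G(\theta_n,Z(\theta_n))=0$ and joint continuity gives $G(\theta_0,\bar z)=0$, so $\bar z\le Z(\theta_0)$. For lower semicontinuity, if $Z(\theta_0)=r_\infty$ it is immediate; otherwise put $z_0:=Z(\theta_0)>r_\infty$, note $z_0>z_\ast(\theta_0)$ since $\sigma(\theta_0,z_0)=1/(2\theta_0)>0$, and for any $z'\in(z_\ast(\theta_0),z_0)$ strict monotonicity gives $G(\theta_0,z')>G(\theta_0,z_0)=0$, hence $G(\theta,z')>0$ for $\theta$ near $\theta_0$ and so $Z(\theta)\ge z'$; letting $z'\uparrow z_0$ gives $\liminf_{\theta\to\theta_0}Z(\theta)\ge z_0$. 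The hard part is not this last argument but the preceding structural reduction: it rests entirely on the positivity/operator‑monotonicity package ($-M(z)\succ 0$ and strictly operator‑decreasing on $(r_\infty,\infty)$, and $\phi(\theta,x,\Psi)\succ 0$), without which tangential roots of $g(\theta,\cdot)$ could in principle let $Z$ jump. Once that is in place, nothing else is delicate.
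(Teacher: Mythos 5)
Your proof is correct, but it takes a genuinely different route from the paper's. The paper argues via complex analysis: it observes that $\mathcal{D}_{\mathcal O}(\theta)(\cdot)$ is, for each $\theta$, the locally uniform limit of the ratio $z\mapsto\det(z-\mathbf{X}^{(N)})/\det(z-\widetilde{\mathbf{X}}^{(N)})$, whose zeros lie on the real axis because both matrices are Hermitian; it then invokes the argument principle/Hurwitz to deduce that the zero set of the limit is real and varies continuously with $\theta$, and finally treats $Z(\theta_0)=r_\infty$ by a separate contradiction using the a priori bound on $Z$. You instead reduce everything to linear algebra: write $\det(\Id-2\theta SB)=\prod_i(1-2\theta\sigma_i)$ with $\sigma_i$ the eigenvalues of the Hermitian matrix $B^{1/2}SB^{1/2}$, use the positivity package ($-M(z)\succ0$ strictly operator-decreasing by differentiating the Stieltjes representation, $\phi(\theta,x,\Psi)\succ0$, matrix inversion operator-antimonotone) to show $\sigma_1(\theta,\cdot)$ is strictly decreasing, and thus recast $Z(\theta)$ as the endpoint of a superlevel set $\{z:2\theta\sigma_1(\theta,z)\ge1\}$; continuity then falls out of standard upper/lower semicontinuity arguments. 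Your approach is more elementary and purely deterministic — it avoids the probabilistic interpretation and the argument principle entirely — and the strict $z$-monotonicity explicitly rules out tangential zeros, a point the paper handles only implicitly via real-rootedness of the approximants. Your diagonalization $\sqrt{B}S\sqrt{B}$ is in fact the same device the paper later uses in the proof of Lemma~\ref{posdef}, so this reading is well aligned with the surrounding text. One small point worth making explicit in your write-up: the strict positivity $\sigma_1(\theta,z)>0$ (so that the superlevel sets are genuinely intervals and the strict monotonicity applies everywhere) relies on $\lambda_1(S)>0$, which the paper establishes in Lemma~\ref{posdef} under the implicit assumption that at least one $A_j\neq 0$ for $j\ge1$; you should either cite that lemma or include the one-line argument.
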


\begin{proof}
For $\mathcal{O}$ an open subset of  $\{ z \in \C: \Re z > r_\infty \}$, let us define $\mathcal{D}_{\mathcal{O}}$ the function from $\R^{+,*}$ to $\mathcal{H}(\mathcal{O})$ (the set of holomorphic functions on $\mathcal{O}$) as 
	
\[ \mathcal{D}_{\mathcal{O}}(\theta)(z) =  \det( \Id_{L^2} +  2 \theta S ( M(z) \otimes \phi(\theta,x,\Psi)) ). \]
	
Then $\mathcal{D}_{\mathcal{O}}$ is a continuous function (for the topology of uniform convergence on compact sets). First, let us see that it is sufficient to prove the following.

\medskip
\underline{\smash{Claim:}} For every $\epsilon >0$, the sets $Z^{ -1}( ] r_\infty+ \epsilon, + \infty[)$ are open and the function $Z$ is continuous on these sets.

\medskip
Indeed, this proves that $Z$ is continuous on the set on every point $\theta$ such that $Z( \theta ) > r$. Let us now take a point $\theta_0$ such that $Z( \theta_0 ) = r_\infty$. If we assume towards  contradiction that $\ell := \limsup_{\theta \to \theta_0} Z(\theta) > r_\infty$, then if $\ell < + \infty$ one can find a sequence $(\theta_n)_{n \in \N}$ such that $\lim_{n \to + \infty} \theta_n = \theta_0$ and $\lim_{n \to \infty} Z(\theta_n) = \ell$. Then, using the continuity of $\mathcal{D}_{\mathcal{O}}$ for an arbitrary open neighborhood $\mathcal{O}$ of $] r_\infty, + \infty[$, we have that $Z(\theta_0) = \ell$ which contradicts our assumption. Finally, note that  $\ell = + \infty$ is impossible since $Z(\theta)$ is bounded on every compact set of $\R^{+,*}$.

\medskip
Let us then move on to proving our claim. For $\theta \geq 0$, consider
\begin{displaymath}
z \mapsto \det( \Id_{L^2} +  2 \theta S ( M(z) \otimes \phi(\theta,x,\Psi)) )
\end{displaymath}
on an arbitrary open neighborhood $\mathcal{O}$ of $]r_\infty + \epsilon, + \infty[$, i.e., the function $\mathcal{D}_{\mathcal{O}}$. Recall that this function is the almost sure limit of
\begin{displaymath}
z \mapsto \frac{\det(z -\mathbf{X}^{(N)}) }{\det(z -\widetilde{\mathbf{X}}^{(N)})}
\end{displaymath}
as $N\rightarrow\infty$. Since the  zeroes of this sequence of functions   all lie on the real line (as a quotient of two polynomials with only real roots),  the same is true for its limit $\mathcal{D}_{\mathcal{O}}$ by using, e.g., the argument principle. Then, using the continuity of $\mathcal{D}_{\mathcal{O}}$ and once again using the argument principle, one concludes that the sets $Z^{ -1}( ] r_\infty+ \epsilon, + \infty[)$ are open. 
{Finally to prove the continuity of the function $Z$, we need only to remember that $Z(\theta)$ is bounded on every compact set of $\R^{+,*}$ which prevents there being a point $a \in \R^{+,*}$ such that $\limsup_{t \to a } Z(t) = + \infty$ and apply the argument principle one last time. }
\end{proof}

With these tools in place, we can give the proof of Proposition~\ref{prop-existencesol}.
\begin{proof}[Proof of Proposition~\ref{prop-existencesol}]
As a consequence of the continuity of $Z$, to prove that there exists~$\theta$ such that $Z(\theta)=x$ one only needs to notice that $Z(0)=r_\infty$, which is straightforward by Lemma \ref{lem-convlargestev}, and then to prove that there exists $\theta_0 >0$ such that $Z(\theta_0) > x$. For this, let us denote $\lambda(\theta,z)$ the largest eigenvalue of the matrix $- 2 \theta S ( M(z) \otimes \phi(\theta,x,\Psi))$. One can first notice that the matrices $S$, $M(z)$ and $\phi(\theta,x,\Psi)$ are symmetric matrices. Furthermore the matrices $-M(z)$ and $\phi(\theta,x,\Psi)$ are definite positive matrix and therefore so is $-  M(z) \otimes \phi(\theta,x,\Psi)$. Therefore, the matrix $- 2 \theta S ( M(z) \otimes \phi(\theta,x,\Psi))$ has the same spectrum as

\[ \sqrt{ - M(z) \otimes 2 \theta \phi(\theta,x,\Psi)} S \sqrt{ - M(z) \otimes 2 \theta \phi(\theta,x,\Psi)} \] 
which itself is symmetric. In particular,  $\lambda(\theta,z)$  is also the largest eigenvalue of the above matrix and the function $\theta \mapsto \lambda(\theta,z)$  is easily seen to be continuous. Furthermore, we have $\lambda(0,x) = 0$. To apply the intermediate value theorem, we further need the following.

\begin{lemma}\label{posdef}
For every $\Psi \in \mathrm{Sym}_L^{++,1}(\R) $, every $z>r_\infty$, 
$\lim_{\theta \to \infty} \lambda(\theta,z) = + \infty$.
\end{lemma}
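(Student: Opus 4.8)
The plan is to produce a single $\theta$-independent test direction $w_0$ along which the quadratic form of (a symmetrization of) the matrix $-2\theta S(M(z)\otimes\phi(\theta,x,\Psi))$ grows linearly in $\theta$; since $\lambda(\theta,z)$ is the largest eigenvalue of that matrix, this forces $\lambda(\theta,z)\to+\infty$.

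First I would simplify $\phi(\theta,x,\Psi)$ in the regime $\theta\to\infty$. Since $x>r_\infty$ is fixed and $(-m_{\mu_\infty})^{-1}(2\theta)\downarrow r_\infty$ as $\theta\to\infty$, one has $\max\{(-m_{\mu_\infty})^{-1}(2\theta),x\}=x$ and $\big(1+m_{\mu_\infty}(x)/(2\theta)\big)_+=1+m_{\mu_\infty}(x)/(2\theta)>0$ for all $\theta$ large enough (recall $m_{\mu_\infty}(x)\in(-\infty,0)$ because $x$ lies strictly to the right of the support of $\mu_\infty$). Plugging this into the definitions \eqref{eq-defvphi}--\eqref{eq-defphi} gives, for all large $\theta$, an identity of the form $2\theta\,\phi(\theta,x,\Psi)=(2\theta+m_{\mu_\infty}(x))\Psi+B$ for a fixed bounded matrix $B$ depending only on $M(x)$ and $L$. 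Since $\Psi\in\mathrm{Sym}_L^{++,1}(\R)$ is positive definite, it follows that $\lambda_{\min}\big(2\theta\,\phi(\theta,x,\Psi)\big)\to+\infty$.

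Next I would use the symmetrization already noted in the excerpt: $-2\theta S(M(z)\otimes\phi(\theta,x,\Psi))$ has the same spectrum as the symmetric matrix $P_\theta^{1/2}\,S\,P_\theta^{1/2}$, where $P_\theta:=(-M(z))\otimes\big(2\theta\,\phi(\theta,x,\Psi)\big)$. For this I need $-M(z)\succ0$ for every real $z>r_\infty$, which follows from the Stieltjes representation \eqref{eq-Stieltjestransf}: the matrix-valued measure $\mu_{\mathrm{MDE}}$ is positive semi-definite with $\mu_{\mathrm{MDE}}(\R)=\Id_L$ and support contained in $\mathrm{supp}(\mu_\infty)\subseteq(-\infty,r_\infty]$ (a compact set), so $-M(z)=\int (z-t)^{-1}\,\mu_{\mathrm{MDE}}(\dx t)\succeq \big(z-\min\mathrm{supp}(\mu_\infty)\big)^{-1}\Id_L\succ0$. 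Combined with the previous paragraph, $P_\theta$ is positive definite and $\lambda_{\min}(P_\theta)=\lambda_{\min}(-M(z))\,\lambda_{\min}\big(2\theta\,\phi(\theta,x,\Psi)\big)\to+\infty$. Then, choosing in the Rayleigh quotient the test vector $v=P_\theta^{-1/2}w_0/\|P_\theta^{-1/2}w_0\|$ for a fixed nonzero $w_0\in\R^{L^2}$,
\[
\lambda(\theta,z)=\sup_{\|v\|=1}\big\langle P_\theta^{1/2}v,\,S\,P_\theta^{1/2}v\big\rangle\ \geq\ \frac{\langle w_0,Sw_0\rangle}{\langle w_0,P_\theta^{-1}w_0\rangle}\ \geq\ \frac{\lambda_{\min}(P_\theta)}{\|w_0\|^2}\,\langle w_0,Sw_0\rangle .
\]
To finish, I would pick $w_0$ with $\langle w_0,Sw_0\rangle>0$: identifying $\R^{L^2}$ with $L\times L$ matrices so that $S=\sum_j A_j\otimes A_j$ acts as $W\mapsto\sum_j A_jWA_j$, the choice $w_0\leftrightarrow\Id_L$ gives $\langle w_0,Sw_0\rangle=\Tr\big[\sum_j A_j^2\big]=\sum_j\Tr[A_j^2]>0$, which is strictly positive unless all $A_j$ vanish (a degenerate case in which $\mathbf X^{(N)}$ is deterministic, there is no $x>r_\infty$, and the statement is vacuous). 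Since $\lambda_{\min}(P_\theta)\to+\infty$ while the remaining factors are $\theta$-independent, $\lambda(\theta,z)\to+\infty$.

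The routine parts are the two asymptotic reductions in the first paragraph and the elementary Rayleigh-quotient estimate. The one point deserving care — and the only real obstacle — is the strict positive-definiteness $-M(z)\succ0$, equivalently that $\mu_{\mathrm{MDE}}$ is a genuine $\Id_L$-normalized matrix measure supported strictly to the left of $z$; this is exactly what guarantees that $P_\theta$ is invertible and that $\lambda_{\min}(P_\theta)$ blows up, and hence that the linear-in-$\theta$ lower bound above is not vacuous.
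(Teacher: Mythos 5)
Your proposal is correct and follows essentially the same strategy as the paper's proof: symmetrize to $P_\theta^{1/2} S P_\theta^{1/2}$ with $P_\theta=(-M(z))\otimes 2\theta\phi(\theta,x,\Psi)$, lower-bound the top eigenvalue via a Rayleigh quotient, and show $\lambda_{\min}(P_\theta)\to\infty$ using $2\theta\phi=-M(x)/L+(2\theta+m_{\mu_\infty}(x))\Psi$ together with $\lambda_L(\Psi)>0$ and $-M(z)\succ 0$.

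The one genuine difference is the choice of test direction. The paper uses an eigenvector of $S$ for its largest eigenvalue and argues $\lambda_1(S)>0$ from $\Tr(S)=\sum_j\Tr(A_j)^2\ge 0$; as written this requires the additional (unstated) observation that $S\neq 0$, and fails to be immediate when all $A_j$ are traceless. Your choice $w_0\leftrightarrow\Id_L$, giving $\langle w_0,Sw_0\rangle=\sum_j\Tr(A_j^2)$, is strictly more robust: that quantity vanishes only when every $A_j=0$, which you correctly flag as the vacuous degenerate case. You also supply the justification that $-M(z)\succ 0$ for $z>r_\infty$ (via the positive semi-definite matrix measure $\mu_{\mathrm{MDE}}$ with $\mu_{\mathrm{MDE}}(\R)=\Id_L$), which the paper merely asserts. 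Both are small strengthenings, but the underlying argument is the same.
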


We postpone the proof of Lemma~\ref{posdef} for the moment. Given the lemma, we conclude that, {since $\lambda(0,x)=0$ and $\lambda(.,x)$ is continuous,}   there exists some $\theta' >0$ such that $\lambda(\theta',x) = 1$. This implies that

\[ \det( \Id_{L^2} +  2 \theta' S ( M(x) \otimes \phi(\theta',x,\Psi)) )  =0,\]
which yields that indeed $Z(\theta') \geq x$. Then, using the fact that $Z(0) = r_{\infty}$  and the continuity of $Z$, we have by the intermediate value theorem that there exists $\theta \geq 0$ such that $Z( \theta) =x$. This finishes the proof of Proposition~\ref{prop-existencesol}.
\end{proof}

\medskip
It remains to supply the asymptotics of $\lambda(\theta,z)$ that we used above.

\begin{proof}[Proof of Lemma~\ref{posdef}]
First, one can notice that $\lambda_1(S) >0$. Indeed since all the $A_i$ are symmetric, so is~$S$. Furthermore, we have that 
\[ \Tr(S) = \sum_{i} \Tr(A_i \otimes A_i) = \sum_{i}\Tr(A_i)^2 \geq 0. \]
Therefore, $S$ has to have at least one positive eigenvalue. Since both matrices $ 2 \theta \phi(\theta,x,\Psi)$ and $-M(z)$ are definite positive, so is their tensor product and furthermore, we have that
\begin{align*}
&\lambda_1(\sqrt{ - M(z) \otimes 2 \theta \phi(\theta,x,\Psi)} S \sqrt{ - M(z) \otimes 2 \theta \phi(\theta,x,\Psi)}  )\\
&\geq {\lambda_{L^2}}(\sqrt{ - M(z) \otimes 2 \theta \phi(\theta,x,\Psi)})^2 \lambda_1(S).
\end{align*}
Note that $\lambda_{L^2}$ denotes the smallest eigenvalue here, as both $M(z)$ and $\phi(\theta,x,\Psi)$ are of size $L\times L$. Furthermore,
\[ \lambda_{L^2}(\sqrt{ - M(z) \otimes 2 \theta \phi(\theta,x,\Psi)}) = \sqrt{2 \theta \lambda_{L^2}( - M(z) \otimes  \phi(\theta,x,\Psi))}.
\]
For $2\theta \geq (- m_{\mu_{\infty}}(x))$, one has 

\[ 2 \theta \phi(\theta,x,\Psi) = -M(x)/L + (2\theta + m_{\mu_{\infty}}(x)) \Psi \]

Therefore, one can write

\[ \lambda_{L}(2 \theta \phi(\theta,x,\Psi)) \geq - ||M(x)||/L + (2\theta + m_{\mu_{\infty}}(x)) \lambda_{L}(\Psi) \] 

since by assumption $\lambda_{L}(\Psi)>0$, we have $\lim_{\theta \to \infty}\lambda_{L}(2 \theta \phi(\theta,x,\Psi)) = + \infty$ which proves the result.
\end{proof}

Putting together the preceding lemmas, we obtain the following.

\begin{proposition}\label{prop-impliesLB}
For any $\delta >0$, $x > r_{\infty}$,  $\Psi \in \mathrm{Sym}^{++,1}( \R)$ there exists $\theta>0$ so that 

\[\lim_{\epsilon \to 0}   \liminf_{N \to \infty} \inf_{\substack{\mathbf{u}\in\mathds{S}^{NL-1},\\ ||\rho(\mathbf{u})- \phi(\theta,x,\Psi)|| \leq \epsilon}} \P^{(\theta,u)}[ | \lambda_1(\mathbf{X}^{(N)}) - x | \leq \delta ] =1. \]
\end{proposition}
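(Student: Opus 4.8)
The plan is to derive Proposition~\ref{prop-impliesLB} from Proposition~\ref{prop-existencesol} and Lemma~\ref{lem-theeq}, upgrading the sequential convergence of the latter to a bound uniform over the $\epsilon$-ball by a compactness/diagonal argument. First I would use Proposition~\ref{prop-existencesol} to fix $\theta>0$ (depending only on $x$ and $\Psi$) such that $x$ is the largest solution in $(r_\infty,+\infty)$ of $\det(\Id_{L^2}+2\theta S(M(z)\otimes\phi(\theta,x,\Psi)))=0$, and set $\Phi:=\phi(\theta,x,\Psi)$. I would then check that $\Phi\in\mathrm{Sym}_L^{++,1}(\R)$: by~\eqref{eq-defvphi} and the Stieltjes representation~\eqref{eq-Stieltjestransf} the matrix $-M(t)$ is positive definite for $t>r_\infty$, so $\Phi$ is positive definite since $\Psi$ is; and using $\Tr[M(t)]/L=\langle M(t)\rangle=m_{\mu_\infty}(t)$ a short computation splitting the two cases in~\eqref{eq-defphi} according to the sign of $2\theta+m_{\mu_\infty}(x)$ gives $\Tr[\Phi]=1$. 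In particular, writing $\Phi=CC^{T}$ and taking the rows of $C$ as the blocks of $\mathbf{u}$, for every $N\ge L$ there is $\mathbf{u}\in\mathds{S}^{NL-1}$ with $\rho(\mathbf{u})=\Phi$, so the set $\{\mathbf{u}\in\mathds{S}^{NL-1}:\|\rho(\mathbf{u})-\Phi\|\le\epsilon\}$ is nonempty for all $\epsilon>0$.

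Next I would apply Lemma~\ref{lem-theeq} with the profile $\Phi$: for every sequence $(\mathbf{u}^{(N)})_N$ with $\mathbf{u}^{(N)}\in\mathds{S}^{NL-1}$ and $\rho(\mathbf{u}^{(N)})\to\Phi$, the equation~\eqref{theeq} has largest root $x>r_\infty$ by the choice of $\theta$, so Lemma~\ref{lem-theeq} gives that $\lambda_1(\mathbf{X}^{(N)})\to x$ almost surely under $\P^{(\theta,\mathbf{u}^{(N)})}$, hence
\[
\P^{(\theta,\mathbf{u}^{(N)})}\big(|\lambda_1(\mathbf{X}^{(N)})-x|>\delta\big)\longrightarrow 0\qquad(N\to\infty).
\]
(It is also worth noting that, by invariance of the GOE under orthogonal conjugation and since the tilting perturbation $2\theta TST^{*}$ depends on $\mathbf{u}$ only through $\rho(\mathbf{u})$ up to conjugation by $\Id_L\otimes O$ with $O\in O(N)$, the law of $\lambda_1(\mathbf{X}^{(N)})$ under $\P^{(\theta,\mathbf{u})}$ depends on $\mathbf{u}$ only through $\rho(\mathbf{u})$; this makes the infimum in the statement an infimum over the compact set $\{\Psi'\in\mathrm{Sym}_L^{+,1}(\R):\|\Psi'-\Phi\|\le\epsilon\}$, though this is not strictly needed below.)

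Finally I would uniformise by contradiction. Set $f(N,\epsilon):=\inf\{\P^{(\theta,\mathbf{u})}(|\lambda_1(\mathbf{X}^{(N)})-x|\le\delta):\mathbf{u}\in\mathds{S}^{NL-1},\ \|\rho(\mathbf{u})-\Phi\|\le\epsilon\}$, well defined for $N\ge L$. Since $f(N,\epsilon)$ is non-increasing in $\epsilon$ and bounded by $1$, the limit $\lim_{\epsilon\to0}\liminf_N f(N,\epsilon)=\sup_{\epsilon>0}\liminf_N f(N,\epsilon)$ exists and is $\le 1$; assume for contradiction that it equals $1-2\eta$ with $\eta>0$. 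Then $\liminf_N f(N,1/m)\le 1-2\eta$ for every $m$, so one can choose integers $N_1<N_2<\cdots$ with $N_m\ge m$ and $f(N_m,1/m)\le 1-\eta$, and then $\mathbf{u}^{(m)}\in\mathds{S}^{N_mL-1}$ with $\|\rho(\mathbf{u}^{(m)})-\Phi\|\le 1/m$ and $\P^{(\theta,\mathbf{u}^{(m)})}(|\lambda_1(\mathbf{X}^{(N_m)})-x|\le\delta)\le 1-\eta/2$. Extend this to a full sequence $(\mathbf{v}^{(N)})_N$ by $\mathbf{v}^{(N_m)}:=\mathbf{u}^{(m)}$ and, for $N\notin\{N_m\}$, any $\mathbf{v}^{(N)}$ with $\rho(\mathbf{v}^{(N)})=\Phi$. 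Then $\rho(\mathbf{v}^{(N)})\to\Phi$, so the displayed convergence above forces $\P^{(\theta,\mathbf{v}^{(N)})}(|\lambda_1(\mathbf{X}^{(N)})-x|\le\delta)\to1$, contradicting the bound $\le 1-\eta/2$ along $N=N_m$. Hence $\lim_{\epsilon\to0}\liminf_N f(N,\epsilon)=1$, as claimed.

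The substantive input is entirely contained in Proposition~\ref{prop-existencesol} and Lemma~\ref{lem-theeq}, both already proved; the new ingredients are the verification that $\phi(\theta,x,\Psi)$ is an admissible profile (positive semidefinite, unit trace, hence realisable as $\rho(\mathbf{u})$) and the diagonal extraction interchanging $\lim_{\epsilon\to0}$, $\liminf_N$ and $\inf_{\mathbf{u}}$. I expect the only delicate point to be this interchange, i.e.\ making precise the mode of convergence asserted in Lemma~\ref{lem-theeq} and ensuring it is uniform enough along all sequences with $\rho(\mathbf{u}^{(N)})\to\Phi$; the compactness of $\mathrm{Sym}_L^{+,1}(\R)$ is what makes the contradiction argument go through.
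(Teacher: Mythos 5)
Your proof is correct and follows the same route as the paper, which states Proposition~\ref{prop-impliesLB} merely as a consequence of ``putting together the preceding lemmas,'' namely Proposition~\ref{prop-existencesol} (existence of $\theta$) and Lemma~\ref{lem-theeq} (convergence of $\lambda_1$ under the tilted measure). The two verifications you supply---that $\phi(\theta,x,\Psi)$ is a positive definite trace-one matrix and hence realisable as $\rho(\mathbf{u})$ for $N\ge L$, and the compactness/diagonal argument converting Lemma~\ref{lem-theeq}'s sequential convergence into the $\lim_{\epsilon\to0}\liminf_N\inf_{\mathbf{u}}$ statement---are exactly the details the paper leaves implicit, and both check out (in particular $\Tr[\phi(\theta,x,\Psi)]=1$ follows from $\Tr[M(t)]=Lm_{\mu_\infty}(t)$ and $-m_{\mu_\infty}((-m_{\mu_\infty})^{-1}(2\theta))=2\theta$ in the two regimes).
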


From this proposition, we can now deduce the desired weak LDP lower bound.

\begin{proof}[Proof of Proposition~\ref{prop-weakLDPinf}]

Restricting to $\rho(\mathbf{u})$ a neighborhood of $\Psi \in \mathrm{Sym}^{++,1}( \R)$, such that $\Tr[ \Psi^T S( \Psi)] \neq 0$, we obtain from Proposition~\ref{prop-impliesLB} that 

\[ \lim_{\delta \to 0} \liminf P^{(N)}_{\delta}(x) \geq - \inf_{\substack{\Psi \in \mathrm{Sym}^{++,1}(\R),\\ \Tr[\Psi^T \cS( \Psi)] \neq 0}} \sup_{\theta \geq 0} \mathcal{F}( \theta,x, \Psi). \]

To upgrade this statement to $\Psi$ that may not be definite, we can approximate any $\Psi\in \mathrm{Sym}^{+,1}(\R)$ so that $\Tr[\Psi^T \cS( \Psi)] \neq 0$ by a sequence $(\Psi_p)_{p \in \N}$ such that $\Psi_p \in \mathrm{Sym}^{++,1}_L(\R)$ and $\Psi_p \to \Psi$. For $p$ large enough we have that $\Tr[ \Psi_p^T \cS( \Psi_p)] \geq  \eta$ for some $\eta >0$. We then use Lemma \ref{lem-thetainacompact} to restrict $\theta$ to  a compact. From there the continuity of $\mathcal{F}( \theta,x, \Psi)$ in~$\Psi$ and~$\theta$ enables us to conclude. 
\end{proof}
Lastly, let us deduce a property of the rate function from the upper and lower bounds. This property shows that the two bounds actually match and will be useful to study the properties of $\rate$.

\begin{proposition}\label{prop-profileepsilon}
For $x > r_{\infty}$ and $\epsilon_x$ chosen as in Lemma \ref{prop-improved_weakLDPsup}, we have that for every $\epsilon \in (0,\epsilon_x]$, 
\[ \rate(x) = \rate(x, \epsilon). \] 
\end{proposition}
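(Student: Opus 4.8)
The plan is to obtain $\rate(x)=\rate(x,\epsilon)$ essentially for free, by comparing the weak large deviation upper and lower bounds already established with an elementary monotonicity of $\epsilon\mapsto\rate(x,\epsilon)$; no new estimate is needed.

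First I would record the monotonicity. For $0<\epsilon'\le\epsilon''$ one has the nested inclusions of constraint sets
\begin{displaymath}
\{\Psi\in\mathrm{Sym}_L^{+,1}(\R):\Tr[\Psi^T\cS(\Psi)]\ge\epsilon''\}\subseteq\{\Psi\in\mathrm{Sym}_L^{+,1}(\R):\Tr[\Psi^T\cS(\Psi)]\ge\epsilon'\}\subseteq\{\Psi\in\mathrm{Sym}_L^{+,1}(\R):\Tr[\Psi^T\cS(\Psi)]\ne0\},
\end{displaymath}
where for the last inclusion we note $\Tr[\Psi^T\cS(\Psi)]=\sum_{j=1}^k\Tr[(\Psi^{1/2}A_j\Psi^{1/2})^2]\ge0$. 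Since enlarging the feasible set of an infimum can only decrease it, taking the infimum of $\sup_{\theta\ge0}\cF(\theta,x,\cdot)$ over the three nested sets gives, in view of~\eqref{eq-defapproxI} and~\eqref{eq-ratefunct},
\begin{displaymath}
\rate(x)\le\rate(x,\epsilon')\le\rate(x,\epsilon'')\qquad\text{for all }0<\epsilon'\le\epsilon''.
\end{displaymath}
In particular $\rate(x)\le\rate(x,\epsilon)\le\rate(x,\epsilon_x)$ for every $\epsilon\in(0,\epsilon_x]$, so the whole statement reduces to the single reverse inequality $\rate(x,\epsilon_x)\le\rate(x)$.

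For that reverse inequality I would sandwich $P_\delta^{(N)}(x)$, defined in~\eqref{eq-defPNdelta}, between the two available bounds. Proposition~\ref{prop-improved_weakLDPsup}, applied with the constant $\epsilon_x$ it provides, yields
\begin{displaymath}
\lim_{\delta\rightarrow0}\limsup_{N\rightarrow\infty}P_\delta^{(N)}(x)\le-\rate(x,\epsilon_x),
\end{displaymath}
after identifying the right-hand side with~\eqref{eq-defapproxI} (the discrepancy between the supremum over $\theta>0$ appearing in that proposition and the supremum over $\theta\ge0$ in~\eqref{eq-defapproxI} is immaterial, since by Lemma~\ref{lem-optimizationreg} one has $\cF(\theta,x,\Psi)=0$ for all small $\theta>0$, so the two suprema coincide). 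Likewise, Proposition~\ref{prop-weakLDPinf} together with the identity $\sup_\Psi\inf_\theta(-\cF)=-\inf_\Psi\sup_\theta\cF$ gives
\begin{displaymath}
\lim_{\delta\rightarrow0}\liminf_{N\rightarrow\infty}P_\delta^{(N)}(x)\ge-\rate(x).
\end{displaymath}
Since $P_\delta^{(N)}(x)$ is non-decreasing in $\delta$ for each fixed $N$, both $\delta$-limits exist (as infima over $\delta>0$), and $\liminf_N\le\limsup_N$ pointwise in $\delta$ forces $\lim_{\delta\to0}\liminf_N P_\delta^{(N)}(x)\le\lim_{\delta\to0}\limsup_N P_\delta^{(N)}(x)$. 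Chaining the three displays yields $-\rate(x)\le-\rate(x,\epsilon_x)$, i.e. $\rate(x,\epsilon_x)\le\rate(x)$. Combined with the monotonicity chain above, all three quantities coincide, so $\rate(x)=\rate(x,\epsilon)=\rate(x,\epsilon_x)$ for every $\epsilon\in(0,\epsilon_x]$, which is the claim.

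There is no real obstacle here; the only points requiring a line of care are the bookkeeping with the order of the $\delta$- and $N$-limits (handled by the monotonicity of $P_\delta^{(N)}$ in $\delta$) and the $\theta>0$ versus $\theta\ge0$ convention (handled by Lemma~\ref{lem-optimizationreg}). It is worth emphasising that this proposition is exactly what makes the weak LDP of Theorem~\ref{thm-mainweak} come out with the single good rate function $\rate(x)$: the sandwich above turns all the inequalities into equalities, so in fact $\lim_{\delta\to0}\liminf_{N\to\infty}P_\delta^{(N)}(x)=\lim_{\delta\to0}\limsup_{N\to\infty}P_\delta^{(N)}(x)=-\rate(x)$.
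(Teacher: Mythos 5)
Your proposal is correct and follows essentially the same route as the paper: the easy direction comes from the nested constraint sets, and the reverse inequality $\rate(x,\epsilon_x)\le\rate(x)$ is obtained by sandwiching $P_\delta^{(N)}(x)$ between the improved upper bound of Proposition~\ref{prop-improved_weakLDPsup} and the lower bound of Proposition~\ref{prop-weakLDPinf}, using $\liminf_N\le\limsup_N$. The extra bookkeeping you supply (monotonicity of $\rate(x,\cdot)$, non-negativity of $\Tr[\Psi^T\cS(\Psi)]$, and the order of the $\delta$- and $N$-limits) is correct but is exactly what the paper's terse one-line chain of inequalities implicitly relies on.
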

\begin{proof}
Clearly $\rate(x) \leq \rate(x,\epsilon) $. Furthermore using the LDP upper and lower bounds, more specifically Proposition \ref{prop-improved_weakLDPsup} for the upper bound, we have 

\[ \rate(x) \geq - \lim_{\delta \to 0} \liminf_{N\to + \infty}P^{(N)}_{\delta}(x) \geq  - \lim_{\delta \to 0} \limsup_{N\to + \infty}P^{(N)}_{\delta}(x) \geq \rate(x, \epsilon), \]
which proves the result.
\end{proof}

\section{Discussion of the Rate Function}\label{sect-ratefunct}
Lastly, we note the following properties of the rate function~\eqref{eq-ratefunct} in Theorem~\ref{thm-main}.
\begin{proposition}\label{prop-ratefunct}
The rate function $\rate$ in~\eqref{eq-ratefunct} satisfies the following properties:
\begin{itemize}
\item[(i)] $x\mapsto \rate(x)$ is continuous on $(r_\infty,\infty)$.
\item[(ii)] $x\mapsto \rate(x)$ is a good rate function, i.e., $\rate$ is lower semi-continuous and its sublevel sets $\{\rate(x)\leq y\}$ are compact for all $y\in\R$.
\item[(iii)] $x\mapsto \rate(x)$ is increasing on $[r_\infty,\infty)$.
\end{itemize}
\end{proposition}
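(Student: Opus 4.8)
The plan is to derive all three properties from the variational formula \eqref{eq-ratefunct}, the workhorse being the identification $\rate(x)=\rate(x,\eps)$ for small $\eps>0$ (Proposition~\ref{prop-profileepsilon}) together with the uniform lower bound $\eps_0:=\inf_{x\in[r,R]}\eps_x>0$ available on any compact $[r,R]\subset(r_\infty,\infty)$ (Proposition~\ref{prop-improved_weakLDPsup}; note $r>r_\infty\ge\lambda_1(A_0)$). On such an $[r,R]$ this gives, with $\mathcal T_{\eps_0}:=\{\Psi\in\mathrm{Sym}_L^{+,1}(\R):\Tr[\Psi^T\cS\Psi]\ge\eps_0\}$ compact,
\[ \rate(x)=\inf_{\Psi\in\mathcal T_{\eps_0}}\ \sup_{\theta\ge0}\ \cF(\theta,x,\Psi),\qquad x\in[r,R]. \]
Item~(ii) requires nothing new: Theorem~\ref{thm-mainweak} is a weak LDP with an l.s.c.\ rate function, and combining it with the exponential tightness of Lemma~\ref{lem-norm_exptight} upgrades it, by the standard argument, to a full LDP with a good rate function; the sublevel sets of $\rate$ are then closed (l.s.c.) and bounded (exponential tightness), hence compact.

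For (i), the constraint $\Tr[\Psi^T\cS\Psi]\ge\eps_0$ forces $\cF(\theta,x,\Psi)\to-\infty$ as $\theta\to\infty$ uniformly over $(x,\Psi)\in[r,R]\times\mathcal T_{\eps_0}$ (this is Lemma~\ref{lem-thetainacompact}), so the inner supremum is attained on a fixed compact interval $[0,\Theta]$. On $[0,\Theta]\times[r,R]\times\mathcal T_{\eps_0}$ the integrand $\cF$ is continuous: the only non-smooth ingredients, the $\max$ in \eqref{eq-defvphi} and the $(\cdot)_+$ in \eqref{eq-defphi}, are continuous, $\cF$ vanishes identically below the first-branch threshold $\theta=-m_{\mu_\infty}(x)/2$ by Lemma~\ref{lem-optimizationreg}, and $M$, $m_{\mu_\infty}$, $J_{\mu_\infty}$ are smooth off $\mathrm{supp}\,\mu_\infty$. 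Hence each $x\mapsto\sup_{\theta\in[0,\Theta]}\cF(\theta,x,\Psi)$ is continuous, the family of these maps is equicontinuous in $x$ uniformly over $\Psi\in\mathcal T_{\eps_0}$ (uniform continuity of $\cF$ on the compact domain), and taking $\inf_\Psi$ preserves continuity. Since $[r,R]$ was arbitrary, $\rate$ is continuous on $(r_\infty,\infty)$.

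For (iii), first $\rate\ge0$ since $\sup_{\theta\ge0}\cF(\theta,x,\Psi)\ge\cF(\theta_0,x,\Psi)=0$ for any $0<\theta_0<-m_{\mu_\infty}(x)/2$ (Lemma~\ref{lem-optimizationreg}); thus $\rate(r_\infty)=0\le\rate$, and it suffices to show $\rate$ is non-decreasing on $(r_\infty,\infty)$. On $[r,R]$ the representation above is Lipschitz (each $\sup_{\theta\in[0,\Theta]}\cF(\theta,\cdot,\Psi)$ is Lipschitz with constant independent of $\Psi$, as $\partial_x\cF$ is bounded on the compact domain, and an infimum of uniformly Lipschitz functions is Lipschitz), hence $\rate$ is differentiable a.e.\ and it is enough to check $\rate'(x)\ge0$ at differentiability points. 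Fix such an $x$, a minimizer $\Psi^*\in\mathcal T_{\eps_0}$ and a maximizer $\theta^*\in[0,\Theta]$ for $\Psi^*$. If $\theta^*\le-m_{\mu_\infty}(x)/2$ then $\rate(x)=\cF(\theta^*,x,\Psi^*)=0$ is a global minimum of $\rate$, so $\rate'(x)=0$. Otherwise $\theta^*$ is an interior maximizer in the second branch of \eqref{eq-defJ}, where $2\theta\,\phi(\theta,x,\Psi)=-M(x)/L+(2\theta+m_{\mu_\infty}(x))\Psi$, so $\partial_\theta\phi(\theta,x,\Psi)=\theta^{-1}\big(\Psi-\phi(\theta,x,\Psi)\big)$ and the optimality relation $\partial_\theta\cF(\theta^*,x,\Psi^*)=0$ collapses to $\langle\nabla_\phi K(\theta^*,\phi^*),\Psi^*\rangle=L\theta^*x$, with $\phi^*=\phi(\theta^*,x,\Psi^*)$ and $\nabla_\phi K=2L^2\theta^2\cS\phi+L\theta A_0+\tfrac12\phi^{-1}$. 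By the envelope theorem $\rate'(x)=\partial_x\cF(\theta^*,x,\Psi^*)$; differentiating the second-branch formulas in $x$, inserting the MDE \eqref{eq-tensorMDE} and its $x$-derivative $\cS[M'(x)]=M(x)^{-1}M'(x)M(x)^{-1}-\Id_L$, and using $\langle\nabla_\phi K,\Psi^*\rangle=L\theta^*x$ to cancel the $\Psi^*$-dependent terms (exactly as in the proof of Lemma~\ref{lem-optimizationreg}) leaves
\[ \partial_x\cF(\theta^*,x,\Psi^*)=\beta\,\Tr\!\Big[M'(x)\Big(L\theta^*\,M(x)^{-1}\phi^*M(x)^{-1}+\tfrac1{4L\theta^*}(\phi^*)^{-1}+M(x)^{-1}\Big)\Big]. \]
Writing $N:=-M(x)^{-1}\succ0$ (valid since $x>r_\infty$), the matrix AM--GM inequality $aPP^T+a^{-1}QQ^T\succeq PQ^T+QP^T$ with $a=L\theta^*$, $P=N(\phi^*)^{1/2}$, $Q=\tfrac12(\phi^*)^{-1/2}$ (so $PQ^T+QP^T=N$) gives $L\theta^*N\phi^*N+\tfrac1{4L\theta^*}(\phi^*)^{-1}-N\succeq0$; since also $M'(x)=\int(t-x)^{-2}\,\mu_{\mathrm{MDE}}(\dx t)\succeq0$, the trace above is $\ge0$. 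Hence $\rate'(x)\ge0$ a.e., $\rate$ is non-decreasing on $[r,R]$, and therefore on $[r_\infty,\infty)$. (The monotonicity is strict wherever $\rate(x)>0$: then $a:=(1+m_{\mu_\infty}(x)/2\theta^*)_+>0$, the parenthesized matrix is strictly positive definite, and $M'(x)\succ0$.)

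The main obstacle is the displayed identity for $\partial_x\cF(\theta^*,x,\Psi^*)$: one must expand $\partial_x K\big(\theta^*,\phi(\theta^*,x,\Psi^*)\big)$ in full, eliminate $A_0$, $\cS[M]$ and $\cS[M']$ via the MDE precisely as in Lemma~\ref{lem-optimizationreg}, and apply the optimality relation $\langle\nabla_\phi K,\Psi^*\rangle=L\theta^*x$ to kill the residual $\Psi^*$-dependence — reaching the clean form to which matrix AM--GM applies is the crux. A secondary, routine point is the use of the envelope theorem when the $\theta$-maximizer or $\Psi$-minimizer need not be unique: one sandwiches $\rate'(x)$ between the extreme values of $\partial_x\cF(\theta^*,x,\Psi^*)$ over $\theta^*\in\arg\max_\theta\cF(\theta,x,\Psi^*)$ (Danskin-type one-sided derivative formulas), each of which is $\ge0$ by the case analysis above, which is unproblematic given the compactness already established.
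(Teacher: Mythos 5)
Your (i) and (ii) follow essentially the same path as the paper: (ii) via the general principle that a weak LDP plus exponential tightness upgrades to a full LDP with a good rate function, and (i) via Proposition~\ref{prop-profileepsilon}, Lemma~\ref{lem-thetainacompact}, and uniform continuity of $\cF$ on the resulting compact $(\theta,x,\Psi)$-domain. The paper adds a second, ``direct'' argument for compactness of sublevel sets (a lower bound showing $\rate(x)\to\infty$), which you omit, but your argument stands on its own.

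Item (iii) is where you genuinely diverge. The paper proves monotonicity \emph{probabilistically}: it couples $\mathbf X^{(N)}$ at two different ``heights'' by running an Ornstein--Uhlenbeck flow on the GOE pieces $W_j^{(N)}$ and reproduces the argument of Lemma~B.1 in~\cite{BoursierG24}, yielding $\limsup_N P_\delta^{(N)}(y)\le\limsup_N P_\delta^{(N)}(x)$ for $y<x$ directly at the level of probabilities. You instead argue \emph{analytically} from the variational formula: you reduce to a compact $\theta$-range, invoke an envelope/Danskin argument to write $\rate'(x)=\partial_x\cF(\theta^*,x,\Psi^*)$ at a.e.\ $x$, simplify $\partial_x\cF$ using the first-order condition $\langle\nabla_\phi K,\Psi^*\rangle=L\theta^* x$, the MDE~\eqref{eq-tensorMDE} and its $x$-derivative, and land on $\partial_x\cF=\Tr[M'(x)(L\theta^* M^{-1}\phi^* M^{-1}+\tfrac{1}{4L\theta^*}(\phi^*)^{-1}+M^{-1})]$, which is nonnegative by matrix AM--GM together with $M'(x)\succeq0$. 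I checked the algebra (including $\partial_\theta\phi=\theta^{-1}(\Psi-\phi)$, the cancellation to $\langle\nabla_\phi K,\Psi\rangle=L\theta x$, and the AM--GM factorization with $P=N\phi^{1/2}$, $Q=\tfrac12\phi^{-1/2}$, $N=-M^{-1}$), and it is correct. Your route is more self-contained (no appeal to the stochastic-process lemma of~\cite{BoursierG24}) and gives strict monotonicity as a byproduct whenever $\rate(x)>0$; the paper's route sidesteps the need for a clean formula for $\partial_x\cF$ and for the envelope-theorem technicalities with possibly non-unique optimizers, which you rightly flag as the remaining routine point to tidy up.
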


\begin{remark} The rate function may be discontinuous at $r_{\infty}$. Indeed, let us consider for instance the matrix 

\[ \frac{1}{\sqrt{N}} \begin{pmatrix} G_N & 0_N \\0_N &  3 I_N \end{pmatrix} \]
where $G_N$ is a GOE matrix. Then the rate function for the largest eigenvalue of such a matrix would be $+ \infty$ for $x < 3$, $0$ for $x =3$ and $\rate(x)$ for $x >3$.
\end{remark}

Let us first {prove parts} (ii) and (iii) assuming (i) of the above proposition : 
\begin{proof}[Proof of (ii) and (iii) of Proposition \ref{prop-ratefunct}]

Beginning by (ii), we first notice that $\rate$ is lower semi-continuous at $r_\infty$ since it vanishes at $r_\infty$, and is non-negative everywhere. The level sets are included in compacts as a consequence of the existence of the LDP and the exponential tightness. We give another more direct proof using the expression of the rate function. Indeed, one can make the following bounds for  any $\Psi \in \mathrm{Sym}_L^{ +,1}(\R)$

\[ \Tr[ \Psi \mathcal{S}(\Psi) ] \leq A\,\quad \Tr[A_0 \Psi] \leq B,\quad 
\ln ( \det(\Psi) - L \ln L) \leq 0 \,,\]
for some constants $A,B > 0$ that depends only on the models. For $x > r_{\infty} +1$, we can write with $\theta_*= - m_{\mu_{\infty}}( r_{\infty} +1)$ for $\theta > \theta_*$
\[ J_{\mu_{\infty}}(\theta,x) = \theta x - \frac{1}{2}( 1 + \ln(2 \theta)) - \frac{1}{2}\int_{\R} \ln |x-y| d \mu_{\infty}(y) \geq \theta x - \frac{1}{2}( 1 + \ln(2 \theta)) - \frac{1}{2}\ln|x| - C \]
for some constant $C$. 
Therefore we have for $x > r_{\infty} +1$,

\begin{eqnarray*}
 \rate(x) &=& \inf_{\Psi} \sup_{\theta \geq 0} \mathcal{F}( \theta,x,\Psi)  \\
 & \geq &  \inf_{\Psi} \sup_{\theta \geq \theta_*} \mathcal{F}( \theta,x,\Psi) \\
  & \geq & \sup_{\theta \geq \theta_*}\Big[  L( \theta x - \frac{1}{2}( 1 + \ln(2 \theta)) - \frac{1}{2}\ln|x| - C) - L^2 A \theta^2 - L B \theta  \Big]\\
  & \geq & \Big[  L( \theta_* x - \frac{1}{2}( 1 + \ln(2 \theta_*)) - \frac{1}{2}\ln|x| - C) - L^2 A \theta_*^2 - L B \theta_*  \Big]
  \end{eqnarray*}

It is now easy to see that the lower bound goes to $+ \infty$ as $x$ goes to $+ \infty$ which proves the result. 

\medskip
For point (iii) of the proposition, one can prove that for any $x,y$ such that $r_{\infty} < y <x $ and any $\delta >0$ such that $\delta < |y - r_{\infty}|$, we have

\[ \limsup_{N \to + \infty}P_{\delta}^{(N)}(y ) \leq  \limsup_{N \to + \infty}P_{\delta}^{(N)}(x). \]
For this, one can reproduce almost verbatim the proof of Lemma B.1 in \cite{BoursierG24}, the only difference is that the matrix valued stochastic process used has to be defined by running an Ornstein-Uhlenbeck on every matrix $W_j^{(N)}$. In other words, in the proof, instead of~$G_N$, we consider the process $\mathbf{X}_\beta^{(N)}(t)$ defined by 
\[ \mathbf{X}_\beta^{(N)}(t) = \sum_{j=1}^k A_j\otimes W_j^{(N)}(t)+A_0\otimes \Id_N\]
where $W_j^{(N)}$ obey the following stochastic differential equations with $(H_j^{(N)})_{ 1 \leq j \leq k}$ being a family of $k$ i.i.d. $N \times N$ symmetric Brownian motions

\[ \forall 1 \leq j \leq k, dW_j^{(N)} = d H_j^{(N)} - \frac{1}{2} W_j^{(N)}. \]
The rest of the proof is identical. From the proved inequality, taking $\delta$ to $0$ gives that $\rate(y) \leq \rate(x)$. 
 \end{proof}

To discuss the continuity of the rate function, we will need to restrict ourselves to a compact interval of $\theta \geq 0$
\begin{lemma}\label{lem-thetainacompact}
Let  $M > \eta  > r_{\infty} $ and $\eps >0$. There exist $\Theta(M,\eta,\epsilon)$ such that for every $\Psi$ such that if  $\Tr[ \Psi^T S(\Psi)] \geq \eps$, we have for every $x \in (r_{\infty},M]$
\[ \sup_{\theta \geq 0} \mathcal{F}(\theta,x,\Psi) = \sup_{0 \leq \theta \leq \Theta(M,\eta,\eps) } \mathcal{F}(\theta,x,\Psi) \]
\end{lemma}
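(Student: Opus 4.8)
The plan is to exploit that, by Lemma~\ref{lem-optimizationreg}, $\cF(\theta,x,\Psi)=0$ for all $\theta\in[0,-m_{\mu_\infty}(x)/2)$, an interval which is non-empty since $m_{\mu_\infty}(x)<0$ for $x>r_\infty$; hence $\sup_{\theta\in[0,\Theta]}\cF(\theta,x,\Psi)\ge0$ for every $\Theta>0$, and it is enough to produce $\Theta=\Theta(M,\eta,\eps)$ with
\[
\cF(\theta,x,\Psi)\le 0\qquad\text{for all }\theta>\Theta,\ \eta\le x\le M,\ \Tr[\Psi^T\cS(\Psi)]\ge\eps .
\]
For this I would bound $\cF(\theta,x,\Psi)=LJ_{\mu_\infty}(x,\theta)-K(\theta,\phi(\theta,x,\Psi))$ from above by a fixed quadratic-in-$\theta$ expression with negative leading coefficient.

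I would first record the form of $\phi$ for $\theta>-m_{\mu_\infty}(x)/2$, where $(-m_{\mu_\infty})^{-1}(2\theta)<x$ and the positive part in~\eqref{eq-defphi} is active, so that
\[
\phi(\theta,x,\Psi)=-\frac{M(x)}{2\theta L}+\Big(1+\frac{m_{\mu_\infty}(x)}{2\theta}\Big)\Psi ,
\]
with $-M(x)\succ0$ (as $x>r_\infty$) and $1+m_{\mu_\infty}(x)/(2\theta)\in(0,1)$; in particular $\phi\succeq(1+m_{\mu_\infty}(x)/(2\theta))\Psi\succeq0$ and $\Tr[\phi]=1$ (and $\phi$ is symmetric, so $\phi^T=\phi$). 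Since $\cS$ is self-adjoint for the Frobenius inner product and maps the positive semidefinite cone into itself, the quadratic form $A\mapsto\Tr[A\cS(A)]$ is monotone there, because $\Tr[A\cS(A)]-\Tr[B\cS(B)]=\Tr[(A-B)\cS(A+B)]\ge0$ when $A\succeq B\succeq0$. Applying this with $A=\phi$, $B=(1+m_{\mu_\infty}(x)/(2\theta))\Psi$ gives $\Tr[\phi^T\cS(\phi)]\ge(1+m_{\mu_\infty}(x)/(2\theta))^2\,\Tr[\Psi^T\cS(\Psi)]\ge(1+m_{\mu_\infty}(x)/(2\theta))^2\eps$, and since $-m_{\mu_\infty}(x)\le-m_{\mu_\infty}(\eta)<\infty$ for $x\ge\eta$, the prefactor is $\ge\tfrac12$ once $\theta\ge-m_{\mu_\infty}(\eta)$, whence $\Tr[\phi^T\cS(\phi)]\ge\eps/4$. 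The remaining two contributions to $K$ are controlled crudely: $|\Tr[A_0^T\phi]|\le\|A_0\|\Tr[\phi]=\|A_0\|$; and $\phi\succeq\tfrac{\lambda_L(-M(x))}{2\theta L}\Id_L$ with $\lambda_L(-M(x))\ge(M-\ell)^{-1}>0$ (where $\ell=\min\mathrm{supp}(\mu_{\mathrm{MDE}})$, using $-M(x)=\int\tfrac{\mu_{\mathrm{MDE}}(\dx y)}{x-y}$), while $\lambda_1(\phi)\le\Tr[\phi]=1$, so the eigenvalues of $\phi$ lie in $[c_*/(2\theta L),1]$ for a constant $c_*=c_*(M)>0$, which yields $|\ln\det\phi-L\ln L|\le L\ln(2\theta)+C$ for $\theta$ large, with $C=C(L,M)$.

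Putting these together, for $\theta$ larger than a threshold depending only on $L,M,\eta,\eps$ (and the matrices $A_0,\dots,A_k$), one gets $K(\theta,\phi(\theta,x,\Psi))\ge\tfrac{L^2\eps}{4}\theta^2-L\|A_0\|\theta-\tfrac{L}{2}\ln(2\theta)-C$. On the other hand, in the same regime $2\theta\ge-m_{\mu_\infty}(x)$ the second branch of~\eqref{eq-defJ} applies and $LJ_{\mu_\infty}(x,\theta)=L\theta x-\tfrac{L}{2}(1+\ln(2\theta))-\tfrac{L}{2}\int_\R\ln|x-y|\,\dx\mu_\infty(y)\le LM\theta+C'$ for $\theta\ge\tfrac12$, where $C'=C'(L,\eta)$ is finite because $\int_\R\ln|x-y|\,\dx\mu_\infty(y)\ge\ln(\eta-r_\infty)$ for $x\ge\eta$. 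Subtracting, $\cF(\theta,x,\Psi)\le-\tfrac{L^2\eps}{4}\theta^2+(LM+L\|A_0\|)\theta+\tfrac{L}{2}\ln(2\theta)+(C+C')$, a fixed function of $\theta$ tending to $-\infty$; choosing $\Theta=\Theta(M,\eta,\eps)$ beyond the point where this becomes non-positive (and beyond the thresholds used above) finishes the argument.

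The main obstacle is exactly the uniformity in $\Psi$: the coefficient of $\theta^2$ must stay bounded away from $0$ over \emph{all} admissible profiles, which is where the constraint $\Tr[\Psi^T\cS(\Psi)]\ge\eps$ together with the monotonicity of $A\mapsto\Tr[A\cS(A)]$ on the positive semidefinite cone is essential --- without it $\phi$ could carry negligible $\cS$-energy and the quadratic term would not dominate. The role of $\eta$ is auxiliary: it keeps $-m_{\mu_\infty}(x)$, the edge factor $1+m_{\mu_\infty}(x)/(2\theta)$, and $\int\ln|x-y|\,\dx\mu_\infty(y)$ under control uniformly in $x$ (these could degenerate as $x\downarrow r_\infty$), so that the resulting $\Theta$ is independent of $x$.
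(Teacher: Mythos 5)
Your proof is correct but takes a genuinely different route from the paper. The paper differentiates $\cF$ in $\theta$ and exploits a cancellation: after introducing $\Delta(M)=-\tfrac12(\ln\det M-L\ln L)$, one checks that $g(\theta):=-\tfrac{L}{2\theta}+\tfrac{\partial(\Delta\circ\phi)}{\partial\theta}$ is the derivative of $\theta\mapsto\Delta(2\theta\phi(\theta,x,\Psi))$ and equals $-\Tr\bigl(\Psi(-M(x)+2(\theta-\theta_x)\Psi)^{-1}\bigr)\le 0$; combined with $\Tr[(-M(x))\cS(\Psi)]\ge0$ and a uniform lower bound on $\Tr[\Psi A_0^T]$, this gives $\partial_\theta\cF\le (x+b_0)-2a(\theta-\theta_x)$ with $a=\tfrac{L^2}{4}\Tr[\Psi\cS(\Psi)]\ge\tfrac{L^2\eps}{4}$, whence $\cF\le(x+b_0)(\theta-\theta_x)-a(\theta-\theta_x)^2$ by integration. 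Your version instead bounds $K$ from below and $LJ_{\mu_\infty}$ from above term by term; the crucial ingredient is your monotonicity lemma for $A\mapsto\Tr[A\cS(A)]$ on the positive semidefinite cone, which you use to propagate the constraint $\Tr[\Psi^T\cS(\Psi)]\ge\eps$ to $\Tr[\phi^T\cS(\phi)]\ge\eps/4$ once $\theta$ is moderately large. Both routes produce a downward parabola in $\theta$ with leading coefficient $\cO(L^2\eps)$ and hence a threshold $\Theta$ of the same order $\cO\bigl(\tfrac{M}{L^2\eps}\bigr)$. What the paper's derivative calculation buys is a sharper, constant-free inequality (the problematic $\tfrac{L}{2}\ln(2\theta)$ term you have to bound separately simply cancels in $g$); what your approach buys is that it avoids the differentiation and chain-rule bookkeeping and isolates the psd-monotonicity of the $\cS$-quadratic form as the structural reason the coefficient of $\theta^2$ is uniformly negative over admissible $\Psi$ — a point the paper leaves implicit.

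One minor technical remark on your sketch: you need the support of $\mu_{\mathrm{MDE}}$ (and hence of $\mu_\infty$) to be bounded below by some $\ell>-\infty$ to get the lower bound $\lambda_L(-M(x))\ge(M-\ell)^{-1}$. This holds here since the model is bounded in operator norm by a constant depending only on $L$, $k$, $\|A_0\|,\dots,\|A_k\|$ (cf.\ Lemma~\ref{lem-norm_exptight}), but it is worth stating when promoting the sketch to a full proof, as is the fact that your bound for $\lambda_L(-M(x))$ degenerates as $M\to\infty$, which is fine because $\Theta$ is allowed to depend on $M$.
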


We delay the proof of Lemma~\ref{lem-thetainacompact} to Appendix \ref{sect-functFpart1}.

\begin{proof}[Proof of (i) of Proposition \ref{prop-ratefunct}]
Let us consider $\eta,M \in \R$ such that $ r_{\infty} < \eta <M $, and let $\epsilon' := \inf_{x \in[r,M] \epsilon_x}$ as defined in Proposition \ref{prop-improved_weakLDPsup}. We have $\epsilon' >0$. Using the preceding Lemma, we have 
\[ \sup_{\theta \geq 0} \mathcal{F}(\theta,x,\Psi,1) = \sup_{0 \leq \theta \leq \Theta(M,\eta,\eps') } \mathcal{F}(\theta,x,\Psi) \] 
and, using Proposition \ref{prop-profileepsilon},
\[ \rate(x) = \inf_{\Psi : \langle \Psi,\mathcal{S}[\Psi]\rangle \geq \epsilon'} \sup_{0 \leq \theta \leq \Theta(M,\eta,\eps') } \mathcal{F}(\theta,x,\Psi). \]

Furthermore, the function 
\[ (\theta,x,\Psi) \mapsto \mathcal{F}(\theta,x,\Psi)\] 
is uniformly continuous on $[0, \Theta(M,\eta,\epsilon')] \times [\eta,M] \times \{ \Psi : \langle \Psi, \mathcal{S} \Psi \rangle \geq \epsilon'\}$. This implies that the function $\rate$ is continuous on $[\eta,M]$ and therefore on $]r_{\infty}, + \infty[$.
\end{proof}

\section{Changes for the GUE}\label{sect-GUE}
This section collects several remarks on the differences between the proof of Theorem~\ref{thm-main} for $\beta=1$ and $\beta=2$. While the overall argument is the same, some small adjustments may be needed to adapt the proof to the complex model. We collect the most notable ones below. Recall that for $\beta=2$, we have
\begin{displaymath}
\mathbf{X}^{(N)}:=\sum_{j=1}^k A_j\otimes W_j+A_0\otimes \Id_N
\end{displaymath}
with Hermitian $A_0,A_1,\dots,A_k\in\C^{L\times L}$ and $W_1,\dots,W_k$ independent GUE matrices of size~${N\times N}$.

\subsection{Proof of Proposition~\ref{prop-conv.annealed} for $\beta=2$}
When considering the convergence of the annealed spherical integral, the main difference between the real and complex case lies in evaluating $\E_W[\re^{\theta NL\langle\mathbf{u},\mathbf{X}^{(N)}\mathbf{u}\rangle}]$. Recalling our convention that complex inner products are skew-linear in the first entry, we write
\begin{align}
\langle\mathbf{u},(A_j\otimes W_j)\mathbf{u}\rangle&=\sum_{a=1}^N(W_j)_{aa}\sum_{c,d=1}^L(A_j)_{cd}\overline{(u_c)_a}(u_d)_b\NN\\
&\quad+\sum_{a<b}\Re(W_j)_{ab}\sum_{c,d=1}^L\big[(A_j)_{cd}\overline{(u_c)_a}(u_d)_b+\overline{(A_j)_{cd}}(u_c)_a\overline{(u_d)_b}\, \big]\label{eq-decompcomplex}\\
&\quad+\sum_{a<b}\ri \Im(W_j)_{ab}\sum_{c,d=1}^L\big[(A_j)_{cd}\overline{(u_c)_a}(u_d)_b-\overline{(A_j)_{cd}}(u_c)_a\overline{(u_d)_b}\, \big].\NN
\end{align}
By definition of the GUE, the $(W_j)_{aa}$ are i.i.d. real-valued $\mathcal{N}(0,N^{-1})$ random variables and the $\Re(W_j)_{ab}$ resp. $\Im(W_j)_{ab}$ are i.i.d real-valued $\mathcal{N}(0,(2N)^{-1})$ random variables for $a<b$. Moreover, these three families of Gaussian random variables are independent of each other. Noting that for $\xi\sim\mathcal{N}(0,\sigma^2)$, the formula $\E[\re^{t\xi}]=\re^{t^2/(2\sigma^2)}$ at the base of~\eqref{eq-defL} continues to hold for $t\in\C$,~\eqref{eq-decompcomplex} implies that
\begin{align*}
\prod_{j=1}^k\E_W[\re^{\theta NL\langle\mathbf{u},(A_j\otimes W_j)\mathbf{u}\rangle}]&=\prod_{j=1}^k\exp\Big[\sum_{a=1}^N\frac{\theta^2N^2L^2}{2N}\Big(\sum_{c,d=1}^L(A_j)_{cd}\overline{(u_c)_a}(u_d)_b\Big)^2\\
&\quad\quad+\sum_{a<b}\frac{\theta^2N^2L^2}{4N}\Big(\sum_{c,d=1}^L\big[(A_j)_{cd}\overline{(u_c)_a}(u_d)_b+\overline{(A_j)_{cd}}(u_c)_a\overline{(u_d)_b}\Big)^2\\
&\quad\quad-\sum_{a<b}\frac{\theta^2N^2L^2}{4N}\Big(\sum_{c,d=1}^L\big[(A_j)_{cd}\overline{(u_c)_a}(u_d)_b-\overline{(A_j)_{cd}}(u_c)_a\overline{(u_d)_b}\Big)^2\Big]\\
&=\prod_{j=1}^k\exp\Big[\frac{NL^2\theta^2}{2}\Tr[\rho(\mathbf{u})^TA_j\rho(\mathbf{u})^TA_j]\Big].
\end{align*}
Similar to~\eqref{eq-innerprod}, the dependence of $\E_W[\exp(\theta NL\langle\mathbf{u},\mathbf{X}^{(N)}\mathbf{u}\rangle)]$ on $\mathbf{u}$ is thus completely determined by $\rho(\mathbf{u})_{ij}=\langle u_i,u_j\rangle$, i.e., the entries of a (complex) renormalized Wishart matrix. Moreover, these random variables again satisfy an LDP and the rate function for this complex analog of Lemma~\ref{lem-WishartLDP} is given by
\begin{displaymath}
\widetilde{\rate}:\mathrm{Sym}_L^{+,1}(\C)\rightarrow\C,\quad \widetilde{\rate}(X)=L\ln(L)-\ln(\det(X))
\end{displaymath}
The proof of the LDP is analogous to that of Lemma~\ref{lem-WishartLDP}. However, instead of~\cite{StatBook}, we now use Equation (1.6) of~\cite{Goodman1963} as the starting point, which indicates that the joint law of the~(upper triangular) entries of a complex Wishart matrix is given by
\begin{displaymath}
f_Y(X)={\frac {1}{\pi ^{L(L-1)/2}\prod _{j=1}^{L}\Gamma_{\C}(N-j-1)}}{\mathrm{det}(X)}^{(N-L)}\mathrm {e} ^{-\Tr (X)},\quad  X\in\mathrm{Sym}_L^{++}(\C)
\end{displaymath}
where $\Gamma_{\C}$ denotes the complex gamma function.

\subsection{Proof of Proposition~\ref{prop-weakLDPsup} for $\beta=2$}
For the weak LDP upper bound, we observe that~\eqref{eq-defmuproj} may be a complex measure, as the coefficients of the Dirac measures in
\begin{displaymath}
\mu_{\mathbf{H}}(\Pi_i,\Pi_j)=\frac{1}{N}\sum_{a=1}^{NL}\langle \mathbf{P}^{i\leftrightarrow j}\Pi_i\mathbf{v}_a(\mathbf{H}),\Pi_j\mathbf{v}_a(\mathbf{H})\rangle \delta_{\lambda_a(\mathbf{H})}
\end{displaymath}
are given by complex inner products. However, the bound~\eqref{eq-complexRadon} still holds, hence $\mu_{\mathbf{H}}(\Pi_i,\Pi_j)$ is thus well-defined as a complex Radon measure and $(\mu_{\mathbf{H}}(\Pi_i,\Pi_j))_{ij}$ can be interpreted as a random measure taking values in $\mathrm{Sym}_L^+(\C)$. Hence, considering
\begin{displaymath}
d_{Lip}(\nu_1,\nu_2)=\sup_{\substack{\|f\|_\infty\leq1,\\ \mathrm{Lip}(f)\leq1}}\Big|\int f\dx(\nu_1-\nu_2)\Big|
\end{displaymath}
and decomposing the complex measures into their real and imaginary parts allows reducing the argument to signed measures. The rest of the proof is then analogous to the real case. Note, however, that we need to use \cite[Thm.~4.5]{HaagerupTobjornsen2005} instead of~\cite[Thm.~4.3]{HST2006} in the proof of Theorem~\ref{thm-muproj.conc}, as the randomness in $\mathbf{X}^{(N)}_\beta$ is supplied by GUE matrices for~$\beta=2$.

\appendix
\section{Additional Proofs}
\subsection{Proof of Lemma~\ref{lem-WishartLDP} (LDP for $\rho(\mathbf{u})$)}\label{app-WishartLDP}

We follow the standard technique of proving a weak LDP and exponential tightness to obtain the desired result. Recall that $\mathrm{Sym}_L^+(\R)$ resp. $\mathrm{Sym}_L^{++}(\R)$ denotes the set of symmetric positive semi-definite resp. symmetric positive definite matrices, and
\begin{displaymath}
\mathrm{Sym}_L^{\sharp,1}(\R)=\{X\in\mathrm{Sym}_L^\sharp(\R):\Tr(X)=1\}
\end{displaymath}
for $\sharp\in\{+,++\}$. Moreover, $\rho(\mathbf{u})=\widetilde{Y}$ was defined in~\eqref{eq-defYtilde}.

\medskip
\underline{\smash{Exponential tightness:}}
As $\mathrm{supp}(\rho(\mathbf{u}))=\mathrm{Sym}_L^{+,1}(\R)$ is a compact set, exponential tightness is trivially satisfied. 

\medskip
\underline{\smash{The law of $\rho(\mathbf{u})$:}}  As $Y$ is a Wishart matrix, the joint law of the (upper triangular) entries of~$Y$ is absolutely continuous with density given by
\begin{displaymath}
f_Y(X)={\frac {1}{2^{\frac {NL}{2}}\pi ^{L(L-1)/4}\prod _{j=1}^{L}\Gamma(\frac {N-j-1}{2})}}{\mathrm{det}(X)}^{\frac {N-L-1}{2}}\mathrm {e} ^{-{\frac {1}{2}}{\Tr (X)}},\quad X\in\mathrm{Sym}_L^{++}(\R)
\end{displaymath}
where $\Gamma(\cdot)$ denotes the gamma function (see, e.g.,~\cite[Sect.~7.2]{StatBook}). In this notation, we make the change of variables
\begin{displaymath}
X\mapsto\Big(\frac{X}{\Tr(X)},\Tr(X)\Big)=:(\widetilde{X},t),
\end{displaymath}
and note that the entries of the Jacobian $J$ are given by
\begin{align*}
\frac{\partial \widetilde{X}_{ij}}{\partial X_{kl}}=\frac{1}{t}(\I_{\{i=k\}}\I_{\{j=l\}}-\I_{\{k=l\}}\widetilde{X}_{ij}),\quad \frac{\partial t}{\partial X_{kl}}=\I_{\{k=l\}}.
\end{align*}
The determinant is easily computed using Gaussian elimination, yielding
\begin{displaymath}
\det(J)=t^{1-\frac{L(L+1)}{2}}.
\end{displaymath}
After the change of variables, we obtain
\begin{align*}
f_{Y}(\widetilde{X},t)=\frac {{\mathrm{det}(\widetilde{X})}^{\frac {N-L-1}{2}}t^{\frac{NL-2L^2-2L+2}{2}}\mathrm {e} ^{-{\frac {t}{2}}}}{2^{\frac {NL}{2}}\pi ^{L(L-1)/4}\prod _{j=1}^{L}\Gamma(\frac {N-j-1}{2})},\quad (\widetilde{X},t)\in\mathrm{Sym}_L^{++,1}(\R)\times\R_+
\end{align*}
which factorizes into two parts that only depend on $\widetilde{X}$ and $t$, respectively. Integrating out the $t$ variable yields the density for $\rho(\mathbf{u})$. It is given by
\begin{equation}\label{eq-lawYtilde}
f_{\widetilde{Y}}(X)=\frac {\Gamma(\frac{NL}{2}-L^2-L+2)}{2^{L(L+1)+2}\pi ^{L(L-1)/4}\prod _{j=1}^{L}\Gamma(\frac {N-j-1}{2})}{\mathrm{det}(X)}^{\frac {N-L-1}{2}},\quad X\in\mathrm{Sym}_L^{++,1}(\R)
\end{equation}
For simplicity, we abbreviate the normalizing constant in~\eqref{eq-lawYtilde} as $Z_N$.

\medskip
\underline{\smash{Weak LDP:}} We start by writing
\begin{displaymath}
Z_N=\int_{\mathrm{Sym}_L^{+,1}(\R)}\exp\Big[N\ln(\det(X))\Big(\frac{1}{2}-\frac{L-1}{2N}\Big)\Big]\dx X
\end{displaymath}
and noting that the function in the exponent satisfies
\begin{displaymath}
\lim_{N\rightarrow\infty}\Big[\ln(\det(X))\Big(\frac{1}{2}-\frac{L-1}{2N}\Big)\Big]=\frac{\ln(\det(X))}{2}
\end{displaymath}
whenever $X$ does not have a zero eigenvalue (otherwise, the contribution to the integral vanishes). Moreover, the convergence is uniform on all sets where all eigenvalues of $X$ are bounded away from zero, and for every constant $C > 0$ there is an $\eps > 0$ such that
\begin{displaymath}
\ln(\det(X)) < -C
\end{displaymath}
for all $X$ for which at least one eigenvalue is smaller than $\eps$. With these observations, we deduce from the classical Laplace method that
\begin{displaymath}
\lim_{N\rightarrow\infty}\frac{1}{N}\ln(Z_N)=\max_{X\in\mathrm{Sym}_L^{+,1}(\R)}\frac{\ln(\det(X))}{2}=-\frac{L\ln(L)}{2}.
\end{displaymath}
Here, the last equality follows by interpreting the optimization problem in terms of the~(non-negative) eigenvalues $\lambda_1,\dots,\lambda_L$ of $X\in\mathrm{Sym}_L^{+,1}(\R)$ and maximizing $\det(X)=\prod_i\lambda_i$ under the constraint $\Tr(X)=\sum_i\lambda_i=1$. This maximum is attained for $\lambda_1=\dots=\lambda_L=1/L$, which yields the desired constant.

\medskip
Another application of the Laplace method gives the desired weak LDP with rate function $\widetilde{\rate}(X)=(\ln(\det(X))-L\ln(L))/2$. Note that we defined $\widetilde{\rate}$ for positive semi-definite matrices instead of the positive definite matrices to ensure we work on a Polish space. However, if $\|\rho(\mathbf{u})-X\|<\eps$ for $\eps>0$ and a matrix $X$ that has a zero eigenvalue, $\rho(\mathbf{u})$ must have at least one small eigenvalue and $\det(\rho(\mathbf{u}))<\eps$. As a consequence, $\widetilde{\rate}(X)=\infty$ as expected.

\subsection{The Function $\cF$: Proof of Lemmas \ref{lem-Ibound} and \ref{lem-thetainacompact}}\label{sect-functFpart1}
First, let us remind that $\mathcal{F}(\theta,x,\Psi)=0$ for every $\theta \leq \theta_x:=-m(x) $. Then, let us call for a matrix $M$, $\Delta(M) = - \frac{1}{2}( \ln \det(M) - L \ln L )$. Let us then differentiate the function $\mathcal{F}$ with respect to $\theta$ for $\theta \geq \theta_x$, we get 

\begin{eqnarray*}
\frac{\partial \mathcal{F}}{\partial \theta}(\theta,x,\Psi) &=& L( x -  \frac{1}{2\theta}) -2a(\theta -\theta_x) - 2b + \frac{\partial\Delta\circ \phi}{\partial\theta}(\theta,x,\Psi), \\
\end{eqnarray*}
where 
\[ a:= \frac{L^2}{4}\Tr[ \Psi \mathcal{S}(\Psi) ] \text{ and } b:= 2L\Tr( \Psi A_0^T) + \frac{L^2}{2}\Tr[ (-M(x)) \mathcal{S}(\Psi) ]. \]
Let us study the function 
\[ g: \theta \mapsto - \frac{L}{2\theta} + \frac{\partial\Delta\circ \phi}{\partial\theta}(\theta,x,\Psi). \]
First, we can see that $- L/2\theta$ is the derivative of the function $\theta \mapsto - \frac{1}{2}\ln \det(2\theta \Id_L)$. therefore, the function $g$ is the derivative of the function 

\[ G : \theta \mapsto -\frac{1}{2}\Big( \ln \det(2 \theta\phi(\theta,x,\Psi)) - L \ln L  \Big) = \Delta( 2 \theta\phi(\theta,x,\Psi))\]
Let us now notice that for $\theta \geq \theta_x$, $2\theta\phi(\theta,x,\Psi) = ( - M(x) + (2 (\theta - \theta_x)) \Psi)$ and let us remind that the differential of $\Delta$ in an invertible matrix $A$ is given by $d\Delta_A(H) = -\frac{1}{2}\Tr(A^{-1}H)$. 
Using the chain rule, we can rewrite $g$ as 
 \[  g(\theta) = - \Tr( \Psi ( - M(x) + (2 (\theta - \theta_x)) \Psi)^{-1}).  \]
Since $-M(x) $ and $\Psi$ are positive matrices, the above expression is negative. Furthermore, we have 
$\Tr[ (-M(x)) \mathcal{S}(\Psi) ] \geq 0 $ and we can bound $2 L \Tr(\Psi A_0^T)$ from below by some constant $- b_0$ with $b_0 >0$. All in all, we can write 
\begin{eqnarray*}
\frac{\partial \mathcal{F}}{\partial \theta}(\theta,x,\Psi) &\leq &  (x + b_0) - 2a (\theta- \theta_x),
\end{eqnarray*}
and thus  since $\mathcal{F}(\theta_x,x,\Psi)=0$, for $\theta\ge \theta_x$, 
\begin{eqnarray*}
\mathcal{F}(\theta,x,\Psi) &\leq &  (x + b_0)(\theta-\theta_x) - a (\theta- \theta_x)^2.
\end{eqnarray*}
From this equation, we deduce that 
\[ \max_{\theta \geq 0}\mathcal{F}(\theta,x,\Psi) \leq \frac{(x+b_0)^2}{4a} \]
and that for $\theta \geq \theta_x + \frac{x+b_0}{a}$, it holds that $\mathcal{F}(\theta,x,\Psi) \leq 0$. Now, to prove Lemma \ref{lem-Ibound}, one can first choose $\Psi= \Id_L/L$ and $\eta = \Tr[ S( \Id_L/L)]$. We then have that for $\epsilon < \eta$
\begin{align*}
\rate(\sigma, \epsilon) &\leq \max_{\theta \geq 0} \mathcal{F}(\theta,x,\Id_L /L)\leq\frac{(x+b_0)^2}{4a} \leq \frac{x^2 + b_0^2}{2 a},
\end{align*}
and the lemma is proven choosing $\gamma = \max \{ 1,b_0^2 \}/2a$. To prove Lemma~\ref{lem-thetainacompact}, we can choose 
\[ \Theta(M, \eta, \eps) = -m_{\mu_{\infty}}(\eta) + 4\frac{M+b_0}{L^2 \eps}, \]
which then ensures the function $\mathcal{F}(\theta,x,\Psi)$ is negative for $\theta \geq \Theta(M,\eta,\eps)$.

\renewcommand*{\bibname}{References}

\interlinepenalty=10000

\bibliographystyle{plain}
\bibliography{LDP_references}

@Book{GIneqBook,
  author =      {R. Adler and J. Taylor},
  title =       {Random Fields and Geometry},
  year =        2007,
  publisher =   {Springer New York}
}

@Article{AfoninBogachev2023,
  author =      {K. Afonin and V. Bogachev},
  title =       {Kantorovich type topologies on spaces of measures and convergence of barycenters},
  journal =     {Commun. Pure Appl. Anal.},
  year =        2023,
  volume =      22,
  number =      2,
  pages =       {597--612}
}

@Article{AEK2019,
  author =      {O. Ajanki and L. Erd\H{o}s and T. Krüger},
  title =       {Stability of the matrix {D}yson equation and random matrices with correlations},
  journal =     {Probab. Theory Relat. Fields},
  year = 	    2019,
  volume =      173,
  pages =       {293--373}   
}

@Article{AEKN2019,
  author =      {J. Alt and L. Erd\H{o}s and T. Krüger and Y. Nemish},
  title =       {Location of the spectrum of {K}ronecker random matrices},
  journal =     {Ann. Inst. H. Poincaré Probab. Statist.},
  year = 	    2019,
  volume =      55,
  number =      2,
  pages =       {661--696}
}

@Book{StatBook,
  author =      {T. W. Anderson},
  title =       {An Introduction to Multivariate Statistical Analysis, 3rd Ed.},
  year =        2003,
  publisher =   {Wiley and Sons}
}

@Book{AGZBook,
  author =      {G. W. Anderson and A. Guionnet and O. Zeitouni},
  title =       {An Introduction to Random Matrices},
  year =        2009,
  publisher =   {Cambridge University Press}
}

@article {AuffingerBC13,
    AUTHOR = {A. Auffinger and G. Ben Arous and J. \v{C}ern\'{y}},
     TITLE = {Random matrices and complexity of spin glasses},
   JOURNAL = {Comm. Pure Appl. Math.},
    VOLUME = {66},
      YEAR = {2013},
    NUMBER = {2},
     PAGES = {165--201}
}

@article {AuffingerB13,
    AUTHOR = {A. Auffinger and G. Ben Arous},
     TITLE = {Complexity of random smooth functions on the high-dimensional sphere},
   JOURNAL = {Ann. Probab.},
    VOLUME = {41},
      YEAR = {2013},
    NUMBER = {6},
     PAGES = {4214--4247}
}

@article{Augeri16,
	author = {F. Augeri},
	title = {Large deviations principle for the largest eigenvalue of {W}igner matrices without {G}aussian tails},
	journal = {Electron. J. Probab.},
	pages = {Paper No. 32, 49},
	volume = {21},
	year = {2016},
}

@Article{AGH2021,
  author =      {F. Augeri and A. Guionnet and J. Husson},
  title =       {Large deviations for the largest eigenvalue of sub-{G}aussian matrices},
  journal =     {Commun. Math. Phys.},
  year = 	    2021,
  volume =      383,
  pages =       {997--1050}
}

@Article{Austin-split1,
  author =      {T. Austin},
  title =       {Entropy and determinants for unitary representations},
  journal =     {Preprint, arXiv:2412.13751v3},
  year =        2025
}

@Article{Austin-split2,
  author =      {T. Austin},
  title =       {Annealed almost periodic entropy},
  journal =     {Preprint, arXiv:2507.08909},
  year =        2025
}

@article{BenarousDG01,
	author = {G. {\uppercase{b}en} Arous and A. Dembo and A. Guionnet},
		title = {Aging of spherical spin glasses},
	journal = {Probab. Theory Related Fields},
	number = {1},
	pages = {1--67},
	volume = {120},
	year = {2001}
}

@article{BenarousG97,
	author = {G. {\uppercase{b}en} Arous and A. Guionnet},
	title = {Large deviations for {W}igner's law and {V}oiculescu's non-commutative entropy},
	journal = {Probab. Theory Related Fields},
	number = {4},
	pages = {517--542},
	volume = {108},
	year = {1997}
}

@article {BenarousMMN19,
    AUTHOR = {G. {\uppercase{b}en} Arous and S. Mei and A. Montanari and
              M. Nica},
     TITLE = {The landscape of the spiked tensor model},
   JOURNAL = {Comm. Pure Appl. Math.},
    VOLUME = {72},
      YEAR = {2019},
    NUMBER = {11},
     PAGES = {2282--2330}
}

@article{BordenaveC14,
	author = {C. Bordenave and P. Caputo},
	title = {A large deviation principle for {W}igner matrices without {G}aussian tails},
	journal = {Ann. Probab.},
	number = {6},
	pages = {2454--2496},
	volume = {42},
	year = {2014}
}

@Article{BordenaveGM24,
  author =      {C. Bordenave and A. Guionnet and C. Male},
  title =       {Large deviations for macroscopic observables of heavy-tailed matrices},
  journal =     {Preprint, arXiv:2409.14027},
  year =        2024
}

@article{BianeCG03,
	author = {P. Biane and M. Capitaine and A. Guionnet},
	title = {Large deviation bounds for matrix {B}rownian motion},
	journal = {Invent. Math.},
	number = {2},
	pages = {433--459},
	volume = {152},
	year = {2003}
}

@Article{BoursierG24,
  author =      {J. Boursier and A. Guionnet},
  title =       {Large deviations for the smallest eigenvalue of a deformed {GOE} with an outlier},
  journal =     {Preprint, arXiv:2408.09256},
  year =        2024
}

@article{Bowen08,
 author = {L. Bowen},
 title = {A measure-conjugacy invariant for free group actions},
 journal = {Ann. Math. (2)},
 volume = {171},
 number = {2},
 pages = {1387--1400},
 year = {2010}
}

@article{Bowen10,
 author = {L. Bowen},
 title = {The ergodic theory of free group actions: entropy and the {{\(f\)}}-invariant},
 journal = {Groups Geom. Dyn.},
 volume = {4},
 number = {3},
 pages = {419--432},
 year = {2010}
}

@Article{CookDG23,
  author =      {N. Cook and R. Ducatez and A. Guionnet},
  title =       {Full large deviation principles for the largest eigenvalue of sub-{Gaussian} {Wigner} matrices},
  journal =     {Preprint, arXiv:2302.14823v2},
  year =        2023
}

@Book{denHollander2000,
  author =      {F. den {\uppercase{h}ollander}},
  title =       {Large Deviations},
  year =        2000,
  publisher =   {Fields Institute Monographs, AMS}
}

@Article{DGH2024,
  author =      {R. Ducatez and A. Guionnet and J. Husson},
  title =       {Large deviation principle for the largest eigenvalue of random matrices with a variance profile},
  journal =     {Preprint, arXiv:2403.05413v2},
  year = 	      2024 
}

@Article{FleermannKirsch2023,
  author =      {M. Fleermann and W. Kirsch},
  title =       {Proof methods in random matrix theory},
  journal =     {Probab. Surveys},
  year =        2023,
  volume =      20,
  pages =       {291--381}
}

@article{GangulyHN24,
 author = {S. Ganguly and E. Hiesmayr and K. Nam},
 title = {Spectral large deviations of sparse random matrices},
 journal = {J. Lond. Math. Soc., II. Ser.},
 volume = {110},
 number = {1},
 pages = {64},
 year = {2024}
}

@Article{Goodman1963,
  author =      {N. Goodman},
  title =       {Statistical analysis based on a certain multivariate complex Gaussian distribution (an introduction)},
  journal =     {Ann. Math. Statist.},
  year =        1963,
  volume =      34,
  number =      1,
  pages =       {152--177}
}

@article{Guionnet02,
 author = {A. Guionnet},
 title = {Large deviations upper bounds and central limit theorems for non-commutative functionals of {Gaussian} large random matrices},
 journal = {Ann. Inst. Henri Poincar{\'e}, Probab. Stat.},
 volume = {38},
 number = {3},
 pages = {341--384},
 year = {2002}
}

@article{GuionnetH23,
 author = {A. Guionnet and J. Huang},
 title = {Asymptotics of rectangular spherical integrals},
 journal = {J. Funct. Anal.},
 volume = {285},
 number = {11},
 pages = {68},
 year = {2023}
}

@Article{GuionnetHusson2020,
  author =      {A. Guionnet and J. Husson},
  title =       {Large deviations for the largest eigenvalue of {R}ademacher matrices},
  journal =     {Ann. Probab.},
  year =        2020,
  volume =      48,
  number =      2,
  pages =       {1436--1465}
}

@article{GuionnetH22,
  title={Asymptotics of k dimensional spherical integrals and applications},
  author={Guionnet, Alice and Husson, Jonathan},
  journal={ALEA},
  volume={19},
  pages={769--797},
  year={2022}
}

@Article{GuionnetMaida2005,
  author =      {A. Guionnet and M. Ma\"{\i}da},
  title =       {A {F}ourier view on the {R}-transform and related asymptotics of spherical integrals},
  journal =     {J. Funct. Anal.},
  year =        2005,
  volume =      222,
  pages =       {435--490}
}

@article{GuionnetZeitouni00,
 author = {A. Guionnet and O. Zeitouni},
 title = {Concentration of the spectral measure for large matrices},
 journal = {Electron. Commun. Probab.},
 volume = {5},
 pages = {119--136},
 year = {2000}
}

@article{GuionnetZ02,
 author = {A. Guionnet and O. Zeitouni},
 title = {Large deviations asymptotics for spherical integrals},
 journal = {J. Funct. Anal.},
 volume = {188},
 number = {2},
 pages = {461--515},
 year = {2002}
}

@Article{HST2006,
  author =      {U. Haagerup and H. Schultz and S. Thorbj{\o}rnsen},
  title =       {A random matrix approach to the lack of projections in {$C^\ast(F_2)$}},
  journal =     {Adv. Math.},
  year = 	    2006,
  volume =      204,
  pages =       {1--83}
}

@Article{HaagerupTobjornsen2005,
  author =      {U. Haagerup and S. Thorbj{\o}rnsen},
  title =       {A new application of random matrices: {$\mathrm{Ext}(C^*(F_2))$} is not a group},
  journal =     {Annals Math.},
  year = 	    2005,
  volume =      162,
  pages =       {711--775}
}

@Article{Husson2022,
  author = 	 {J. Husson},
  title = 	 {Large deviations for the largest eigenvalue of matrices with variance profiles},
  journal =  {Electron. J. Probab.},
  year = 	 2022,
  volume = 27,
  pages = {1--44}
}

@Article{HussonMcKenna2023,
  author =      {J. Husson and B. McKenna},
  title =       {Large deviations for the largest eigenvalue of generalized sample covariance matrices},
  journal =     {Preprint, arXiv:2302.02847},
  year =        2023
}

@article{Jekel20,
	author = {D. Jekel},
	title = {An elementary approach to free entropy theory for convex potentials},
	journal = {Anal. PDE},
	number = {8},
	pages = {2289--2374},
	volume = {13},
	year = {2020}
}

@Article{Kittaneh1985,
  author =      {F. Kittaneh},
  title =       {On {L}ipschitz functions of normal operators},
  journal =     {Proc. Am. Math. Soc.},
  year =        1985,
  volume =      94,
  number =      3,
  pages =       {416--418}
}

@Article{Lehner1999,
  author =      {F. Lehner},
  title =       {Computing norms of free operators with matrix coefficients},
  journal =     {Amer. J. Math.},
  year =        1999,
  volume =      121,
  number =      3,
  pages =       {453--486}
}

@article{Mckenna21,
 author = {B. McKenna},
 title = {Large deviations for extreme eigenvalues of deformed {Wigner} random matrices},
 journal = {Electron. J. Probab.},
 volume = {26},
 pages = {37},
 year = {2021}
}

@Article{Maida2007,
  author =      {M. Ma\"{\i}da},
  title =       {Large deviations for the largest eigenvalue of rank one deformations of {G}aussian ensembles},
  journal =     {Electron. J. Probab.},
  year =        2007,
  volume =      12,
  pages =       {1131--1150}
}

@article{Voiculescu91,
	author = {D. Voiculescu},
	journal = {Invent. Math.},
	key = {Vo91},
	pages = {201--220},
	title = {Limit laws for random matrices and free products},
	volume = 104,
	year = 1991}

@article{Voiculescu93,
 author = {D. Voiculescu},
 title = {The analogues of entropy and of {Fisher}'s information measure in free probability theory. {I}},
 journal = {Commun. Math. Phys.},
 volume = {155},
 number = {1},
 pages = {71--92},
 year = {1993}
}

@article{Voiculescu94,
 author = {D. Voiculescu},
 title = {The analogues of entropy and of {Fisher}'s information measure in free probability theory. {II}},
 journal = {Invent. Math.},
 volume = {118},
 number = {3},
 pages = {411--440},
 year = {1994}
}

@article{Voiculescu02,
 author = {D. Voiculescu},
 title = {The topological version of free entropy},
 journal = {Lett. Math. Phys.},
 volume = {62},
 number = {1},
 pages = {71--82},
 year = {2002}
}

@article {Wigner1958,
    AUTHOR = {E. Wigner},
     TITLE = {On the distribution of the roots of certain symmetric
              matrices},
   JOURNAL = {Annals Math.},
    VOLUME = {67},
      YEAR = {1958},
     PAGES = {325--327}
}

@ARTICLE{Wishart1928,
    AUTHOR = "J. Wishart",
    TITLE = "The generalized product moment distribution 
	    in samples from a Normal multivariate population", 
    JOURNAL = "Biometrika",
    YEAR = 1928,
    VOLUME = {20A},
    PAGES = "32--52"                                    
}

\end{document}